\renewcommand{\subset}{\subseteq}
\newcommand{\lcb}{\left\lbrace} 
\newcommand{\rcb}{\right\rbrace} 
\newcommand{\cb}[1]{\lcb #1 \rcb} 
\newcommand{\lab}{\left[} 
\newcommand{\rab}{\right]} 
\newcommand{\ab}[1]{\lab #1 \rab} 
\newcommand{\abOf}[1]{\!\ab{#1}} 
\newcommand{\lb}{\left(} 
\newcommand{\rb}{\right)} 
\newcommand{\br}[1]{\lb #1 \rb} 
\newcommand{\brOf}[1]{\!\br{#1}} 
\newcommand{\abs}[1]{\left| #1 \right|} 
\newcommand*{\E}{\mathbb{E}} 
\newcommand{\sizedMid}[2]{#1 \, \kern-\nulldelimiterspace\mathopen{}\left| \vphantom{#1}\,#2\right.\mathclose{}\kern-\nulldelimiterspace}
\DeclarePairedDelimiterX\Set[1]{\lbrace}{\rbrace}%
{  #1 }
\newcommand{\Ex}{\E\expectarg}
\DeclarePairedDelimiterX{\expectarg}[1]{[}{]}{%
	\ifnum\currentgrouptype=16 \else\begingroup\fi
	\activatebar#1
	\ifnum\currentgrouptype=16 \else\endgroup\fi
}
\newcommand{\innermid}{\nonscript\;\delimsize\vert\nonscript\;}
\newcommand{\activatebar}{%
	\begingroup\lccode`\~=`\|
	\lowercase{\endgroup\let~}\innermid 
	\mathcode`|=\string"8000
}
\newcommand*{\mc}[1]{\mathcal{#1}}
\newcommand*{\ms}[1]{\mathsf{#1}}
\newcommand{\N}{\mathbb{N}}
\newcommand{\R}{\mathbb{R}}
\newcommand{\Rp}{[0, \infty)} 
\newcommand{\Rpp}{(0, \infty)} 
\newcommand{\pr}{^\prime}
\newcommand{\prr}{^{\prime\prime}}
\newcommand{\prrr}{^{\prime\prime\prime}}
\def\integral from #1to #2of #3by #4;{\int_{#1}^{#2} \! #3 \mathrm{d}#4} %
\def\integralMeasure in #1of #2by #3of #4;{\int_{#1} \! #2{#4} #3{\mathrm{d}#4}} %
\def\mapping #1from #2to #3;{#1 \colon #2 \rightarrow #3}
\def\mappingDef #1from #2to #3maps #4to #5;{#1 \colon #2 \rightarrow #3,\ #4 \mapsto #5}
\def\seq #1by #2;{\br{#1}_{#2\in\N}}
\def\seqInText #1by #2;{(#1)_{#2\in\N}}
\newcommand{\innerProduct}[2]{\left\langle#1\,,\, #2\right\rangle}
\newcommand{\ip}[2]{\innerProduct{#1}{#2}}
\newcommand{\lebesgue}{\mathcal{L}}
\newcommand{\lebesguePow}[1]{\lebesgue^{#1}}
\newcommand{\sgn}{\mathsf{sgn}} 
\newcommand{\dl}{\mathrm{d}}
\def\converges for #1to #2;{\xrightarrow{#1} #2}
\def\convergesAlmostSurely for #1to #2;{\xrightarrow{#1}_{\mathsf{fs}} #2}
\def\convergesInProbability for #1to #2;{\xrightarrow{#1}_{\mathsf{p}} #2}
\def\convergesInL #1for #2to #3;{\xrightarrow{#2}_{\lebesguePow{#1}} #3}
\newcommand{\ind}{\mathbf{1}}
\newcommand{\indOf}[1]{\ind_{\!#1}}%
\newcommand{\normof}[1]{\Vert #1 \Vert}
\newcommand{\normOf}[1]{\left\Vert #1 \right\Vert}
\newcommand{\equationFullstop}{\, .}
\newcommand{\eqfs}{\equationFullstop}
\newcommand{\equationComma}{\, ,}
\newcommand{\eqcm}{\equationComma}
\newcommand{\euler}{\mathrm{e}}
\DeclareMathOperator*{\argmin}{arg\,min}
\newcommand{\ol}[2]{\overline{#1,#2}}
\newcommand{\cato}{$\ms{CAT(0)}$}
\newcommand{\mtr}{\varphi}
\newcommand{\tran}{\tau}
\newcommand{\trans}{\tran_{\!\sqrt{}}}
\newcommand{\dtrans}{\trans\pr}
\newcommand{\ddtrans}{\trans\prr}
\newcommand{\dddtrans}{\trans\prrr}
\newcommand{\Tran}{\mc{T}}
\newcommand{\dtran}{\tran\pr}
\newcommand{\ddtran}{\tran\prr}
\newcommand{\dddtran}{\tran\prrr}
\newcommand{\ctransymbol}{L}
\newcommand{\ctrano}[1]{\ctransymbol_{#1}}
\newcommand{\ctranobest}[1]{\ctransymbol_{#1}^{*}}
\newcommand{\ctranbest}{\ctranobest{\tran}}
\newcommand{\ctran}{\ctrano{\tran}}
\newcommand{\ctransqr}{\ctrano{}}
\newcommand{\setcc}{\mc S}
\newcommand{\setd}[1]{\mc C^{#1}}
\newcommand{\setccd}[1]{\mc S^{#1}}
\newcommand{\setcco}{\mc S_0}
\newcommand{\setccod}[1]{\mc S_0^{#1}}
\newcommand{\lhs}{\ell_{\tran}}
\newcommand{\ifu}{F_{\tran}}
\newcommand{\incr}{\nearrow}
\newcommand{\decr}{\searrow}
\newcommand{\posi}{\geq0}
\newcommand{\nega}{\leq0}
\newcommand{\conc}{\smallfrown}
\newcommand{\conv}{\smallsmile}
\definecolor{colQuadDiag}{HTML}{D81B60}
\definecolor{colQuadSidePos}{HTML}{1E88E5}
\definecolor{colQuadSideNeg}{HTML}{004D40}
\newbox{\myorcidthanksbox}
\sbox{\myorcidthanksbox}{\large\includegraphics[height=1.8ex]{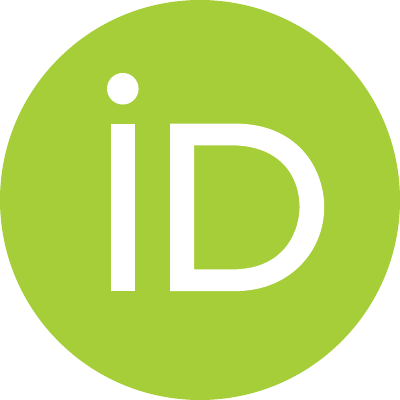}}
\newcommand{\orcidthanks}[1]{%
	\href{https://orcid.org/#1}{\raisebox{-0.5ex}{\usebox{\myorcidthanksbox}}\,#1}}
\def\postBoxSkip{1.0ex}
\def\postBoxSkipCmd{\vskip\postBoxSkip}
\def\preBoxSkip{1.0ex}
\def\preBoxSkipCmd{\vskip\preBoxSkip}
\declaretheoremstyle[
	bodyfont=\normalfont,
	postfoothook={\postBoxSkipCmd},
	preheadhook={\preBoxSkipCmd},
	mdframed={
		backgroundcolor = black!2,
		startcode={},
}]{ruledBoxStyle}
\declaretheoremstyle[
	bodyfont=\normalfont,
	postfoothook={\postBoxSkipCmd},
	preheadhook={\preBoxSkipCmd},
	mdframed={
		backgroundcolor=white,
}]{ruledBoxStyleWhite}
\declaretheoremstyle[
	bodyfont=\normalfont,
	postfoothook={\postBoxSkipCmd},
	preheadhook={\preBoxSkipCmd},
	mdframed={
		backgroundcolor=black!2,
		linecolor = black!2,
		tikzsetting = {
			draw = black,
			line width = 2pt,%
			dashed,%
			dash pattern = on 10pt off 3pt
		},
}]{dashedBoxStyle}
\declaretheoremstyle[
	bodyfont=\normalfont,
	postfoothook={\postBoxSkipCmd},
	preheadhook={\preBoxSkipCmd},
	mdframed={
		linecolor = white,
		startcode={},
		tikzsetting = {
			draw = black,
			line width = 1pt,%
			loosely dotted,
		},
	}
]{dashedStyle}
\declaretheoremstyle[
	bodyfont=\normalfont,
	postfoothook={\postBoxSkipCmd},
	preheadhook={\preBoxSkipCmd},
	mdframed={
		linecolor = black,
		innerlinewidth=1pt,outerlinewidth=1pt,
		middlelinewidth=1pt,
		linecolor=black,middlelinecolor=white,
		startcode={},
	}
]{doubleStyle}
\declaretheoremstyle[
	bodyfont=\normalfont,
	postfoothook={\postBoxSkipCmd},
	preheadhook={\preBoxSkipCmd},
	mdframed={
		backgroundcolor = black!4,
		linecolor = black!4,
		startcode={},
}]{boxStyle}
\declaretheoremstyle[
	headfont=\normalfont\itshape, 
	notefont=\normalfont\itshape, 
	notebraces={}{},
	bodyfont=\normalfont,
	qed=\qedsymbol,
	numbered=no,
	headindent=0pt,
	postheadspace=1ex,
	name={Proof},
	postheadhook={},
	mdframed={
		hidealllines = true,
		innerrightmargin = 0pt,
		innerleftmargin = 0pt,
		innertopmargin = 0pt,
		innerbottommargin = 0pt,
		leftmargin = 0pt,
		rightmargin = 0pt,
	}
]{proofStyle}
\declaretheoremstyle[
	bodyfont=\normalfont,
	postfoothook={\postBoxSkipCmd},
	preheadhook={\preBoxSkipCmd},
	mdframed={
		backgroundcolor = white,
		linecolor = black,
		startcode={},
		leftline = false,
		rightline = false,
}]{tobBottomStyle}
\declaretheoremstyle[
bodyfont=\normalfont,
]{standardStyle}
\declaretheorem[style=ruledBoxStyle,name=Definition]{definition}
\declaretheorem[style=ruledBoxStyle,name=Lemma,numberlike=definition]{lemma}
\declaretheorem[style=ruledBoxStyle,name=Proposition,numberlike=definition]{proposition}
\declaretheorem[style=ruledBoxStyle,name=Theorem,numberlike=definition]{theorem}
\declaretheorem[style=ruledBoxStyle,name=Corollary,numberlike=definition]{corollary}
\declaretheorem[style=ruledBoxStyle,name=Theorem,numbered=no]{theorem*}
\declaretheorem[style=boxStyle,name=Remark,numberlike=definition]{remark}
\let\proof\@undefined
\let\endproof\@undefined
\declaretheorem[style=proofStyle]{proof}
\newcounter{subExample}%
\renewcommand{\thmcontinues}[1]{Teil \arabic{subExample}} 
\title{Quadruple Inequalities:\\Between Cauchy--Schwarz and Triangle}
\date{}
\author[1,2]{Christof Sch\"otz\thanks{math@christof-schoetz.de, \orcidthanks{0000-0003-3528-4544}}}
\affil[1]{Potsdam Institute for Climate Impact Research}
\affil[2]{Technical University of Munich}
\begin{document}
\maketitle
\begin{abstract}
	We prove a set of inequalities that interpolate the Cauchy--Schwarz inequality and the triangle inequality. Every nondecreasing, convex function with a concave derivative induces such an inequality. They hold in any metric space that satisfies a metric version of the Cauchy--Schwarz inequality, including all CAT(0) spaces and, in particular, all Euclidean spaces. Because these inequalities establish relations between the six distances of four points, we call them quadruple inequalities. In this context, we introduce the quadruple constant --- a real number that quantifies the distortion of the Cauchy--Schwarz inequality by a given function. Additionally, for inner product spaces, we prove an alternative, more symmetric version of the quadruple inequalities, which generalizes the parallelogram law.

\end{abstract}
\tableofcontents
\section{Introduction}
\subsection{Relating Cauchy--Schwarz and Triangle}
The Cauchy--Schwarz inequality states that in any inner product space $(V, \langle\cdot,\cdot\rangle)$, we have
\begin{equation}\label{eq:csi}
	\langle u,v \rangle \leq \normof{u} \normof{v}
\end{equation}
for all $u,v \in V$, where $ \normof{u} = \sqrt{\langle u,u \rangle}$. In any metric space $(\mc Q, d)$ the triangle inequality 
\begin{equation}\label{eq:tri}
	\ol yz \leq \ol yp + \ol pz
\end{equation}
is true for all $p,y,z\in\mc Q$, where we use the short notation $\ol yz := d(y,z)$. These two inequalities can be connected as follows: Consider the inequality
\begin{equation}\label{eq:quadtran}
	\tran(\ol yq)  - \tran(\ol yp) - \tran(\ol zq) + \tran(\ol zp) \leq \ctran\,  \ol qp \, \dtran(\ol yz)
\end{equation}
for $y,z,q,p\in\mc Q$, a differentiable function $\tran\colon[0,\infty)\to\R$ with derivative $\dtran$, and a constant $\ctran \in\Rp$. We call \eqref{eq:quadtran} a \textit{quadruple inequality} \cite{schoetz19} as it establishes a relationship between the six distances among four points, see \cref{fig:fourpoints}. If we plug the identity $\tran = \tran_1 := (x\mapsto x)$ and $\ctran = 2$ into \eqref{eq:quadtran}, we obtain
\begin{equation}\label{eq:quad1}
	\ol yq  - \ol yp - \ol zq + \ol zp \leq 2 \, \ol qp
	\eqfs
\end{equation}
The triangle inequality \eqref{eq:tri} implies \eqref{eq:quad1}. Furthermore, in a symmetric distance space $(\mc Q, d)$ \cite{deza16}, where $d$ does not necessarily fulfill the triangle inequality, \eqref{eq:quad1} also implies \eqref{eq:tri} by setting $z=q$. 
Next, let us evaluate \eqref{eq:quadtran} with $\tran = \tran_2 :=(x\mapsto x^2)$ and $\ctran = 1$. We get
\begin{equation}\label{eq:quad2}
	\ol yq^2 - \ol yp^2 - \ol zq^2 + \ol zp^2 \leq 2\, \ol qp\, \ol yz
	\eqfs
\end{equation}
If we assume that the metric space $(\mc Q, d)$ is induced by an inner product space $(V, \langle\cdot,\cdot\rangle)$, i.e., $\mc Q = V$ and $d(q,p) = \normof{q-p}$, then \eqref{eq:quad2} becomes
\begin{equation}
	2 \ip{q-p}{z-y} \leq 2 \normof{q-p}\normof{y-z}
	\eqfs
\end{equation}
Thus, in this case, \eqref{eq:quad2} is equivalent to \eqref{eq:csi}. Hence, we can consider \eqref{eq:quad2} to be a generalization of the Cauchy--Schwarz inequality to metric spaces. Equation \eqref{eq:quad2} is not true in every metric space. But it plays an important role in the study of geodesic metric spaces. A geodesic metric space is a metric space such that any two points $q,p\in\mc Q$ can be joined by a geodesic, i.e., a curve of length $\ol qp$.
Equation \eqref{eq:quad2} is well-known to hold in non-positively curved geodesic spaces, which are called \cato\ spaces or, if they are complete, Hadamard spaces or global NPC spaces. In these spaces, \eqref{eq:quad2} is also known as \textit{four point cosq condition} \cite{berg08} or \textit{Reshetnyak's Quadruple Comparison} \cite[Proposition 2.4]{sturm03}. Furthermore, a geodesic space is \cato\ if it fulfills \eqref{eq:quad2} \cite[Theorem 1]{berg08}. 
\begin{figure}
	\begin{center}
		\begin{tikzpicture}[thick]
			\coordinate (y) at (-1,0);
			\coordinate (q) at (7,0);
			\coordinate (z) at (1.5,5);
			\coordinate (p) at (2,2);
			
			\draw[colQuadSidePos] (y) -- node[below] {$\ol yq$} (q);
			\draw[colQuadSideNeg] (q) -- node[right] {$\ol zq$} (z);
			\draw[colQuadSidePos] (z) -- node[right] {$\ol zp$} (p);
			\draw[colQuadSideNeg] (p) -- node[right] {$\ol yp$} (y);
			\draw[colQuadDiag] (y) -- node[left] {$\ol yz$} (z);
			\draw[colQuadDiag] (q) -- node[above] {$\ol qp$} (p);
			
			\fill (y) circle[radius=2pt] node[below] {$y$};
			\fill (q) circle[radius=2pt] node[below] {$q$};
			\fill (z) circle[radius=2pt] node[above] {$z$};
			\fill (p) circle[radius=2pt] node[left] {$p$};
		\end{tikzpicture}
	\end{center}
	\caption{Four points and their six distances.}\label{fig:fourpoints}
\end{figure}
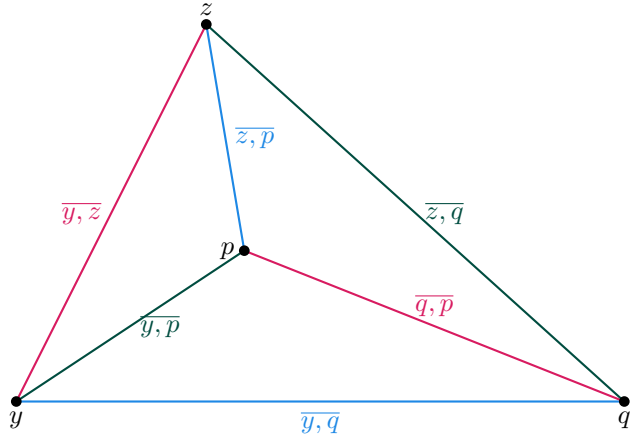
\subsection{Contributions}
The functions $\tran_1, \tran_2$ are both nondecreasing, convex, and have a concave derivative. They can be considered as edge cases of all functions with these properties: As a linear function, $\tran_1$ can be thought of as ``least convex'' of all convex functions. Similarly, $\tran_2$, which has a linear and strictly increasing derivative, is a ``most convex'' function among all functions with a concave derivative. As our main result, we show that in all metric spaces with the property \eqref{eq:quad2}, inequality \eqref{eq:quadtran} is true for all functions ``between" $\tran_1$ and $\tran_2$, i.e., for all nondecreasing, convex functions with a concave derivative. In this sense, we ``interpolate" the triangle and the Cauchy--Schwarz inequality.
\begin{theorem}\label{thm:main}
	Let $(\mc Q, d)$ be a metric space.
	Let $y,z,q,p\in\mc Q$. Assume
	\begin{equation*}
		\ol yq^2 - \ol yp^2 - \ol zq^2 + \ol zp^2 \leq 2 \, \ol qp\, \ol yz
		\eqfs
	\end{equation*}
	Let $\tran\colon[0,\infty)\to\R$ be differentiable. Assume $\tran$ is nondecreasing, convex and has a concave derivative $\dtran$. Then
	\begin{equation}\label{eq:main}
		\tran(\ol yq)  - \tran(\ol yp) - \tran(\ol zq) + \tran(\ol zp) \leq 2 \, \ol qp \, \dtran(\ol yz)\eqfs
	\end{equation}
\end{theorem}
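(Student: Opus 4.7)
My strategy is to linearize in $\tran$ and reduce to a family of extremal building blocks. Since $\tran$ is convex with concave derivative, $\ddtran$ is nonnegative and nonincreasing, and a standard layer-cake argument yields the representation
\begin{equation*}
    \tran(x) = \tran(0) + \dtran(0)\, x + \tfrac{\gamma}{2}\, x^2 + \int_0^\infty \tran_s(x)\, \dl\mu(s),
\end{equation*}
for some $\gamma \geq 0$ and positive Borel measure $\mu$ on $(0,\infty)$, where the family $\tran_s(x) := \int_0^x \min(u, s)\, \dl u$ satisfies $\tran_s^\prime(x) = \min(x, s)$. Inequality \eqref{eq:main} is linear in $\tran$ with nonnegative coefficients in this representation, so it suffices to verify it on each building block.

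Constants give $0 \leq 0$. For $\tran(x) = x$, inequality \eqref{eq:main} reads $\ol yq - \ol yp - \ol zq + \ol zp \leq 2\, \ol qp$, which follows from $\ol yq - \ol yp \leq \ol qp$ and $\ol zp - \ol zq \leq \ol qp$. For $\tran(x) = \tfrac{1}{2} x^2$, we have $\dtran(\ol yz) = \ol yz$, and $\tfrac{1}{2}(\ol yq^2 - \ol yp^2 - \ol zq^2 + \ol zp^2) \leq \ol qp \cdot \ol yz \leq 2\, \ol qp \cdot \ol yz$ is exactly the stated hypothesis (with a factor of $2$ of slack). The main work is then to prove \eqref{eq:main} for each $\tran_s$. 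Fubini's theorem gives the alternative form $\tran_s(x) = \int_0^s (x - t)^+\, \dl t$, so the LHS of \eqref{eq:main} equals $\int_0^s D_t\, \dl t$ where
\begin{equation*}
    D_t := (\ol yq - t)^+ - (\ol yp - t)^+ - (\ol zq - t)^+ + (\ol zp - t)^+.
\end{equation*}
Since $x \mapsto (x - t)^+$ is $1$-Lipschitz, the triangle inequality implies $|D_t| \leq 2\, \ol qp$. In the case $s \leq \ol yz$ this immediately yields $\int_0^s D_t\, \dl t \leq 2\, \ol qp \cdot s = 2\, \ol qp \cdot \tran_s^\prime(\ol yz)$, and we are done.

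The main obstacle is the remaining case $s > \ol yz$, where the naive triangle bound $2\, \ol qp \cdot s$ strictly exceeds the required $2\, \ol qp \cdot \ol yz$, so we must genuinely combine the triangle inequality with the CSI hypothesis. A natural route is the identity $\tran_s(x) = \tfrac{1}{2} x^2 - \tfrac{1}{2}((x - s)^+)^2$: the $\tfrac{1}{2} x^2$ part contributes $\tfrac{1}{2}(\ol yq^2 - \ol yp^2 - \ol zq^2 + \ol zp^2) \leq \ol qp \cdot \ol yz$ by the hypothesis, reducing the problem to bounding the correction term built from the clipped distances $(\ol yq - s)^+, \ldots, (\ol zp - s)^+$, which inherit the triangle-type relations with the same $\ol qp$ and $\ol yz$. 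A clean treatment of this correction --- likely via a careful case analysis on the positions of the four distances relative to $s$, or via a more symmetric decomposition that uses both hypotheses simultaneously --- is where the real work of the proof lies.
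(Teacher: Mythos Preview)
Your reduction to the extremal building blocks $\tran_s$ is a natural idea, and your treatment of the constants, the linear part, the quadratic part, and the case $s\le\ol yz$ is correct. However, as you yourself acknowledge, the case $s>\ol yz$ is left open, and this is not a minor technicality: it is the entire content of the theorem. The functions $\tran_s(x)=\int_0^x\min(u,s)\,\dl u$ lie in $\setcc$ (they are nondecreasing, convex, with concave derivative $x\mapsto\min(x,s)$), so proving the quadruple inequality for every $\tran_s$ is \emph{equivalent} to proving it for every $\tran\in\setcc$; your integral representation establishes one implication, but the extremal $\tran_s$ already witness the full difficulty of the other. Your proposed route via $\tran_s(x)=\tfrac12 x^2-\tfrac12((x-s)^+)^2$ shifts the burden to bounding
\[
((\ol yp-s)^+)^2+((\ol zq-s)^+)^2-((\ol yq-s)^+)^2-((\ol zp-s)^+)^2 \;\le\; 2\,\ol qp\,\ol yz,
\]
which is the quadratic quadruple expression with the roles of $q$ and $p$ swapped, applied to the clipped distances. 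The hypothesis of the theorem is one-sided and assumed only for the given ordering $(y,z,q,p)$ (the paper stresses this point explicitly), so there is no direct control on this swapped expression, and the clipping does not obviously restore it. A case analysis on which of the four distances exceed $s$ is certainly possible, but carrying it out is comparable in effort to a direct attack.

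The paper takes an entirely different route: it parametrizes four-point configurations by real parameters $(a,b,c,r,s,t)$ via repeated use of the cosine rule, rewrites both hypothesis and conclusion in these parameters, absorbs $t$ via the hypothesis, and eliminates $r$ by a convexity argument. What remains is an inequality in $(a,b,c,s)$ for general $\tran\in\setccod3$, which is then established by a long case analysis (six cases, each a chain of elementary calculus estimates exploiting the signs and monotonicity of $\dtran,\ddtran,\dddtran$ and of $\trans=\tran\circ\sqrt{\cdot}$). Your decomposition trades the generality in $\tran$ for an extra parameter $s$, while the paper keeps $\tran$ general but reduces the geometric parameters; neither reduction bypasses the substantial case analysis, and you have not carried yours out.
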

We emphasize that \eqref{eq:quad2} as condition in \cref{thm:main} is only required for the specific four points $y,z,q,p\in\mc Q$ in the given order.
We call functions $\tran$ that satisfy \eqref{eq:quadtran} when \eqref{eq:quad2} is true quadruple transformations:
\begin{definition}[Quadruple transformation]
	Let $\tran\colon(0,\infty)\to\R$ be differentiable. We call $\tran$ a \textit{quadruple transformation} if there is a constant  $\ctran\in\Rp$ such that the following condition holds:
	For every metric space $(\mc Q, d)$ and pairwise distinct points $y,z,q,p\in\mc Q$ such that
	\begin{equation*}
		\ol yq^2 - \ol yp^2 - \ol zq^2 + \ol zp^2 \leq 2\, \ol qp\, \ol yz\eqcm
	\end{equation*}
	we also have
	\begin{equation*}
		\tran(\ol yq)  - \tran(\ol yp) - \tran(\ol zq) + \tran(\ol zp) \leq \ctran\,  \ol qp \, \dtran(\ol yz)
		\eqfs
	\end{equation*}
	Define the \textit{quadruple constant} of a quadruple transformation $\tran$ as the smallest possible choice of $\ctran$ and denote it as $\ctranbest$.
	Let $\Tran$ denote the \textit{set of all quadruple transformations}. 
\end{definition}
Note that $L^*_{\tran_2} = 1$ and $\tran = \tran_2$ in \eqref{eq:quadtran} yields the Cauchy--Schwarz inequality. Hence, we can interpret the quadruple constant $\ctranbest$ as a factor describing the distortion of the Cauchy--Schwarz inequality induced by applying $\tran$ instead of $\tran_2$. 

Let $\setcc$ be the set of all nondecreasing, convex, and differentiable functions $\tran\colon[0,\infty)\to\R$ with concave derivative.
Then, by \cref{thm:main}, $\setcc \subset \Tran$.
We show that, for any $\tran\in\setcc$, the quadruple constant $\ctranbest$ is in the interval $[1, 2]$, see \cref{lmm:constLowerBoundOne}. Further lower bounds on $\ctranbest$ are discussed in \cref{lmm:constLowerBounds}. Slightly stronger versions of \cref{thm:main} are presented in \cref{thm:main:param} and \cref{thm:strong}.

Let $\setcco$ be the set of functions in $\setcc$ with $\tran(0)=0$. For $\tran\in\setcco$, the right-hand side of \eqref{eq:main} can be bounded by $2\tran(\ol qp) + 2\tran(\ol yz)$, see \cref{cor:rhsbound}. In inner product spaces, we derive a stronger upper bound:
\begin{theorem}\label{thm:ipsymm}
	Let $(V, \ip\cdot\cdot)$ be an inner product space with induced metric $d$. Let $y,z,q,p\in V$. Let $\tran\in\setcco$. Then
	\begin{equation}\label{eq:symm:tran}
		\tran(\ol yq)  - \tran(\ol yp) - \tran(\ol zq) + \tran(\ol zp) \leq \tran(\ol qp) + \tran(\ol yz)
		\eqfs
	\end{equation}
\end{theorem}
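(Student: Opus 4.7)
My plan is to establish the theorem first for two basic cases, $\tran = \tran_1$ and $\tran = \tran_2$, and then to reduce the general case $\tran \in \setcco$ to these via an integral representation of the derivative $\dtran$.

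For $\tran = \tran_2$, expanding squared norms in the inner product space gives
\begin{equation*}
	\tran_2(\ol yq) - \tran_2(\ol yp) - \tran_2(\ol zq) + \tran_2(\ol zp) = 2\ip{z-y}{q-p},
\end{equation*}
so the desired inequality reduces to $2\ip{z-y}{q-p} \leq \normof{q-p}^2 + \normof{z-y}^2$, i.e.\ the non-negativity of $\normof{(q-p) - (z-y)}^2$ (the familiar polarization/parallelogram identity). For $\tran = \tran_1$, two applications of the triangle inequality, grouping the four summands two different ways, give LHS $\leq 2\ol qp$ and LHS $\leq 2\ol yz$, hence LHS $\leq \min(2\ol qp, 2\ol yz) \leq \ol qp + \ol yz$.

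For the reduction, since $\dtran$ is nondecreasing, concave, and nonnegative on $\Rp$, it admits the representation
\begin{equation*}
	\dtran(x) = \dtran(0) + \int_0^\infty \min(x, s)\,d\mu(s)
\end{equation*}
for some nonnegative Borel measure $\mu$ on $(0,\infty)$. Integrating and using $\tran(0) = 0$ yields $\tran(x) = \dtran(0)\,\tran_1(x) + \int_0^\infty \psi_s(x)\,d\mu(s)$, where $\psi_s(x) := \int_0^x \min(t, s)\,dt$ belongs to $\setcco$ (its derivative $\min(\cdot, s)$ is concave, nondecreasing, and vanishes at $0$). Since both sides of \eqref{eq:symm:tran} are linear in $\tran$, by Fubini--Tonelli and the positivity of $\mu$ it suffices to prove the inequality for $\tran_1$ (done) and separately for each $\psi_s$.

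The main obstacle will be the $\psi_s$ case. The naive decomposition $\psi_s = \frac{1}{2}\tran_2 - \frac{1}{2}(\cdot - s)_+^2$ is circular: after using the $\tran_2$ identity, the auxiliary inequality one is left with for $(x-s)_+^2$ is algebraically equivalent to the $\psi_s$ statement itself, and $(x-s)_+^2$ is not in $\setcc$ so \cref{thm:main} does not apply. Instead, my plan for this step is a direct case analysis according to which of the six distances exceed the threshold $s$: in each sub-case, $\psi_s$ acts either as $\tran_2/2$ (giving the parallelogram contribution) or as a linear function of the relevant distances (where the $\tran_1$ argument applies), and the two contributions should combine, via the triangle inequality and the parallelogram identity, to yield the bound. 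If that case analysis proves unmanageable, the fallback is to sharpen \cref{thm:main} in the IPS setting via a mixed-derivative/parametric argument along the segments $[p,q]$ and $[y,z]$, exploiting the inner product structure beyond the plain metric Cauchy--Schwarz used in the proof of \cref{thm:main}.
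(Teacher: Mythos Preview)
Your reduction is sound in outline: the integral representation of a nonnegative, nondecreasing, concave $\dtran$ is valid (though strictly you also need a term $cx$ with $c=\lim_{x\to\infty}\ddtran(x)\ge 0$, which integrates to a multiple of $\tran_2$ and is already covered), and linearity in $\tran$ does reduce the problem to $\tran_1$, $\tran_2$, and the family $\psi_s$. The $\tran_1$ and $\tran_2$ cases are handled correctly.

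The gap is that the $\psi_s$ case is precisely the content of the theorem, and you have not proved it. Your own observation that the decomposition $\psi_s=\tfrac12\tran_2-\tfrac12(\cdot-s)_+^2$ is circular is correct; unwinding it gives back exactly the $\psi_s$ inequality. The proposed ``case analysis according to which of the six distances exceed $s$'' is not carried out, and it is not a small bookkeeping matter: the cases where some of the four ``side'' distances lie on one side of $s$ and some on the other mix quadratic and affine pieces in a way that does not reduce to the $\tran_1$ or $\tran_2$ arguments separately. Your fallback (``sharpen \cref{thm:main} in the IPS setting'') is also too vague to count as a proof; note in particular that the bound from \cref{thm:main}, namely $2\,\ol qp\,\psi_s'(\ol yz)=2\,\ol qp\min(\ol yz,s)$, can exceed $\psi_s(\ol qp)+\psi_s(\ol yz)$ (take $\ol qp=\ol yz=s$), so that route needs a genuinely new idea, not just a sharpening.

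For comparison, the paper avoids both the integral representation and any threshold case analysis. Its key observation is that $\tran\in\setcco$ implies $x\mapsto\tran(\sqrt{x})$ is nonnegative, nondecreasing, and \emph{concave}. One then first proves the parallelogram case
\[
\tran(\normof{u})+\tran(\normof{v})\ \le\ 2\,\tran\!\brOf{\normOf{\tfrac{u-v}{2}}}+2\,\tran\!\brOf{\normOf{\tfrac{u+v}{2}}}
\]
by applying a Karamata/majorization-type lemma for concave functions to the squared norms (the parallelogram law supplies the required sum identity and the diagonal supplies the required maximum). Finally, one passes from the parallelogram with diagonals $u=y-q$, $v=z-p$ to the arbitrary quadrilateral $y,z,q,p$ using only that $\tran$ is nondecreasing and convex, via $2\tran(\normof{w/2})\le\tran(\normof{x})+\tran(\normof{w-x})$. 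This two-step argument is short and uses the structural hypothesis on $\tran$ (concave derivative) exactly once, through the concavity of $\tran\circ\sqrt{\cdot}$, rather than disassembling $\tran$ into building blocks.
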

This can be viewed as a generalization of the \emph{parallelogram law} in inner product spaces, $\normof{u+v}^2 + \normof{u-v}^2 \leq 2 \normof{u}^2 + 2\normof{v}^2$ for $u,v\in V$, as it implies
\begin{equation}
	\tran(\normof{u+v}) + \tran(\normof{u-v}) \leq  2 \tran(\normof{u}) + 2 \tran(\normof{v}) 
\end{equation}
for $\tran\in\setcco$, see \cref{cor:ip}.
\subsection{Related Literature}
For a history of the Cauchy--Schwarz inequality and many of its extensions, \cite{steele04} is highly recommended. The book \cite{deza16} is an excellent reference for metric related concepts.
\subsubsection{Convex Analysis}
\cref{thm:ipsymm} is related to \textit{Karamata's inequality} \cite{karamata32}: Let $f \colon \R \to\R$ be a convex and nondecreasing function. Let $a_1,\dots, a_n, b_1, \dots, b_n \in \R$ with 
\begin{equation}\label{eq:majorization}
	a_1 \geq \dots \geq a_n\eqcm \qquad b_1 \geq \dots \geq b_n\eqcm \qquad  \sum_{i=1}^k a_i \leq \sum_{i=1}^k b_i
\end{equation}
for $k = 1,\dots, n$. Then
\begin{equation}
	\sum_{i=1}^n f(a_i) \leq \sum_{i=1}^n f(b_i) \eqfs
\end{equation}
If we set $f = \tran$, $n = 4$, $a_1 = \ol yq$, $a_2 = \ol zp$, $a_3 = a_4 = 0$, $b_1 = \ol qp$, $b_2 = \ol yz$, $b_3 = \ol yp$, $b_4 = \ol zq$, then Karamata's inequality proves \cref{thm:ipsymm} for configurations of distances that fulfill \eqref{eq:majorization}. But this does not cover all cases.	
\subsubsection{Quadruple Inequality}
\cref{thm:main} extends \cite[Theorem 3]{schoetz19}, which states that $\tran_\alpha := (x \mapsto x^\alpha)$ with $\alpha\in[1,2]$ is a quadruple transformation and, together with \cite[Appendix E]{schoetz19}, implies $\ctranobest{\tran_\alpha} = 2^{2-\alpha}$. Moreover, in \cite[Appendix E]{schoetz19} it is shown that $\alpha\in[1,2]$ are the only positive real exponents with $\tran_\alpha\in\Tran$. Note that $\tran_\alpha\in\setcc$ for $\alpha\in[1,2]$. The proof of \cref{thm:main} requires new ideas compared to the one of \cite[Theorem 3]{schoetz19}, e.g., as we cannot take derivatives with respect to $\alpha$. Our generalization to all functions $\tran\in\setcc$ comes at the cost of a larger constant on the right-hand side: \cref{thm:main} applied to $\tran = \tran_\alpha$ yields a constant factor $\ctrano{\tran_\alpha} = 2$, which is strictly greater than $\ctranobest{\tran_\alpha}$ for $\alpha>1$. 
\subsubsection{Metric Geometry}
Aside from \cato\ spaces briefly discussed above, some further ideas in metric geometry seem relevant in the context of quadruple inequalities.

A function $\mtr\colon\Rp\to\Rp$ is called \textit{metric preserving}, if $\mtr \circ d$ is a metric for any metric space $(\mc Q, d)$. See \cite{corazza99} for an overview. As the quadruple inequality \eqref{eq:quadtran} with $\tran = \tran_2$ is a condition for \cref{thm:main}, we may think of the main result as stating that a Cauchy--Schwarz-like inequality is preserved under transformation with $\tran$. But note that the right-hand side of \eqref{eq:quadtran} is not written in terms of $\tran \circ d$.

A metric space $(\mc Q, d)$ has the \textit{Euclidean $k$-point property} \cite[Definition 50.1]{blumenthal70} if any $k$-tuple of points in $\mc Q$ has an isometric embedding in the Euclidean space $\R^{k-1}$. If $(\mc Q, d)$ has the Euclidean $4$-point property, then \eqref{eq:quad2} is fulfilled. For $\gamma\in\Rpp$, let $\mtr_\gamma(x) := x^\gamma$. This function is metric preserving for $\gamma \leq 1$. According to \cite[Theorem 52.1]{blumenthal70}, $(\mc Q, \mtr_\gamma \circ d)$ has the Euclidean $4$-point property for all $\gamma \leq 1/2$. Furthermore, $\gamma = 1/2$ is the largest exponent with this property. Thus, $(\mc Q, \mtr_\gamma \circ d)$ fulfills \eqref{eq:quad2} for $\gamma\in(0,1/2]$. In particular,
\begin{equation}\label{eq:symm:blumi}
	\ol yq^{2\gamma} - \ol yp^{2\gamma} - \ol zq^{2\gamma} + \ol zp^{2\gamma} \leq 2 \,\ol qp^\gamma \,\ol yz^\gamma
	\eqfs
\end{equation}
As $\tilde d = d^{2\gamma}$ is a metric --- $x\mapsto x^{2\gamma}$ is metric preserving for $\gamma\in(0,1/2]$ --- we obtain from \eqref{eq:quad1},
\begin{equation}
	\ol yq^{2\gamma} - \ol yp^{2\gamma} - \ol zq^{2\gamma} + \ol zp^{2\gamma} \leq 2 \min\brOf{\ol qp^{2\gamma}, \ol yz^{2\gamma}}
	\eqcm
\end{equation}
which also implies \eqref{eq:symm:blumi}.

The Euclidean $4$-point property can be weakened for \cato\ spaces. A metric space $(\mc Q, d)$ fulfills the \cato\ $4$-point condition \cite[Definition II.1.10]{bridson99} if, for all $y,z,q,p\in\mc Q$, there are $\bar y,\bar z,\bar q,\bar p\in\R^2$ such that 
\begin{align*}
	\ol yq	&= \normof{\bar y - \bar q}\eqcm &
	\ol yp	&= \normof{\bar y - \bar p}\eqcm &
	\ol zq	&= \normof{\bar z - \bar q}\eqcm &
	\ol zp	&= \normof{\bar z - \bar p}\eqcm &
	\\
	&&
	\ol qp	&\leq \normof{\bar q - \bar p}\eqcm &
	\ol yz	&\leq \normof{\bar y - \bar z}\eqfs &
	&&
\end{align*}
Every \cato\ space fulfills the \cato\ four-point condition, see \cite{reshetnyak68} or \cite[Proposition II.1.11]{bridson99}. 

Another famous $4$-point property is \textit{Ptolemy's inequality}: A metric space $(\mc Q, d)$ is called \textit{Ptolemaic} if, for all $y,z, q,p\in\mc Q$, we have
\begin{equation}\label{eq:ptolemy}
	\ol yq \, \ol zp + \ol yp\, \ol zq \leq \ol qp \, \ol yz 
	\eqfs
\end{equation}
Every inner product space is Ptolemaic. If a normed vector space is Ptolemaic, then it is an inner product space. All \cato\ spaces are Ptolemaic. A complete Riemannian manifold is Ptolemaic if and only if it is \cato\ \cite[Theorem 1.1]{buckley09}. Each geodesic metric space satisfying the $\tran_2$-quadruple inequality is Ptolemaic, but a geodesic Ptolemaic metric space is not necessarily \cato\ \cite{foertsch07,berg08}.

Strongly related to \cref{thm:ipsymm} is the concept of roundness of a metric space: A value $\alpha\in\Rpp$ is called \textit{roundness exponent} of a metric space $(\mc Q, d)$ if, for all $y,z,q,p\in\mc Q$,
\begin{equation}\label{eq:symm:pow}
	\ol yq^\alpha  - \ol yp^\alpha - \ol zq^\alpha + \ol zp^\alpha \leq \ol qp^\alpha + \ol yz^\alpha
	\eqfs
\end{equation}
Let $R = R(\mc Q, d)$ be the set of all roundness exponents of $(\mc Q, d)$. The \textit{roundness} $r = r(\mc Q, d)$ of $(\mc Q, d)$ is the supremum of the roundness exponents $r := \sup R$. By the triangle inequality and the metric preserving property of $(x \mapsto x^\alpha)$ for $\alpha \in(0,1]$, we have $(0, 1] \subset R$ for all metric spaces. The functions spaces $L_p(0,1)$ have roundness $p$ for $p\in[1,2]$ \cite{enflo70lp}. For a geodesic metric space, roundness $r=2$ is equivalent to being \cato, see \cite[Remark 7]{berg08}. A metric space is called \textit{ultrametric} if the triangle inequality can be strengthened to $\ol yz \leq \max(\ol yp, \ol zp)$ for all points $y,z,p$. Every ultrametric space can be isometrically embedded in a Hilbert space, see, e.g., \cite[Corollary 5.4]{faver14}. A metric space is ultrametric if and only if $r = \infty$, \cite[Theorem 5.1]{faver14}. Then $R = (0, \infty)$, \cite[Proposition 2.7]{faver14}. In general, $R$ is not necessarily an interval \cite[Remark p.\ 254]{enflo70}. But if $(\mc Q, d)$ is a (subset of a) Banach space with the metric $d$ induced by its norm, then $R = (0, r]$ with $r \in [1,2]$, \cite[Proposition 4.1.2]{enflo70}. In particular, \eqref{eq:symm:pow} holds for $\alpha\in(0,2]$ in all inner product spaces. A metric space is called \textit{additive} if 
\begin{equation}
	\ol yq + \ol zp \leq \max(\ol yp + \ol zq, \ol qp + \ol yz)
\end{equation}
for all points $y,z,q,p$. Every ultrametric space is additive. Every additive metric space is Ptolemaic. Additive metric spaces have roundness $r \geq 2$ \cite[Proposition 4.1]{faver14}.
\subsubsection{Martingale Theory}
Nondecreasing, convex functions with concave derivative play an important role in the Topchii--Vatutin inequality of martingales, see \cite[Theorem 2]{topchii98} and \cite{alsmeyer03}: For a suitably integrable martingale $(M_n)_{n\in\N_0}$, we have
\begin{equation}
	\Ex{\tran(\abs{M_n}) - \tran(\abs{M_0})} \leq 2 \sum_{k=1}^n \Ex{\tran(\abs{M_k-M_{k-1}})}
\end{equation}
for all $\tran\in\setcco$, where $\Ex{\cdot}$ denotes the expectation. In this context, the functions $\tran\in\setcco$ are named \textit{weakly convex}.
Moreover, \cite[Lemma 6]{topchii98} gives a weaker version of \cref{thm:ipsymm}:
Let $\tran \in \setcco$. For $a,b\in\Rp$ with $a\geq b$, it was shown that $\tran(a+b) + \tran(a-b) \leq 2 \tran(a) + 2 \tran(b)$.
\subsubsection{Statistics}
\cref{thm:main} can be applied to prove rates of convergence for certain kinds of means \cite{schoetz19}: We may want to calculate a mean value of some sample points in a metric spaces. One candidate for this is the \textit{Fréchet mean} \cite{frechet48}, also called \textit{barycenter}. It is the (set of) minimizer(s) of the squared distance to the sample points. If $Y$ is a random variable with values in a metric space $(\mc Q, d)$, the Fréchet mean is $\argmin_{q\in\mc Q}\Ex{\ol Yq^2}$, where we assume $\Ex{\ol Yq^2}<\infty$ for all $q\in\mc Q$. Similarly, one can define the Fréchet median \cite{fletcher09} as $\argmin_{q\in\mc Q}\Ex{\ol Yq}$, or a more general $\tran$-Fréchet mean \cite{schoetz22} as $\argmin_{q\in\mc Q}\Ex{\tran(\ol Yq)}$ for functions $\tran\colon\Rp\to\R$. Given a sequence of independent random variables $Y_1, Y_2, \dots$ with the same distribution as $Y$, a standard task in statistics is to bound the distance between the sample statistics and its corresponding population version. In our case, assume the $\tran$-Fréchet mean is unique and define
\begin{align*}
	m &:= \argmin_{q\in\mc Q}\Ex{\tran(\ol Yq)}\eqcm
	&
	\hat m_n &:= \argmin_{q\in\mc Q} \frac 1n \sum_{i=1}^n \tran(\ol {Y_i}q)\eqfs
\end{align*}
We want to bound $\ol {\hat m_n}{m}$ depending on $n$. One can employ quadruple inequalities such as \eqref{eq:quadtran} to obtain a suitable upper bound \cite[Theorem 1]{schoetz19}. This approach is particularly useful, if we do not want to make the assumption that the diameter of the metric space $\sup_{q,p\in\mc Q} \ol qp$ is finite. With \cref{thm:main}, one can obtain such a bound for $\tran$-Fréchet means with $\tran\in\setcc$ (under some conditions). We emphasize that this is only possible with \eqref{eq:quadtran} and not with \eqref{eq:symm:tran}. Noteworthy examples of $\tran\in\setcc$ in this context, aside from $\tran = \tran_\alpha$, are the Huber loss $\tran_{\ms H,\delta}$ \cite{huber64} and the Pseudo-Huber loss $\tran_{\ms{pH},\delta}$ \cite{charbonnier94} for $\delta\in\Rpp$,
 \begin{align*}
	\tran_{\ms H,\delta}(x) &:= 
		\begin{cases}
			\frac12 x^2 & \text{ for } x \leq \delta\eqcm\\
			\delta(x - \frac12 \delta) & \text{ for } x > \delta\eqcm\\
		\end{cases}
	&
	\tran_{\ms{pH},\delta}(x) &:= \delta^2 \br{\sqrt{1 + \frac{x^2}{\delta^2}} - 1}
	\eqcm
\end{align*}
as well as $x\mapsto \ln(\cosh(x))$ \cite{green90}.
These functions are of great interest in robust statistics and image processing as their respective minimizers combine properties of the classical mean ($\tran_2$-Fréchet mean) and the median ($\tran_1$-Fréchet mean).
\subsection{Outline}
In the remaining sections, we first discuss the set $\Tran$, i.e., the set of quadruple transformations, see section \ref{sec:quad}. We continue with a discussion of the set $\setcc$, i.e., nondecreasing, convex functions with concave derivative, in section \ref{sec:cc}. Thereafter, we prove our main result, i.e., $\setcc \subset \Tran$. The basic ideas of the proof and variations of the main result are presented in section \ref{sec:proofoutline}. The technical details can be found in appendix \ref{app:mainproof} and \ref{app:aux}. The proof of \cref{thm:ipsymm} can be found in appendix \ref{app:proofsymm}. In section \ref{sec:impl} we discuss implications of the main results and open questions.

\section{Quadruple Transformations}\label{sec:quad}
We explore some properties of quadruple functions $\tran \in\Tran$ and their quadruple constant $\ctranbest$.
\subsection{Properties}
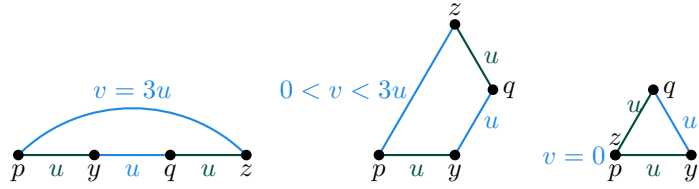
\begin{figure}
	\begin{center}
		\begin{tikzpicture}[thick]
			\coordinate (y) at (1,0);
			\coordinate (q) at (2,0);
			\coordinate (z) at (3,0);
			\coordinate (p) at (0,0);
			
			\draw[colQuadSidePos] (y) -- node[below] {$u$} (q);
			\draw[colQuadSideNeg] (q) -- node[below] {$u$} (z);
			\draw[colQuadSidePos] (z) to[out=135,in=45] node[above] {$v = 3u$} (p);
			\draw[colQuadSideNeg] (p) -- node[below] {$u$} (y);
			
			\fill (y) circle[radius=2pt] node[below] {$y$};
			\fill (q) circle[radius=2pt] node[below] {$q$};
			\fill (z) circle[radius=2pt] node[below] {$z$};
			\fill (p) circle[radius=2pt] node[below] {$p$};
		\end{tikzpicture}
		\begin{tikzpicture}[thick]
			\coordinate (p) at (0,0);
			\coordinate (y) at (1,0);
			\path (y) +(60:1) coordinate (q);
			\path (q) +(120:1) coordinate (z);
			
			\draw[colQuadSideNeg] (p) -- node[below] {$u$} (y);
			\draw[colQuadSidePos] (y) -- node[right] {$u$} (q);
			\draw[colQuadSideNeg] (q) -- node[right] {$u$} (z);
			\draw[colQuadSidePos] (z) -- node[left] {$0 < v < 3u$} (p);
			
			\fill (p) circle[radius=2pt] node[below] {$p$};
			\fill (y) circle[radius=2pt] node[below] {$y$};
			\fill (q) circle[radius=2pt] node[right] {$q$};
			\fill (z) circle[radius=2pt] node[above] {$z$};
		\end{tikzpicture}
		\begin{tikzpicture}[thick]
			\coordinate (p) at (0,0);
			\coordinate (z) at (0,0);
			\coordinate (y) at (1,0);
			\path (y) +(120:1) coordinate (q);
			
			\draw[colQuadSideNeg] (p) -- node[below] {$u$} (y);
			\draw[colQuadSidePos] (y) -- node[right] {$u$} (q);
			\draw[colQuadSideNeg] (q) -- node[above] {$u$} (z);
			\draw[colQuadSidePos] (z) -- node[left] {$v=0$} (p);
			
			\fill (p) circle[radius=2pt] node[below] {$p$};
			\fill (y) circle[radius=2pt] node[below] {$y$};
			\fill (q) circle[radius=2pt] node[right] {$q$};
			\fill (z) circle[radius=2pt] node[above] {$z$};
		\end{tikzpicture}
	\end{center}
	\caption{Construction for the proof of \cref{lmm:quadconst} \ref{lmm:quadconst:quadzeroisconst}.}\label{fig:quadriProofConst}
\end{figure}
\begin{lemma}[Constant functions]\label{lmm:quadconst}\mbox{ }
	\begin{enumerate}[label=(\roman*)]
		\item\label{lmm:quadconst:constisquad} For $c\in\R$, let $\tran_c := (x\mapsto c)$. Then $\tran_c\in\Tran$ with $\ctranbest = 0$.
		\item\label{lmm:quadconst:quadzeroisconst} If $\tran\in\Tran$ with $\ctran = 0$, then there is $c\in\R$ such that $\tran = \tran_c$.
	\end{enumerate}
\end{lemma}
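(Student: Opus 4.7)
Part (i) is immediate: if $\tran_c \equiv c$ then $\tran_c$ has derivative $0$, so both sides of \eqref{eq:quadtran} vanish for any choice of points, which shows $\ctran = 0$ is a valid constant and hence $\ctranobest{\tran_c} = 0$.

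For part (ii), the plan is to build a one-parameter family of configurations in the Euclidean plane, apply the $\ctran = 0$ hypothesis twice (once as stated and once after a symmetry), and conclude $\tran(u) = \tran(v)$ for every $v$ in an interval. Specifically, for $u > 0$ and $\theta \in (0,\pi)$, I would take (as in the middle panel of \cref{fig:quadriProofConst}) $y = (0,0)$, $p = (-u,0)$, $q = (u,0)$, $z = (u + u\cos\theta,\, u\sin\theta)$. A short calculation yields $\ol yq = \ol yp = \ol zq = u$, $\ol qp = 2u$, $\ol yz = 2u\cos(\theta/2)$, and $v := \ol pz = u\sqrt{5 + 4\cos\theta} \in (u, 3u)$. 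Moreover, \eqref{eq:quad2} reduces to $8u^2\cos^2(\theta/2) \leq 8u^2\cos(\theta/2)$, which holds since $\cos(\theta/2) \in (0,1)$. The assumption $\ctran = 0$ then forces $\tran(v) \leq \tran(u)$.

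The key step is a relabelling trick: I would apply the definition again to the same four physical points but with the names $y$ and $z$ exchanged. The new $\tran_2$-expression becomes $-8u^2\cos^2(\theta/2) \leq 0$, so \eqref{eq:quad2} holds trivially, and the quadruple inequality with $\ctran = 0$ now reads $\tran(u) \leq \tran(v)$. Combining the two bounds yields $\tran(u) = \tran(v)$ for every $v \in (u, 3u)$, and continuity of $\tran$ (implied by differentiability) extends this to $v \in [u, 3u]$. To finish, for any $0 < a < b$ I would pick a chain $a = u_0 < u_1 < \cdots < u_n = b$ with $u_{k+1} \leq 3 u_k$; the previous step forces $\tran(u_k) = \tran(u_{k+1})$ for every $k$, so $\tran(a) = \tran(b)$, and $\tran$ is constant on $(0,\infty)$.

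The main obstacle, such as it is, is picking a configuration that simultaneously satisfies \eqref{eq:quad2}, has four pairwise distinct points, and covers a genuine interval of values of $v$ rather than a single value; the bent chain with free parameter $\theta$ achieves all three. The relabelling trick, which is what converts a one-sided bound into an equality, crucially relies on $\ctran = 0$ so that no sign-dependent term on the right-hand side can obstruct the symmetry.
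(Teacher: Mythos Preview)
Your proof is correct and follows essentially the same approach as the paper's: build a planar configuration with three side-distances equal to $u$ and the fourth equal to $v$, exploit the sign symmetry of the quadruple inequality (you swap $y\leftrightarrow z$; the paper equivalently swaps the roles of $\ol zq$ and $\ol zp$) to turn the one-sided bound into $\tran(u)=\tran(v)$, and then propagate. The only cosmetic difference is that the paper's bent chain allows two free hinge angles and thus reaches every $v\in(0,3u]$ directly, whereas your configuration keeps $p,y,q$ collinear and reaches only $v\in(u,3u)$; you compensate cleanly with continuity and the geometric chain $u_0<u_1<\cdots$ with $u_{k+1}\le 3u_k$, which is exactly the step the paper leaves implicit in its final sentence.
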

\begin{proof}
	The first part is trivial. For the second part, let $u\in\Rpp$ and $v\in(0,3u]$. Take $y,z,q,p\in\R^2$ such that $\ol yq = \ol yp = \ol zp = u$ and  $\ol zq = v$, or $\ol yq = \ol yp = \ol zq = u$ and $\ol zp = v$, see \cref{fig:quadriProofConst}, to obtain
	\begin{align}
		\abs{\tran(u) - \tran(v)} \leq 0
	\end{align}
	from \eqref{eq:quadtran}. This can only be fulfilled for constant functions.
\end{proof}
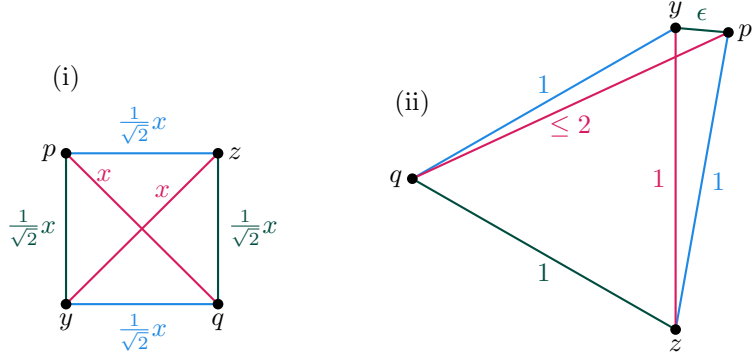
\begin{figure}
	\begin{center}
		\begin{tikzpicture}[thick]
			\coordinate (y) at (0,0);
			\coordinate (q) at (2,0);
			\coordinate (z) at (2,2);
			\coordinate (p) at (0,2);
			
			\draw[colQuadSidePos] (y) -- node[below] {$\frac1{\sqrt2} x$} (q);
			\draw[colQuadSideNeg] (q) -- node[right] {$\frac1{\sqrt2} x$} (z);
			\draw[colQuadSidePos] (z) -- node[above] {$\frac1{\sqrt2} x$} (p);
			\draw[colQuadSideNeg] (p) -- node[left] {$\frac1{\sqrt2} x$} (y);
			\draw[colQuadDiag] (y) -- node[near end,left] {$x$} (z);
			\draw[colQuadDiag] (q) -- node[near end,above] {$x$} (p);
			
			\fill (y) circle[radius=2pt] node[below] {$y$};
			\fill (q) circle[radius=2pt] node[below] {$q$};
			\fill (z) circle[radius=2pt] node[right] {$z$};
			\fill (p) circle[radius=2pt] node[left] {$p$};
			
			\node at (0, 3) {(i)};
		\end{tikzpicture}
		\hspace*{1cm}
		\begin{tikzpicture}[thick]
			\coordinate (y) at (0,4);
			\coordinate (q) at ({-sqrt(3)*2},2);
			\coordinate (z) at (0,0);
			\path (z) +(80:4) coordinate (p);
			
			\draw[colQuadSidePos] (y) -- node[above] {$1$} (q);
			\draw[colQuadSideNeg] (q) -- node[below] {$1$} (z);
			\draw[colQuadSidePos] (z) -- node[right] {$1$} (p);
			\draw[colQuadSideNeg] (p) -- node[above] {$\epsilon$} (y);
			\draw[colQuadDiag] (y) -- node[left] {$1$} (z);
			\draw[colQuadDiag] (q) -- node[below] {$\leq 2$} (p);
			
			\fill (y) circle[radius=2pt] node[above] {$y$};
			\fill (q) circle[radius=2pt] node[left] {$q$};
			\fill (z) circle[radius=2pt] node[below] {$z$};
			\fill (p) circle[radius=2pt] node[right] {$p$};
			
			\node at ({-sqrt(3)*2}, 3) {(ii)};
		\end{tikzpicture}
	\end{center}
	\caption{Constructions for the proof of \cref{lmm:nondecreasing}.}\label{fig:quadriProofNondec}
\end{figure}
\begin{lemma}\label{lmm:nondecreasing}
	Let $\tran\in\Tran$. Then 
	\begin{enumerate}[label=(\roman*)]
		\item \label{lmm:nondecreasing:incr} $\dtran(x)\geq0$ for all $x\in\Rpp$ and
		\item \label{lmm:nondecreasing:finite} $\inf_{x\in\Rpp} \tran(x) > -\infty$.
	\end{enumerate}  
\end{lemma}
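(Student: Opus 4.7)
The plan is to prove both parts by exhibiting explicit four-point configurations in the Euclidean plane (which automatically satisfies the metric Cauchy--Schwarz condition \eqref{eq:quad2}) and then invoking the defining inequality \eqref{eq:quadtran} of quadruple transformations.

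For part (i), I fix $x\in\Rpp$ and place $y,q,z,p$ at the four vertices of a square in $\R^2$ with side length $x/\sqrt 2$, labeled so that $y,z$ and $q,p$ are the two diagonals (as in the leftmost picture of \cref{fig:quadriProofNondec}). The four points are pairwise distinct, and all four side distances $\ol yq,\ol yp,\ol zq,\ol zp$ coincide, so the left-hand side of \eqref{eq:quadtran} collapses to zero. Since $\ol qp=\ol yz=x$, the inequality degenerates to $0\leq \ctran\,x\,\dtran(x)$. If $\ctran>0$, dividing gives $\dtran(x)\geq 0$; and if $\ctran=0$, the second part of \cref{lmm:quadconst} forces $\tran$ to be constant, so $\dtran\equiv 0$.

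For part (ii), part (i) together with differentiability gives that $\tran$ is nondecreasing on $\Rpp$, so it suffices to bound $\tran$ from below on $(0,1]$. The plan is to use the configuration from the right picture of \cref{fig:quadriProofNondec}: let $y,q,z\in\R^2$ be the vertices of an equilateral triangle of side $1$, and for each $\epsilon\in(0,1]$ pick $p$ at the intersection of the circles of radius $1$ about $z$ and of radius $\epsilon$ about $y$. This intersection is nonempty because $|1-\epsilon|\leq 1\leq 1+\epsilon$, the four points are pairwise distinct, and $\ol qp\leq \ol qy+\ol yp=1+\epsilon\leq 2$ by the triangle inequality. Substituting into \eqref{eq:quadtran} yields
\begin{equation*}
\tran(1)-\tran(\epsilon)-\tran(1)+\tran(1)\;\leq\;\ctran\,\ol qp\,\dtran(1)\;\leq\;2\ctran\,\dtran(1)\eqcm
\end{equation*}
so $\tran(\epsilon)\geq \tran(1)-2\ctran\,\dtran(1)$ uniformly in $\epsilon\in(0,1]$, which is the finite lower bound required.

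The only delicate point, and really the main thing to check, is that both configurations are admissible instances of the defining hypothesis of $\Tran$: Euclidean space satisfies \eqref{eq:quad2} (by expanding inner products and applying classical Cauchy--Schwarz), pairwise distinctness is immediate once $x,\epsilon>0$, and the few triangle inequalities needed for the second construction are elementary. I do not foresee any substantial obstacle.
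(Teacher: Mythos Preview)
Your proposal is correct and follows essentially the same approach as the paper: both parts use the very configurations depicted in \cref{fig:quadriProofNondec} (a square with diagonal $x$ for part~(i), and an equilateral triangle $y,q,z$ of side~$1$ with a fourth point $p$ on the unit circle about $z$ at distance $\epsilon$ from $y$ for part~(ii)), and the algebra you carry out is identical to the paper's. The only cosmetic differences are that the paper gives explicit coordinates for the second configuration and that you invoke \cref{lmm:quadconst} explicitly for the case $\ctran=0$, whereas the paper absorbs this into the phrase ``$\ctran\geq 0$ with equality only if $\tran$ is constant.''
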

\begin{proof}
	\begin{enumerate}[label=(\roman*)]
		\item 
			Let $y,z,q,p\in\R^2$ form a square with diagonal $x\in\Rpp$, see \cref{fig:quadriProofNondec} (i). Then, by \eqref{eq:quadtran},
			\begin{equation}
				0 \leq \ctran x \dtran(x)\eqfs
			\end{equation}
			As $x>0$ and $\ctran \geq 0$ with equality only if $\tran$ is constant, we have $0 \leq \dtran(x)$.
		\item 
			By \ref{lmm:nondecreasing:incr}, we only have to check that $\tran(\varepsilon)$ is bounded below for $\varepsilon\in(0,1]$. Let $\varepsilon>0$. In the Euclidean plane $\R^2$, set $y = (0,1), z = (0,0), q = (-\sqrt{3}/2, 1/2), p = (\cos(\alpha), \sin(\alpha))$, where $\alpha\in[0,\pi/2]$ is chosen such that $\ol yp = \varepsilon$, see \cref{fig:quadriProofNondec} (ii). Then $\ol yq = \ol zq = \ol yz = \ol zp = 1$ and $\ol qp \leq 2$. Then, using \eqref{eq:quadtran} and \ref{lmm:nondecreasing:incr}, we obtain
			\begin{equation}
				\tran(1) - \tran(\varepsilon) \leq 2 \ctran \dtran(1)\eqfs
			\end{equation}
			This yields a constant lower bound of $\tran(1) - 2 \ctran \dtran(1) > -\infty$ on $\tran(\varepsilon)$ for all $\varepsilon\in(0,1]$.
	\end{enumerate}
\end{proof}
Next, we extend the domain of $\tran\in\Tran$ in a consistent way to include $0$.
\begin{proposition}
	Let $\tran\in\Tran$. Define $\tran(0) := \lim_{x\searrow0} \tran(x)$ and $\dtran(0) := \liminf_{x\searrow0} \dtran(x)$. Then, given \eqref{eq:quad2}, inequality \eqref{eq:quadtran} holds for any quadruple of points (not necessarily pairwise distinct).
\end{proposition}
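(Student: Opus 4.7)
The plan is to handle the coincidences $y=z$ and $q=p$ directly, and then reduce the remaining cases to the pairwise-distinct setting of the definition of $\Tran$ by a perturbation argument in a one-dimensional enlargement of $\mc Q$.

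First I would verify that the extensions are well defined. By \cref{lmm:nondecreasing}, $\dtran\geq 0$ on $\Rpp$ and $\inf_{x\in\Rpp}\tran(x)>-\infty$, so $\tran$ is nondecreasing and bounded below; hence $\tran(0):=\lim_{x\searrow 0}\tran(x)$ exists and is finite, and $\tran$ is continuous on $\Rp$. The value $\dtran(0)\in[0,\infty]$ is automatically nonnegative. If $q=p$, then $\ol qp=0$ and both sides of \eqref{eq:quadtran} vanish. If $y=z$, the left-hand side of \eqref{eq:quadtran} vanishes while the right-hand side equals $\ctran\,\ol qp\,\dtran(0)\geq 0$. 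So these cases are immediate.

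For the remaining configurations I would assume $y\neq z$ and $q\neq p$ and enlarge $\mc Q$ to $\mc Q':=\mc Q\times\R$, equipped with the $\ell^2$-product metric $d'((a,s),(b,t)):=\sqrt{d(a,b)^2+(s-t)^2}$, into which $\mc Q$ embeds isometrically via $a\mapsto(a,0)$. For $\delta>0$, set $y_\delta:=(y,0)$, $z_\delta:=(z,0)$, $q_\delta:=(q,\delta)$, $p_\delta:=(p,\delta)$. Because $y\neq z$, $q\neq p$, and the second coordinates distinguish points across the two pairs, the four perturbed points are pairwise distinct in $\mc Q'$. The distances are $d'(y_\delta,z_\delta)=\ol yz$, $d'(q_\delta,p_\delta)=\ol qp$, and $d'(a_\delta,b_\delta)^2=\ol ab^2+\delta^2$ for the four cross pairs, so when plugged into \eqref{eq:quad2} in $\mc Q'$ the $\delta^2$-terms cancel and the inequality reduces to \eqref{eq:quad2} in $\mc Q$, which holds by assumption.

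Since $\tran\in\Tran$, applying \eqref{eq:quadtran} in $\mc Q'$ to these four pairwise distinct points yields
\begin{equation*}
\tran(\sqrt{\ol yq^2+\delta^2})-\tran(\sqrt{\ol yp^2+\delta^2})-\tran(\sqrt{\ol zq^2+\delta^2})+\tran(\sqrt{\ol zp^2+\delta^2})\leq\ctran\,\ol qp\,\dtran(\ol yz)\eqfs
\end{equation*}
The right-hand side is independent of $\delta$. Letting $\delta\searrow 0$ and using continuity of $\tran$ on $\Rp$, the left-hand side converges to $\tran(\ol yq)-\tran(\ol yp)-\tran(\ol zq)+\tran(\ol zp)$, proving \eqref{eq:quadtran} for the original points. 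The main subtlety, and the reason for perturbing only the second coordinate and only in $q$ and $p$, is to keep $\ol yz$ exactly fixed so that the argument of $\dtran$ on the right never changes; otherwise the limit argument would require continuity of $\dtran$ at $\ol yz$, which is not available for a merely differentiable $\tran$.
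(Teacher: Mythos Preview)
Your argument is correct and takes a genuinely different route from the paper's. The paper observes that when $y\neq z$ but at least two of the four points coincide, there are at most three distinct points, which can be isometrically embedded in the Euclidean plane; it then perturbs inside $\R^2$, where \eqref{eq:quad2} holds automatically for \emph{every} quadruple by Cauchy--Schwarz, while keeping $\normof{y_n-z_n}=\ol yz$ fixed. Your $\ell^2$-product construction $\mc Q\times\R$ stays instead in a direct enlargement of the original space and perturbs only the auxiliary coordinate of $q$ and $p$; the $\delta^2$-cancellation makes \eqref{eq:quad2} in $\mc Q'$ literally identical to the assumed instance in $\mc Q$, so you never invoke the three-point Euclidean embedding or the global validity of \eqref{eq:quad2} in any ambient space. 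You also dispose of $q=p$ directly, which the paper absorbs into its perturbation argument. Both proofs share the essential idea of keeping $\ol yz$ exactly fixed so that no continuity of $\dtran$ is needed, and your final remark makes this point explicit. Your construction is somewhat more self-contained and uses only the single given instance of \eqref{eq:quad2}; the paper's is shorter once the three-point embedding is taken for granted.
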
 
\begin{proof}
	By \cref{lmm:nondecreasing}, $\tran(0)$ is well-defined. We have to show that \eqref{eq:quad2} implies \eqref{eq:quadtran} in the cases where at least one distance is zero. In the case $y=z$, the left-hand side of \eqref{eq:quadtran} vanishes and the right-hand side is nonnegative by \cref{lmm:nondecreasing} \ref{lmm:nondecreasing:incr}. Next, consider the case $y,z,q,p\in\mc Q$ with $y\neq z$ but at least two points being identical. As any triplet of points in a metric space can be isometrically embedded in the Euclidean plane $\R^2$, this can also be done for $(y,z,q,p)$. Furthermore, we can find a sequence of quadruples of points $(y_n, z_n, q_n, p_n)_{n\in\N} \subset (\R^2)^4$ with $\normof{y_n - z_n} = \ol yz$ and all other distances strictly positive and convergent to the respective distance of the points $y,z,q,p$. By continuity of $\tran$, the definition of $\tran(0)$, and the constant value of $\normof{y_n - z_n}$, \eqref{eq:quadtran} holds in the limit $n\to\infty$.
\end{proof}
\begin{proposition}[Power functions]\label{prp:powerinequ}
	Let $\tran_\alpha = (x\mapsto x^\alpha)$ for $\alpha\in\Rpp$. We have $\tran_\alpha\in\Tran$ if and only if $\alpha\in[1,2]$. In this case, the quadruple constant is $\ctranobest{\tran_\alpha} = 2^{2-\alpha}$.
\end{proposition}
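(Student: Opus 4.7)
The proposition comprises three assertions: membership $\tran_\alpha \in \Tran$ for $\alpha \in [1,2]$, identification of the quadruple constant as $2^{2-\alpha}$, and non-membership for $\alpha \in \Rpp \setminus [1,2]$. For membership, I first verify that $\tran_\alpha \in \setcc$: the derivatives $\tran_\alpha'(x) = \alpha x^{\alpha-1}$, $\tran_\alpha''(x) = \alpha(\alpha-1) x^{\alpha-2}$, and $\tran_\alpha'''(x) = \alpha(\alpha-1)(\alpha-2) x^{\alpha-3}$ are nonnegative, nonnegative, and nonpositive on $\Rpp$ when $\alpha \in [1,2]$, so $\tran_\alpha$ is nondecreasing, convex, and has concave derivative. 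Then \cref{thm:main} yields $\tran_\alpha \in \Tran$ with $\ctran \leq 2$.

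For the sharp lower bound $\ctranbest \geq 2^{2-\alpha}$, I use the 1D Euclidean quadruple $y = 0$, $p = T$, $q = T + 1$, $z = 2T + 1$ with $T > 0$. A direct computation gives $\ol yq = \ol zp = T + 1$, $\ol yp = \ol zq = T$, $\ol yz = 2T + 1$, $\ol qp = 1$, and both sides of \eqref{eq:quad2} equal $4T + 2$, so the hypothesis holds with equality. The left side of \eqref{eq:quadtran} is $2[(T + 1)^\alpha - T^\alpha] \sim 2 \alpha T^{\alpha - 1}$, while the right side is $\ctran \alpha (2T + 1)^{\alpha - 1} \sim \ctran \alpha \cdot 2^{\alpha - 1} T^{\alpha - 1}$; letting $T \to \infty$ forces $\ctran \geq 2^{2 - \alpha}$. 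The matching upper bound $\ctranbest \leq 2^{2-\alpha}$ is strictly stronger than what \cref{thm:main} supplies (which only gives $\ctran \leq 2$); I would invoke \cite[Theorem~3]{schoetz19}, whose proof differentiates an auxiliary quantity in $\alpha$---a technique specific to power functions, as emphasized in the Related Literature.

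For non-membership when $\alpha > 2$, take the 1D configuration $y = 0$, $z = \epsilon$, $q = h + \epsilon$, $p = h$ in $\R$ with $0 < \epsilon < h$; both sides of \eqref{eq:quad2} equal $2\epsilon^2$, so the hypothesis is satisfied. The left side of \eqref{eq:quadtran} is $(h + \epsilon)^\alpha + (h - \epsilon)^\alpha - 2 h^\alpha$, which Taylor-expands to $\alpha(\alpha - 1) h^{\alpha - 2} \epsilon^2 + O(\epsilon^4)$, while the right side equals $\ctran \alpha \epsilon^\alpha$; their ratio is of order $\epsilon^{2 - \alpha}$, which diverges as $\epsilon \to 0$ since $\alpha > 2$, so no finite $\ctran$ suffices. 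The remaining case $\alpha \in (0, 1)$ is the main obstacle: Euclidean 1D configurations yield only bounded LHS/RHS ratios (typically $2/\alpha$), so one must construct a sequence of quadruples in a different metric space on which \eqref{eq:quad2} is preserved but the ratio in \eqref{eq:quadtran} grows without bound. Here I would defer to the explicit construction in \cite[Appendix~E]{schoetz19}.
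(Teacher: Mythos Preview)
The paper's own proof is a bare citation to \cite[Theorem~3 and Appendix~E]{schoetz19}; nothing is argued self-containedly. Your proposal is correct as far as it goes and supplies strictly more detail: your explicit lower-bound configuration for $\ctranbest \geq 2^{2-\alpha}$ and your $\alpha>2$ construction are both valid. Since you end up citing \cite[Theorem~3]{schoetz19} for the sharp upper bound anyway, the detour through \cref{thm:main} for membership is redundant (though not circular). Note also that your lower bound falls out of \cref{lmm:constLowerBounds}~\ref{lmm:constLowerBounds:supadd} with $u=v$ in one line: $\ctranbest \geq 2\dtran(u)/\dtran(2u) = 2^{2-\alpha}$.

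There is, however, one mistaken claim. You assert that for $\alpha\in(0,1)$ one-dimensional Euclidean configurations yield only bounded ratios and that a different metric space is needed. This is false: the configuration of \cref{lmm:constLowerBounds}~\ref{lmm:constLowerBounds:subadd} already suffices. With $u=1$, $v=T+1$ one gets
\[
\ctranbest \;\geq\; \frac{\dtran(1)-\dtran(T+1)}{\dtran(T)} \;=\; \frac{1-(T+1)^{\alpha-1}}{T^{\alpha-1}} \;\xrightarrow{T\to\infty}\; \infty
\]
since $\alpha-1<0$. The ``typically $2/\alpha$'' you mention comes from \cref{lmm:constLowerBounds}~\ref{lmm:constLowerBounds:2tau}, which is indeed bounded for $\tau_\alpha$; the point is that part~\ref{lmm:constLowerBounds:subadd} exploits the \emph{decreasing} derivative of $\tau_\alpha$ for $\alpha<1$, which parts~\ref{lmm:constLowerBounds:2tau} and \ref{lmm:constLowerBounds:supadd} do not. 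So the $\alpha\in(0,1)$ case can be handled entirely on the real line, and no appeal to a separate construction in \cite[Appendix~E]{schoetz19} is required for this direction.
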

\begin{proof}
	\cite[Theorem 3 and Appendix E]{schoetz19}.
\end{proof}
\subsection{Lower Bounds on the Quadruple Constant}
\begin{lemma}
	The quadruple constant $\ctranbest$ is well-defined in the sense that if $\tran\in\Tran$ there is a smallest value $\ctranbest\in\Rp$ such that \eqref{eq:quadtran} is true for all metric spaces and quadruples of points therein.
\end{lemma}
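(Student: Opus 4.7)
The plan is to define $\ctranbest$ directly as the infimum of all admissible constants and verify that this infimum is attained. Let $\mathcal{L}$ denote the set of all $L \in \Rp$ for which \eqref{eq:quadtran} (with $\ctran$ replaced by $L$) holds for every metric space and every quadruple $y,z,q,p$ satisfying \eqref{eq:quad2}. Since $\tran \in \Tran$, the set $\mathcal{L}$ is nonempty, and it is bounded below by $0$, so I would set $\ctranbest := \inf \mathcal{L}$, which is a well-defined element of $\Rp$.

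The main verification is that $\ctranbest \in \mathcal{L}$. The key observation is that the right-hand side of \eqref{eq:quadtran}, namely $L \, \ol qp \, \dtran(\ol yz)$, is nonnegative: distances are nonnegative by definition, and $\dtran \geq 0$ on $\Rpp$ by \cref{lmm:nondecreasing}\ref{lmm:nondecreasing:incr}. (If one wishes to include the boundary case $\ol yz = 0$, one uses the convention $\dtran(0) := \liminf_{x\searrow 0}\dtran(x)$ introduced just before \cref{prp:powerinequ}, which is likewise nonnegative.) Consequently, if $L \in \mathcal{L}$ and $L' \geq L$, then $L' \in \mathcal{L}$, so $\mathcal{L}$ is upward-closed in $\Rp$.

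To see that $\ctranbest \in \mathcal{L}$, pick a sequence $L_n \in \mathcal{L}$ with $L_n \searrow \ctranbest$. For any metric space $(\mc Q, d)$ and any quadruple $y,z,q,p \in \mc Q$ satisfying \eqref{eq:quad2}, we have
\[
\tran(\ol yq) - \tran(\ol yp) - \tran(\ol zq) + \tran(\ol zp) \leq L_n \, \ol qp \, \dtran(\ol yz)
\]
for every $n$. Passing to the limit $n \to \infty$ on the right-hand side, which is continuous in $L$, yields the same inequality with constant $\ctranbest$. Hence $\ctranbest$ is itself an admissible constant, and by construction it is the smallest such one.

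I do not anticipate a serious obstacle here. The argument reduces to the elementary fact that a nonempty, upward-closed subset of $\Rp$ which is closed under decreasing limits is a closed half-line; the only input specific to this setting is the nonnegativity of $\dtran$ supplied by \cref{lmm:nondecreasing}, which is what makes the limit operation in $L$ preserve the inequality.
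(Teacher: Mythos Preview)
Your proof is correct and follows essentially the same approach as the paper: both take a decreasing sequence of admissible constants converging to the infimum and pass to the limit in the inequality (the paper phrases the final step as a contradiction, you phrase it as a direct limit, but the content is identical). Your remark on upward-closedness via $\dtran\geq 0$ is true but not actually needed for the limit argument, which only uses continuity of $L\mapsto L\,\ol qp\,\dtran(\ol yz)$.
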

\begin{proof}
	Let $\tran\in\Tran$. Let $(\ctransymbol_n)_{n\in\N}\subset\Rp$ be a decreasing sequence with $\ctransymbol_\infty := \lim_{n\to\infty} \ctransymbol_n$. Assume \eqref{eq:quadtran} is true for all $\ctran = \ctransymbol_n$. We need to show that \eqref{eq:quadtran} is also true for $\ctran = \ctransymbol_\infty$. 
	If it were false, there would be a metric space $(\mc Q, d)$ with points $y,z,q,p\in\mc Q$ such that
	\begin{equation}
		\tran(\ol yq)  - \tran(\ol yp) - \tran(\ol zq) + \tran(\ol zp) > \ctransymbol_\infty\,  \ol qp \, \dtran(\ol yz)
		\eqfs
	\end{equation}
 	As the inequality is strict, it would also hold when $\ctransymbol_\infty$ is replaced by a $L_n$ for a sufficiently large $n$, which is a contradiction.
\end{proof}
\begin{proposition}\label{lmm:constLowerBounds}
	Let $\tran\in\Tran$. Let $u,v\in\Rpp$. Then, assuming the denominator is not 0, 
	\begin{enumerate}[label=(\roman*)]
		\item \label{lmm:constLowerBounds:2tau}
		\begin{equation}
			\ctranbest \geq 2\frac{\tran(u) -\tran(0)}{u \dtran(u)}\eqcm
		\end{equation}
		\item \label{lmm:constLowerBounds:2u}
		\begin{equation}
			\ctranbest \geq \frac{\tran(2u) -\tran(0)}{2u \dtran(u)}\eqcm
		\end{equation}
		\item \label{lmm:constLowerBounds:subadd}
		\begin{equation}
			\ctranbest \geq \frac{\dtran(u) - \dtran(v)}{\dtran(|u-v|)}\eqcm
		\end{equation}
		\item \label{lmm:constLowerBounds:supadd}
		\begin{equation}
			\ctranbest \geq \frac{\dtran(u) + \dtran(v)}{\dtran(u+v)}\eqfs
		\end{equation}
	\end{enumerate}
\end{proposition}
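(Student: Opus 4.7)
The plan is to prove each of the four bounds by exhibiting a specific, essentially one-dimensional configuration of four points in $\R$ for which the hypothesis \eqref{eq:quad2} is tight, so that the desired lower bound on $\ctranbest$ drops out of \eqref{eq:quadtran} by direct substitution. The two bounds \ref{lmm:constLowerBounds:2tau} and \ref{lmm:constLowerBounds:2u}, which involve differences $\tran(u)-\tran(0)$ and $\tran(2u)-\tran(0)$, will arise from static configurations; the two bounds \ref{lmm:constLowerBounds:subadd} and \ref{lmm:constLowerBounds:supadd}, which involve only $\dtran$, will arise from shrinking $\ol qp$ to $0$, so that the finite differences on the left-hand side of \eqref{eq:quadtran} linearize into $\dtran$-values.

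For \ref{lmm:constLowerBounds:2tau}, I invoke the preceding proposition (which extends \eqref{eq:quadtran} to possibly non-distinct points once $\tran(0)$ is defined as $\lim_{x\searrow 0}\tran(x)$) and take $p = y$, $q = z$ with $\ol yz = u$. Then $\ol yq = \ol zp = \ol qp = \ol yz = u$ and $\ol yp = \ol zq = 0$, so \eqref{eq:quad2} holds with equality ($2u^2 = 2u^2$) and \eqref{eq:quadtran} collapses to $2\br{\tran(u) - \tran(0)} \le \ctran\, u\, \dtran(u)$. For \ref{lmm:constLowerBounds:2u}, I arrange the four points collinearly as $y = p = 0$, $z = u$, $q = 2u$ on $\R$. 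Then $\ol yq = \ol qp = 2u$, $\ol yp = 0$, $\ol zq = \ol zp = u$, $\ol yz = u$, again saturating \eqref{eq:quad2}, and the two $\tran(u)$ terms on the left of \eqref{eq:quadtran} cancel, leaving $\tran(2u) - \tran(0) \le 2\ctran\, u\, \dtran(u)$.

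For \ref{lmm:constLowerBounds:subadd}, I set $q = 0$, $p = \varepsilon$, $y = u$, $z = v$ on $\R$ with $0 < \varepsilon < \min(u,v)$. A short calculation gives the two sides of \eqref{eq:quad2} equal to $2(u-v)\varepsilon$ and $2\varepsilon|u-v|$ respectively, so the hypothesis holds. Dividing \eqref{eq:quadtran} by $\varepsilon$ and letting $\varepsilon \searrow 0$, the left-hand side tends to $\dtran(u) - \dtran(v)$ by the pointwise definition of the derivative at $u$ and $v$ (I do not need continuity of $\dtran$ anywhere else), which yields the bound. For \ref{lmm:constLowerBounds:supadd}, I perturb in the opposite direction by taking $p = 0$, $q = \varepsilon$, $y = -u$, $z = v$ with $0 < \varepsilon < v$; now the signs flip so that \eqref{eq:quad2} becomes $2(u+v)\varepsilon \le 2\varepsilon(u+v)$, and the same limiting argument produces $\dtran(u) + \dtran(v) \le \ctran\, \dtran(u+v)$.

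None of these four arguments is genuinely hard once the configuration is in hand. The only mildly delicate point is orienting the points correctly in \ref{lmm:constLowerBounds:supadd}: $y$ and $z$ must lie on opposite sides of the segment $qp$ and the perturbation direction from $p$ to $q$ must be chosen \emph{toward} $z$, so that the two resulting differences $\tran(u+\varepsilon) - \tran(u)$ and $\tran(v) - \tran(v-\varepsilon)$ both contribute with a positive sign and combine to give $\dtran(u) + \dtran(v)$ in the limit.
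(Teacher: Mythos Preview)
Your proof is correct and follows essentially the same strategy as the paper: in each case, pick collinear points in $\R$ so that \eqref{eq:quad2} holds (with equality), substitute into \eqref{eq:quadtran}, and for \ref{lmm:constLowerBounds:subadd} and \ref{lmm:constLowerBounds:supadd} pass to the limit $\varepsilon\searrow 0$. The specific configurations differ only cosmetically: for \ref{lmm:constLowerBounds:2tau} the paper takes $z=q=0$, $y=u$, $p=u/2$ (one coincident pair plus a midpoint) rather than your $p=y$, $q=z$ (two coincident pairs), and for \ref{lmm:constLowerBounds:2u} and \ref{lmm:constLowerBounds:supadd} the paper's choices are related to yours by the symmetry $(y,z,q,p)\leftrightarrow(z,y,p,q)$ and a reflection; part \ref{lmm:constLowerBounds:subadd} is identical.
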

\begin{figure}
	\begin{center}
		\begin{tikzpicture}[thick]
			\coordinate (y) at (4,0);
			\coordinate (q) at (0,0);
			\coordinate (z) at (0,0);
			\coordinate (p) at (2,0);
			
			\draw[colQuadSidePos] (y) to[in=45,out=135] node[above] {$u$} (q);
			\draw[colQuadSideNeg] (p) -- node[below] {$\frac u2$} (y);
			\draw[colQuadDiag] (q) -- node[below] {$\frac u2$} (p);
			
			\fill (y) circle[radius=2pt] node[below] {$y$};
			\fill (q) circle[radius=2pt] node[below] {$q = z$};
			\fill (p) circle[radius=2pt] node[below] {$p$};
			
			\node at (0, 1) {(i)};
		\end{tikzpicture}
		\hspace*{1cm}
		\begin{tikzpicture}[thick]
			\coordinate (y) at (2,0);
			\coordinate (q) at (0,0);
			\coordinate (z) at (0,0);
			\coordinate (p) at (4,0);
			
			\draw[colQuadSidePos] (y) --  node[below] {$u$} (q);
			\draw[colQuadSideNeg] (p) -- node[below] {$u$} (y);
			\draw[colQuadDiag] (q) to[in=135,out=45] node[above] {$2u$} (p);
			
			\fill (y) circle[radius=2pt] node[below] {$y$};
			\fill (q) circle[radius=2pt] node[below] {$q = z$};
			\fill (p) circle[radius=2pt] node[below] {$p$};
			
			\node at (0, 1) {(ii)};
		\end{tikzpicture}
		
		\vspace*{1cm}
		
		\begin{tikzpicture}[thick]
			\coordinate (y) at (3,0);
			\coordinate (q) at (0,0);
			\coordinate (z) at (5,0);
			\coordinate (p) at (1,0);
			
			\draw[colQuadSidePos] (y) to[in=45,out=135] node[above] {$u$} (q);
			\draw[colQuadSideNeg] (q) to[in=-135,out=-45] node[above] {$v$} (z);
			\draw[colQuadSidePos] (z) to[in=45,out=135] node[above] {$v-\epsilon$} (p);
			\draw[colQuadSideNeg] (p) -- node[below] {$u-\epsilon$} (y);
			\draw[colQuadDiag] (y) -- node[below] {$|u-v|$} (z);
			\draw[colQuadDiag] (q) -- node[below] {$\epsilon$} (p);
			
			\fill (y) circle[radius=2pt] node[below] {$y$};
			\fill (q) circle[radius=2pt] node[below] {$q$};
			\fill (z) circle[radius=2pt] node[below] {$z$};
			\fill (p) circle[radius=2pt] node[below] {$p$};
			
			\node at (0, 1) {(iii)};
		\end{tikzpicture}
		\hspace*{1cm}
		\begin{tikzpicture}[thick]
			\coordinate (y) at (3,0);
			\coordinate (q) at (0,0);
			\coordinate (z) at (-2,0);
			\coordinate (p) at (1,0);
			
			\draw[colQuadSidePos] (y) to[in=45,out=135] node[above] {$u$} (q);
			\draw[colQuadSideNeg] (q) -- node[below] {$v$} (z);
			\draw[colQuadSidePos] (z) to[in=135,out=45] node[above] {$v+\epsilon$} (p);
			\draw[colQuadSideNeg] (p) -- node[below] {$u-\epsilon$} (y);
			\draw[colQuadDiag] (y) to[in=-45,out=-135] node[below] {$u+v$} (z);
			\draw[colQuadDiag] (q) -- node[below] {$\epsilon$} (p);
			
			\fill (y) circle[radius=2pt] node[below] {$y$};
			\fill (q) circle[radius=2pt] node[below] {$q$};
			\fill (z) circle[radius=2pt] node[below] {$z$};
			\fill (p) circle[radius=2pt] node[below] {$p$};
			
			\node at (-2, 1) {(iv)};
		\end{tikzpicture}
	\end{center}
	\caption{Constructions for the proof of \cref{lmm:constLowerBounds}.}\label{fig:quadriProofLower}
\end{figure}
\begin{proof}
	For all parts, we apply \eqref{eq:quadtran} in the metric space of the real line with Euclidean metric, see \cref{fig:quadriProofLower}.
	\begin{enumerate}[label=(\roman*)]
		\item Set $z=q=0$, $y=u$, $p = u/2$. Then \eqref{eq:quadtran} becomes
		\begin{equation*}
			\tran(u) - \tran(0) \leq \ctran \frac u2 \dtran(u)
			\eqfs
		\end{equation*}
		\item 
		Set $z=q=0$, $y=u$, $p = 2u$.
		Then \eqref{eq:quadtran} becomes
		\begin{equation*}
			\tran(2u) - \tran(0) \leq 2 \ctran u \dtran(u)
			\eqfs
		\end{equation*}
		\item 
		Let $\varepsilon\in(0, \min(u,v))$. Set $q = 0$, $p = \varepsilon$, $y = u$, $z = v$.
		Then \eqref{eq:quadtran} becomes
		\begin{align*}
			\tran(u)  - \tran(u-\varepsilon) - \tran(v) + \tran(v-\varepsilon) \leq \ctran\,  \varepsilon \, \dtran(\abs{u-v})
			\eqfs
		\end{align*}
		Thus,
		\begin{align*}
			\ctran \dtran(\abs{u-v}) \geq 
			\frac{\tran(u)  - \tran(u-\varepsilon)}{\varepsilon} -\frac{\tran(v) - \tran(v-\varepsilon)}{\varepsilon}
			\xrightarrow{\varepsilon \to 0}
			\dtran(u) - \dtran(v)
			\eqfs
		\end{align*}
		\item 
		Let $\varepsilon\in(0, \min(u,v))$. Let $q = 0$, $p = \varepsilon$, $y = u$, $z = -v$.
		Then \eqref{eq:quadtran} becomes
		\begin{align*}
			\tran(u)  - \tran(u-\varepsilon) - \tran(v) + \tran(v+\varepsilon) \leq \ctran\,  \varepsilon \, \dtran(u+v)
			\eqfs
		\end{align*}
		Thus,
		\begin{align*}
			\ctran \dtran(u+v) \geq 
			\frac{\tran(u)  - \tran(u-\varepsilon)}{\varepsilon}  + \frac{\tran(v+\varepsilon) - \tran(v)}{\varepsilon}
			\xrightarrow{\varepsilon \to 0}
			\dtran(u) + \dtran(v)
			\eqfs
		\end{align*}
	\end{enumerate}
\end{proof}
\begin{proposition}\label{lmm:constLowerBoundOne}
	Let $\tran\in\Tran$. Assume $\tran$ is not constant. Then $\ctranbest \geq 1$.
\end{proposition}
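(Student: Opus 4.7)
The plan is to combine parts \ref{lmm:constLowerBounds:subadd} and \ref{lmm:constLowerBounds:supadd} of \cref{lmm:constLowerBounds} at $u=v=s$ to squeeze $\dtran(2s)$ between two quantities linear in $\dtran(s)$, and then to extract $\ctranbest \geq 1$ from the consistency of these two-sided bounds.

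First I would locate some $s \in \Rpp$ with $\dtran(s) > 0$. By \cref{lmm:nondecreasing}\ref{lmm:nondecreasing:incr} we have $\dtran \geq 0$ on $\Rpp$; if $\dtran$ vanished identically, then $\tran$ would be constant, contradicting the hypothesis. Also, by \cref{lmm:quadconst}\ref{lmm:quadconst:quadzeroisconst}, a non-constant $\tran$ forces $\ctranbest > 0$, which justifies dividing by $\ctranbest$ later.

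With such an $s$ fixed, I would split into two cases depending on the value of $\dtran(2s)$, needed to ensure the denominators in \cref{lmm:constLowerBounds} are nonzero. If $\dtran(2s) > 0$, then \cref{lmm:constLowerBounds}\ref{lmm:constLowerBounds:supadd} applied at $u = v = s$ gives $\ctranbest \dtran(2s) \geq 2\dtran(s)$, while \cref{lmm:constLowerBounds}\ref{lmm:constLowerBounds:subadd} applied at $u = 2s, v = s$ (for which $|u-v| = s$) gives $\dtran(2s) - \dtran(s) \leq \ctranbest \dtran(s)$. Chaining these two bounds yields
\[
\frac{2 \dtran(s)}{\ctranbest} \leq \dtran(2s) \leq (1 + \ctranbest) \dtran(s),
\]
and dividing by $\dtran(s) > 0$ produces $2/\ctranbest \leq 1 + \ctranbest$, i.e.\ $(\ctranbest - 1)(\ctranbest + 2) \geq 0$; since $\ctranbest \geq 0$, the factor $\ctranbest + 2$ is positive, and hence $\ctranbest \geq 1$. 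If instead $\dtran(2s) = 0$, I would apply \cref{lmm:constLowerBounds}\ref{lmm:constLowerBounds:subadd} with the roles swapped, $u = s$ and $v = 2s$, so that the denominator is $\dtran(s) > 0$; this directly gives $\ctranbest \dtran(s) \geq \dtran(s) - \dtran(2s) = \dtran(s)$, whence $\ctranbest \geq 1$.

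I do not expect any serious obstacle. The only subtlety worth flagging is that \cref{lmm:constLowerBounds} is only formulated when the relevant denominator is nonzero, which is precisely why the minor case split on $\dtran(2s)$ is necessary; each branch then reduces to an elementary algebraic inequality. Conceptually, parts \ref{lmm:constLowerBounds:subadd} and \ref{lmm:constLowerBounds:supadd} function as near-matching subadditivity and superadditivity bounds on $\dtran$, and the compatibility of these two inequalities is exactly equivalent to $\ctranbest \geq 1$.
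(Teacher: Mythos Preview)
Your proof is correct, but it takes a different and more circuitous route than the paper's. The paper argues directly: pick $u$ with $\dtran(u)>0$, place $z=q=0$, $p=\varepsilon$, $y=u$ on the real line, and apply the quadruple inequality. After dropping the nonnegative term $\tran(\varepsilon)-\tran(0)$, dividing by $\varepsilon$, and letting $\varepsilon\to 0$, one gets $\ctranbest\,\dtran(u)\geq \dtran(u)$ immediately. By contrast, you recycle two of the four-point estimates from \cref{lmm:constLowerBounds}, squeeze $\dtran(2s)$ between $2\dtran(s)/\ctranbest$ and $(1+\ctranbest)\dtran(s)$, and extract $\ctranbest\geq 1$ from the resulting quadratic inequality, with a separate branch for $\dtran(2s)=0$. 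Both arguments ultimately rest on the same real-line configurations, but the paper's single three-point construction (with $z=q$) avoids the algebraic detour and the case split; your version, on the other hand, makes the nice conceptual point that the sub- and super-additivity type bounds of \cref{lmm:constLowerBounds} are already mutually inconsistent unless $\ctranbest\geq 1$.
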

\begin{figure}
	\begin{center}
		\begin{tikzpicture}[thick]
			\coordinate (y) at (4,0);
			\coordinate (q) at (0,0);
			\coordinate (z) at (0,0);
			\coordinate (p) at (1,0);
			
			\draw[colQuadSidePos] (y) to[in=45,out=135] node[above] {$u$} (q);
			\draw[colQuadSideNeg] (p) -- node[below] {$u-\epsilon$} (y);
			\draw[colQuadDiag] (q) -- node[below] {$\epsilon$} (p);
			
			\fill (y) circle[radius=2pt] node[below] {$y$};
			\fill (q) circle[radius=2pt] node[left] {$q = z$};
			\fill (p) circle[radius=2pt] node[below] {$p$};
		\end{tikzpicture}
	\end{center}
	\caption{Constructions for the proof of \cref{lmm:constLowerBoundOne}.}\label{fig:quadriProofOne}
\end{figure}
\begin{proof}
	Let $u\in\Rpp$ be such that $\dtran(u) \neq 0$. Then \cref{lmm:nondecreasing} \ref{lmm:nondecreasing:incr} implies $\dtran(u) > 0$.
	Let $\varepsilon\in(0, u)$. In the metric space of the real line with Euclidean metric, we choose $z = q = 0$, $p = \varepsilon$, $y = u$, see \cref{fig:quadriProofOne}.
	Then \eqref{eq:quadtran} becomes
	\begin{align*}
		\tran(u)  - \tran(u-\varepsilon) - \tran(0) + \tran(\varepsilon) \leq \ctran\,  \varepsilon \, \dtran(u)
		\eqfs
	\end{align*}
	As $\tran$ is nondecreasing by \cref{lmm:nondecreasing} \ref{lmm:nondecreasing:incr}, we have $\tran(0) = \lim_{x\searrow0} \tran(x) \leq \tran(\varepsilon)$.
	Thus,
	\begin{align*}
		\ctran \dtran(u) \geq 
		\frac{\tran(u)  - \tran(u-\varepsilon)}{\varepsilon}
		\xrightarrow{\varepsilon \to 0}
		\dtran(u)
		\eqfs
	\end{align*}
\end{proof}
\subsection{Stability}\label{ssec:quad:stabi}
We can construct potentially new elements of $\Tran$ from given ones by taking limits or certain linear combinations, as the next two propositions show.
\begin{proposition}\label{lmm:quad:limit}
	Let $(\tran_n)_{n\in\N}\subset\Tran$ with constants $(\ctrano{\tran_n})_{n\in\N}\subset\Rp$. Let $\tran\colon\Rpp\to\R$ be a differentiable function. Assume $\lim_{n\to\infty}\tran_n(x) = \tran(x)$ and $\limsup_{n\to\infty} \dtran_n(x) \leq \dtran(x)$ for all $x\in\Rpp$. Set $\ctran := \limsup_{n\to\infty} \ctrano{\tran_n}$. Assume $\ctran < \infty$. Then $\tran\in\Tran$ with constant $\ctran$.
\end{proposition}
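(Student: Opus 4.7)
The plan is to prove this via a pointwise limit argument applied to the quadruple inequality satisfied by each $\tran_n$. Fix a metric space $(\mc Q, d)$ and pairwise distinct points $y,z,q,p\in\mc Q$ satisfying the Cauchy--Schwarz-type condition \eqref{eq:quad2}; I aim to derive \eqref{eq:quadtran} for $\tran$ with constant $\ctran$. For each $n\in\N$, the hypothesis $\tran_n\in\Tran$ with constant $\ctrano{\tran_n}$ yields
\begin{equation*}
	\tran_n(\ol yq) - \tran_n(\ol yp) - \tran_n(\ol zq) + \tran_n(\ol zp) \leq \ctrano{\tran_n}\, \ol qp\, \dtran_n(\ol yz)\eqfs
\end{equation*}
Since $y,z,q,p$ are pairwise distinct, all six distances are strictly positive, so the pointwise convergence $\tran_n\to\tran$ on $\Rpp$ makes the left-hand side converge to $\tran(\ol yq) - \tran(\ol yp) - \tran(\ol zq) + \tran(\ol zp)$. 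On the right-hand side, the factor $\ol qp>0$ is a constant that can be pulled out, so it remains to control $\limsup_n \ctrano{\tran_n}\, \dtran_n(\ol yz)$ by $\ctran\,\dtran(\ol yz)$. Here I invoke \cref{lmm:nondecreasing} applied to each $\tran_n$: since $\ol yz>0$, we have $\dtran_n(\ol yz)\geq 0$. Together with $\ctrano{\tran_n}\geq 0$, this makes both sequences nonnegative, which is what allows the limsup argument to run.

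The key step --- and really the only non-routine point --- is the sub-multiplicativity of $\limsup$ for nonnegative sequences: if $a_n,b_n\geq 0$ with $\limsup_n a_n \leq A<\infty$ and $\limsup_n b_n \leq B<\infty$, then $\limsup_n a_n b_n \leq AB$. This follows by fixing $\epsilon>0$, using that $a_n\leq A+\epsilon$ and $b_n\leq B+\epsilon$ for all $n$ sufficiently large, bounding the product by $(A+\epsilon)(B+\epsilon)$, and letting $\epsilon\searrow0$. Applying it with $a_n = \ctrano{\tran_n}$, $b_n = \dtran_n(\ol yz)$, $A=\ctran$, and $B=\dtran(\ol yz)$ gives $\limsup_n \ctrano{\tran_n}\dtran_n(\ol yz) \leq \ctran\,\dtran(\ol yz)$. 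The hypothesis $\ctran<\infty$ is exactly what makes this bound meaningful; the hypothesis $\limsup_n\dtran_n(x)\leq\dtran(x)$ supplies the bound on the $b_n$-side.

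Combining both sides --- a limit on the left, a limsup on the right in the displayed inequality --- I conclude
\begin{equation*}
	\tran(\ol yq) - \tran(\ol yp) - \tran(\ol zq) + \tran(\ol zp) \leq \ctran\, \ol qp\, \dtran(\ol yz)\eqcm
\end{equation*}
showing $\tran\in\Tran$ with constant $\ctran$. The main obstacle, such as it is, is the sub-multiplicativity of $\limsup$, and the delicate point is to notice that nonnegativity of all relevant factors (guaranteed by \cref{lmm:nondecreasing}\ref{lmm:nondecreasing:incr} and by the assumption $\ctrano{\tran_n}\in\Rp$) is precisely what justifies passing the product through the limsup.
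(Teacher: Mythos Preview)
Your proof is correct and follows essentially the same approach as the paper's: take the quadruple inequality for each $\tran_n$, pass to the limit on the left, and bound the right-hand side via $\limsup$. The paper's version is terser and writes $\lim$ where $\limsup$ is more accurate in the intermediate step; your explicit justification of the sub-multiplicativity of $\limsup$ for nonnegative sequences (using \cref{lmm:nondecreasing}\ref{lmm:nondecreasing:incr} to secure $\dtran_n(\ol yz)\geq 0$) fills in exactly the detail the paper leaves implicit.
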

\begin{proof}
	This is a direct consequence of the assumed limit properties of $\tran$, $\dtran$, and $\ctran$, and of the quadruple inequality for $\tran_n$ with constant $\ctrano{\tran_n}$:
	\begin{align*}
		\tran(\ol yq) - \tran(\ol yp) - \tran(\ol zq) + \tran(\ol zp)
		&=
		\lim_{n\to\infty}
		\br{\tran_n(\ol yq) - \tran_n(\ol yp) - \tran_n(\ol zq) + \tran_n(\ol zp)}
		\\&\leq
		\limsup_{n\to\infty}
		\ctrano{\tran_n}\,\ol qp\, \dtran_n(\ol yz)
		\\&\leq
		\ctran \,\ol qp\, \dtran(\ol yz)
		\eqfs
	\end{align*}
\end{proof}
\begin{proposition}\label{lmm:quadstablin}\mbox{ }
	\begin{enumerate}[label=(\roman*)]
		\item\label{lmm:quadstablin:factor} Let $\tran\in\Tran$ with constant $\ctran$. Let $a\in\Rp$. Then $(x\mapsto a\tran(x)) \in \Tran$ with the same quadruple constant $\ctran$.
		\item\label{lmm:quadstablin:add} Let $\tran, \tilde \tran\in \Tran$ with constant $\ctrano{\tran}$ and $\ctrano{\tilde\tran}$, respectively. Then $(x\mapsto \tran(x)+\tilde\tran(x)) \in \Tran$ with quadruple constant $\max(\ctrano{\tran}, \ctrano{\tilde\tran})$.
	\end{enumerate}
\end{proposition}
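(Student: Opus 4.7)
Both parts are direct consequences of the linearity of the defining inequality \eqref{eq:quadtran} in $\tran$; no deep ideas are required, only careful bookkeeping.

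For part \ref{lmm:quadstablin:factor}, I would fix a metric space $(\mc Q, d)$ and a quadruple $y, z, q, p \in \mc Q$ for which \eqref{eq:quad2} holds. Multiplying the assumed quadruple inequality for $\tran$ (with constant $\ctran$) through by $a \in \Rp$ preserves the inequality and yields
\begin{equation*}
a\tran(\ol yq) - a\tran(\ol yp) - a\tran(\ol zq) + a\tran(\ol zp) \leq a\ctran\, \ol qp\, \dtran(\ol yz).
\end{equation*}
The left-hand side is exactly the left-hand side of \eqref{eq:quadtran} applied to $a\tran$, and the right-hand side has the form required to exhibit $a\tran$ as a quadruple transformation, after identifying $(a\tran)'(\ol yz) = a\,\dtran(\ol yz)$. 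The degenerate case $a = 0$ gives a constant function and is covered by \cref{lmm:quadconst}.

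For part \ref{lmm:quadstablin:add}, I would add the quadruple inequalities for $\tran$ and $\tilde\tran$ at the same quadruple. The sum of the left-hand sides is the left-hand side for $\tran + \tilde\tran$, and the sum of the right-hand sides is
\begin{equation*}
\ol qp \br{\ctrano{\tran}\,\dtran(\ol yz) + \ctrano{\tilde\tran}\,\tilde\tran'(\ol yz)}.
\end{equation*}
Invoking nonnegativity of $\dtran$ and $\tilde\tran'$ from \cref{lmm:nondecreasing} \ref{lmm:nondecreasing:incr}, I would bound this above by $(\ctrano{\tran} + \ctrano{\tilde\tran})\, \ol qp\, (\dtran + \tilde\tran')(\ol yz) = (\ctrano{\tran} + \ctrano{\tilde\tran})\, \ol qp\, (\tran + \tilde\tran)'(\ol yz)$, which is the desired inequality.

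The only point requiring any care is this final bounding step in part \ref{lmm:quadstablin:add}, where nonnegativity of the individual derivatives is used to absorb the two separate constants into the joint constant $\ctrano{\tran} + \ctrano{\tilde\tran}$; part \ref{lmm:quadstablin:factor} is essentially notational.
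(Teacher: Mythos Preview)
Your argument is correct and matches the paper's approach exactly: both parts follow from the linearity of \eqref{eq:quadtran} in $\tran$ and of the derivative operator, and the paper's own proof is a one-liner to precisely that effect. Your explicit invocation of $\dtran,\tilde\tran'\geq0$ from \cref{lmm:nondecreasing} in part \ref{lmm:quadstablin:add} to merge the two constants into $\ctrano{\tran}+\ctrano{\tilde\tran}$ is a detail the paper leaves implicit.
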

\begin{proof}
	Both parts follow directly from the definition of $\Tran$ and linearity of the derivative operator.
\end{proof}

\section{Nondecreasing, Convex Functions with Concave Derivative}\label{sec:cc}
As the set of nondecreasing, convex functions with concave derivative is central to this article, we start by discussing some basic properties of these functions in this section.

We define $\setcc$ to be the set of nondecreasing convex functions $\tran\colon\Rp\to\R$ that are differentiable on $\Rpp$ with concave derivative $\dtran$. We extend the domain of $\dtran$ to $\Rp$ by setting  $\dtran(0) := \lim_{x\searrow0} \dtran(x)$, which exists as $\dtran$ is nonnegative and nondecreasing.
Let $I\subset\R$ be convex. Let $t_0\in I$ with $t_0<\sup I$. For a function $f\colon I \to \R$ the \textit{right derivative} at $t_0$ is defined as
\begin{equation}
	\partial_+ f(t_0) = \lim_{t\searrow t_0}\frac{f(t)-f(t_0)}{t-t_0}
\end{equation}
if the limit exists.
Let $\setd{k}(\Rp)$ denote the space of $k$-times continuously differentiable functions $\Rp\to\R$, where derivatives at $0$ are taken as right derivatives. 
Denote $\setcco := \cb{\tran\in\setcc \colon \tran(0) = 0}$. 
Denote $\setccd{k} := \setcc\cap \setd{k}(\Rp)$ and $\setccod{k} := \setcco\cap \setd{k}(\Rp)$ for $k\in\N\cup\{\infty\}$.
\subsection{Properties}
In this section, we establish some simple properties of functions $\tran\in\setccd{k}$ that will be useful in the proof of the main results. The proofs can be found in the appendix \ref{app:aux:cc}.
\begin{lemma}\label{lmm:ccSimpleProps}
	Let $\tran\in\setcc$. 
	\begin{enumerate}[label=(\roman*)]
		\item Then $\tran$ is continuous; its right derivative at $0$ exists and $\partial_+ \tran(0) = \dtran(0)$; $\dtran$ is nonnegative, nondecreasing, and continuous.
		\item Assume $\tran\in\setccd{2}$. Then $\ddtran$ is nonnegative and nonincreasing.
		\item Assume $\tran\in\setccd{3}$. Then $\dddtran$ is nonpositive.
	\end{enumerate}
\end{lemma}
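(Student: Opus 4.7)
The plan is to deduce each claim from standard facts about the regularity of convex and concave functions on real intervals; none of the assertions require cleverness, only careful bookkeeping around the extension to $0$. For part (i), the first step is to record that the convexity of $\tran$ on $\Rp$ together with differentiability on $\Rpp$ forces $\dtran$ to be nondecreasing on $\Rpp$, and that $\tran$ being nondecreasing forces $\dtran \geq 0$ pointwise on $\Rpp$. Because $\dtran$ is nondecreasing and bounded above by $\dtran(1) < \infty$, the limit $\dtran(0) := \lim_{x \searrow 0} \dtran(x)$ is a well-defined element of $[0, \dtran(1)]$, which justifies the extension made in the surrounding text. Continuity of $\tran$ on $\Rpp$ is the usual consequence of convexity on an open interval; continuity at $0$ follows from two complementary observations: the secant bound $\tran(x) \leq (1 - x/y)\tran(0) + (x/y)\tran(y)$ for $0 < x < y$ gives $\limsup_{x \searrow 0} \tran(x) \leq \tran(0)$, while monotonicity gives $\liminf_{x \searrow 0} \tran(x) \geq \tran(0)$.

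The identity $\partial_+ \tran(0) = \dtran(0)$ is then obtained from the mean value theorem applied on $[0,x]$ (valid because $\tran$ is continuous on $[0,x]$ and differentiable on $(0,x)$): for each $x > 0$ there exists $\xi_x \in (0,x)$ with $(\tran(x) - \tran(0))/x = \dtran(\xi_x)$, and letting $x \searrow 0$ forces $\xi_x \searrow 0$, so $\dtran(\xi_x) \to \dtran(0)$ by the very definition of $\dtran(0)$. Continuity of $\dtran$ on $\Rpp$ is immediate since every concave real-valued function on an open interval is continuous; right-continuity at $0$ is built into the definition $\dtran(0) := \lim_{x \searrow 0}\dtran(x)$, which settles the remaining claims in (i).

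For parts (ii) and (iii), everything reduces to the classical equivalences between higher regularity and the sign or monotonicity of higher derivatives. If $\tran \in \setccd{2}$, then $\dtran$ is $C^1$, so ``$\tran$ convex and $C^2$'' yields $\ddtran \geq 0$, and ``$\dtran$ concave and $C^1$'' yields $\ddtran$ nonincreasing. Part (iii) is the same move one level higher: if $\tran \in \setccd{3}$, then $\ddtran$ is $C^1$ and nonincreasing, hence $\dddtran \leq 0$. The only mildly nontrivial point in the whole lemma is ensuring that $\dtran(0)$ is finite before invoking the mean value theorem, and this is handled by the monotonicity-plus-boundedness argument sketched in the first paragraph; I expect no real obstacle beyond that.
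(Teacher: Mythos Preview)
Your proof is correct and tracks the paper's argument almost step for step: both derive the sign and monotonicity of $\dtran$ from the convexity and monotonicity of $\tran$, obtain continuity of $\tran$ and $\dtran$ from the standard fact that convex/concave functions on open intervals are continuous, and handle (ii)--(iii) by reading off the sign/monotonicity of $\ddtran$ and $\dddtran$ from the $C^2$/$C^3$ versions of convexity/concavity.

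The one place where you genuinely diverge is the proof of $\partial_+\tran(0)=\dtran(0)$. The paper runs an explicit $\epsilon$--$\delta$ argument: it invokes uniform continuity of $\dtran$ on $[0,2]$ to compare $(\tran(x+h)-\tran(x))/h$ with $\dtran(x)$ uniformly in small $x,h$, and then chains four small-error estimates to pass from $\dtran(0)$ to $\partial_+\tran(0)$. Your route via the mean value theorem---applying MVT on $[0,x]$ to obtain $(\tran(x)-\tran(0))/x=\dtran(\xi_x)$ with $\xi_x\in(0,x)$, then letting $x\searrow0$ so that $\dtran(\xi_x)\to\dtran(0)$ by the very definition of $\dtran(0)$---is shorter and more transparent; it uses only the already-established continuity of $\tran$ on $[0,x]$ and differentiability on $(0,x)$, and avoids any uniform-continuity bookkeeping. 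A minor cosmetic remark: the relevant bound for the existence and finiteness of $\dtran(0)=\lim_{x\searrow0}\dtran(x)$ is the lower bound $\dtran\geq0$ (since $\dtran$ is nondecreasing, the limit is $\inf_{x>0}\dtran(x)$); your mention of the upper bound $\dtran(1)$ is true but not the operative fact.
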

The next lemma shows that all functions $\tran\in\setccd3$ are between a nondecreasing linear function and a parabola that opens upward.
\begin{lemma}[Polynomial bounds]\label{lmm:ccpoly}\mbox{ }
	Let $\tran\in\setccd3$. Let $x, y \in \Rp$. Then
	\begin{enumerate}[label=(\roman*)]
		\item  \label{lmm:ccpoly:taylor} 
		\begin{equation*}
			\tran(x) + y \dtran(x) \leq \tran(x + y) \leq \tran(x) + y \dtran(x) + \frac12 y^2 \ddtran(x)
			\eqcm
		\end{equation*}
		\item \label{lmm:ccpoly:dtaylor} 
		\begin{equation*}
			\dtran(x) \leq \dtran(x + y) \leq \dtran(x) + y \ddtran(x)
			\eqfs
		\end{equation*}
	\end{enumerate}
\end{lemma}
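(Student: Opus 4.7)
The plan is to prove (ii) first and then bootstrap (i) from it by integration, since the upper bound in (i) is essentially the antiderivative of the upper bound in (ii).

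For (ii), both inequalities are statements about the function $\dtran$ alone. The lower bound $\dtran(x) \leq \dtran(x+y)$ holds because $\tran$ is convex, so by \cref{lmm:ccSimpleProps} its derivative $\dtran$ is nondecreasing, and $y\geq 0$. The upper bound $\dtran(x+y) \leq \dtran(x) + y\ddtran(x)$ is the standard tangent-line inequality for concave functions: since $\tran \in \setccd{3}$, the derivative $\dtran$ is concave and continuously differentiable on $\Rp$, so it lies below its tangent at $x$, namely $s \mapsto \dtran(x) + (s-x)\ddtran(x)$; evaluating at $s = x+y$ gives the claim. (At $x=0$ we use that $\ddtran(0)$ is defined as the right derivative of $\dtran$, which exists and gives a valid supporting line for the concave function $\dtran$ on $\Rp$.)

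For (i), the lower bound $\tran(x) + y\dtran(x) \leq \tran(x+y)$ is the tangent-line inequality for the convex function $\tran$ at the point $x$, which again holds on $\Rp$ using the right derivative at $0$. For the upper bound I would write
\begin{equation*}
    \tran(x+y) - \tran(x) = \integral from 0to yof \dtran(x+s) by s;
\end{equation*}
(valid since $\dtran$ is continuous by \cref{lmm:ccSimpleProps}) and then insert the upper bound from part (ii) inside the integrand:
\begin{equation*}
    \integral from 0to yof \dtran(x+s) by s; \leq \integral from 0to yof \br{\dtran(x) + s\,\ddtran(x)} by s; = y\,\dtran(x) + \tfrac12 y^2 \ddtran(x)\eqfs
\end{equation*}

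There is no real obstacle here: the lemma is essentially a repackaging of the first-order tangent inequalities for a convex function and for its concave derivative, combined with the fundamental theorem of calculus. The only point to be careful about is that the derivatives $\dtran$ and $\ddtran$ at $0$ are interpreted as right derivatives (as is done in the setup of $\setccd{k}$ just before the lemma), so that the tangent-line inequalities, and the above integration, extend all the way to $x=0$.
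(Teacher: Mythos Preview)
Your proof is correct and essentially matches the paper's own argument. The paper phrases it as applying Taylor expansions of the appropriate order to $y\mapsto\tran(x+y)$ and $y\mapsto\dtran(x+y)$ and invoking the signs of $\ddtran$ and $\dddtran$ from \cref{lmm:ccSimpleProps}; your tangent-line inequalities for convex $\tran$ and concave $\dtran$, together with the integration step for the upper bound in (i), are exactly the same content stated in slightly different language.
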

In the following lemma, we provide useful bounds for the proof of the main results. 
\begin{lemma}[Difference bound]\label{lmm:ccdiff}
	Let $\tran\in\setcc$.
	\begin{enumerate}[label=(\roman*)]
		\item \label{lmm:ccdiff:tight} Let $x,y\in\Rp$. Assume $x \geq y$. Then
		\begin{equation*}
			\frac{x-y}2\br{\dtran(x)+\dtran(y)} \leq \tran(x) - \tran(y) \leq (x-y)\dtran\brOf{\frac{x+y}{2}}
			\eqfs
		\end{equation*}
		\item \label{lmm:ccdiff:tightabs} Let $x,y\in\Rp$. Then
		\begin{equation*}
			\tran(x+y) - \tran(\abs{x-y}) \leq 2\min(x, y)\dtran(\max(x, y))
			\eqfs
		\end{equation*}
	\end{enumerate}
\end{lemma}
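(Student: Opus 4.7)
The plan is to reduce both parts to the Hermite--Hadamard inequality applied to the concave function $\dtran$.

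For part \ref{lmm:ccdiff:tight}, the first step is to write
$$\tran(x) - \tran(y) = \integral from y to x of \dtran(t) by t;,$$
which is valid for $x \geq y > 0$ by differentiability of $\tran$ on $\Rpp$ and continuity of $\dtran$ (\cref{lmm:ccSimpleProps}); the boundary case $y = 0$ is handled by continuity of $\tran$ at $0$ and the definition $\dtran(0) := \lim_{t\searrow 0}\dtran(t)$. Since $\dtran$ is concave on $[y,x]$, the Hermite--Hadamard inequality gives
$$\tfrac12\br{\dtran(y) + \dtran(x)}(x-y) \;\leq\; \integral from y to x of \dtran(t) by t; \;\leq\; (x-y)\,\dtran\!\br{\tfrac{x+y}{2}},$$
which is exactly the claim. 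If I did not wish to quote Hermite--Hadamard, each side has a one-line derivation: for the lower bound, parameterize $t = (1-s)y + sx$ with $s\in[0,1]$, apply concavity of $\dtran$ pointwise, and integrate in $s$; for the upper bound, pick any subgradient $c$ of the concave function $\dtran$ at the midpoint $m = (x+y)/2$, so that $\dtran(t) \leq \dtran(m) + c(t-m)$ on $[y,x]$, and note that the linear term integrates to zero over the symmetric interval.

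For part \ref{lmm:ccdiff:tightabs}, by the symmetry $x \leftrightarrow y$ of the statement we may assume $x \geq y$, so $\abs{x-y} = x - y$, $\min(x,y) = y$, and $\max(x,y) = x$. Set $a := x + y$ and $b := x - y$, so $a \geq b \geq 0$, $(a+b)/2 = x$, and $a - b = 2y$. Applying the upper bound of part \ref{lmm:ccdiff:tight} to the pair $(a,b)$ yields
$$\tran(x+y) - \tran(x-y) \;\leq\; (a-b)\,\dtran\!\br{\tfrac{a+b}{2}} \;=\; 2y\,\dtran(x) \;=\; 2\min(x,y)\,\dtran(\max(x,y)),$$
which is the desired inequality.

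I do not anticipate any serious obstacle: the main conceptual content is the recognition that both bounds in (i) are the two halves of Hermite--Hadamard applied to the concave integrand $\dtran$, and (ii) is a direct specialization of the upper half of (i). The only minor care required is at the boundary $y = 0$ in part (i), handled by continuity, and the subgradient argument in place of a tangent line, since $\dtran$ is only assumed concave and continuous rather than differentiable.
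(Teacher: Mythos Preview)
Your proof is correct and essentially identical to the paper's: the paper also writes $\tran(x)-\tran(y)=\int_y^x\dtran$ and derives the two bounds by the chord-below/tangent-above argument for the concave integrand $\dtran$ (your parametrization and subgradient remarks are precisely its two steps), and it likewise obtains (ii) directly from the upper bound in (i). The only cosmetic difference is that you name the estimate as Hermite--Hadamard, whereas the paper spells out the two one-line derivations without that label.
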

\subsection{Approximation}
In the proof of \cref{thm:main}, we will first show the result for $\tran\in\setccd{3}$ and then approximate the remaining functions in $\setcc$ via smooth functions. The following lemma shows that this is possible.
\begin{lemma}[Smooth approximation]\label{lmm:cc:molli}
	Let $\tran\in\setcc$. Then there is a sequence $(\tran_n)_{n\in\N}\subset \setccd{\infty}$ such that $\tran(x) =\lim_{n\to\infty}\tran_n(x)$ and $\dtran(x) =\lim_{n\to\infty}\dtran_n(x)$.
\end{lemma}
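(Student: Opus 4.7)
The natural idea is mollification, but one must be careful: the usual symmetric convolution requires extending $\tran$ to all of $\R$, and the obvious linear extension $\tran(x) := \tran(0) + x\dtran(0)$ for $x < 0$ fails to preserve concavity of the derivative at the origin (the extended $\dtran$ jumps from constant on $\R_{\le 0}$ to strictly increasing on $\Rp$, which is convex at $0$, not concave). The fix is to use a \emph{one-sided} mollifier that only looks forward. Fix $\rho\in C^\infty(\R)$, nonnegative, with $\supp\rho\subset[0,1]$ and $\int\rho=1$, and define for $n\in\N$ and $x\in\Rp$
\begin{equation*}
\tran_n(x) := \int_0^1 \tran\!\br{x + \tfrac{t}{n}}\rho(t)\,\dl t\eqfs
\end{equation*}
This uses $\tran$ only on $\Rp$, so no extension is needed on the values that matter.

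\textbf{Step 1 (smoothness).} Equivalently, $\tran_n = g\ast\tilde\rho_n$ where $\tilde\rho_n(s) := n\rho(-ns)$ has $\supp\tilde\rho_n\subset[-1/n,0]$ and $g$ is any locally integrable extension of $\tran$ to $\R$ (e.g.\ $g(x):=\tran(0)$ for $x<0$). Because $\tilde\rho_n\in C^\infty_c(\R)$, the convolution is $C^\infty(\R)$ by the standard argument (differentiate onto $\tilde\rho_n$), and for $x\ge0$ the integrand only sees $g$ on $\Rp$, where $g=\tran$. Hence $\tran_n\in C^\infty(\Rp)$, with right derivatives at $0$ given by the usual limits.

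\textbf{Step 2 (structural properties).} For each fixed $t\in[0,1]$, the function $x\mapsto\tran(x+t/n)$ is a translate of $\tran$ and is therefore nondecreasing and convex on $\Rp$. A nonnegative weighted integral of such functions is again nondecreasing and convex, so $\tran_n$ lies in $\setcc$ up to the derivative condition. Since $\dtran$ is continuous on $\Rp$ by \cref{lmm:ccSimpleProps}, dominated convergence justifies differentiation under the integral, giving
\begin{equation*}
\dtran_n(x) = \int_0^1 \dtran\!\br{x+\tfrac{t}{n}}\rho(t)\,\dl t\eqfs
\end{equation*}
For each $t$ the map $x\mapsto\dtran(x+t/n)$ is concave on $\Rp$, so $\dtran_n$ is concave as a nonnegative weighted integral of concave functions. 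Thus $\tran_n\in\setccd{\infty}$.

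\textbf{Step 3 (pointwise convergence).} For every $x\in\Rp$,
\begin{equation*}
\abs{\tran_n(x)-\tran(x)} \le \sup_{t\in[0,1]}\abs{\tran(x+t/n)-\tran(x)} \xrightarrow{n\to\infty} 0
\end{equation*}
by continuity of $\tran$, and the analogous estimate with $\tran$ replaced by $\dtran$ (continuous by \cref{lmm:ccSimpleProps}) gives $\dtran_n(x)\to\dtran(x)$.

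\textbf{Main obstacle.} The only nontrivial design choice is the one-sidedness of $\rho$: it is what makes the scheme compatible with the boundary at $0$ without having to invent an extension of $\tran$ to the negative axis that simultaneously preserves monotonicity, convexity, \emph{and} concavity of the derivative. Once this is set up, all three properties propagate automatically from $\tran$ to $\tran_n$ through the averaging, and both convergences follow from continuity alone.
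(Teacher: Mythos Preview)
Your proof is correct and takes a genuinely different route from the paper. The paper mollifies $\dtran$ via \emph{multiplicative} convolution on the group $(\Rpp,\cdot)$ with its Haar measure $\dl x/x$: with bump functions $\varphi_n$ supported near $1$, it sets $\dtran_n(y)=\int_0^\infty \varphi_n(z)\,\dtran(y/z)\,\dl z/z$ and then integrates to recover $\tran_n$. Multiplicative rescaling keeps arguments inside $\Rpp$ automatically, so no boundary issue arises, and concavity/monotonicity of $\dtran$ pass to $\dtran_n$ because each $y\mapsto\dtran(y/z)$ retains them. Your additive one-sided mollification of $\tran$ itself achieves the same goals by a different device: the forward-looking kernel $\rho$ supported in $[0,1]$ guarantees that $\tran_n(x)$ only samples $\tran$ on $[x,\infty)\subset\Rp$, so again no extension to negative reals is needed, and the structural properties transfer because translates $x\mapsto\tran(x+t/n)$ inherit them. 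Your argument is arguably the more elementary of the two (standard mollification with a mild twist), while the paper's has the conceptual appeal of working intrinsically on the multiplicative half-line; both produce $\setccd{\infty}$ approximants with the required pointwise convergence of $\tran_n$ and $\dtran_n$.
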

\begin{proof}
	We will smooth $\dtran$ by convolution with a mollifier. The convolution is executed in the group of positive real numbers under multiplication endowed with its Haar measure $\mu(A) = \int_A \frac1x \dl x$ for $A\subset\Rpp$ measurable.
	
	For $n\in\N$, let $\varphi_n\in\setd\infty(\Rpp)$ be a sequence of nonnegative functions with support in $[\exp(-1/n), \exp(1/n)]$ and 
	\begin{equation}
		\int_0^\infty \frac{\varphi_n(x)} x \dl x = 1\eqfs
	\end{equation}
	Let $\tran\in\setcc$ with derivative $\dtran$. 
	For $n\in\N$, $x\in\Rp$, we define
	\begin{equation}
		\tran_n(x) := \tran(0) + \int_{0}^x \int_{0}^\infty \frac{\varphi_n(z)}{z} \dtran\brOf{\frac yz}  \dl z \dl y
		\eqfs
	\end{equation}
	Then, for $y\in\Rp$,
	\begin{equation}
		\dtran_n(y) = \int_{0}^\infty \frac{\varphi_n(z)}{z} \dtran\brOf{\frac yz}  \dl z = \int_{\R} \varphi_n(\euler^t) \dtran\brOf{\euler^{\log(y) - t}}  \dl t
		\eqfs
	\end{equation}
	Thus, $s\mapsto \dtran_n(\euler^s)$ is the convolution of $t\mapsto \varphi_n(\euler^t)$ with $t\mapsto\dtran\brOf{\euler^t}$. Using standard results on convolutions, the mollified function has the following properties: 
	\begin{enumerate}[label=(\roman*)]
		\item $\dtran_n$ is infinitely differentiable on $\Rpp$, because $\varphi_n$ is,
		\item $\dtran_n$ is nonnegative, nondecreasing, and concave, because $\dtran$ is and $\varphi_n$ is nonnegative,
		\item $\lim_{n\to\infty}\dtran_n(x) = \dtran(x)$ because $\dtran$ is continuous.
	\end{enumerate}
	Furthermore, $\tran_n$ is convex, as $\dtran_n$ is nondecreasing and $\lim_{n\to\infty}\tran_n(x) = \tran(x)$ by dominated convergence.
	Thus, $(\tran_n)_{n\in\N} \subset \setccd{\infty}$, and the sequence has the desired point-wise limits.
\end{proof}
\subsection{Stability}
The set $\setcc$ enjoys similar stability properties as $\Tran$. Thus, after having shown $\setcc\subset\Tran$, we cannot easily find further functions in $\Tran$ from the constructions presented in section \ref{ssec:quad:stabi}.
\begin{proposition}
	Let $(\tran_n)_{n\in\N}\subset\setcc$. Let $\tran\colon\Rp\to\R$ be differentiable. Assume that $\tran(x) = \lim_{n\to\infty}\tran_n(x)$ for all $x\in\Rp$. Then $\tran\in\setcc$.
\end{proposition}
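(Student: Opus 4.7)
The plan is to verify the three defining properties of $\setcc$ for $\tran$ in order: monotonicity, convexity, and concavity of the derivative. The first two pass to pointwise limits immediately, so the only nontrivial task is to show that $\dtran$ inherits concavity, and for this I first argue that $\dtran_n(x) \to \dtran(x)$ at every $x\in\Rpp$.

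First, for $x\leq y$ in $\Rp$, the monotonicity $\tran_n(x)\leq \tran_n(y)$ passes to the limit, so $\tran$ is nondecreasing. Likewise, for $x,y\in\Rp$ and $\lambda\in[0,1]$, convexity of each $\tran_n$ gives $\tran_n(\lambda x+(1-\lambda)y)\leq \lambda \tran_n(x)+(1-\lambda)\tran_n(y)$, and taking $n\to\infty$ yields convexity of $\tran$.

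Next I establish pointwise convergence of the derivatives on $\Rpp$. Fix $x\in\Rpp$ and $h>0$ small enough that $x-h>0$. By the convexity of $\tran_n$ together with \cref{lmm:ccdiff} (or directly from the supporting line inequality), one has
\begin{equation*}
	\frac{\tran_n(x)-\tran_n(x-h)}{h} \;\leq\; \dtran_n(x) \;\leq\; \frac{\tran_n(x+h)-\tran_n(x)}{h}\eqfs
\end{equation*}
Sending $n\to\infty$, using $\tran_n\to \tran$ pointwise, gives
\begin{equation*}
	\frac{\tran(x)-\tran(x-h)}{h} \;\leq\; \liminf_{n\to\infty}\dtran_n(x) \;\leq\; \limsup_{n\to\infty}\dtran_n(x) \;\leq\; \frac{\tran(x+h)-\tran(x)}{h}\eqfs
\end{equation*}
Letting $h\searrow 0$ and using that $\tran$ is differentiable at $x$, both outer quotients tend to $\dtran(x)$, so $\dtran_n(x)\to\dtran(x)$.

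Finally, concavity of $\dtran$ follows from concavity of each $\dtran_n$ by passing to the limit: for $x,y\in\Rpp$ and $\lambda\in[0,1]$,
\begin{equation*}
	\dtran(\lambda x+(1-\lambda)y)=\lim_{n\to\infty}\dtran_n(\lambda x+(1-\lambda)y)\geq \lim_{n\to\infty}\br{\lambda\dtran_n(x)+(1-\lambda)\dtran_n(y)}=\lambda\dtran(x)+(1-\lambda)\dtran(y)\eqfs
\end{equation*}
Combined with the preceding steps and the standing assumption that $\tran$ is differentiable on $\Rpp$, this yields $\tran\in\setcc$. The only potential obstacle is the derivative convergence step, but the monotone difference quotient sandwich above handles it cleanly.
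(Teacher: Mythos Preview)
Your proof is correct and follows essentially the same approach as the paper: pass monotonicity and convexity to the pointwise limit, then establish $\dtran_n(x)\to\dtran(x)$ on $\Rpp$ to deduce concavity of $\dtran$. The only differences are that you prove the derivative convergence via an elementary difference-quotient sandwich while the paper cites \cite[Theorem~25.7]{rockafellar70}, and the paper adds a short check that concavity extends to $x=0$, which you omit but which is not required by the definition of $\setcc$ (differentiability and concavity are only demanded on $\Rpp$).
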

\begin{proof}
	As $\tran_n$ is nondecreasing and convex, so is $\tran$.
	As $\dtran_n$ is concave, by \cite[Theorem 25.7]{rockafellar70}, for all $x\in\Rpp$,
	\begin{equation}
		\lim_{n\to\infty} \dtran_n(x) = \dtran(x)
		\eqfs
	\end{equation}
	Thus, as all $\dtran_n$ are concave, $\dtran$ is concave on $\Rpp$.
	As $\tran$ is convex, $\frac{\tran(x+h)-\tran(x)}{h}$ is increasing in both $x$ and $h$. Thus,
	\begin{align*}
		\liminf_{x\searrow0} \dtran(x) 
		&= 
		\inf_{x\in\Rpp} \dtran(x)
		\\&=
		\inf_{x,h\in\Rpp} \frac{\tran(x+h)-\tran(x)}{h} 
		\\&=
		\liminf_{h\searrow0} \frac{\tran(h)-\tran(0)}{h}
		\\&=
		\dtran(0)
		\eqfs
	\end{align*}
	Therefore, $\dtran$ is continuous at $0$ and concavity extends to $\Rp$.
\end{proof}
\begin{proposition}\mbox{ }
	\begin{enumerate}[label=(\roman*)]
		\item Let $\tran\in\setcc$. Let $a\in\Rp$. Then $(x\mapsto a\tran(x)) \in \setcc$.
		\item Let $\tran, \tilde \tran\in \setcc$. Then $(x\mapsto \tran(x)+\tilde\tran(x)) \in \setcc$.
	\end{enumerate}
\end{proposition}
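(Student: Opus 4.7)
The plan is to verify each defining property of $\setcc$ (nondecreasing, convex, differentiable on $\Rpp$, concave derivative) directly from the corresponding closure properties of real-valued functions. This is entirely analogous to the proof of \cref{lmm:quadstablin}: nothing delicate happens here, since all four properties are preserved by multiplication with a nonnegative scalar and by finite sums.

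For part (i), I would fix $\tran\in\setcc$ and $a\in\Rp$, set $\sigma := (x\mapsto a\tran(x))$, and observe: $\sigma$ is nondecreasing because $a\geq0$ and $\tran$ is nondecreasing; $\sigma$ is convex because nonnegative multiples of convex functions are convex; $\sigma$ is differentiable on $\Rpp$ with $\dtran[\sigma] = a\dtran$; and this derivative is concave because $a\geq 0$ and $\dtran$ is concave (by assumption on $\tran$ and by \cref{lmm:ccSimpleProps}). Hence $\sigma\in\setcc$.

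For part (ii), let $\sigma := (x\mapsto \tran(x)+\tilde\tran(x))$. Sums of nondecreasing, respectively convex, respectively differentiable functions inherit the corresponding property, so $\sigma$ is nondecreasing, convex, and differentiable on $\Rpp$ with $\dtran[\sigma] = \dtran + \dtran[\tilde\tran]$. Since both summands are concave, their sum is concave, so $\dtran[\sigma]$ is concave. Therefore $\sigma\in\setcc$. No step is difficult; the only mild point is to record that the derivative of the combined function is the combination of the derivatives, which is immediate from linearity of differentiation.
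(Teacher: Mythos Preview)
Your proposal is correct and follows exactly the same approach as the paper: verifying the defining properties of $\setcc$ directly using linearity of the derivative operator. The paper's proof is a one-liner (``Both parts follow directly from the definition of $\setcc$ and linearity of the derivative operator''); you have simply spelled out the details. One minor remark: the concavity of $\dtran$ is part of the definition of $\setcc$, so the appeal to \cref{lmm:ccSimpleProps} is unnecessary there.
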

\begin{proof}
	Both parts follow directly from the definition of $\setcc$ and linearity of the derivative operator.
\end{proof}

\section{Outline of the Proof of \cref{thm:main}}\label{sec:proofoutline}
In the first step of the proof of \cref{thm:main}, we represent general $4$-point metric spaces with $6$ real-valued parameters. We refer to this representation as a \textit{parametrization}. It converts our problem from the domain of metric geometry to the domain of real analysis. The rest of the proof consists of a complex sequence of elementary calculus arguments. This sequence may seem difficult to discover. To aid this process, an extensive application of computer-assisted numerical assessments was employed. The inequality \eqref{eq:quadtran} and transformations of it were evaluated on a grid of the parameter space and for a finite set of functions $\tran$. This computational tool played a crucial role in guiding the proof. It helped to identify steps that would not be useful and indicated steps with potential merit. 
\subsection{Parametrization}
There are different ways to parametrize a general $k$-tuple of points of a metric space. A trivial way is to take the $k$ distances as the $k$ real parameters. Then the parameter space is a subset of $\Rp^k$ with certain constraints that ensure the triangle inequality. In this parametrization, the representation of distances is simple and the parameter space is more complex.
We will base our parametrization on the Euclidean cosine-formula. We start with a parametrization of $3$ points, see \cref{fig:param:threepoints}. The parameter space for this construction is $\R^2 \times [-1, 1]$ without further constraints. In contrast to the purely distance based parametrization described above, this parametrization yields a more complex representation of all distances, but a very simple parameter space.
\begin{proposition}[3-point parametrization]\label{lmm:threepoint}\mbox{ }
	\begin{enumerate}[label=(\roman*)]
		\item 	Let $(\mc Q, d)$ be a metric space and $y,q,p\in\mc Q$. Set $a := \ol yp$ and $b:= \ol qp$. Isometrically embed $y,q,p$ in the Euclidean plane $\R^2$ and set $s := \cos(\measuredangle ypq)$ with the angle $\measuredangle ypq$ measured in the Euclidean plane. Then,  $\ol yq^2 = a^2 + b^2 - 2 s ab$	and $a,b\in\Rp, s \in[-1, 1]$.
		\item
		For all $a,b\in\Rp, s \in[-1, 1]$ there is a metric space with a triplet of points $y,q,p$ such that $\ol yp = a$, $\ol qp = b$, and $\ol yq^2 = a^2 + b^2 - 2 s ab$.
	\end{enumerate}
\end{proposition}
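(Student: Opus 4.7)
The proposition has two directions, and both are essentially exercises in planar Euclidean geometry; the plan is to handle each by an explicit construction, with the main subtlety being the isometric embedding of three metric-space points into $\R^2$.

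For part (i), the plan is to first recall (or briefly justify) the standard fact that any triple of points in a metric space admits an isometric embedding in $\R^2$: the three pairwise distances $\ol yp, \ol qp, \ol yq$ are nonnegative and satisfy the three triangle inequalities, hence they are the side lengths of a (possibly degenerate) triangle, which can be realized in the Euclidean plane. Once the embedding is fixed, $a, b \geq 0$ is immediate from the nonnegativity of distances, and $s = \cos(\measuredangle ypq) \in [-1,1]$ is immediate from the definition of cosine. The identity $\ol yq^2 = a^2 + b^2 - 2sab$ is then precisely the Euclidean law of cosines applied to the triangle with vertices $y,q,p$, with the angle taken at $p$.

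For part (ii), the plan is to exhibit an explicit realization in $\mc Q = \R^2$ with its Euclidean metric. Given $a,b \in \Rp$ and $s \in [-1,1]$, set
\begin{equation*}
	p := (0, 0), \qquad y := (a, 0), \qquad q := \br{bs,\; b\sqrt{1 - s^2}}\eqfs
\end{equation*}
Then $\ol yp = a$ and $\ol qp = \sqrt{b^2 s^2 + b^2(1-s^2)} = b$. A direct expansion gives
\begin{equation*}
	\ol yq^2 = (a - bs)^2 + b^2(1 - s^2) = a^2 - 2abs + b^2 s^2 + b^2 - b^2 s^2 = a^2 + b^2 - 2sab\eqcm
\end{equation*}
as required.

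The only delicate point is the embedding step in part (i): one must check that the three distances can indeed form a planar triangle, which reduces to verifying that $\ol yq \leq \ol yp + \ol qp$ and its two permutations — all consequences of the metric-space triangle inequality — and that $\ol yq^2 \leq (a+b)^2$ and $\ol yq^2 \geq (a-b)^2$, which together are equivalent to the existence of $s \in [-1,1]$ with $\ol yq^2 = a^2 + b^2 - 2sab$. No further obstacle is expected, and the whole argument should take only a few lines.
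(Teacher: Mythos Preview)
Your proposal is correct and follows essentially the same approach as the paper: isometric embedding of three metric points into $\R^2$ followed by the law of cosines for part (i), and an explicit triangle construction in $\R^2$ for part (ii). The paper's own proof is even terser (part (i) is dismissed as ``true by construction''), so your version simply fills in the details the paper omits.
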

\begin{proof}
	The first part is true by construction. For given parameters $a,b\in\Rp, s \in[-1, 1]$, we can easily construct a triangle in $\R^2$ with sides $a, b, \sqrt{a^2 + b^2 - 2 s ab}$ and an angle $\arccos(s)$ between the sides with length $a$ and $b$. The corners of this triangle are the points $y,q,p$.
\end{proof}
\begin{figure}
	\begin{center}
		\begin{tikzpicture}[thick]
			\coordinate (y) at (-1,0);
			\coordinate (q) at (5,0);
			\coordinate (p) at (1,2);
			
			\draw (y) -- node[below] {$\sqrt{a^2 + b^2 - 2sab}$} (q);
			\draw (p) -- node[left] {$a$} (y);
			\draw (q) -- node[above] {$b$} (p);
			
			\fill (y) circle[radius=2pt] node[below] {$y$};
			\fill (q) circle[radius=2pt] node[below] {$q$};
			\fill (p) circle[radius=2pt] node[left] {$p$};
			
			\tkzMarkAngle[draw=black](y,p,q)
			\tkzLabelAngle[pos = 0.5](y,p,q){$\lessdot s$}
		\end{tikzpicture}
	\end{center}
	\caption{A 3-point parametrization. We denote $\lessdot s := \arccos(s)$.}\label{fig:param:threepoints}
\end{figure}
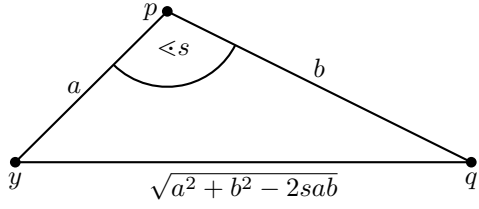
For the proof of \cref{thm:main}, we use a $4$-point parametrization, see \cref{fig:param:fourpoints}. It is based on repeated application of the Euclidean cosine formula. Its parameter space is a subset of $\Rp^3\times[-1,1]^3$ with rather complex constraints. We later relax this parametrization to a construction with a parameter space $\Rp^3\times[-1,1]^2$ without further constraints. That construction is not a parametrization, but results shown in its parameter space are stronger than those shown in the parametrization.
\begin{proposition}[4-point parametrization]\label{lmm:fourpoint}\mbox{ }
	\begin{enumerate}[label=(\roman*)]
		\item Let $(\mc Q, d)$ be a metric space and $y,z,q,p\in\mc Q$. Set $a:=\ol zp$, $c:=\ol yp$, $b:=\ol qp$, $s := \cos(\measuredangle ypq)$, $r := \cos(\measuredangle zpq)$, $t := \cos(\measuredangle ypz)$, with all angles measured as in an isometric $3$-point embedding in the Euclidean plane.
		Then,
		\begin{align*}
			\ol yz^2 &= a^2 + c^2 - 2tac\eqcm
			&
			\ol yq^2 &= c^2 + b^2 - 2scb\eqcm
			&
			\ol zq^2 &= a^2 + b^2 - 2rab
			\eqfs
		\end{align*}
		Furthermore, $a,b,c\in\Rp$, $r,s,t\in[-1, 1]$, and
		\begin{equation}\label{lmm:fourpoint:trianlge}
			\begin{aligned}
				-tac &\leq b^2 - rab - scb + \sqrt{a^2 - 2rab + b^2}\sqrt{c^2 - 2scb + b^2}\eqcm\\
				-rab &\leq c^2 - tac - scb + \sqrt{a^2 - 2tac + c^2}\sqrt{c^2 - 2scb + b^2}\eqcm\\
				-scb &\leq a^2 - rab - tac + \sqrt{a^2 - 2rab + b^2}\sqrt{a^2 - 2tac + c^2}\eqfs
			\end{aligned} 
		\end{equation}
		\item For all $a,b,c\in\Rp$, $r,s,t\in[-1, 1]$ that fulfill \eqref{lmm:fourpoint:trianlge}, there is a metric space $(\mc Q, d)$ with a quadruple of points $y,z,q,p\in\mc Q$ such that
		\begin{align*}
			a &= \ol zp\eqcm
			&
			c &= \ol yp\eqcm
			&
			b &= \ol qp\eqcm
			\\
			\ol yz^2 &= a^2 + c^2 - 2tac\eqcm
			&
			\ol yq^2 &= c^2 + b^2 - 2scb\eqcm
			&
			\ol zq^2 &= a^2 + b^2 - 2rab
			\eqfs
		\end{align*}
	\end{enumerate}
\end{proposition}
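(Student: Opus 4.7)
The plan is to apply the 3-point parametrization of \cref{lmm:threepoint} three times, once to each of the triples $(y, q, p)$, $(z, q, p)$, and $(y, z, p)$, each time using $p$ as the vertex at which the angle is measured. These three applications immediately yield the three squared-distance formulas in the statement, along with the constraints $a, b, c \in \Rp$ (since they are distances) and $r, s, t \in [-1, 1]$ (since they are cosines of Euclidean angles).

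For part (i), it remains to derive the three inequalities in \eqref{lmm:fourpoint:trianlge}. These should arise from the triangle inequality applied to the triangle on the vertices $y, z, q$, the only triple not involving $p$. The plan is to take each of the three triangle inequalities $\ol yz \leq \ol yq + \ol zq$, $\ol yq \leq \ol yz + \ol zq$, and $\ol zq \leq \ol yz + \ol yq$, square them (equivalent to the original since all quantities are nonnegative), substitute the three squared-distance formulas, and simplify. For the first inequality, this gives $-2tac - 2b^2 + 2scb + 2rab \leq 2\sqrt{a^2 + b^2 - 2rab}\,\sqrt{c^2 + b^2 - 2scb}$, which after dividing by $2$ and rearranging is precisely the first inequality of \eqref{lmm:fourpoint:trianlge}; the remaining two follow analogously by cyclic permutation of roles.

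For part (ii), given parameters $a, b, c, r, s, t$ satisfying \eqref{lmm:fourpoint:trianlge}, I would define six real numbers on the four-element set $\mathcal{Q} = \{y, z, q, p\}$ according to the formulas in the statement (with $\ol yp := c$, $\ol zp := a$, $\ol qp := b$, and $\ol yz, \ol yq, \ol zq$ via the cosine-formula expressions) and verify that they define a metric. This requires all four triangle inequalities, one per triple. The three triples involving $p$ can each be realized as an isometrically embedded triangle in $\R^2$ by \cref{lmm:threepoint}, so their triangle inequalities hold automatically. The fourth triangle on $y, z, q$ satisfies its three triangle inequalities precisely because the hypothesis \eqref{lmm:fourpoint:trianlge} is equivalent to them, via the computation from part (i) run in reverse.

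The main obstacle I anticipate is bookkeeping: the three inequalities in \eqref{lmm:fourpoint:trianlge} correspond to the three triangle inequalities among the sides $\ol yz$, $\ol yq$, $\ol zq$, with the parameters $(a, b, c)$ and $(r, s, t)$ permuting accordingly. The algebra for a single case is routine, but one must confirm that the squaring step is fully reversible on the nonnegative ranges involved, so that the same manipulation delivers both the necessity (part (i)) and sufficiency (part (ii)) of \eqref{lmm:fourpoint:trianlge}.
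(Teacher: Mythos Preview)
Your proposal is correct and follows essentially the same approach as the paper: the squared-distance formulas come from three applications of the 3-point parametrization with vertex $p$, and the inequalities \eqref{lmm:fourpoint:trianlge} are exactly the three triangle inequalities on the triple $y,z,q$ after squaring and substituting. The only minor technicality the paper adds in part (ii) is that the resulting $d$ may be a semimetric (some distances could vanish), so one identifies points at distance $0$ to obtain a genuine metric space; this does not affect your argument.
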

\begin{proof}
	\begin{enumerate}[label=(\roman*)]
		\item 
		The inequalities \eqref{lmm:fourpoint:trianlge} are due to the triangle inequality,
		\begin{equation*}
			\ol yz \leq \ol yq + \ol zq 
			\eqcm\qquad
			\ol zq \leq \ol yz + \ol yq
			\eqcm\qquad
			\ol yq \leq \ol yz + \ol zq
			\eqfs
		\end{equation*}
		\item
		Define a four point set $\mc Q$ with elements named $y,z,q,p$. Define $d\colon\mc Q\times \mc Q\to\Rp$ with the equations given in the lemma, extended by symmetry and $d(x,x) = 0$ for all $x\in\mc Q$. By construction, $d$ is a semimetric \cite{deza16} (vanishing distance for non-identical points allowed) so that identifying points with distance 0 yields a metric space.
	\end{enumerate}
\end{proof}
\begin{figure}
	\begin{center}
		\begin{tikzpicture}[thick]
			\coordinate (y) at (-1,0);
			\coordinate (q) at (7,0);
			\coordinate (z) at (1.5,5);
			\coordinate (p) at (2,2);
			
			\draw[colQuadSidePos] (y) -- node[below] {$\sqrt{c^2 + b^2 - 2scb}$} (q);
			\draw[colQuadSideNeg] (q) -- node[right] {$\sqrt{a^2 + b^2 - 2rab}$} (z);
			\draw[colQuadSidePos] (z) -- node[right] {$a$} (p);
			\draw[colQuadSideNeg] (p) -- node[left] {$c$} (y);
			\draw[colQuadDiag] (y) -- node[left] {$\sqrt{a^2 + c^2 - 2tac}$} (z);
			\draw[colQuadDiag] (q) -- node[below] {$b$} (p);
			
			\fill (y) circle[radius=2pt] node[below] {$y$};
			\fill (q) circle[radius=2pt] node[below] {$q$};
			\fill (z) circle[radius=2pt] node[right] {$z$};
			\fill (p) circle[radius=2pt] node[below] {$p$};
			
			\tkzMarkAngle[draw=black](y,p,q)
			\tkzLabelAngle[pos = 0.65](y,p,q){$\lessdot s$}
			\tkzMarkAngle[draw=black](q,p,z)
			\tkzLabelAngle[pos = 0.5](q,p,z){$\lessdot r$}
			\tkzMarkAngle[draw=black](z,p,y)
			\tkzLabelAngle[pos = 0.5](z,p,y){$\lessdot t$}
		\end{tikzpicture}
	\end{center}
	\caption{A 4-point parametrization. We denote $\lessdot x := \arccos(x)$.}\label{fig:param:fourpoints}
\end{figure}
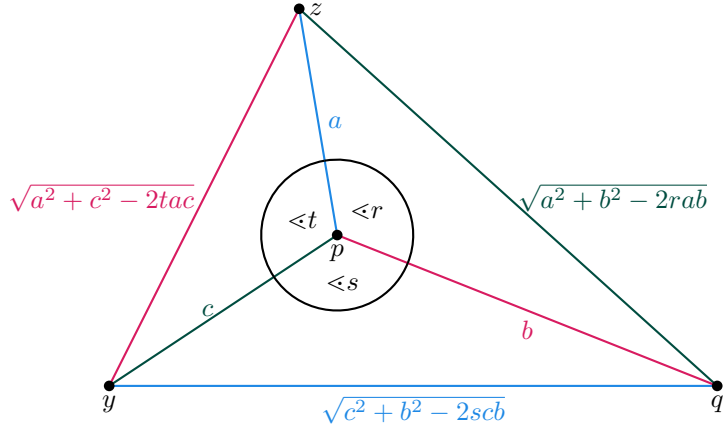
With this parametrization and 
\begin{equation}\label{eq:quad2lhs:simplification}
	a^2 - c^2 - (a^2-2rab+b^2) + (c^2-2scb+b^2)
	= 2b \br{ra-sc}
	\eqcm
\end{equation}
\eqref{eq:quad2} can be expressed as
\begin{equation}\label{eq:quad2:param}
	b \br{ra-sc} \leq b \sqrt{a^2 + c^2 - 2tac}\eqcm
\end{equation}
 and \eqref{eq:quadtran} becomes
\begin{equation}\label{eq:quadtran:param}
	\tran(a) - \tran(c) - \trans\brOf{a^2-2rab+b^2} + \trans\brOf{c^2-2scb+b^2} \leq \ctran \, b \, \dtran(\sqrt{a^2 + c^2 - 2tac})\eqcm
\end{equation}
where we use the shorthand $\trans(x) := \tran(\sqrt x)$.
Thus, \cref{thm:main} is equivalent to showing that \eqref{eq:quad2:param} implies \eqref{eq:quadtran:param} for all $a,b,c\in\Rp$, $r,s,t\in[-1, 1]$ that fulfill \eqref{lmm:fourpoint:trianlge}. We will prove a stronger but simpler looking result in section \ref{app:mainproof} of the appendix:
\begin{theorem}\label{thm:main:param}
	Let $a,b,c\geq0$, $r,s\in[-1,1]$, and $\tran\in\setccod{3}$.
	Then
	\begin{equation}\label{eq:thm:main:param}
			\tran(a) - \tran(c) - \trans\brOf{a^2-2rab+b^2} + \trans\brOf{c^2-2scb+b^2}
			\leq 
			2 b \dtran\brOf{\max(ra - sc, |a-c|)}
		\eqfs
	\end{equation}
\end{theorem}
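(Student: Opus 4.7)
The plan is to treat both sides of \eqref{eq:thm:main:param} as $\mc C^1$ functions of $b \geq 0$, with $a, c, r, s$ held fixed, and to reduce the inequality to a derivative comparison. Write $\Phi(b)$ for the left-hand side of \eqref{eq:thm:main:param} and $\Psi(b) := 2b\,\dtran(M)$ for the right-hand side, where $M := \max(ra - sc,\, |a - c|)$. Since $\tran(0) = 0$ and $\trans(x^2) = \tran(x)$, we have $\Phi(0) = \Psi(0) = 0$, so it suffices to prove $\Phi'(b) \leq 2\,\dtran(M)$ for all $b \geq 0$. Setting $A := \sqrt{a^2 - 2rab + b^2}$ and $C := \sqrt{c^2 - 2scb + b^2}$, a direct computation using $\trans'(x) = \dtran(\sqrt x)/(2\sqrt x)$ gives
\begin{equation*}
	\Phi'(b) \;=\; \frac{(ra - b)\,\dtran(A)}{A} \;+\; \frac{(b - sc)\,\dtran(C)}{C}\eqfs
\end{equation*}

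The two workhorse ingredients are: (i) for $\dtran$ concave on $\Rp$ with $\dtran(0) \geq 0$, the map $x \mapsto \dtran(x)/x$ is nonincreasing on $\Rpp$, so that $\dtran(A)/A \leq \dtran(u)/u$ whenever $A \geq u > 0$; and (ii) the elementary bounds $A \geq |ra - b|$ and $A \geq |a - rb|$ together with the analogues for $C$, each proved by completing the square, e.g.\ $A^2 - (ra - b)^2 = (1 - r^2)\,a^2 \geq 0$. Combining (i) and (ii) converts any summand $(ra - b)\dtran(A)/A$ with nonnegative numerator into the clean upper bound $\dtran(ra - b)$, and shows that when the numerator is nonpositive the summand itself is nonpositive.

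I then split into four cases according to the signs of $ra - b$ and $b - sc$. The case $ra \leq b \leq sc$ is immediate since both summands of $\Phi'(b)$ are nonpositive. In the case $sc \leq b \leq ra$, both summands are nonnegative and ingredients (i)--(ii) yield $\Phi'(b) \leq \dtran(ra - b) + \dtran(b - sc)$; midpoint concavity of $\dtran$ then gives $\leq 2\,\dtran((ra - sc)/2)$, and monotonicity of $\dtran$ combined with $(ra - sc)/2 \leq ra - sc \leq M$ closes the case. The two remaining mixed-sign cases --- where $ra, sc \geq b$, or $ra, sc \leq b$ --- are the genuinely hard ones: the two summands of $\Phi'(b)$ have opposite signs and may cancel substantially, so a term-by-term bound from (i)--(ii) is too weak. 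For these I rewrite
\begin{equation*}
	\Phi'(b) \;=\; (ra - sc)\,\frac{\dtran(C)}{C} \;+\; (ra - b)\br{\frac{\dtran(A)}{A} - \frac{\dtran(C)}{C}}\eqcm
\end{equation*}
and aim to control the bracketed difference jointly with the leading term via the identity $A^2 - C^2 = a^2 - c^2 - 2b(ra - sc)$, the concavity of $\dtran$, and the monotonicity of $\ddtran$ --- all available under $\tran \in \setccod{3}$ through \cref{lmm:ccSimpleProps} and \cref{lmm:ccpoly}.

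These mixed-sign cases, where joint cancellation is essential, are where I expect the main difficulty to lie: this is precisely where the smoothness hypothesis $\tran \in \setccod{3}$ is called on, since a sharp comparison of $\dtran(A)/A$ and $\dtran(C)/C$ relies on a mean-value argument involving $\ddtran$. Once \cref{thm:main:param} is established in this smooth setting, \cref{thm:main} for arbitrary $\tran \in \setcc$ follows by the smooth approximation of \cref{lmm:cc:molli}, combined with the observation that under the hypothesis \eqref{eq:quad2:param} one has $\max(ra - sc,\, |a - c|) \leq \sqrt{a^2 + c^2 - 2tac}$, so that monotonicity of $\dtran$ recovers the right-hand side of \eqref{eq:main}.
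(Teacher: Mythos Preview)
Your plan to differentiate in $b$ and reduce to the pointwise inequality $\Phi'(b)\le 2\dtran(M)$ is a genuinely different route from the paper's. The paper never attempts a single derivative bound in $b$; instead it first exploits that $r\mapsto \ifu(a,b,c,r,s)$ is convex (\cref{lmm:ddr}) to reduce to the $r$-endpoints $r=1$ and $ra-sc=|a-c|$, and then proves the resulting reduced statement (\cref{lmm:param:firstreduction}) by a six-way case split on the relative sizes of $a,b,c,sc$, each case involving one or more further monotonicity arguments in $a$, $b$, or the auxiliary variables (sections~\ref{ssec:reduii}--\ref{ssec:reduicLaGbscbLsc}). The two easy sign cases you handle cleanly correspond roughly to what the paper absorbs into its endpoint reduction, but your mixed-sign cases and the paper's six sub-cases do not match up in any simple way.

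The gap is that the mixed-sign cases are not proved. You write ``aim to control the bracketed difference jointly with the leading term'' and list the tools you expect to use, but no argument is given. This is precisely where the difficulty sits: when $ra,sc\ge b$, say, the naive bound $\Phi'(b)\le \dtran(ra-b)$ from your ingredients (i)--(ii) is far too weak, since $ra-b$ can be much larger than $M$ (take $a=c$ large, $r=1$, $s$ close to~$1$, $b$ small, so $M=ra-sc$ is tiny while $ra-b\approx a$). The inequality therefore requires sharp cancellation between the two terms of $\Phi'(b)$, and your rewriting together with the identity $A^2-C^2=a^2-c^2-2b(ra-sc)$ does not by itself deliver this: the difference $\dtran(A)/A-\dtran(C)/C$ depends on $A,C$ individually, not just on $A^2-C^2$, and a mean-value estimate via $\ddtran$ produces a factor $\ddtran(\xi)$ at an uncontrolled intermediate point. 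The paper's lengthy case analysis exists precisely because no single clean estimate was found here; if you can close your mixed-sign cases with a short joint argument that would be a real improvement, but as written the proposal stops at the point where the work begins.
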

\subsection{Remaining Proof Steps}\label{ssec:remaining}
From \cref{thm:main:param}, we obtain a slightly stronger result than \cref{thm:main} by relaxing \eqref{eq:quad2}:
\begin{theorem}\label{thm:strong}
	Let $\tran\in\setcc$.  Let $(\mc Q, d)$ be a metric space.
	Let $y,z,q,p\in\mc Q$. Assume, there is $\ctransqr\in[2,\infty)$ such that
	\begin{equation}\label{eq:strong:assu}
		\ol yq^2 - \ol yp^2 - \ol zq^2 + \ol zp^2 \leq \ctransqr \, \ol qp\, \ol yz
		\eqfs
	\end{equation}
	Then
	\begin{equation}\label{eq:strong}
		\tran(\ol yq)  - \tran(\ol yp) - \tran(\ol zq) + \tran(\ol zp) \leq \ctransqr \, \ol qp \, \dtran\brOf{\ol yz}\eqfs
	\end{equation}
\end{theorem}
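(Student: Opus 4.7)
The strategy is to reduce \cref{thm:strong} to \cref{thm:main:param} via the $4$-point parametrization (\cref{lmm:fourpoint}), and then bridge the gap between the universal constant $2$ in \eqref{eq:thm:main:param} and the potentially larger $\ctransqr$ in \eqref{eq:strong} by a short case analysis combining monotonicity of $\dtran$ with a concavity-scaling inequality.

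\textbf{Reduction to $\tran\in\setccod{3}$.} Both sides of \eqref{eq:strong} are invariant under adding a constant to $\tran$, since the left-hand side is an alternating sum whose weights total zero and $\dtran$ is unchanged. So I would assume $\tran(0)=0$, i.e.\ $\tran\in\setcco$. \cref{lmm:cc:molli} provides a sequence $(\tran_n)\subset\setccd{\infty}$ with $\tran_n\to\tran$ and $\dtran_n\to\dtran$ pointwise, and the explicit construction there preserves $\tran(0)=0$, so in fact $(\tran_n)\subset\setccod{\infty}\subset\setccod{3}$. It therefore suffices to prove \eqref{eq:strong} for $\tran\in\setccod{3}$ and pass to the limit.

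\textbf{Parametrization and application of \cref{thm:main:param}.} Set $a:=\ol zp$, $b:=\ol qp$, $c:=\ol yp$, with $r,s,t\in[-1,1]$ the cosines of the angles at $p$ in the isometric Euclidean $3$-point embeddings (\cref{lmm:fourpoint}). The case $b=0$ makes both sides of \eqref{eq:strong} vanish, so I would assume $b>0$. Using \eqref{eq:quad2lhs:simplification}, the hypothesis \eqref{eq:strong:assu} reads
\begin{equation*}
ra - sc \;\leq\; \tfrac{\ctransqr}{2}\,\ol yz\eqcm
\end{equation*}
and \eqref{eq:strong} is, after substituting the cosine formulas, the parametrized inequality with right-hand side $\ctransqr\,b\,\dtran(\ol yz)$. \cref{thm:main:param}, which requires no link between the four points, yields the unconditional bound
\begin{equation*}
\tran(a) - \tran(c) - \trans\brOf{a^2 - 2rab + b^2} + \trans\brOf{c^2 - 2scb + b^2}
\;\leq\; 2b\,\dtran\brOf{\max(ra-sc,\,\abs{a-c})}\eqfs
\end{equation*}

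\textbf{Closing the gap.} It remains to prove $2\,\dtran\brOf{\max(ra-sc,\abs{a-c})}\leq \ctransqr\,\dtran(\ol yz)$. If $\abs{a-c}\geq ra-sc$, then $\abs{a-c}=\abs{\ol zp-\ol yp}\leq\ol yz$ by the triangle inequality, so monotonicity of $\dtran$ (\cref{lmm:ccSimpleProps}) together with $\ctransqr\geq 2$ gives the estimate. Otherwise the parametrized hypothesis gives $\max=ra-sc\leq\tfrac{\ctransqr}{2}\ol yz$, and I would invoke the scaling inequality $\dtran(\lambda x)\leq \lambda\dtran(x)$, valid for every $\lambda\geq 1$ and $x\geq 0$; this follows from concavity of $\dtran$ together with $\dtran(0)\geq 0$ via $\dtran(x)\geq \tfrac1\lambda\dtran(\lambda x)+(1-\tfrac1\lambda)\dtran(0)$. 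Applied with $\lambda=\ctransqr/2$ and $x=\ol yz$, this closes the estimate. The main conceptual step is recognizing that \cref{thm:main:param} is phrased with a $\max$ precisely to separate these two regimes --- a ``triangle'' regime handled by monotonicity, and a ``Cauchy--Schwarz'' regime handled by the concavity-scaling inequality --- which is what keeps the final constant at $\ctransqr$ rather than something like $\ctransqr+2$.
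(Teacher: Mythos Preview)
Your proof is correct and follows essentially the same route as the paper. The paper's argument differs only cosmetically: instead of a two-case analysis on whether $\abs{a-c}$ or $ra-sc$ dominates, it writes the bound directly as $2\,\ol qp\,\dtran\bigl(\max(\tfrac{\ctransqr}{2}\ol yz,\,\ol yz)\bigr)$, observes that $\ctransqr/2\ge 1$ collapses the max to $\tfrac{\ctransqr}{2}\ol yz$, and then applies the same concavity-scaling inequality $\dtran(\lambda x)\le\lambda\,\dtran(x)$ (their \cref{lmm:tranconcave}\ref{lmm:tranconcave:factor}); the reduction to $\setccod{3}$ via mollification and the additive-constant invariance is identical.
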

\begin{proof}[that \cref{thm:main:param} implies \cref{thm:strong}]\label{proof:paramImpliesStrong}
	Let $\tran \in \setccod3$.
	Using \eqref{eq:quad2lhs:simplification}, the metric version of \eqref{eq:thm:main:param} is 
	\begin{equation}\label{eq:strong:proof:1}
		\begin{aligned}
			&\tran(\ol yq)  - \tran(\ol yp) - \tran(\ol zq) + \tran(\ol zp) 
			\\&\leq 
			2 \,\ol qp\, \dtran\brOf{\max\brOf{\frac{\ol yq^2 - \ol yp^2 - \ol zq^2 + \ol zp^2}{2 \,\ol qp}, |\ol zp - \ol yp|}}
			\eqfs
		\end{aligned}
	\end{equation}
	Using \eqref{eq:strong:assu} and the triangle inequality, we bound the right-hand side of \eqref{eq:strong:proof:1}, by
	\begin{equation}
		2 \,\ol qp\, \dtran\brOf{\max\brOf{\frac{\ctransqr \, \ol yz}{2}, \ol yz}}\eqfs
	\end{equation}
	With \cref{lmm:tranconcave} \ref{lmm:tranconcave:factor} and $\ctransqr/2\geq 1$, we obtain \eqref{eq:strong}.
	
	To extend the result shown for $\tran\in\setccod3$ to $\tran\in\setcc$, we use \cref{lmm:cc:molli} and \cref{lmm:quad:limit} to remove the smoothness requirement, and \cref{lmm:quadconst} \ref{lmm:quadconst:constisquad} and \cref{lmm:quadstablin} \ref{lmm:quadstablin:add} to remove the requirement $\tran(0) = 0$.
\end{proof}
\cref{thm:main} follows from \cref{thm:strong} by fixing $\ctransqr = 2$.
The remaining part of the main proof, i.e., the proof of \cref{thm:main:param}, is given in the appendix section \ref{app:mainproof}. \cref{fig:proof:overview} gives an overview of how the different intermediate results presented above and below relate to each other. 
\begin{figure}
	\begin{center}
		\begin{tikzpicture}[node distance=2cm,>=stealth,thick]
			\tikzset{
				box/.style={
					draw,
					rectangle,
					minimum width=2.5cm,
					minimum height=0.7cm,
					text centered
				}
			}
			
			\node[box] (main) {\cref{thm:main}};
			\node[box, below of=main] (strong) {\cref{thm:strong}};
			\node[box, below of=strong] (mainParam) {\cref{thm:main:param}};
			\node[box, below of=mainParam] (firstreduction) {\cref{lmm:param:firstreduction}: Elimination of $r$};
			
			\node[box, anchor=east] (reduii) at (12,0) {\cref{lmm:param:reduii}: (ii)};
			\node[box, anchor=east] (redui1) at (12,-1)  {\cref{lmm:param:reduicGaGbGsc}: (i) $c\geq a\geq b \geq sc$};
			\node[box, anchor=east] (redui2) at (12, -2) {\cref{lmm:param:reduicGaGbLsc}: (i) $c\geq a\geq b \leq sc$};
			\node[box, anchor=east] (redui3) at (12, -3) {\cref{lmm:param:reduicGaLb}: (i) $c\geq a\leq b$};
			\node[box, anchor=east] (redui4) at (12, -4) {\cref{lmm:param:reduicLaLbscbLsc}: (i) $a\geq c$, $b\leq 2sc$, $sc\geq a-b$};
			\node[box, anchor=east] (redui5) at (12, -5) {\cref{lmm:param:reduicLabGsc}: (i) $a\geq c$, $b\geq 2sc$};
			\node[box, anchor=east] (redui6) at (12, -6) {\cref{lmm:param:reduicLaGbscbLsc}: (i) $a\geq c$, $b\leq 2sc$, $sc \leq a-b$};
			
			\draw[->] (mainParam) --node[anchor=west]{section \ref{ssec:remaining}} (strong);
			\draw[->] (strong) --node[anchor=west]{$L=2$} (main);
			\draw[->] (firstreduction) --node[anchor=west]{section \ref{sssec:firstReduImpliesMainParam}} (mainParam);
			
			\draw (4,0) -- (4,-6);
			\draw[] (reduii) -- (4, 0);
			\draw[] (redui1) -- (4, -1);
			\draw[] (redui2) -- (4, -2);
			\draw[] (redui3) -- (4, -3);
			\draw[] (redui4) -- (4, -4);
			\draw[] (redui5) -- (4, -5);
			\draw[->] (redui6.west) -- (firstreduction.east);
		\end{tikzpicture}
	\end{center}
	\caption{Overview of theorems and lemmas in the main proof.}\label{fig:proof:overview}
\end{figure}
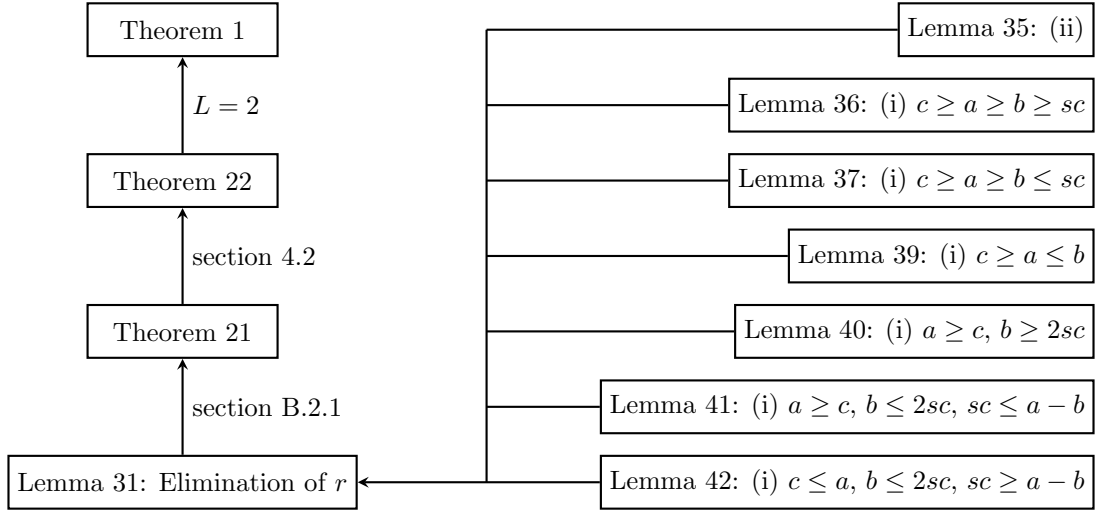

\section{Corollaries and Discussion}\label{sec:impl}
With \cref{thm:main} and \cref{thm:ipsymm}, we have shown new fundamental inequalities in metric spaces and inner product spaces that are related to the Cauchy--Schwarz and the Triangle inequalities. In this section, we discuss some corollaries of these results.
\subsection{Symmetries}
\begin{figure}
	\begin{center}
		\begin{tikzpicture}[thick]
			\coordinate (y) at (-1,0);
			\coordinate (q) at (7,0);
			\coordinate (z) at (4.5,3);
			\coordinate (p) at (1,4);
			
			\draw[colQuadSidePos] (y) -- node[below] {$\ol yq$} (q);
			\draw[colQuadSideNeg] (q) -- node[right] {$\ol zq$} (z);
			\draw[colQuadSidePos] (z) -- node[above] {$\ol zp$} (p);
			\draw[colQuadSideNeg] (p) -- node[left] {$\ol yp$} (y);
			\draw[colQuadDiag, dashed] (y) -- node[left] {$\ol yz$} (z);
			\draw[colQuadDiag, dotted] (q) -- node[below] {$\ol qp$} (p);
			
			\fill (y) circle[radius=2pt] node[below] {$y$};
			\fill (q) circle[radius=2pt] node[below] {$q$};
			\fill (z) circle[radius=2pt] node[above] {$z$};
			\fill (p) circle[radius=2pt] node[left] {$p$};
		\end{tikzpicture}
	\end{center}
	\caption{Four points as a quadrilateral. The sides of the quadrilateral show up on the left of the quadruple inequalities (the terms of opposite sides have the same sign); the diagonals form the right-hand side.}\label{fig:quadrilateral:symm}
\end{figure}
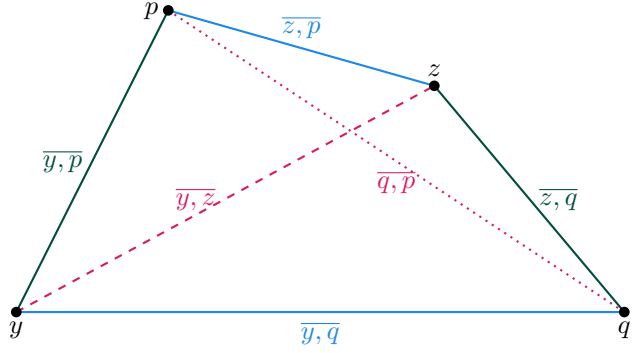
\cref{fig:quadrilateral:symm} illustrates the symmetries in quadruple inequalities. Sides of the same color contribute essentially in the same way in the inequality. In \eqref{eq:quadtran}, the diagonals of \cref{fig:quadrilateral:symm} come up in non-exchangeable terms. But they can be swapped in the assumption \eqref{eq:quad2}. 
Thus, if the conditions of \cref{thm:main} are fulfilled, we have, for $\tran\in\setcc$,
\begin{equation}
	\tran(\ol yq) - \tran(\ol yp) - \tran(\ol zq) + \tran(\ol zp) \leq 2 \min(\ol qp\,\dtran(\ol yz), \ol yz\, \dtran(\ol qp))
	\eqfs
\end{equation}
Furthermore, swapping $y$ and $z$ or $q$ and $p$ does not influence the right-hand side but changes the sign on the left-hand side. Thus, assuming
\begin{equation}
	\abs{\ol yq^2 - \ol yp^2 - \ol zq^2 + \ol zp^2} \leq 2\, \ol qp \, \ol yz
\end{equation}
we get
\begin{equation}
	\abs{\tran(\ol yq) - \tran(\ol yp) - \tran(\ol zq) + \tran(\ol zp)} \leq 2 \min(\ol qp\,\dtran(\ol yz), \ol yz\, \dtran(\ol qp))
	\eqfs
\end{equation}
Further bounds of the right-hand side are shown in the next subsection.
\subsection{Bounds for the Right-Hand Side}
\begin{corollary}\label{cor:rhsbound}
	Let $\tran\in\setcco$. Let $(\mc Q, d)$ be a metric space. Let $y,z,q,p\in\mc Q$. Assume
	\begin{equation}
		\ol yq^2 - \ol yp^2 - \ol zq^2 + \ol zp^2 \leq 2\, \ol qp \, \ol yz
		\eqfs
	\end{equation}
	Then the value 
	\begin{equation}
		\tran(\ol yq) - \tran(\ol yp) - \tran(\ol zq) + \tran(\ol zp)
	\end{equation}
	is bounded from above by all of the following values:
	\begin{enumerate}[label=(\roman*)]
		\item\label{cor:rhsbound:min} $2 \min(\ol qp,\, \ol yz) \dtran\brOf{\max(\ol qp,\, \ol yz)}$,
		\item\label{cor:rhsbound:geo} $2 \,\ol qp^\beta \ol yz^{1-\beta} \dtran\brOf{\ol qp^{1-\beta} \ol yz^\beta}$ for all $\beta\in[0,1]$,
		\item\label{cor:rhsbound:ari} $2 \br{\beta\,\ol qp + (1-\beta)\ol yz} \dtran\brOf{(1-\beta)\ol qp + \beta\ol yz}$ for all $\beta\in[0,1]$,
		\item\label{cor:rhsbound:sqrt} $2 \sqrt{\ol qp\, \ol yz} \dtran\brOf{\sqrt{\ol qp\, \ol yz}}$,
		\item\label{cor:rhsbound:avg} $\br{\ol qp + \ol yz} \dtran\brOf{\frac{\ol qp + \ol yz}2}$,
		\item\label{cor:rhsbound:tran} $4\tran\brOf{\sqrt{\ol qp\, \ol yz}}$,
		\item\label{cor:rhsbound:sum} $4\tran\brOf{\frac{\ol qp + \ol yz}2}$,
		\item\label{cor:rhsbound:transum} $2\tran(\ol qp) + 2\tran(\ol yz)$.
	\end{enumerate}
\end{corollary}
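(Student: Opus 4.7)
The plan is to reduce all eight bounds to two ``base'' inequalities obtained from \cref{thm:main} and then extract the remaining items via properties of $\tran\in\setcco$. The crucial observation is that the hypothesis $\ol yq^2 - \ol yp^2 - \ol zq^2 + \ol zp^2 \leq 2\,\ol qp\,\ol yz$ is invariant under the relabelling $(y,z,q,p)\mapsto (q,p,y,z)$ that swaps the two ``diagonals,'' and so is the quantity $D := \tran(\ol yq)-\tran(\ol yp)-\tran(\ol zq)+\tran(\ol zp)$. Applying \cref{thm:main} in the original labelling and in the swapped one therefore yields
\begin{equation*}
	(\text{A})\ D \leq 2\,\ol qp\,\dtran(\ol yz)
	\qquad\text{and}\qquad
	(\text{B})\ D \leq 2\,\ol yz\,\dtran(\ol qp)\eqfs
\end{equation*}
Writing $u := \ol qp$ and $v := \ol yz$, bound \ref{cor:rhsbound:min} is just the pointwise minimum of (A) and (B): take (A) when $u\leq v$ and (B) otherwise.

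For \ref{cor:rhsbound:ari}, I would take the convex combination $\beta\cdot(\text{A}) + (1-\beta)\cdot(\text{B})$, yielding $D \leq 2\beta u\,\dtran(v) + 2(1-\beta)v\,\dtran(u)$, and compare this with the claimed bound using concavity of $\dtran$ in the form $\dtran((1-\beta)u+\beta v) \geq (1-\beta)\dtran(u) + \beta\dtran(v)$. Multiplying by $2(\beta u + (1-\beta)v)$ and expanding, the desired inequality reduces algebraically to $2\beta(1-\beta)(u-v)(\dtran(u)-\dtran(v))\geq 0$, which holds because $\dtran$ is nondecreasing. For \ref{cor:rhsbound:geo}, I would instead use the scaling bound $\dtran(sx)\geq s\,\dtran(x)$ valid for $s\in[0,1]$, a direct consequence of concavity of $\dtran$ together with $\dtran(0)\geq 0$ (both guaranteed by \cref{lmm:ccSimpleProps}): in the case $u\leq v$, take $x=v$ and $s = (u/v)^{1-\beta}\leq 1$ so that $\dtran(u^{1-\beta}v^\beta) \geq (u/v)^{1-\beta}\dtran(v)$; multiplying by $2u^\beta v^{1-\beta}$ shows the bound in \ref{cor:rhsbound:geo} is at least $2u\,\dtran(v)\geq D$ by (A). The case $u\geq v$ is symmetric via (B). Items \ref{cor:rhsbound:sqrt} and \ref{cor:rhsbound:avg} are the $\beta=1/2$ specializations of \ref{cor:rhsbound:geo} and \ref{cor:rhsbound:ari}.

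Finally, \ref{cor:rhsbound:tran}--\ref{cor:rhsbound:transum} are one-variable consequences of $\tran\in\setcco$. Since $\dtran$ is concave with $\dtran(0)\geq 0$, the chord inequality gives $\dtran(t)\geq (t/x)\dtran(x)$ for $t\in[0,x]$; integrating from $0$ to $x$ yields $\tran(x)\geq (x/2)\dtran(x)$, i.e.\ $x\,\dtran(x)\leq 2\tran(x)$. Applying this with $x=\sqrt{uv}$ promotes \ref{cor:rhsbound:sqrt} to \ref{cor:rhsbound:tran}, and with $x=(u+v)/2$ promotes \ref{cor:rhsbound:avg} to \ref{cor:rhsbound:sum}. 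The chain \ref{cor:rhsbound:tran} $\leq$ \ref{cor:rhsbound:sum} $\leq$ \ref{cor:rhsbound:transum} is then AM--GM with monotonicity of $\tran$ followed by midpoint convexity of $\tran$. No step is a serious obstacle; the only genuinely new ingredient is the diagonal-symmetry observation that unlocks bound (B), after which everything is either an interpolation between (A) and (B) or a standard property of functions in $\setcco$.
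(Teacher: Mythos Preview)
Your proof is correct and follows essentially the same route as the paper: obtain the two base bounds (A) and (B) by applying \cref{thm:main} to $(y,z,q,p)$ and to $(q,p,y,z)$, then derive all eight items from properties of $\tran\in\setcco$ (concavity and nonnegativity of $\dtran$, the bound $x\dtran(x)\leq 2\tran(x)$, AM--GM, and convexity of $\tran$). The only routing difference is in item \ref{cor:rhsbound:ari}: the paper chains $\min(u,v)\dtran(\max(u,v)) \leq u^\beta v^{1-\beta}\dtran(u^{1-\beta}v^\beta) \leq (\beta u + (1-\beta)v)\dtran((1-\beta)u+\beta v)$ and thus obtains \ref{cor:rhsbound:ari} from \ref{cor:rhsbound:geo} via AM--GM, whereas you bypass \ref{cor:rhsbound:geo} and get \ref{cor:rhsbound:ari} directly from the convex combination $\beta\cdot(\text{A})+(1-\beta)\cdot(\text{B})$ together with the rearrangement $(u-v)(\dtran(u)-\dtran(v))\geq 0$; both are equally short and valid.
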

\begin{proof}
	We first apply \cref{thm:main} twice, to $(y,z,q,p)$ and  to $(q,p,y,z)$, to obtain
	\begin{equation}\label{eq:cor:rhsbound:min}
		\tran(\ol yq) - \tran(\ol yp) - \tran(\ol zq) + \tran(\ol zp) \leq 2 \min(\ol qp,\, \ol yz) \dtran\brOf{\max(\ol qp,\, \ol yz)}
		\eqfs
	\end{equation}
	This shows \ref{cor:rhsbound:min}.
	Let $a,b\in\Rp$ and $\beta\in [0,1]$. Next we use \cref{lmm:tranconcave} \ref{lmm:tranconcave:factor} and the weighted arithmetic--geometric mean inequality, 
	\begin{align*}
		\min(a,b) \dtran(\max(a,b)) 
		&\leq 
		a^\beta b^{1-\beta} \dtran(a^{1-\beta} b^\beta)
		\\&\leq 
		\br{\beta a + (1-\beta)b} \dtran((1-\beta)a + \beta b)
		\eqfs
	\end{align*}
	Applying these inequalities to \eqref{eq:cor:rhsbound:min} shows \ref{cor:rhsbound:geo} and \ref{cor:rhsbound:ari}, and their special cases \ref{cor:rhsbound:sqrt} and \ref{cor:rhsbound:avg}, where $\beta=1/2$.
	By \cref{lmm:ccdiff} \ref{lmm:ccdiff:tight} with $y=0$, we have 
	\begin{equation}\label{eq:xdtrantran}
		x\dtran(x) \leq 2 \tran(x)
	\end{equation}
	for all $x\in\Rp$. Thus,
	\begin{align*}
		\min(a,b) \dtran(\max(a,b)) 
		&\leq 
		\sqrt{a b} \,\dtran(\sqrt{a b})
		\\&\leq 
		2\tran(\sqrt{a b})
		\eqcm
	\end{align*}
	which yields \ref{cor:rhsbound:tran}. The remaining parts \ref{cor:rhsbound:sum} and \ref{cor:rhsbound:transum}, can be obtained using \eqref{eq:xdtrantran} and Jensen's inequality:
	\begin{align*}
		\min(a,b) \dtran(\max(a,b)) 
		&\leq 
		\frac{a+b}2 \dtran\brOf{\frac{a+b}2}
		\\&\leq 
		2\tran\brOf{\frac{a+b}2}
		\\&\leq 
		\tran(a) + \tran(b)
		\eqfs
	\end{align*}
\end{proof}
\subsection{Corollaries for Special Cases}
We apply \cref{thm:main}, \cref{thm:ipsymm}, \cref{prp:powerinequ}, and \cref{cor:rhsbound} for a triple of points (a quadruple of points with two identical points), on the real line, and for parallelograms in inner product spaces to demonstrate the main results.
\begin{corollary}[For three points]
	Let $(\mc Q, d)$ be a metric space. Let $y, q, p \in \mc Q$.
	\begin{enumerate}[label=(\roman*)]
		\item Let $\tran\in\setcco$. Then
		\begin{equation}\label{cor:three:tran}
			\tran(\ol yq) - \tran(\ol yp) + \tran(\ol qp)
			\leq 2 \, \ol qp \, \dtran(\ol yq) 
			\eqfs
		\end{equation}
		\item Let $\alpha\in[1,2]$. Then
		\begin{equation}\label{cor:three:pow}
			\ol yq^\alpha - \ol yp^\alpha + \ol qp^\alpha
			\leq \alpha 2^{2-\alpha} \, \ol qp \, \ol yq^{\alpha-1}
			\eqfs
		\end{equation}
	\end{enumerate}
\end{corollary}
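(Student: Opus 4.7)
The plan is to obtain both parts by collapsing the four-point setting of the main results to three points via the substitution $z = q$. Under this substitution one pair of vertices coincides, which kills a term on the left-hand side (using $\tran(0) = 0$) and reduces the $\tran_2$-quadruple hypothesis \eqref{eq:quad2} to the ordinary reverse triangle inequality, which is free in any metric space.

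For part \ref{cor:three:tran}, I would apply \cref{thm:main} to the quadruple $(y, q, q, p)$. Since $\tran\in\setcco$ gives $\tran(0)=0$, and $\ol zq = 0$ when $z = q$, the left-hand side of \eqref{eq:main} becomes $\tran(\ol yq) - \tran(\ol yp) + \tran(\ol qp)$, while $\ol yz = \ol yq$ turns the right-hand side into $2\,\ol qp\,\dtran(\ol yq)$, as required. The hypothesis \eqref{eq:quad2} specializes to
\begin{equation*}
    \ol yq^2 + \ol qp^2 - \ol yp^2 \leq 2\,\ol qp\,\ol yq,
\end{equation*}
equivalently $(\ol yq - \ol qp)^2 \leq \ol yp^2$, i.e.\ $|\ol yq - \ol qp| \leq \ol yp$, which is just the reverse triangle inequality for the triple $(y,q,p)$.

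For part \ref{cor:three:pow}, the same substitution $z = q$ works, but I would apply \cref{prp:powerinequ} rather than \cref{thm:main}, so that the sharper constant $\ctranobest{\tran_\alpha} = 2^{2-\alpha}$ is retained instead of the universal $2$ from \cref{thm:main}. Plugging $\dtran_\alpha(x) = \alpha x^{\alpha-1}$ into the resulting three-point inequality gives exactly the asserted bound $\alpha\, 2^{2-\alpha}\,\ol qp\,\ol yq^{\alpha-1}$. The hypothesis to verify is identical to the one in part \ref{cor:three:tran}.

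No real obstacle is expected, as the corollary is a direct specialization of stronger earlier results. The only subtle point is that \cref{thm:main} and \cref{prp:powerinequ} must remain valid when two of the four points coincide; this is fine since \cref{thm:main} as stated imposes no distinctness requirement, and the paper has already extended the quadruple-transformation framework to arbitrary (not necessarily distinct) quadruples via the convention $\tran(0) := \lim_{x\searrow 0}\tran(x)$.
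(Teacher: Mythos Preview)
Your proposal is correct and matches the paper's own proof essentially line for line: both set $z=q$, verify the $\tran_2$-quadruple hypothesis via the reverse triangle inequality $|\ol yq-\ol qp|\le\ol yp$, and then invoke \cref{thm:main} for part~\ref{cor:three:tran} and \cref{prp:powerinequ} for part~\ref{cor:three:pow}. Your additional remark about coinciding points is a valid observation but not needed here, since the hypothesis is checked directly and \cref{thm:main} is stated for arbitrary (not necessarily distinct) points.
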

\begin{proof}
	By the triangle inequality, $\abs{\ol yq - \ol qp} \leq \ol yp$. After squaring this inequality, we obtain \eqref{eq:quad2} with $z=q$, which is
	\begin{equation}
		\ol yq^2 - \ol yp^2 + \ol qp^2 \leq 2 \,\ol qp \, \ol yq
		\eqfs
	\end{equation}
	Thus, \cref{thm:main} implies \eqref{cor:three:tran} and \cref{prp:powerinequ} implies \eqref{cor:three:pow}.
\end{proof}
\begin{corollary}[On the real line]
	Let $a,b,c\in\Rp$.
	\begin{enumerate}[label=(\roman*)]
		\item Let $\tran\in\setcco$. Then
		\begin{align}
			\label{eq:real:quad}
			\tran(a+b+c) - \tran(a+b) - \tran(b+c) + \tran(b) 
			&\leq 2 c \dtran(a) 
			\\
			\label{eq:real:symm}
			\tran(a+b+c) - \tran(a+b) - \tran(b+c) + \tran(b)  &\leq \tran(c) + \tran(a)
			\eqfs
		\end{align}
		\item Let $\alpha\in[1,2]$. Then
		\begin{align}\label{eq:real:pow}
			(a+b+c)^\alpha - (a+b)^\alpha - (b+c)^\alpha + b^\alpha
			&\leq \alpha 2^{2-\alpha} c a^{\alpha-1} 
			\\
			\label{eq:real:powsymm}
			(a+b+c)^\alpha - (a+b)^\alpha - (b+c)^\alpha + b^\alpha
			&\leq c^\alpha + a^\alpha
			\eqfs
		\end{align}
	\end{enumerate}
\end{corollary}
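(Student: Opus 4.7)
The plan is to realize all four expressions as the six pairwise distances of four suitably chosen points on the real line, and then invoke \cref{thm:main}, \cref{thm:ipsymm}, and \cref{prp:powerinequ} directly. Specifically, I would place $y := 0$, $z := a$, $p := a+b$, and $q := a+b+c$ in $(\R, |\cdot|)$. A quick check gives $\ol yq = a+b+c$, $\ol yp = a+b$, $\ol zq = b+c$, $\ol zp = b$, $\ol qp = c$, and $\ol yz = a$, which is exactly the pattern that appears in \eqref{eq:real:quad}--\eqref{eq:real:powsymm}.

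Next, I would verify the Cauchy--Schwarz-type hypothesis \eqref{eq:quad2} for this quadruple. Expanding,
\begin{equation*}
    (a+b+c)^2 - (a+b)^2 - (b+c)^2 + b^2 = 2ac = 2\,\ol qp\,\ol yz\eqcm
\end{equation*}
so in fact equality holds. Hence the hypothesis of \cref{thm:main} is satisfied, and applying it for $\tran \in \setcco$ yields \eqref{eq:real:quad} directly. Since $\R$ is an inner product space, \cref{thm:ipsymm} applies to the same four points and produces \eqref{eq:real:symm}.

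For part (ii), I would use \cref{prp:powerinequ}, which says that for $\alpha \in [1,2]$ the function $\tran_\alpha(x) = x^\alpha$ is a quadruple transformation with quadruple constant $\ctranobest{\tran_\alpha} = 2^{2-\alpha}$; noting $\dtran_\alpha(a) = \alpha a^{\alpha-1}$, the quadruple inequality applied to our four collinear points becomes
\begin{equation*}
    (a+b+c)^\alpha - (a+b)^\alpha - (b+c)^\alpha + b^\alpha \leq 2^{2-\alpha}\cdot c \cdot \alpha a^{\alpha-1}\eqcm
\end{equation*}
which is \eqref{eq:real:pow}. Since $\tran_\alpha \in \setcco$ for $\alpha \in [1,2]$, another application of \cref{thm:ipsymm} gives \eqref{eq:real:powsymm}.

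There is essentially no obstacle: the only creative step is the choice of the collinear configuration, and once made, every inequality in the corollary reduces to plugging the six distances into a previously proved result. The fact that \eqref{eq:quad2} holds with equality is what allows us to apply \cref{thm:main} with no extra assumption beyond $a,b,c \geq 0$.
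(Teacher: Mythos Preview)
Your proposal is correct and follows essentially the same route as the paper: the same collinear configuration $y=0$, $z=a$, $p=a+b$, $q=a+b+c$ is chosen, and then \cref{thm:main}, \cref{thm:ipsymm}, and \cref{prp:powerinequ} are invoked for the respective inequalities. Your explicit verification that \eqref{eq:quad2} holds with equality is a nice addition; the paper simply notes that \eqref{eq:quad2} is satisfied on $(\R,|\cdot-\cdot|)$ in general.
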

\begin{proof}
	In $(\R, \abs{\cdot - \cdot})$, \eqref{eq:quad2} is fulfilled. Choose $y = 0$, $z = a$, $p = a+b$, $q=a+b+c$ and apply \cref{thm:main} and \cref{prp:powerinequ} to obtain \eqref{eq:real:quad}. For \eqref{eq:real:symm}, apply \cref{thm:ipsymm}. To get \eqref{eq:real:pow}, use \cref{prp:powerinequ}. The equation \eqref{eq:real:powsymm} is \eqref{eq:real:symm} with $\tran = \tran_\alpha = (x\mapsto x^\alpha)$.
\end{proof}
\begin{corollary}[In inner product spaces]\label{cor:ip}
	Let $(V, \ip{\cdot}{\cdot})$ be an inner product space with induced norm $\normof{\cdot}$. Let $u, v \in V$.
	\begin{enumerate}[label=(\roman*)]
		\item\label{cor:ip:a:tran} Let $\tran\in\setcco$. Then
		\begin{equation}
			\tran(\normof{u}) - \tran(\normof{v})
			\leq \normof{u-v} \dtran(\normof{u+v}) 
			\eqfs
		\end{equation}
		\item\label{cor:ip:a:pow} Let $\alpha\in[1,2]$. Then
		\begin{equation}
			\normof{u}^\alpha - \normof{v}^\alpha
			\leq \alpha 2^{1-\alpha}  \normof{u-v} \normof{u+v}^{\alpha-1}
			\eqfs
		\end{equation}
		\item\label{cor:ip:b:tran} Let $\tran\in\setcco$. Then
		\begin{align}
			\tran(\normof{u+v}) + \tran(\normof{u-v}) &\leq  2 \tran(\normof{u}) + 2 \tran(\normof{v}) 
			\eqfs
		\end{align}
		\item\label{cor:ip:b:pow} Let $\alpha\in[1,2]$. Then
		\begin{equation}
			\normof{u+v}^\alpha + \normof{u-v}^\alpha
			\leq 2 \normof{u}^\alpha + 2 \normof{v}^\alpha
			\eqfs
		\end{equation}
	\end{enumerate}
\end{corollary}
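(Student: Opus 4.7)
The plan is to prove all four assertions by applying the main theorems of the paper to carefully chosen $4$-point configurations in $V$, exploiting the fact that any inner product space trivially satisfies the Cauchy--Schwarz quadruple condition \eqref{eq:quad2} via the identity $\ip{u+v}{u-v} = \normof{u}^2 - \normof{v}^2$. Parts \ref{cor:ip:a:pow} and \ref{cor:ip:b:pow} will follow from the $\tran$-statements \ref{cor:ip:a:tran} and \ref{cor:ip:b:tran} (or, for \ref{cor:ip:a:pow}, directly from \cref{prp:powerinequ}) once one notes that $\tran_\alpha \in \setcco$ for $\alpha\in[1,2]$, since its derivative $\alpha x^{\alpha-1}$ is nonnegative, concave, and continuous.

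For \ref{cor:ip:a:tran} and \ref{cor:ip:a:pow}, I would set $y := 0$, $z := u+v$, $q := u$, $p := v$. With this choice a short computation gives $\ol{y}{q} = \ol{z}{p} = \normof{u}$, $\ol{y}{p} = \ol{z}{q} = \normof{v}$, $\ol{q}{p} = \normof{u-v}$, and $\ol{y}{z} = \normof{u+v}$, so the left-hand side of \eqref{eq:main} collapses to $2\tran(\normof{u}) - 2\tran(\normof{v})$. Verifying the hypothesis \eqref{eq:quad2} in this configuration amounts to the one-line Cauchy--Schwarz estimate $\normof{u}^2 - \normof{v}^2 = \ip{u+v}{u-v} \leq \normof{u+v}\,\normof{u-v}$. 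Applying \cref{thm:main} and dividing by $2$ then produces \ref{cor:ip:a:tran}; substituting $\tran = \tran_\alpha$ together with the sharper constant $2^{2-\alpha}$ from \cref{prp:powerinequ} produces \ref{cor:ip:a:pow}.

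For \ref{cor:ip:b:tran}, the strategy is to find a quadruple for which \cref{thm:ipsymm} contributes $\tran(\normof{u+v}) + \tran(\normof{u-v})$ on its left and $2\tran(\normof{u}) + 2\tran(\normof{v})$ on its right, where two of the four summands $\tran(\normof{u}), \tran(\normof{v})$ appear as the positive terms $\tran(\ol{q}{p}) + \tran(\ol{y}{z})$ and the other two originate from the negative terms $-\tran(\ol{y}{p}) - \tran(\ol{z}{q})$ of the left-hand side (then moved to the right). A brief search lands on the (asymmetric) quadruple $y := u$, $z := 0$, $q := -v$, $p := u - v$, which satisfies $\ol{y}{q} = \normof{u+v}$, $\ol{z}{p} = \normof{u-v}$, $\ol{y}{p} = \ol{z}{q} = \normof{v}$, and $\ol{q}{p} = \ol{y}{z} = \normof{u}$. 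Feeding these six distances into \cref{thm:ipsymm} is then literally \ref{cor:ip:b:tran}, and \ref{cor:ip:b:pow} follows as above.

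The only genuinely non-routine step will be guessing this asymmetric quadruple in \ref{cor:ip:b:tran}: the symmetric candidates one first tries --- for instance $y = u$, $z = -u$, $q = v$, $p = -v$ --- yield \cref{thm:ipsymm} with the wrong sign pattern and bound only the difference $|\tran(\normof{u+v}) - \tran(\normof{u-v})|$ in terms of $\tran(2\normof{u}) + \tran(2\normof{v})$, which is of no use for obtaining a symmetric parallelogram-type inequality. Once the correct asymmetric configuration has been identified, everything else reduces to a routine evaluation of six norms together with a direct application of \cref{thm:ipsymm} and \cref{thm:main}.
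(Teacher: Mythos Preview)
Your proposal is correct and follows essentially the same strategy as the paper: choose a four-point configuration in $V$ and apply \cref{thm:main}, \cref{prp:powerinequ}, and \cref{thm:ipsymm}. For parts \ref{cor:ip:a:tran} and \ref{cor:ip:a:pow} your quadruple $y=0,\,z=u+v,\,q=u,\,p=v$ is exactly the paper's; for parts \ref{cor:ip:b:tran} and \ref{cor:ip:b:pow} the paper uses the slightly simpler configuration $y=0,\,z=v,\,q=u+v,\,p=u$ (which avoids introducing $-v$ and $u-v$), but your choice $y=u,\,z=0,\,q=-v,\,p=u-v$ works equally well and yields the identical six distances up to swapping the roles of $u$ and $v$.
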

\begin{proof}
	For the first two parts, set $y = 0$, $z = u+v$, $q = u$, $p = v$; for the last two parts, set $y = 0$, $z = v$, $q = u+v$, $p = u$. Then apply \cref{thm:main}, \cref{thm:ipsymm}, and \cref{prp:powerinequ}.
\end{proof}
\begin{remark}\label{rem:subopti:euclid}\mbox{ }
	Recall the \textit{parallelogram law}: Let $(V, \ip{\cdot}{\cdot})$ be an inner product space with induced norm $\normof{\cdot}$. Let $u, v \in V$. Then
	\begin{equation}
		\normof{u+v}^2 + \normof{u-v}^2 \leq 2 \normof{u}^2 + 2\normof{v}^2\eqcm
	\end{equation}
	which is also true with equality.
	Thus, we can say that \cref{cor:ip} generalizes the parallelogram law.
\end{remark}
\subsection{Quadruple Constants and Further Research}
We summarize our findings on the quadruple inequality and quadruple constant for symmetric and non-symmetric right-hand side, and for power functions as well as nondecreasing, convex functions with concave derivative.

Let $A \subset \Rpp^6$ be the set of points $x = (x_1, \dots, x_6)$ such that there is a metric space $(\{y_x,z_x,q_x,p_x\}, d)$ with
\begin{equation}
	x_1 = \ol {y_x}{q_x}
	\eqcm\quad
	x_2 = \ol {y_x}{p_x}
	\eqcm\quad
	x_3 = \ol {z_x}{q_x}
	\eqcm\quad
	x_4 = \ol {z_x}{p_x}
	\eqcm\quad
	x_5 = \ol {q_x}{p_x}
	\eqcm\quad
	x_6 = \ol {y_x}{z_x}
\end{equation} 
that fulfills \eqref{eq:quad2}.
For $\alpha\in\Rpp$, define
\begin{align*}
	L_\alpha &:= \sup_{x\in A}  \frac{\ol {y_x}{q_x}^\alpha - \ol {y_x}{p_x}^\alpha - \ol {z_x}{q_x}^\alpha + \ol {z_x}{p_x}^\alpha}{\ol {q_x}{p_x} \,\ol {y_x}{z_x}^{\alpha-1}}\eqcm\\
	K_\alpha &:= \sup_{x\in A}  \frac{\ol {y_x}{q_x}^\alpha - \ol {y_x}{p_x}^\alpha - \ol {z_x}{q_x}^\alpha + \ol {z_x}{p_x}^\alpha}{\ol {q_x}{p_x}^{\frac\alpha2} \,\ol {y_x}{z_x}^{\frac\alpha2}}\eqcm\\
	J_\alpha &:= \sup_{x\in A}  \frac{\ol {y_x}{q_x}^\alpha - \ol {y_x}{p_x}^\alpha - \ol {z_x}{q_x}^\alpha + \ol {z_x}{p_x}^\alpha}{\ol {q_x}{p_x}^\alpha + \ol {y_x}{z_x}^\alpha}\eqfs
\end{align*}
Furthermore, define
\begin{align*}
	L_{\setcco} &:= \sup_{\tran\in\setcco} \sup_{x\in A} \frac{\tran(\ol {y_x}{q_x}) - \tran(\ol {y_x}{p_x}) - \tran(\ol {z_x}{q_x}) + \tran(\ol {z_x}{p_x})}{\ol {q_x}{p_x} \,\dtran(\ol {y_x}{z_x})}\eqcm\\
	K_{\setcco} &:= \sup_{\tran\in\setcco} \sup_{x\in A} \frac{\tran(\ol {y_x}{q_x}) - \tran(\ol {y_x}{p_x}) - \tran(\ol {z_x}{q_x}) + \tran(\ol {z_x}{p_x})}{\tran(\sqrt{\ol {q_x}{p_x}\,\ol {y_x}{z_x}})}\eqcm\\
	J_{\setcco} &:= \sup_{\tran\in\setcco} \sup_{x\in A} \frac{\tran(\ol {y_x}{q_x}) - \tran(\ol {y_x}{p_x}) - \tran(\ol {z_x}{q_x}) + \tran(\ol {z_x}{p_x})}{\tran(\ol {q_x}{p_x}) + \tran(\ol {y_x}{z_x})}\eqfs
\end{align*}
\begin{proposition}\label{prp:summary}\mbox{ }
	\begin{enumerate}[label=(\roman*)]
		\item $L_\alpha = \alpha 2^{2-\alpha}$ for $\alpha\in[1,2]$ and $L_\alpha = \infty$ for $\alpha\in\Rpp\setminus[1,2]$.
		\item $K_\alpha = 2$ for $\alpha\in(0,1]$, $K_\alpha \in [2, \alpha 2^{2-\alpha}]$ for $\alpha\in[1,2]$, $K_\alpha = \infty$ for $\alpha\in(2,\infty)$. 
		\item $J_\alpha = 1$ for $\alpha\in(0,1]$, $J_\alpha \in [1, \alpha 2^{1-\alpha}]$ for $\alpha\in[1,2]$, $J_\alpha = \infty$ for $\alpha\in(2,\infty)$.
		\item $L_{\setcco} = 2$.
		\item $K_{\setcco} \in [2, 4]$.
		\item $J_{\setcco} \in [1, 2]$.
	\end{enumerate}
\end{proposition}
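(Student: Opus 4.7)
My plan is to pair each supremum in \cref{prp:summary} with an upper-bound theorem already available in the paper and to match it with a simple explicit example on the real line. Throughout, the key unifying construction for the lower bounds is the collinear quadruple $y=0$, $p=\delta$, $z=b$, $q=b+\delta$ with $0<\delta<b$ in $\R$. A direct computation shows that all six distances are strictly positive and that \eqref{eq:quad2} holds with equality (both sides equal $2b^2$), so this quadruple lies in $A$. For it one has $\ol qp=\ol yz=b$, and the $\tran$-numerator $\tran(b+\delta)+\tran(b-\delta)-2\tran(\delta)$ tends to $2\tran(b)$ as $\delta\searrow 0$.

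For part (i), I would simply invoke \cref{prp:powerinequ}: since $\dtran_\alpha(x)=\alpha x^{\alpha-1}$, the supremum defining $L_\alpha$ equals $\alpha\cdot\ctranobest{\tran_\alpha}$, which is $\alpha\cdot 2^{2-\alpha}$ for $\alpha\in[1,2]$ and $+\infty$ otherwise. For the upper bounds in (ii) and (iii) I would argue in two regimes. For $\alpha\in(0,1]$, $d^\alpha$ is itself a metric, so two applications of its triangle inequality yield $\ol yq^\alpha-\ol yp^\alpha+\ol zp^\alpha-\ol zq^\alpha\le 2\,\ol qp^\alpha$ and, by symmetry, also $\le 2\,\ol yz^\alpha$; the elementary chain $2\min(a,b)\le 2\sqrt{ab}\le a+b$ then yields $K_\alpha\le 2$ and $J_\alpha\le 1$. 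For $\alpha\in[1,2]$, I would apply part (i) to the original quadruple and to the quadruple obtained by swapping the pair $\{y,z\}$ with $\{q,p\}$---a swap that leaves both the numerator and condition \eqref{eq:quad2} invariant. Taking the geometric mean of the two right-hand sides gives the $K_\alpha$-bound
\[
\tran_\alpha(\ol yq)-\tran_\alpha(\ol yp)-\tran_\alpha(\ol zq)+\tran_\alpha(\ol zp)\le \alpha\,2^{2-\alpha}\,\ol qp^{\alpha/2}\,\ol yz^{\alpha/2},
\]
and one further AM-GM step on the denominator yields the $J_\alpha$-bound $\alpha\,2^{1-\alpha}$. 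For $\alpha>2$ I would use the configuration $y=0$, $z=1$, $p=T$, $q=T+\varepsilon$ in $\R$, which also satisfies \eqref{eq:quad2} with equality and whose numerator is asymptotically $\alpha(\alpha-1)\,\varepsilon\,T^{\alpha-2}$; letting $T\to\infty$ (for $J_\alpha$) or $\varepsilon\searrow 0$ (for $K_\alpha$) drives the ratio to infinity. The matching lower bounds $K_\alpha\ge 2$ and $J_\alpha\ge 1$ for all $\alpha\in(0,2]$ come from evaluating the cornerstone example with $\tran=\tran_\alpha$.

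For the $\setcco$-items, the upper bounds are immediate from the main results: $L_{\setcco}\le 2$ is \cref{thm:main}, while $K_{\setcco}\le 4$ and $J_{\setcco}\le 2$ are \cref{cor:rhsbound} \ref{cor:rhsbound:tran} and \cref{cor:rhsbound} \ref{cor:rhsbound:transum}, respectively. The matching lower bounds $L_{\setcco}\ge 2$, $K_{\setcco}\ge 2$, $J_{\setcco}\ge 1$ are obtained by specializing to $\tran=\tran_1\in\setcco$ and invoking the values $L_1=2$, $K_1=2$, $J_1=1$ established in the earlier parts. The only delicate point I anticipate is the geometric-mean step for $\alpha\in[1,2]$: the numerator may a priori be negative, but since it is being bounded above by a non-negative quantity the inequality remains valid in that case; the remaining work reduces to the routine verification that the specified quadruples lie in $A$.
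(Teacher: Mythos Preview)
Your proposal is correct and follows essentially the same strategy as the paper: part~(i) via \cref{prp:powerinequ}; the upper bounds for $K_\alpha,J_\alpha$ on $[1,2]$ via the symmetry swap $(y,z)\leftrightarrow(q,p)$ combined with a geometric-mean/AM--GM step; the $\alpha\in(0,1]$ case via the metric-preserving property of $x\mapsto x^\alpha$; and the $\setcco$ bounds via \cref{thm:main} and \cref{cor:rhsbound}.

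The only noteworthy differences are in the explicit constructions. For the lower bounds $K_\bullet\ge 2$, $J_\bullet\ge 1$ the paper simply sets $y=p$, $z=q$ (a degenerate quadruple with two vanishing distances), whereas your limiting collinear quadruple $y=0$, $p=\delta$, $z=b$, $q=b+\delta$ keeps all six distances strictly positive and hence stays inside $A\subset\Rpp^6$; this is arguably the cleaner formulation given the definition of $A$. For $\alpha>2$ the paper uses a thin rectangle in $\R^2$ and l'H\^opital, while you use a collinear configuration in $\R$ and a direct Taylor estimate; both achieve the same blow-up. Your handling of the possibly negative numerator in the geometric-mean step is also correct.
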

\begin{table}
	\begin{center}
		\begin{tabular}{c||c|c|c|c}
			$\bullet$ & $\alpha\in(0,1]$ & $\alpha\in[1,2]$ & $\alpha\in(2,\infty)$ & $\setcco$\\
			\hline
			\hline
			$L_\bullet$ & $\infty$ & $\alpha 2^{2-\alpha}$ & $\infty$ & $2$ \\
			$K_\bullet$ & $2$ & $[2, \alpha 2^{2-\alpha}]$ & $\infty$ & $[2, 4]$ \\
			$J_\bullet$ & $1$ & $[1, \alpha 2^{1-\alpha}]$ & $\infty$ & $[1, 2]$ \\
		\end{tabular}
	\end{center}
	\caption{Range of quadruple constants shown in \cref{prp:summary}.}\label{table:prp:summary}
\end{table}
\begin{figure}
	\begin{center}
		\begin{tikzpicture}[thick]
			\coordinate (y) at (0,0);
			\coordinate (q) at (4,1.5);
			\coordinate (z) at (0,1.5);
			\coordinate (p) at (4,0);
			
			\draw[colQuadSidePos] (y) -- node[near start,above,sloped] {$\sqrt{\epsilon^2 + 1}$} (q);
			\draw[colQuadSideNeg] (q) -- node[above] {$1$} (z);
			\draw[colQuadSidePos] (z) -- node[near end,above,sloped] {$\sqrt{\epsilon^2+1}$} (p);
			\draw[colQuadSideNeg] (p) -- node[below] {$1$} (y);
			\draw[colQuadDiag] (y) -- node[left] {$\epsilon$} (z);
			\draw[colQuadDiag] (q) -- node[right] {$\epsilon$} (p);
			
			\fill (y) circle[radius=2pt] node[below] {$y$};
			\fill (q) circle[radius=2pt] node[right] {$q$};
			\fill (z) circle[radius=2pt] node[left] {$z$};
			\fill (p) circle[radius=2pt] node[below] {$p$};
		\end{tikzpicture}
	\end{center}
	\caption{Construction in the proof of \cref{prp:summary}.}\label{fig:quadriRect}
\end{figure}
\begin{proof}
	From \cref{prp:powerinequ}, we know $L_\alpha = \alpha 2^{2-\alpha}$ for $\alpha\in[1,2]$ and $L_\alpha = \infty$ for $\alpha\in\Rpp\setminus[1,2]$. As direct consequences, we obtain $K_\alpha \leq \alpha 2^{2-\alpha}$ and $J_\alpha \leq \alpha 2^{1-\alpha}$ for $\alpha\in[1,2]$.
	
	If we set $y = p$ and $z = q$ and assume $\ol yq = a \in \Rpp$, we have 
	\begin{align*}
		\tran(\ol yq) - \tran(\ol yp) - \tran(\ol zq) + \tran(\ol zp) &= 2 \tran(a)\eqcm\\
		\tran(\sqrt{\ol {q}{p}\,\ol {y}{z}})  &=  \tran(a) \eqcm\\
		\tran(\ol {q}{p}) + \tran(\ol {y}{z})  &=  2\tran(a) \eqcm
	\end{align*}
	for any function $\tran\colon\Rpp\to\R$ with $\tran(0) = 0$.
	Thus, $K_\alpha, K_{\setcco} \geq 2$, $J_\alpha, J_{\setcco} \geq 1$.
	
	If $\tran$ is metric preserving, like $\tran_\alpha = (x\mapsto x^\alpha)$ with $\alpha\in(0,1]$, then 
	\begin{equation}
		\tran(\ol yq) - \tran(\ol yp) - \tran(\ol zq) + \tran(\ol zp) \leq 2\min\brOf{\tran(\ol qp), \tran(\ol yz)}
		\eqfs
	\end{equation}
	This shows $K_\alpha \leq 2$ and $J_\alpha \leq 1$ for $\alpha\in(0,1]$.
	
	Let $y = (0,0)$, $z = (0,\epsilon)$, $q = (1,\epsilon)$, $p = (1, 0)$ in the Euclidean plane $\R^2$, see \cref{fig:quadriRect}. Then 
	\begin{align*}
		\tran(\ol yq) - \tran(\ol yp) - \tran(\ol zq) + \tran(\ol zp) &= 2 \tran(\sqrt{\epsilon^2+1}) - 2 \tran(1)\eqcm\\
		\tran(\sqrt{\ol {q}{p}\,\ol {y}{z}})  &=  \tran(\epsilon) \eqcm\\
		\tran(\ol {q}{p}) + \tran(\ol {y}{z})  &=  2\tran(\epsilon) \eqcm
	\end{align*}
	for any function $\tran\colon\Rp\to\R$.
	Assume $\tran(0) = 0$, $\dtran(1) > 0$, and $\lim_{\epsilon\searrow0} \frac{\epsilon}{\dtran(\epsilon)} = \infty$. Then, by l'Hôpital's rule, 
	\begin{equation}
		\lim_{\epsilon\searrow0} \frac{\tran(\sqrt{\epsilon^2+1}) - \tran(1)}{\tran(\epsilon)}
		= 
		\lim_{\epsilon\searrow0} \frac{\epsilon \dtran(\sqrt{\epsilon^2+1})}{\sqrt{\epsilon^2+1}\,\dtran(\epsilon)}
		= \infty\eqfs
	\end{equation}
	In particular, $K_\alpha = \infty$ and $J_\alpha = \infty$ for $\alpha\in(2,\infty)$.

	By \cref{thm:main} and \cref{cor:rhsbound}, we have $L_{\setcco} \leq 2$, $K_{\setcco} \leq 4$, and $J_{\setcco} \leq 2$. As here we take the supremum over $\tran\in\setcco$, we also get $L_{\setcco} \geq 2$, e.g., for $\tran = \tran_1 = (x\mapsto x)$.
\end{proof}
\begin{remark}\mbox{ }
	We have $\alpha 2^{1-\alpha} \in [1 , \frac{2}{\euler\ln(2)}]$ for $\alpha\in[1,2]	$ and $\frac{2}{\euler\ln(2)} \approx 1.06$. The maximum is attained at $\alpha = \ln(2)^{-1}\approx 1.44$.
\end{remark} 
For future work it remains to find the precise values of $K_\alpha$ and $J_\alpha$ for $\alpha\in[1,2]$, and for $K_{\setcco}$ and $J_{\setcco}$.
Furthermore, it would be interesting to extend the result for an explicit form of the quadruple constant from the functions $\tran_\alpha = (x\mapsto x^\alpha)$, where we have $\ctranobest{\tran_\alpha} = 2^{2-\alpha}$ for $\alpha\in[1,2]$, to functions $\tran\in\setcc$, where we so far know $\ctranbest\in[1,2]$.

\begin{appendix}
	\section{Proof of \cref{thm:ipsymm}}\label{app:proofsymm}
The proof is inspired by the proofs of \cite[Proposition 4.1.1, Proposition 4.1.2]{enflo70}.

For any four points $y, z, q, p\in V$, we have 
\begin{equation}\label{eq:proofsymm:symm2}
	\normof{y-q}^2  - \normof{y-p}^2 - \normof{z-q}^2 + \normof{z-p}^2 
	= 
	2 \ip{y-z}{p-q} 
	\leq 
	\normof{q-p}^2 + \normof{y-z}^2
	\eqfs
\end{equation}
Let $u, v \in V$. Consider a parallelogram with vertices $(0, (u-v)/2, u, (u+v)/2)$. It has the diagonals $u$ and $v$ and the largest diagonal is not smaller than the largest side.
As $\tran\in\setcco$, $\trans$ is nonnegative, nondecreasing, and concave, see \cref{lmm:ccsqrtprop}. Thus, we can apply \cref{lmm:aux:six} to
\begin{equation}
	x_1 = x_2 = \normOf{\frac{u-v}2}^2\eqcm\quad
	x_3 = x_4 = \normOf{\frac{u+v}2}^2\eqcm\quad
	x_5 = \normof{u}^2\eqcm\quad
	x_6 = \normof{v}^2\eqcm
\end{equation}
where $x_1+x_2+x_3+x_4 \geq x_5+x_6$ is ensured by \eqref{eq:proofsymm:symm2}.
We obtain
\begin{equation}
	\tran(\normof{u})  + \tran(\normof{v}) \leq 2\tran\brOf{\normOf{\frac{u-v}2}} + 2\tran\brOf{\normOf{\frac{u+v}2}}
	\eqfs
\end{equation}
To extend the result from parallelograms to any quadrilateral, we note that $\tran$ is nondecreasing and convex, and apply \cref{lmm:aux:redistri}:
For every $x\in V$, 
\begin{align}
	2\tran\brOf{\normOf{\frac{u-v}2}} &\leq \tran\brOf{\normOf{x}} + \tran\brOf{\normOf{u-v-x}}\eqcm\\
	2\tran\brOf{\normOf{\frac{u+v}2}} &\leq \tran\brOf{\normOf{u-x}} + \tran\brOf{\normOf{v+x}}
	\eqfs
\end{align}%
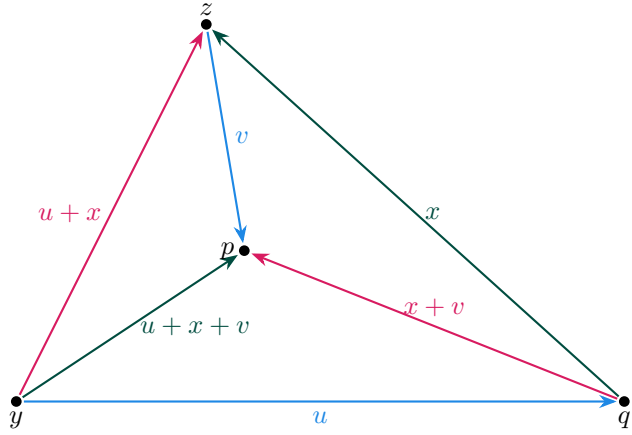
\begin{figure}%
	\begin{center}
		\begin{tikzpicture}[
			thick,
			myArrows/.style={shorten >=0.1cm,shorten <=0.1cm,-{Stealth}}]
			\coordinate (y) at (-1,0);
			\coordinate (q) at (7,0);
			\coordinate (z) at (1.5,5);
			\coordinate (p) at (2,2);
			
			\draw[colQuadSidePos,myArrows] (y) -- node[below] {$u$} (q);
			\draw[colQuadSideNeg,myArrows] (q) -- node[right] {$x$} (z);
			\draw[colQuadSidePos,myArrows] (z) -- node[right] {$v$} (p);
			\draw[colQuadSideNeg,myArrows] (y) -- node[right] {$u+x+v$} (p);
			\draw[colQuadDiag,myArrows] (y) -- node[left] {$u+x$} (z);
			\draw[colQuadDiag,myArrows] (q) -- node[above] {$x+v$} (p);
			
			\fill (y) circle[radius=2pt] node[below] {$y$};
			\fill (q) circle[radius=2pt] node[below] {$q$};
			\fill (z) circle[radius=2pt] node[above] {$z$};
			\fill (p) circle[radius=2pt] node[left] {$p$};
		\end{tikzpicture}
	\end{center}
	\caption{Four points in a vector space $V$. Their relative position is described by three vectors $u,v,x\in V$.}\label{fig:symm:uvx}
\end{figure}%
By appropriate choice of $u, v, x$ for a given quadrilateral with vertices $y,z,q, p$, see \cref{fig:symm:uvx}, we have shown
\begin{equation}
	\tran(\normof{y-q}) + \tran(\normof{z-p}) \leq \tran(\normof{q-p}) + \tran(\normof{y-z})  +  \tran(\normof{y-p}) + \tran(\normof{z-q})
\end{equation}
and finished the proof of \cref{thm:ipsymm}.

	\section{Proof of \cref{thm:main:param}}\label{app:mainproof}
If we want to show $f(x)\leq0$ for all $x\in A$, where $A\subset \R^n$, for a continuous function $f\colon A\to \R$, it is enough to prove the inequality on a dense subset of $A$. We use this fact in the following. If we write an expression with a quotient, we silently restrict the domains of the real parameters in all statements about this expression to a domain on which the denominator is not 0. The restricted domain, will always be dense in the unrestricted domain.
\subsection{Summary of Properties}
Recall $\trans(x) = \tran(\sqrt x)$. The next lemma shows some simple properties of $\trans$ and its derivatives.
\begin{lemma}\label{lmm:ccsqrtprop}
	Let $\tran\in\setcco$.
	\begin{itemize}
		\item Then $\trans$ is nonnegative, nondecreasing, and concave.
		\item If $\trans$ is differentiable, then $\trans\pr$ is nonnegative and nonincreasing.
		\item If $\trans$ is twice differentiable, then $\trans\prr$ is nonpositive.
	\end{itemize}
\end{lemma}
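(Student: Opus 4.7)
Plan: The first two properties in part (i) are essentially automatic. Since $\tran$ is nondecreasing with $\tran(0)=0$ (as $\tran\in\setcco$), we have $\trans(x)=\tran(\sqrt x)\geq \tran(0)=0$, and $\trans$ is the composition of the two nondecreasing maps $\sqrt\cdot$ and $\tran$, so it is nondecreasing. The substantive claim is concavity. The strategy is to establish concavity of $\trans$ on $\Rpp$ via its derivative, and then to extend to $\Rp$ by continuity of $\trans$ at $0$, which is inherited from continuity of $\tran$ (\cref{lmm:ccSimpleProps}).

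The core technical step is the following standard ``chord-slope'' observation for concave functions that are nonnegative at $0$: if $f\colon\Rp\to\R$ is concave with $f(0)\geq 0$, then $u\mapsto f(u)/u$ is nonincreasing on $\Rpp$. For $0<u<v$, writing $u=\tfrac{u}{v}\cdot v+\bigl(1-\tfrac{u}{v}\bigr)\cdot 0$ and applying concavity together with $f(0)\geq 0$ yields
\begin{equation*}
    f(u)\;\geq\;\tfrac{u}{v}f(v)+\bigl(1-\tfrac{u}{v}\bigr)f(0)\;\geq\;\tfrac{u}{v}f(v),
\end{equation*}
hence $f(u)/u\geq f(v)/v$. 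We apply this with $f=\dtran$, whose hypotheses hold by assumption on $\tran\in\setcco$ and by \cref{lmm:ccSimpleProps} (which gives $\dtran(0)\geq 0$ and $\dtran$ concave). Since $\tran$ is differentiable on $\Rpp$, so is $\trans$, with
\begin{equation*}
    \trans\pr(x) \;=\; \frac{\dtran(\sqrt x)}{2\sqrt x} \;=\; \tfrac12\cdot\frac{\dtran(u)}{u}\qquad\text{where }u=\sqrt x.
\end{equation*}
Since $u\mapsto \dtran(u)/u$ is nonincreasing and $x\mapsto \sqrt x$ is nondecreasing, $\trans\pr$ is nonincreasing on $\Rpp$, so $\trans$ is concave there. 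Concavity then extends to $\Rp$: for $0=x<y$ and $\lambda\in[0,1]$, the inequality $\trans(\lambda x+(1-\lambda)y)\geq \lambda\trans(x)+(1-\lambda)\trans(y)$ follows by letting $x_n\searrow 0$ in the corresponding inequality on $\Rpp$ and using continuity of $\trans$ at $0$.

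Parts (ii) and (iii) then follow without further work. Under the differentiability hypothesis of (ii), nonnegativity of $\trans\pr$ comes from $\trans$ being nondecreasing, while $\trans\pr$ nonincreasing comes from concavity in (i). Under the twice differentiability hypothesis of (iii), the sign $\trans\prr\leq 0$ is equivalent to $\trans\pr$ being nonincreasing, which has just been shown. I do not anticipate any real obstacle in this proof; the only point requiring care is the boundary behaviour at $x=0$, which is handled cleanly by continuity of $\tran$ and the definition $\dtran(0)=\lim_{x\searrow 0}\dtran(x)$.
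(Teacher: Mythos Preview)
Your proof is correct and takes a somewhat different route from the paper. Both arguments compute
\[
\trans\pr(x)=\frac{\dtran(\sqrt x)}{2\sqrt x},
\]
but diverge in how they show this is nonincreasing. The paper differentiates once more to get $\ddtrans(x)=\bigl(\ddtran(\sqrt x)-\dtran(\sqrt x)/\sqrt x\bigr)/(4x)$ and then bounds the numerator using $\int_0^{\sqrt x}\ddtran(u)\,\dl u\geq \sqrt x\,\ddtran(\sqrt x)$ (via $\ddtran$ nonincreasing) together with $\dtran(0)\geq 0$. You instead use the chord-slope property of nonnegative concave functions to conclude directly that $u\mapsto \dtran(u)/u$ is nonincreasing, without invoking $\ddtran$ at all.

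Your approach has the advantage that it matches the stated hypothesis $\tran\in\setcco$, which only guarantees one derivative; the paper's computation of $\ddtrans$ tacitly assumes $\tran$ is twice differentiable, so strictly speaking it proves the first bullet only under an extra smoothness assumption (which suffices for the later applications, all of which take $\tran\in\setccod3$, but is not what the lemma asserts). Your handling of the boundary at $x=0$ via continuity is also clean. Parts (ii) and (iii) are, as you note, immediate consequences of (i) under the respective differentiability hypotheses.
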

\begin{proof}
	As $\tran$ is nonnegative, so is $\trans$.
	We have 
	\begin{align*}
		\dtrans(x) &= \frac{\dtran(\sqrt{x})}{2 \sqrt{x}}\eqcm\\
		\ddtrans(x) &= \frac{\ddtran(\sqrt{x})-\frac{\dtran(\sqrt{x})}{\sqrt{x}}}{4x}\eqfs
	\end{align*}
	As $\dtran$ is nonnegative, so is $\dtrans$. Thus, $\trans$ is increasing.
	Furthermore, as $\ddtran$ is nonincreasing, 
	\begin{equation}
		\dtran(\sqrt{x}) - \dtran(0) = \int_0^{\sqrt{x}} \ddtran(u) \dl u \geq \sqrt{x} \ddtran(\sqrt{x}) \eqfs
	\end{equation}
	Thus, with $\dtran(0) \geq 0$, we obtain  $\dtran(\sqrt{x}) \geq \sqrt{x} \ddtran(\sqrt{x})$. Hence, $\ddtrans$ is nonpositive, $\dtrans$ is nonincreasing, and $\trans$ is concave.
\end{proof}
Properties of $\tran\in\setccod3$ (\cref{lmm:ccSimpleProps}) and the corresponding $\trans$ (\cref{lmm:ccsqrtprop}) are summarized in \cref{table:porps} for reference. There and in the proof below, we use the following shorthand notation for properties of functions $f \colon \Rp \to \R$:

\vspace{\topsep+\parskip}
\noindent\begin{minipage}{0.49\textwidth}%
\begin{itemize}%
	\item $f \posi$: $f$ is nonnegative.
	\item $f \incr$: $f$ is nondecreasing.
	\item  $f \conv$: $f$ is convex.
\end{itemize}%
\end{minipage}%
\begin{minipage}{0.49\textwidth}%
	\begin{itemize}%
		\item $f \nega$: $f$ is nonpositive.
		\item $f \decr$: $f$ is nonincreasing.
		\item $f \conc$: $f$ is concave.
	\end{itemize}%
\end{minipage}%
\vspace{\topsep+\parskip}
\begin{table}
	\begin{center}
		\begin{tabular}{c||c|c|c|c||c|c|c|c}
			$f$&$\tran$ & $\dtran$ & $\ddtran$ & $\dddtran$ & $\trans$ & $\dtrans$ & $\ddtrans$ & $\dddtrans$\\
			\hline
			\hline
			$f(0)$ & $0$ & $\geq0$ & $\geq0$ & $\leq0$ & $0$ & $\geq0$ & $\leq0$ & \\
			\hline
			$f(x)$ & $\geq0$ & $\geq0$ & $\geq0$ & $\leq0$ & $\geq0$ & $\geq0$ & $\leq0$ &  \\
			\hline
			$f$ monotone & $\nearrow$ & $\nearrow$ & $\searrow$ &  & $\nearrow$ & $\searrow$ &  &  \\
			\hline
			$f$ curvature & $\smallsmile$ & $\smallfrown$ & &  & $\smallfrown$ &  &  & 
		\end{tabular}
	\end{center}
	\caption{Properties of $\tran\in\setccod3$ and function $\trans(x) = \tran(\sqrt x)$.}\label{table:porps}
\end{table}
\subsection{\cref{lmm:param:firstreduction}: Elimination of $r$}
We will show that the following lemma implies \cref{thm:main:param}.
\begin{lemma}[Elimination of $r$]\label{lmm:param:firstreduction}
	Let $\tran\in\setccod3$.
	\begin{enumerate}[label=(\roman*)]
		\item \label{lmm:param:firstreduction:a}
		For all $a,b,c\in\Rp$, $s\in [-1, \min(1, 2\frac ac -1)]$, we have
		\begin{equation}
			\tran(a) - \tran(c) -\tran(\abs{a-b}) + \trans(c^2 - 2scb + b^2) \leq 2b \dtran(a-sc)\
			\eqfs
		\end{equation}
		\item \label{lmm:param:firstreduction:b}
		For all $a,b,c\in\Rp$ with $a\geq c$, we have
		\begin{equation}
			\tran(a) - \tran(c) -\trans((a-b)^2 + 4bc) + \tran(b + c) \leq 2b \dtran(a-c)
			\eqfs
		\end{equation}
	\end{enumerate}
\end{lemma}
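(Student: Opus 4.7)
My plan is to reduce each part to a one-variable analysis in $b$ by exploiting that equality holds at $b=0$. Consider first part (i). The constraint $s \leq \min(1, 2a/c - 1)$ is equivalent to $a - sc \geq |a - c|$, which is exactly the condition that $\dtran(a - sc)$ is the correct critical quantity from \cref{thm:main:param} after specializing to $r = 1$ (so that $\trans(a^2 - 2rab + b^2) = \tran(|a-b|)$). Setting
\begin{equation*}
F(b) := \tran(a) - \tran(c) - \tran(|a - b|) + \trans(c^2 - 2scb + b^2) - 2b \dtran(a - sc),
\end{equation*}
one checks directly that $F(0) = 0$, so it suffices to show $F'(b) \leq 0$. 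Writing $\rho := \sqrt{c^2 - 2scb + b^2}$, differentiation yields
\begin{equation*}
F'(b) \;=\; \pm\,\dtran(|a - b|) \;+\; (b - sc)\,\frac{\dtran(\rho)}{\rho} \;-\; 2\,\dtran(a - sc),
\end{equation*}
with the sign of the first term depending on whether $b < a$ or $b > a$.

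To control $F'$, I would perform a case analysis on the relative positions of $a,c,b,sc$; the six subcases suggested by \cref{fig:proof:overview} correspond to whether $a \geq c$ or not, whether $b$ is above or below $a$, and whether $b$ lies above or below $sc$ and $2sc$, and whether $sc \leq a - b$. In each subcase the three arguments $|a-b|$, $\rho$, and $a - sc$ have a fixed ordering; since $\dtran$ is nonnegative, nondecreasing, and concave, and $x \mapsto \dtran(x)/x$ is nonincreasing by \cref{lmm:ccsqrtprop}, each subcase collapses to a one-dimensional inequality that follows from the Taylor-type bounds \cref{lmm:ccpoly} and \cref{lmm:ccdiff}.

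For part (ii), which is the complementary regime where the intended right-hand-side maximum is $|a - c| = a - c$, I would set up the analogous
\begin{equation*}
G(b) := \tran(a) - \tran(c) - \trans\bigl((a - b)^2 - 4bc\bigr) + \trans(b + c) - 2b\,\dtran(a - c)
\end{equation*}
on the domain where the $\trans$-argument is nonnegative, and apply the same differentiation strategy. The algebraic identity $(a - b)^2 - 4bc = (a - b - 2\sqrt{bc})(a - b + 2\sqrt{bc})$ is the key combinatorial ingredient that pairs the first $\trans$-term naturally with $\trans(b+c)$ so that the left-hand side telescopes against $a - c$ after invoking concavity of $\dtran$.

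The main obstacle will be managing the case analysis cleanly. In each of the six subregions of part (i) (and the analogous regions of part (ii)), the correct combination of concavity of $\dtran$, convexity of $\tran$, and the bounds from \cref{lmm:ccpoly} and \cref{lmm:ccdiff} must be identified, and continuity across boundary configurations must be verified. As the authors remark, computer-assisted numerical evaluation appears essentially necessary to discover the right chain of inequalities in each region. I would not expect a short uniform proof avoiding the case split, because the sign of $b - sc$ in $F'(b)$ fundamentally alters which monotonicity arguments apply.
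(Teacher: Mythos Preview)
Your plan for part~(ii) is essentially the paper's: define $G(b)$, note $G(0)=0$, and show $\partial_b G\le 0$. The paper does exactly this (\cref{lmm:param:reduii}), splitting on the sign of $a-b-2c$ via two mechanical lemmas; the factorization $(a-b)^2-4bc=(a-b-2\sqrt{bc})(a-b+2\sqrt{bc})$ you mention plays no role there.

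For part~(i), however, your uniform strategy ``show $F'(b)\le 0$ in each of the six subcases'' is \emph{not} what the paper does, and it is not clear it would go through. Only three of the six cases in the paper use a $\partial_b$ argument, and even those do not reduce all the way to $b=0$: in \cref{lmm:param:reduicGaGbGsc} the bound $\partial_b f\le 0$ is only established for $b\ge sc$, reducing to $b=sc$, after which the paper needs a further two-stage reduction in $a$ and then in a new variable $u$; in \cref{lmm:param:reduicGaLb} the $\partial_b$ step reduces only to $b=a$, followed again by reductions in $a$ and an auxiliary variable. In the remaining three cases ($a\ge c$) the paper abandons differentiation entirely and bounds $F$ directly via algebraic ``merging lemmas'' (\cref{lmm:f1}, \cref{lmm:f2}, \cref{lmm:merging:simple}) combined with \cref{lmm:cc:transdtran}. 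Your assertion that in each subcase ``the three arguments $|a-b|$, $\rho$, $a-sc$ have a fixed ordering'' and that the inequality then ``collapses'' via \cref{lmm:ccpoly} and \cref{lmm:ccdiff} is too optimistic: the paper's case-by-case work is substantially more intricate than this, often requiring nested reductions in several variables or bespoke inequalities (e.g.\ \cref{lmm:xabc}, \cref{lmm:gbcs}) that do not follow from the standard Taylor bounds alone.

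In short: you have correctly identified the overall architecture (case split, exploit $F(0)=0$, use concavity of $\dtran$), but the claim that a single $\partial_b$ argument handles every region is unsubstantiated and diverges from the paper's actual proof, which mixes differentiation in various variables with direct algebraic estimates depending on the region.
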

\subsubsection{Proof that \cref{lmm:param:firstreduction} implies \cref{thm:main:param}}\label{sssec:firstReduImpliesMainParam}
For this proof, we first show some auxiliary lemmas.
We distinguish the cases $ra-sc \leq |a-c|$ and $ra-sc \geq |a-c|$ as well as $a\geq c$ and $c \geq a$. Some trivial implications of these cases are recorded in the following lemma. 
\begin{lemma}\label{lmm:srbound}
	Let $a,b,c\geq0$, $r,s\in[-1,1]$.
	Then
	\begin{align*}
		&ra - sc \geq a-c
		\quad\Leftrightarrow\quad 
		s 
		\leq
		(r-1) \frac ac+1
		\quad\Leftrightarrow\quad 
		r \geq 
		(s-1)\frac ca +1
		\eqcm
		\\
		&ra - sc \geq c-a
		\quad\Leftrightarrow\quad 
		s 
		\leq
		(r+1) \frac ac -1
		\quad\Leftrightarrow\quad 
		r \geq 
		(s+1)\frac ca -1
		\eqfs
	\end{align*}
\end{lemma}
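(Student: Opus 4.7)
The plan is to observe that both equivalences are purely elementary linear algebra on the real line, each obtained by isolating the variable of interest. Since $a$ and $c$ appear as denominators in two of the four inequalities, I read the statement in the convention announced at the opening of \cref{app:mainproof}, namely on the dense subset $a,c>0$; the remaining degenerate cases follow from continuous extension.

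For the first line, I would start with $ra - sc \geq a-c$ and rewrite it as $a(r-1) \geq c(s-1)$. Dividing by $c>0$ and rearranging yields $s \leq (r-1)\tfrac{a}{c} + 1$, which is the middle form. Dividing the same inequality $a(r-1) \geq c(s-1)$ by $a>0$ and rearranging yields $r \geq (s-1)\tfrac{c}{a} + 1$, which is the right-hand form. Each step is a reversible equivalence, so we obtain the full chain.

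For the second line, the approach is identical: rewrite $ra - sc \geq c-a$ as $a(r+1) \geq c(s+1)$, then divide by $c$ to get $s \leq (r+1)\tfrac{a}{c} - 1$, and divide by $a$ to get $r \geq (s+1)\tfrac{c}{a} - 1$. Reversibility follows since $a,c>0$.

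There is no real obstacle here; the statement is essentially bookkeeping for the subsequent case analysis. The only thing worth flagging is the implicit $a,c>0$ assumption that is implied by the quotient convention of \cref{app:mainproof}, so no separate treatment of $a=0$ or $c=0$ is needed.
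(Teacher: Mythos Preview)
Your argument is correct and matches the paper's own treatment: the paper records these equivalences as ``trivial implications'' and gives no explicit proof, relying on the same quotient convention you invoke. Your rewriting of $ra-sc\geq a-c$ as $a(r-1)\geq c(s-1)$ and then dividing by $c$ or $a$ is exactly the intended elementary manipulation.
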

Denote 
\begin{align*}
	\lhs(a,b,c,r,s) &:= \tran(a) - \tran(c) - \trans\brOf{a^2-2rab+b^2} + \trans\brOf{c^2-2scb+b^2}\eqcm\\
	\ifu(a,b,c,r,s) &:= \lhs(a,b,c,r,s) - 2 b \dtran\brOf{ra - sc}
	\eqfs
\end{align*}
For $ra - sc \geq \abs{a-c}$, we want to show $\ifu(a,b,c,r,s) \leq 0$. Because of the next lemma, we can reduce the number of values of $r$ for which we need to check this inequality.
\begin{lemma}[Convexity in $r$]\label{lmm:ddr}
	Let $\tran\in\setccod3$. Let $a,b,c\geq0$, $s,r\in[-1,1]$.
	Assume $ra - sc \geq 0$.
	Then
	\begin{equation*}
		\partial_r^2 \ifu(a,b,c,r,s) \geq 0
		\eqfs
	\end{equation*}
\end{lemma}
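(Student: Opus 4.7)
The plan is to differentiate $\ifu$ twice in $r$, and to check that each of the two resulting terms is manifestly nonnegative once we read off the sign/monotonicity information compiled in \cref{table:porps}. Note first that the argument $a^2-2rab+b^2 \geq (a-b)^2 \geq 0$ is always in the domain of $\trans$, and the assumption $ra-sc\geq0$ ensures that $\dddtran$ is evaluated on $\Rp$, which is where \cref{lmm:ccSimpleProps} guarantees its sign.

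First I would compute
\begin{equation*}
\partial_r \ifu(a,b,c,r,s) = 2ab\,\dtrans\brOf{a^2-2rab+b^2} - 2ab\,\ddtran(ra-sc)\eqcm
\end{equation*}
using that the chain rule gives a factor $-2ab$ from the inner derivative of $a^2-2rab+b^2$, and a factor $a$ from the inner derivative of $ra-sc$. Differentiating once more,
\begin{equation*}
\partial_r^2 \ifu(a,b,c,r,s) = -4a^2b^2\,\ddtrans\brOf{a^2-2rab+b^2} - 2a^2b\,\dddtran(ra-sc)\eqfs
\end{equation*}

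Now I would invoke two facts from the earlier preparation. By \cref{lmm:ccsqrtprop}, $\ddtrans\leq 0$, so the first summand $-4a^2b^2\,\ddtrans(\cdot)$ is nonnegative. By \cref{lmm:ccSimpleProps}, $\ddtran$ is nonincreasing on $\Rp$, hence $\dddtran\leq 0$ there, so the second summand $-2a^2 b\,\dddtran(ra-sc)$ is nonnegative as well (this is where the hypothesis $ra-sc\geq 0$ is used). Both summands being nonnegative gives the claim.

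There is no real obstacle here: the lemma is a computational check once the signs in \cref{table:porps} are recorded. The only care needed is to make sure the arguments of $\ddtrans$ and $\dddtran$ lie in $\Rp$, which is immediate in both cases.
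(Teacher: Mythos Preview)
Your proof is correct and follows essentially the same route as the paper: compute $\partial_r^2 \ifu = -4a^2b^2\,\ddtrans(\cdot) - 2a^2b\,\dddtran(\cdot)$ and use $\ddtrans\leq 0$, $\dddtran\leq 0$ to conclude. The paper organizes the computation slightly differently (it records $\partial_r^2\trans(\cdot)$ and $\partial_r^2\dtran(\cdot)$ separately and then assembles), but the argument is identical.
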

\begin{proof}
	As $\ddtrans,\dddtran\leq0$, we have
	\begin{align*}
		\partial_r^2 \trans\brOf{a^2-2rab+b^2} = 4a^2b^2\ddtrans(a^2-2rab+b^2) \leq 0\eqcm
		\partial_r^2 \dtran(ra - sc) = a^2 \dddtran(ra - sc) \leq 0
		\eqfs
	\end{align*}
	Thus, 
	\begin{equation*}
		\partial_r^2 \ifu(a,b,c,r,s) = -\partial_r^2 \trans\brOf{a^2-2rab+b^2} - 2 b \partial_r^2\dtran\brOf{ra - sc} \geq 0
		\eqfs
	\end{equation*}
\end{proof}
In the case $|a-c| \geq ra-sc$, the right-hand side of \eqref{eq:quadtran:param} does not depend on $r$ or $s$. Thus, we will only need to check the inequality with the left-hand side $\lhs$ maximized in $r$ and $s$.
\begin{lemma}[Maximizing the left-hand side for $|a-c| \geq ra-sc$]\label{lmm:maxlhs}
	Let $\tran\in\setccod3$.  Let $a,b,c\in\Rp$, $r,s\in[-1, 1]$. Assume $|a-c| \geq ra-sc$.
	\begin{enumerate}[label=(\roman*)]
		\item If $a\geq c$ and $a^2 \leq c^2+2ab-2cb$, then
		\begin{equation*}
			\lhs(a,b,c,r,s) \leq \tran(a)-\tran(c)-\tran(|a-b|)+\tran(|c-b|)
			\eqfs
		\end{equation*}
		\item If $a\geq c$ and $a^2 \geq c^2+2ab-2cb$, then
		\begin{equation*}
			\lhs(a,b,c,r,s) \leq  \tran(a)-\tran(c)-\trans((a-b)^2+4cb)+\tran(c+b)
			\eqfs
		\end{equation*}
		\item If $a\leq c$, then
		\begin{equation*}
			\lhs(a,b,c,r,s) \leq \tran(a)-\tran(c)-\tran(\abs{a-b})+\trans((c+b)^2-4ab)
			\eqfs
		\end{equation*}
	\end{enumerate}
\end{lemma}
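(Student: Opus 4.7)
The plan is to regard $\lhs(a,b,c,r,s)$ as a function of $(r,s)$ on the convex polygon $F := \cb{(r,s) \in [-1,1]^2 : ra - sc \leq |a-c|}$ and to locate its maximum by first-order analysis. Differentiating gives $\partial_r \lhs = 2ab\,\dtrans(a^2 - 2rab + b^2) \geq 0$ and $\partial_s \lhs = -2cb\,\dtrans(c^2 - 2scb + b^2) \leq 0$ by \cref{lmm:ccsqrtprop}, so $\lhs$ is nondecreasing in $r$ and nonincreasing in $s$. Consequently the maximum on $F$ is attained on the slanted edge $\ell := \cb{(r,s) : ra - sc = |a-c|}$, and by the same monotonicity the coordinate-line pieces of $\partial F$ contribute only via the two endpoints of $\ell \cap F$.

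On $\ell$, writing $s$ as an affine function of $r$, I compute
\[
	\frac{\dl}{\dl r}\lhs\Big|_\ell = 2ab\,\Big[\dtrans\brOf{a^2 - 2rab + b^2} - \dtrans\brOf{c^2 - 2scb + b^2}\Big]\eqfs
\]
The two arguments differ by $a^2 - c^2 - 2b(ra - sc)$, which evaluates to $(a-c)(a+c-2b)$ when $a \geq c$ (because then $ra-sc = a-c$) and to $(a-c)(a+c+2b)$ when $a \leq c$ (because then $ra-sc = c-a$). Since $\dtrans$ is nonincreasing (\cref{lmm:ccsqrtprop}), the sign of $\partial_r \lhs|_\ell$ is the opposite of the sign of this difference, and the three parts of the lemma correspond exactly to its three sign patterns. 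In case (i), $a \geq c$ and $a + c \leq 2b$ make the difference nonpositive, so $\partial_r \lhs|_\ell \geq 0$ and the maximum on $\ell \cap F$ is at the upper endpoint $(r,s) = (1,1)$, where $a^2 - 2rab + b^2 = (a-b)^2$ and $c^2 - 2scb + b^2 = (c-b)^2$. In case (ii) the difference is nonnegative, so the maximum shifts to the other endpoint $(r,s) = (1-2c/a,-1)$, yielding $(a-b)^2 + 4bc$ and $(c+b)^2$. In case (iii), $a \leq c$ again forces a nonpositive difference and the maximum is at $(r,s) = (1, 2a/c-1)$, yielding $(a-b)^2$ and $(c+b)^2 - 4ab$. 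Applying $\trans$ to these arguments produces the three claimed bounds.

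The main obstacle is spotting the algebraic identity $a^2 - c^2 - 2b(ra - sc) = (a-c)(a+c \mp 2b)$ on $\ell$ that drives the case split; once it is recorded, everything else is routine sign chasing. It remains to verify that each candidate maximizer lies in $[-1,1]^2$ under its case hypotheses (immediate: $1 - 2c/a \in [-1,1]$ follows from $c \leq a$, and $2a/c - 1 \in [-1,1]$ from $0 \leq a \leq c$), and that the coordinate edges of $F$ are dominated by these same corners, which follows directly from the two monotonicity properties noted at the outset.
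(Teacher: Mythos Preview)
Your proof is correct and follows essentially the same route as the paper's: both arguments use the monotonicity of $\lhs$ in $r$ and $s$ to reduce to the constraint line $ra-sc=|a-c|$, then differentiate along that line and compare the two arguments of $\dtrans$ to locate the maximizing endpoint. Your explicit factorization $a^2-c^2-2b(ra-sc)=(a-c)(a+c\mp 2b)$ is a nice repackaging of the paper's case split on $a^2\lessgtr c^2+2ab-2cb$, but the underlying computation and endpoint identification are identical.
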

\begin{proof}
	As $\trans\incr$, $s\mapsto\lhs(a,b,c,r,s) \decr$ and $r\mapsto\lhs(a,b,c,r,s) \incr$, i.e., for $s_0,r_0\in[-1,1]$, 
	\begin{equation*}
		\max_{s\geq s_0, r\leq r_0} \lhs(a,b,c,r,s) = \lhs(a,b,c,r_0,s_0)
		\eqfs
	\end{equation*}
	\underline{Case 1: $a\geq c$.} For $r\in[-1,1]$, set $s_{\ms{min}}(r) := (r-1) \frac ac+1$, cf.\ \cref{lmm:srbound}. Define
	\begin{align*}
		f(r) &:= \lhs(a,b,c,r,s_{\ms{min}}(r)) 
		\\&= \tran(a)-\tran(c)-\trans(a^2-2rab+b^2)+\trans(c^2-2rab+2ab-2cb+b^2)
		\eqfs
	\end{align*}
	Then
	\begin{equation*}
		\frac{f\pr(r)}{2ab} = \dtrans(a^2-2rab+b^2) - \dtrans(c^2-2rab+2ab-2cb+b^2)
		\eqfs
	\end{equation*}
	\underline{Case 1.1: $a^2 \leq c^2+2ab-2cb$.} As $\dtrans\decr$, we have
	\begin{align*}
		a^2-2rab+b^2 &\leq c^2-2rab+2ab-2cb+b^2\eqcm\\
		\dtrans(a^2-2rab+b^2) &\geq \dtrans(c^2-2rab+2ab-2cb+b^2)
		\eqfs
	\end{align*}
	Thus, $f\pr(r)\geq0$ and $f$ is maximal at $r = r_{\ms{max}} = 1$, with $s_{\ms{min}}(r) = 1$. Hence,
	\begin{equation*}
		\lhs(a,b,c,r,s) \leq f(1) = \tran(a)-\tran(c)-\tran(|a-b|)+\tran(|c-b|)
		\eqfs
	\end{equation*}
	\underline{Case 1.2: $a^2 \geq c^2+2ab-2cb$.} As $\dtrans\decr$, we have
	\begin{align*}
		a^2-2rab+b^2 &\geq c^2-2rab+2ab-2cb+b^2\eqcm\\
		\dtrans(a^2-2rab+b^2) &\leq \dtrans(c^2-2rab+2ab-2cb+b^2) \eqfs
	\end{align*}
	Thus, $f\pr(r)\leq0$ and $f$ is maximal at $r = r_{\ms{min}} = 1-2\frac ca$, with $s_{\ms{min}}(r) = -1$. Hence,
	\begin{equation*}
		\lhs(a,b,c,r,s) \leq  f(r_{\ms{min}}) = \tran(a)-\tran(c)-\trans((a-b)^2+4cb)+\tran(c+b)
		\eqfs
	\end{equation*}
	\underline{Case 2: $a\leq c$.}  For $r\in[-1,1]$, set $s_{\ms{min}}(r) := (r+1) \frac ac-1$, cf.\ \cref{lmm:srbound}. Define
	\begin{align*}
		f(r) &:= \lhs(a,b,c,r,s_{\ms{min}}(r)) 
		\\&= \tran(a)-\tran(c)-\trans(a^2-2rab+b^2)+\trans(c^2-2rab-2ab+2cb+b^2)
		\eqfs
	\end{align*}
	Then 
	\begin{equation*}
		\frac{f\pr(r)}{2ab} = 
		\dtrans(a^2-2rab+b^2) - \dtrans(c^2-2rab-2ab+2cb+b^2)
		\eqfs
	\end{equation*}
	\underline{Case 2.1: $a^2 \leq c^2-2ab+2cb$.} As $\dtrans\decr$, we have
	\begin{align*}
		a^2-2rab+b^2 &\leq c^2-2rab-2ab+2cb+b^2\eqcm\\
		\dtrans(a^2-2rab+b^2) &\geq \dtrans(c^2-2rab-2ab+2cb+b^2)\eqfs
	\end{align*}
	Thus, $f\pr(r)\geq0$ and $f$ is maximal at $r = r_{\ms{max}} = 1$, with $s_{\ms{min}}(r) = 2\frac ac-1$. Hence,
	\begin{equation*}
		\lhs(a,b,c,r,s) \leq f(1) = \tran(a)-\tran(c)-\tran(\abs{a-b})+\trans((c+b)^2-4ab)
		\eqfs
	\end{equation*}
	\underline{Case 2.2: $a^2 \geq c^2-2ab+2cb$.} This cannot happen for $a < c$. Hence, the proof is finished.
\end{proof}
\begin{proof}[that \cref{lmm:param:firstreduction} implies \cref{thm:main:param}]\mbox{ }\\
	\underline{Case 1: $|a-c| \geq ra-sc$.}\\
	\underline{Case 1.1: $a\geq c$.} 
		We can apply \cref{lmm:maxlhs} and it suffices to show 
		\begin{equation}
			\tran(a)-\tran(c)-\tran(|a-b|)+\tran(|c-b|) \leq 2 b \dtran(a - c)
		\end{equation}
		for $a^2 \leq c^2+2ab-2cb$, and for $a^2 \geq c^2+2ab-2cb$,
		\begin{equation}
			\tran(a)-\tran(c)-\trans((a-b)^2+4cb)+\tran(c+b) \leq 2 b \dtran(a - c)
			\eqfs
		\end{equation}
		The latter is exactly \cref{lmm:param:firstreduction} \ref{lmm:param:firstreduction:b}. The former follows from \cref{lmm:param:firstreduction} \ref{lmm:param:firstreduction:a} with $s\in\{-1, 1\}$.
	\\
	\underline{Case 1.2: $a\leq c$.} 
		We can apply \cref{lmm:maxlhs} and it suffices to show 
		\begin{equation}
			\tran(a)-\tran(c)-\tran(\abs{a-b})+\trans((c+b)^2-4ab) \leq 2 b \dtran(c - a)
			\eqfs
		\end{equation}
		This follows from \cref{lmm:param:firstreduction} \ref{lmm:param:firstreduction:a} with $s = 2\frac ac-1$.
	\\
	\underline{Case 2: $|a-c| \leq ra-sc$.}\\
	\underline{Case 2.1: $a\geq c$.} 
		In this case, $\ifu(a,b,c,r,s) \leq 0$ implies \eqref{eq:thm:main:param}.
		As $r\mapsto \ifu(a,b,c,r,s)$ is convex by \cref{lmm:ddr}, it suffices to show $\ifu(a,b,c,r,s) \leq 0$ for the extreme values of $r$ in order to establish this inequality for all $r$.
		By \cref{lmm:srbound}, 
		\begin{equation}
			r\in\abOf{(s-1)\frac ca+1,\, 1}\eqfs
		\end{equation}
		For maximal $r$, by \cref{lmm:param:firstreduction} \ref{lmm:param:firstreduction:a}, we have $\ifu(a,b,c,1,s) \leq 0$.
		For minimal $r$, we have $ra-sc = a-c$. Thus, we are in case 1.1. 
	\\
	\underline{Case 2.2: $a\leq c$.} 
		As in case 2.1, it suffices show $\ifu(a,b,c,r,s) \leq 0$ for extreme values of $r$. By \cref{lmm:srbound}, 
		\begin{equation}
			r\in\abOf{(s+1)\frac ca -1,\, 1}
			\eqfs
		\end{equation}
		For maximal $r$, by \cref{lmm:param:firstreduction} \ref{lmm:param:firstreduction:a}, we have $\ifu(a,b,c,1,s) \leq 0$.
		For minimal $r$, we have $ra-sc = c-a$.  Thus, we are in case 1.2.
\end{proof}
\subsection{Proof of \cref{lmm:param:firstreduction} \ref{lmm:param:firstreduction:b}}\label{ssec:reduii}
\begin{lemma}\label{lmm:param:reduii}
	Let $\tran\in\setccod3$.
	Let $a,b,c\in\Rp$. Assume $a\geq c$. Then
	\begin{equation}
		\tran(a) - \tran(c) -\trans((a-b)^2 + 4bc) + \tran(b + c) \leq 2b \dtran(a-c)
		\eqfs
	\end{equation}
\end{lemma}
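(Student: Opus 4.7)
The plan is to introduce a bridging value $\tran(a+b)$ and bound each half separately. Following \cref{lmm:maxlhs} (Case~1.2), I interpret $\trans((a-b)^2+4bc) = \tran(v)$ with $v := \sqrt{(a-b)^2+4bc}$, so that the left-hand side takes the form $\tran(a)-\tran(c)-\tran(v)+\tran(b+c)$.

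First I would establish
\begin{equation*}
	\tran(a)+\tran(b+c) \leq \tran(c)+\tran(a+b),
\end{equation*}
which rearranges to $\tran(a+b)-\tran(a) \geq \tran(b+c)-\tran(c)$. Since $\tran$ is convex, $\dtran$ is nondecreasing; and since $a\geq c$ the interval $[a,a+b]$ is a rightward shift of $[c,b+c]$ by $a-c\geq 0$, so $\int_a^{a+b}\dtran \geq \int_c^{b+c}\dtran$. This reduces the task to proving $\tran(a+b)-\tran(v)\leq 2b\dtran(a-c)$.

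Next I would exploit the key algebraic identity $(a+b)^2-v^2 = 4b(a-c)\geq 0$, which gives $v\leq a+b$ and $a+b-v = 4b(a-c)/(a+b+v)$. \cref{lmm:ccdiff}~(i) with $x=a+b$, $y=v$ then yields
\begin{equation*}
	\tran(a+b)-\tran(v) \leq (a+b-v)\,\dtran\brOf{\tfrac{a+b+v}{2}} = \frac{4b(a-c)}{a+b+v}\,\dtran\brOf{\tfrac{a+b+v}{2}}.
\end{equation*}
Because $\dtran$ is concave on $\Rp$ with $\dtran(0)\geq 0$, the map $x\mapsto \dtran(x)/x$ is nonincreasing on $\Rpp$. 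Provided $(a+b+v)/2\geq a-c$, this gives $\dtran((a+b+v)/2)\leq\frac{a+b+v}{2(a-c)}\dtran(a-c)$, and the factors telescope to give exactly $2b\dtran(a-c)$.

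The remaining auxiliary inequality $b+v\geq a-2c$ is trivial when $a\leq 2c+b$, and otherwise squaring it reduces to $v^2-(a-b-2c)^2=4c(a-c)\geq 0$, which holds by the hypothesis $a\geq c$. The degenerate case $a=c$ (which forces $v=a+b$) is handled directly, both sides of the target inequality then being trivial. The main obstacle is spotting the bridge $\tran(a+b)$; a naive Taylor expansion of $\tran$ around $c$ does not work, since $\ddtran(c)$ may be arbitrarily larger than $\ddtran(a-c)$ when $c$ is small and $a-c$ is large, making such bounds far too crude to reach $2b\dtran(a-c)$.
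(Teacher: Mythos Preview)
Your proof is correct and takes a genuinely different route from the paper's. The paper defines
\[
f(a,b,c) := \tran(a) - \tran(c) - \trans((a-b)^2+4cb) + \tran(c+b) - 2 b \dtran(a - c)
\]
and shows $\partial_b f(a,b,c)\leq 0$ via two mechanical case lemmas (\cref{mech:1} for $a-b-2c\geq 0$ and \cref{mech:2} for $a-b-2c\leq 0$), then concludes $f(a,b,c)\leq f(a,0,c)=0$. This is an instance of the paper's general ``eliminate a parameter by monotonicity'' strategy.

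Your argument instead inserts the bridge term $\tran(a+b)$: convexity handles $\tran(a)+\tran(b+c)\leq\tran(c)+\tran(a+b)$ immediately, and then the algebraic identity $(a+b)^2-v^2=4b(a-c)$ together with \cref{lmm:ccdiff}~\ref{lmm:ccdiff:tight} and the concavity of $\dtran$ (via $x\mapsto\dtran(x)/x$ nonincreasing, which is \cref{lmm:tranconcave}~\ref{lmm:tranconcave:balance}) give the exact bound $2b\dtran(a-c)$. The auxiliary check $b+v\geq a-2c$ and the degenerate case $a=c$ are handled cleanly. Your approach is more self-contained and avoids the $\partial_b$ case split, while the paper's method slots into its uniform machinery of derivative bounds; both reach the same conclusion with comparable effort.
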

\begin{proof}
	Define
	\begin{align*}
		f(a,b,c) &:= \tran(a) - \tran(c) - \trans((a-b)^2+4cb) + \tran(c+b) - 2 b \dtran(a - c)
		\eqfs
	\end{align*}
	By \cref{mech:1} and \cref{mech:2}, $\partial_b f(a,b,c) \leq 0$. Thus, as $b\geq0$, we have 
	\begin{equation*}
		f(a,b,c) \leq f(a, 0, c) = 0
		\eqfs
	\end{equation*}
\end{proof}
\subsection{Proof of \cref{lmm:param:firstreduction} \ref{lmm:param:firstreduction:a} for $c \geq a \geq b \geq sc$}\label{ssec:reduicGaGbGsc}
\begin{lemma}\label{lmm:param:reduicGaGbGsc}
	Let $\tran\in\setccod3$.
	Let $a,b,c\geq0$, $s\in[-1,1]$.
	Assume $(s+1)c\leq 2a$, $c \geq a \geq b \geq sc$.
	Then
	\begin{equation*}
		\tran(a)-\tran(c)-\tran(a-b)+\trans(c^2-2scb+b^2) 
		\leq
		2 b \dtran(a - sc) 
		\eqfs
	\end{equation*}
\end{lemma}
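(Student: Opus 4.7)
The plan is to view the claimed inequality as the statement $g(b)\leq 0$ on $b\in[\max(0,sc),a]$ for
\[
	g(b) := \tran(a)-\tran(c)-\tran(a-b)+\trans(c^2-2scb+b^2) - 2b\dtran(a-sc).
\]
A direct evaluation gives $g(0) = \tran(a) - \tran(c) - \tran(a) + \trans(c^2) = 0$, since $\trans(c^2) = \tran(c)$. Hence the strategy is to differentiate in $b$ and exhibit $g'(b)\leq 0$ on the relevant interval.

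Writing $w = w(b) := \sqrt{c^2 - 2scb + b^2}$ and using $\dtrans(w^2) = \dtran(w)/(2w)$ from \cref{lmm:ccsqrtprop},
\[
	g'(b) = \dtran(a-b) + \frac{(b-sc)\,\dtran(w)}{w} - 2\dtran(a-sc).
\]
I would show that each of the first two summands is at most $\dtran(a-sc)$ whenever $b\in[\max(sc,0),a]$. The bound $\dtran(a-b)\leq\dtran(a-sc)$ is immediate, since $\dtran$ is nondecreasing (\cref{lmm:ccSimpleProps}) and $b\geq sc$. For the second, the plan is to use the easy consequence of concavity and $\dtran(0)\geq 0$ that $x\mapsto\dtran(x)/x$ is nonincreasing on $(0,\infty)$, combined with the identity $w^2=(b-sc)^2+c^2(1-s^2)\geq(b-sc)^2$. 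A case split on whether $w\geq a-sc$: in one branch bound $\dtran(w)/w\leq\dtran(a-sc)/(a-sc)$ and multiply by $b-sc\leq a-sc$; in the other use the monotonicity bound $\dtran(w)\leq\dtran(a-sc)$ together with $(b-sc)/w\leq 1$. Either way, the two summands together yield $g'(b)\leq 0$.

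For $s\leq 0$ the relevant interval is $[0,a]$ and the monotonicity alone gives $g(b)\leq g(0)=0$, finishing the argument. The main obstacle is the case $s>0$, where the interval starts at $b=sc>0$ and monotonicity does not propagate from $b=0$. I would handle this by verifying $g(sc)\leq 0$ as a separate initial-value check, which after unfolding becomes
\[
	\tran(a)+\tran(c\sqrt{1-s^2}) \leq \tran(c)+\tran(a-sc)+2sc\,\dtran(a-sc).
\]
This is where the hypothesis $(s+1)c\leq 2a$ enters essentially, since it is equivalent to $a-sc\geq c(1-s)/2$. The plan is to combine the Taylor-type upper bound on $\tran(a)$ around $a-sc$ from \cref{lmm:ccpoly}\ref{lmm:ccpoly:taylor}, the lower bound $\tran(c)-\tran(c\sqrt{1-s^2})\geq c(1-\sqrt{1-s^2})\dtran(c\sqrt{1-s^2})$ coming from monotonicity of $\dtran$, and the pointwise estimate $x\ddtran(x)\leq\dtran(x)$ which follows from concavity of $\dtran$ together with $\dtran(0)\geq 0$; the comparison $a-sc\geq c(1-s)/2$ then links $\dtran(a-sc)$ to $\dtran(c)$ in exactly the proportion needed to close the estimate. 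Tracking which of these bounds is tight (equality occurs at $s=1$, $c=a$) is the delicate point.
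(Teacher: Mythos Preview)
The first half of your proposal---defining $g(b)$, observing $g(0)=0$, and showing $g'(b)\leq 0$ by bounding each of the two summands by $\dtran(a-sc)$---matches the paper's proof exactly (this is the content of \cref{mech:3}). The case $s\leq 0$ then closes as you say.

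For $s>0$, however, your direct attack on the boundary value $g(sc)\leq 0$ does not close with the ingredients you list. Concretely, take $\tran(x)=x^{3/2}$, $a=c=1$, $s=0.95$, so $a-sc=0.05$ and $c\sqrt{1-s^2}\approx 0.312$. Your Taylor bound gives
\[
	\tran(a)-\tran(a-sc)\ \leq\ sc\,\dtran(a-sc)+\tfrac12(sc)^2\ddtran(a-sc)\ \approx\ 0.319+1.513\ =\ 1.832,
\]
against the true value $\approx 0.989$; the resulting sufficient inequality
\[
	\tfrac12(sc)^2\ddtran(a-sc)-sc\,\dtran(a-sc)\ \leq\ \tran(c)-\tran(c\sqrt{1-s^2})
\]
reads $1.194\leq 0.826$, which is false (and your lower bound $c(1-\sqrt{1-s^2})\dtran(c\sqrt{1-s^2})\approx 0.558$ on the right-hand side only makes this worse). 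The issue is structural: for $s$ near $1$ the second-order term scales like $(sc)^2/(a-sc)$, which blows up, while $\tran(c)-\tran(c\sqrt{1-s^2})$ stays bounded; neither the estimate $x\ddtran(x)\leq\dtran(x)$ nor the comparison $a-sc\geq c(1-s)/2$ can compensate for this loss.

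The paper handles $g(sc)\leq 0$ by a further two-step monotonicity reduction rather than a direct estimate: first show that $a\mapsto g(sc)$ is nonincreasing (here $\dddtran\leq 0$ enters), reducing to $a=(s+1)c/2$; then, after the substitution $u=\tfrac12(1+s)c$, $v=\tfrac12(1-s)c$, show the resulting expression is nonincreasing in $u$ (\cref{mech:5}), reducing to $u=v$, where it vanishes identically. The hypothesis $(s+1)c\leq 2a$ is used precisely to make $(s+1)c/2$ the lower endpoint in the reduction over $a$, not merely as a pointwise comparison of derivative values.
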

\begin{proof}
	Define
	\begin{align*}
		f(a,b,c,s) &:= \tran(a)-\tran(c)-\tran(a-b)+\trans(c^2-2scb+b^2) - 2 b \dtran(a - sc)\eqcm\\
		\partial_b f(a,b,c,s) &= \dtran(a-b) + 2(b-sc)\dtrans(c^2-2scb+b^2) - 2 \dtran(a - sc) 
		\eqfs
	\end{align*}
	By \cref{mech:3}, $\partial_b f(a,b,c,s) \leq 0$. Thus, as $b \geq b_{\ms{min}} := \max(0, sc)$, we have
	$f(a,b,c,s) \leq f(a,b_{\ms{min}},c,s)$.
	If $s \leq 0$, then $b_{\ms{min}}=0$ and
	\begin{equation*}
		f(a,0,c,s)  = \tran(a)-\tran(c)-\tran(a)+\trans(c^2) = 0
		\eqfs
	\end{equation*}
	For $s\geq 0$, we have $b_{\ms{min}}=sc$. Define
	\begin{align}
		g(a,c,s) &:= f(a,sc,c,s) = \tran(a)-\tran(c)-\tran(a-sc)+\trans((1-s^2)c^2) - 2 sc\dtran(a - sc)\eqcm\\
		\partial_a  g(a,c,s) &= \dtran(a)-\dtran(a-sc) - 2 sc\ddtran(a - sc)\eqfs
	\end{align}
	Set $d:= sc$ and define
	\begin{equation}
		h(a, d) := \dtran(a)-\dtran(a-d) - 2 d\ddtran(a - d) = \partial_a  g(a,c,s)\eqfs
	\end{equation}
	Then
	\begin{equation}
		\partial_{d} h(a, d) = -\ddtran(a - d) + 2 d\dddtran(a - d) \leq 0
		\eqfs
	\end{equation}
	As $d = sc$ and $s,c\geq 0$, we obtain
	\begin{equation*}
		h(a, d) \leq h(a, 0) = 0
		\eqfs
	\end{equation*}
	Thus, $\partial_a g(a,c,s) = h(a, d) \leq 0$. Therefore, as $a \geq a_{\ms{min}} := \frac{s+1}{2}c$,
	\begin{align*}
		g(a,c,s)  
		&\leq 
		g(a_{\ms{min}},c,s)
		\\&= 
		\tran(\frac{s+1}{2}c)-\tran(c)-\tran(\frac{1-s}{2}c)+\trans((1-s^2)c^2) - 2 sc\dtran(\frac{1-s}{2}c)
		\eqfs
	\end{align*}
	Set $u := \frac12(c+sc)$, $v := \frac12(c-sc)$ with $0\leq v \leq u$  due to $s \in[0,1]$, and define
	\begin{equation}
		\ell(u, v)  :=  \tran(u)-\tran(u+v)-\tran(v)+\trans(4uv) - 2 (u-v)\dtran(v) = g(a_{\ms{min}},c,s)
		\eqfs
	\end{equation}
	By \cref{mech:5}, $\partial_u \ell(u, v) \leq 0$. Thus, as $u \geq v$, we have 
	\begin{align*}
		\ell(u, v)  
		&\leq 
		\ell(v, v) 
		\\&= 
		\tran(v)-\tran(2v)-\tran(v)+\trans(4vv) - 2(v-v)\dtran(v) 
		\\&= 0
		\eqfs
	\end{align*}
	Therefore, we obtain $f(a,sc,c,s) \leq g(a,c,s) \leq \ell(u, v) \leq 0$ and we have finally shown $f(a,b,c,s) \leq 0$.
\end{proof}
\subsection{Proof of \cref{lmm:param:firstreduction} \ref{lmm:param:firstreduction:a} for $c \geq a \geq b$, $b \leq sc$}\label{ssec:reduicGaGbLsc}
\begin{lemma}\label{lmm:param:reduicGaGbLsc}
	Let $\tran\in\setccod3$.
	Let $a,b,c\geq0$, $s\in[-1,1]$.
	Assume $a\leq c$, $(s+1)c\leq 2a$, $a\geq b$, and $b\leq sc$.
	Then
	\begin{equation*}
		\tran(a)-\tran(c)-\tran(a-b)+\trans(c^2-2scb+b^2) 
		\leq
		2 b \dtran(a - sc) 
		\eqfs
	\end{equation*}
\end{lemma}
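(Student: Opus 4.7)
The plan is to mirror the structure of the proof of \cref{lmm:param:reduicGaGbGsc}. Define
$$f(a,b,c,s) := \tran(a)-\tran(c)-\tran(a-b)+\trans(c^2-2scb+b^2) - 2b\dtran(a-sc),$$
so the goal is to prove $f \leq 0$ under the stated constraints. A preliminary observation: the hypotheses $a \leq c$ and $(s+1)c \leq 2a$ together force $sc \leq 2a - c \leq a$, so the constraint $a \geq b$ is automatic from $b \leq sc$; moreover $b \geq 0$ together with $b \leq sc$ forces $s \geq 0$ (the case $c=0$ being trivial).

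The first step is to reduce the variable $a$. I would compute
$$\partial_a f = \dtran(a) - \dtran(a-b) - 2b\ddtran(a-sc),$$
and use that $\ddtran \decr$ (so $\dtran(a) - \dtran(a-b) \leq b\ddtran(a-b)$ by the mean value theorem) together with $b \leq sc$ (giving $a - b \geq a - sc$, hence $\ddtran(a-b) \leq \ddtran(a-sc)$) to conclude $\partial_a f \leq -b\ddtran(a-sc) \leq 0$. Hence $f$ is nonincreasing in $a$, and it suffices to verify the inequality at $a = a_0 := (1+s)c/2$, the smallest admissible value of $a$.

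At the endpoints of $b \in [0, sc]$: when $b = 0$, the claim collapses to $0 \leq 0$; when $b = sc$, setting $u := (c+sc)/2 = a_0$ and $v := (c-sc)/2 = a_0 - sc$ (so $u \geq v \geq 0$ since $s \geq 0$) and using $c^2 - 2sc\cdot sc + (sc)^2 = (1-s^2)c^2 = 4uv$, the inequality becomes $\ell(u, v) \leq 0$, which was already established in the proof of \cref{lmm:param:reduicGaGbGsc}. So $f(a_0, \cdot, c, s) \leq 0$ at both endpoints.

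The main obstacle is then the interior behavior of $b \mapsto f(a_0, b, c, s)$ on $(0, sc)$. In contrast to \cref{lmm:param:reduicGaGbGsc}, $\partial_b f$ need not have a consistent sign on $[0, sc]$ here, and a direct second-derivative inspection (after clearing denominators using $\dtrans(y^2)=\dtran(y)/(2y)$ and $\ddtrans(y^2) = (\ddtran(y)-\dtran(y)/y)/(4y^2)$ with $y=\sqrt{c^2-2scb+b^2}$) shows that $f$ is not even convex in $b$ for every $\tran\in\setccod3$, so neither the monotone reduction used previously nor a plain endpoint-maximization argument applies. The plan is therefore to isolate a dedicated ``mechanical'' auxiliary lemma, analogous in spirit to \cref{mech:3} and \cref{mech:5}, which, using the identity $c^2-2scb+b^2 = (sc-b)^2 + c^2(1-s^2)$ and the inequalities collected in \cref{lmm:ccsqrtprop}, constrains the shape of $b\mapsto \partial_b f(a_0,b,c,s)$ to admit at most one sign change on $(0,sc)$, from negative to positive. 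Combined with $f(a_0,0,c,s)=0$ and $f(a_0,sc,c,s)\leq 0$, this would force the maximum of $b\mapsto f(a_0, b, c, s)$ on $[0, sc]$ to be attained at one of the two endpoints, yielding $f \leq 0$ throughout and finishing the proof.
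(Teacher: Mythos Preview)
Your reduction in $a$ is correct and matches the paper exactly (this is \cref{mech:9}), and your endpoint checks at $b=0$ and $b=sc$ are also correct: the latter is indeed the function $\ell(u,v)$ from \cref{lmm:param:reduicGaGbGsc}.

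The gap is the interior argument. You do not prove the ``mechanical'' sign-change lemma you propose, and its stated form is in tension with what you can compute directly: at the right endpoint,
\[
\partial_b f(a_0,sc,c,s)=\dtran(a_0-sc)+0-2\dtran(a_0-sc)=-\dtran\!\bigl(\tfrac{(1-s)c}{2}\bigr)\leq 0,
\]
so if the only sign change allowed were ``from negative to positive'' one would be forced back to $\partial_b f\leq 0$ throughout, contradicting your own claim that $\partial_b f$ need not keep a consistent sign. In short, the shape constraint you are aiming for is not correctly identified, and nothing is actually established beyond the two endpoints.

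The paper takes a different route after the $a$-reduction. Writing $g(b,c,s):=f(a_0,b,c,s)$, it does \emph{not} try to control $\partial_b g$ alone; instead it shows the diagonal directional derivative
\[
\partial_b g(b,c,s)+\partial_c g(b,c,s)\leq 0
\]
(\cref{lmm:gbcs}), and then substitutes $u:=c-b$, so that $h(b,u,s):=g(b,u+b,s)$ satisfies $\partial_b h\leq 0$ and hence $h(b,u,s)\leq h(0,u,s)=g(0,u,s)=0$. This directional trick is the missing idea: it replaces the delicate one-variable analysis of $b\mapsto g(b,c,s)$ by a monotone reduction along the line $c-b=\mathrm{const}$, and the proof of \cref{lmm:gbcs} itself is a nontrivial chain (a preliminary estimate, convexity in $s$, and endpoint checks in $s$ via \cref{mech:15}--\cref{mech:17}).
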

\begin{proof}
	Define
	\begin{equation*}
		f(a,b,c,s) := \tran(a)-\tran(c)-\tran(a-b)+\trans(c^2-2scb+b^2) - 2 b \dtran(a - sc)\eqfs
	\end{equation*}
	By \cref{mech:9}, $\partial_a f(a,b,c,s) \leq 0$. Thus, as $a \geq  a_{\ms{min}} := (1+s)c/2$, we have 
	\begin{align*}
		f(a,b,c,s) 
		&\leq 
		f(a_{\ms{min}},b,c,s) 
		\\&= 
		\tran\brOf{\frac12(1+s)c} - \tran(c) - \tran\brOf{\frac12(1+s)c - b} + \trans(c^2-2scb+b^2) - 2b\dtran\brOf{\frac12(1-s)c}
		\\&=:
		g(b,c,s)
		\eqfs
	\end{align*}
	By \cref{lmm:gbcs}, $\partial_b g(b,c,s)  + \partial_c g(b,c,s) \leq 0$.
	Set $u := c - b$. Define $h(b, u, s) := g(b,u+b,s)$. Then $\partial_b h(b,u,s) = \partial_b g(b,c,s)  + \partial_c g(b,c,s) \leq 0$. Thus, as $b \geq 0$, we have 
	\begin{equation*}
		h(b, u, s) \leq h(0, u, s) =  g(0,u,s) = \tran\brOf{\frac12(1+s)u} - \tran(u) - \tran\brOf{\frac12(1+s)u} + \trans(u^2) = 0
		\eqfs
	\end{equation*}
	Thus,
	\begin{equation*}
		f(a,b,c,s) \leq g(b,c,s) = h(b, u, s) \leq 0
		\eqfs
	\end{equation*}
\end{proof}
\begin{lemma}\label{lmm:gbcs}
	Let $\tran\in\setccod3$.
	Let $b,c\geq0$, $s\in[-1,1]$.
	Assume $b\leq sc$.
	Define
	\begin{equation*}
		g(b,c,s) := \tran\brOf{\frac12(1+s)c} - \tran(c) - \tran\brOf{\frac12(1+s)c - b} + \trans(c^2-2scb+b^2) - 2b\dtran\brOf{\frac12(1-s)c}\eqfs
	\end{equation*}
	Then
	\begin{equation*}
		\partial_b g(b,c,s)  + \partial_c g(b,c,s) \leq 0\eqfs
	\end{equation*}
\end{lemma}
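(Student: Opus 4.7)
The plan is to compute both partial derivatives of $g$, combine them, and then reduce the desired inequality to a tractable residual estimate using the structural properties of $\setccod3$ recorded in \cref{lmm:ccsqrtprop} and \cref{lmm:ccpoly}. Setting $A := \tfrac{1+s}{2}c$, $B := \tfrac{1+s}{2}c - b$, $D := \tfrac{1-s}{2}c$, and $C := c^2 - 2scb + b^2$, direct computation yields
\begin{align*}
\partial_b g &= \dtran(B) - 2\dtran(D) + 2(b-sc)\dtrans(C),\\
\partial_c g &= \tfrac{1+s}{2}\dtran(A) - \tfrac{1+s}{2}\dtran(B) - \dtran(c) + 2(c-sb)\dtrans(C) - b(1-s)\ddtran(D),
\end{align*}
and, using $(b-sc) + (c-sb) = (1-s)(b+c)$, their sum simplifies to
\begin{align*}
\partial_b g + \partial_c g &= \tfrac{1+s}{2}\dtran(A) + \tfrac{1-s}{2}\dtran(B) - \dtran(c) - 2\dtran(D)\\
&\quad + 2(1-s)(b+c)\dtrans(C) - b(1-s)\ddtran(D).
\end{align*}

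Next, I would exploit the hypothesis $b \leq sc$: together with $b,c \geq 0$ this forces $s \geq 0$, gives $B = D + (sc-b) \geq D$, and via the identity $C = (1-s^2)c^2 + (sc-b)^2$ yields $\sqrt C \geq c - sb$. Concavity of $\dtran$ applied with weights $\tfrac{1\pm s}{2}$ gives
\[
\tfrac{1+s}{2}\dtran(A) + \tfrac{1-s}{2}\dtran(B) \leq \dtran\brOf{A - \tfrac{1-s}{2}b},
\]
and since $c - \brOf{A - \tfrac{1-s}{2}b} = D + \tfrac{1-s}{2}b \geq 0$, a second application of concavity at $c$ produces
\[
\dtran\brOf{A - \tfrac{1-s}{2}b} - \dtran(c) \leq -\brOf{D + \tfrac{1-s}{2}b}\ddtran(c).
\]
For the $\dtrans(C)$ term I would use that $x \mapsto \dtran(x)/x$ is nonincreasing on $\Rpp$ (a direct consequence of $\dtran(x) \geq x\ddtran(x) + \dtran(0)$ from \cref{lmm:ccpoly}), which combined with $\sqrt C \geq c-sb$ yields
\[
2(1-s)(b+c)\dtrans(C) = \frac{(1-s)(b+c)}{\sqrt C}\dtran(\sqrt C) \leq \frac{(1-s)(b+c)}{c-sb}\dtran(c-sb).
\]

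Assembling these three bounds reduces $\partial_b g + \partial_c g \leq 0$ to the residual estimate
\[
\frac{(1-s)(b+c)}{c-sb}\dtran(c-sb) \leq 2\dtran(D) + \brOf{D + \tfrac{1-s}{2}b}\ddtran(c) + b(1-s)\ddtran(D).
\]
I would then dispatch this via a case analysis in the parameter $e := sc - b \in [0,sc]$. At the endpoint $e = sc$ (i.e.\ $b = 0$) the prefactor equals $1-s$, the left-hand side becomes $(1-s)\dtran(c)$, and the concavity estimate $\dtran(D) = \dtran((1-s)c/2) \geq \dtran(c) - \tfrac{1+s}{2}c\,\ddtran(c)$ together with $s \leq 1$ closes the inequality. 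At the opposite endpoint $e = 0$ (i.e.\ $b = sc$) the prefactor equals $1$, the left-hand side is $\dtran((1-s^2)c)$, and the right-hand side is at least $2\dtran(D) + sc(1-s)\ddtran(D)$, which handles it via concavity of $\dtran$ on $[D,c]$. For intermediate $e$ I would either interpolate or establish monotonicity of the one-sided difference in $e$ through an auxiliary calculation in the spirit of the mechanical lemmas invoked earlier in \cref{app:mainproof}.

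The main obstacle is the $\dtrans(C)$ term: it couples a square-rooted derivative with ordinary $\dtran$ at multiple distinct arguments, and the naive pointwise bound $\dtrans(C) \leq \dtrans((c-sb)^2)$ only barely suffices when paired with the sharpness of the double-concavity reduction above. Preserving enough tightness through both reductions to defeat the $-2\dtran(D) - b(1-s)\ddtran(D)$ slack on the other side is the delicate step, and I expect the proof ultimately to rest on a short mechanical lemma encoding the required concavity relation among $\dtran$ and $\ddtran$ evaluated at $D$, $c-sb$, and $c$.
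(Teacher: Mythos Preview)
Your derivative computations and the combination into
\[
\partial_b g + \partial_c g = \tfrac{1+s}{2}\dtran(A) + \tfrac{1-s}{2}\dtran(B) - \dtran(c) - 2\dtran(D) + 2(1-s)(b+c)\dtrans(C) - b(1-s)\ddtran(D)
\]
are correct and agree with the paper. The Jensen step on $\dtran(A),\dtran(B)$ and the bound on $\dtrans(C)$ via monotonicity of $x\mapsto\dtran(x)/x$ are also valid. From this point on, however, the argument has two genuine problems.

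First, your endpoint check at $b=0$ invokes the inequality $\dtran(D)\geq \dtran(c)-\tfrac{1+s}{2}c\,\ddtran(c)$, but concavity of $\dtran$ gives the \emph{opposite} direction: the tangent line at $c$ lies above the graph, so $\dtran(D)\leq \dtran(c)-\tfrac{1+s}{2}c\,\ddtran(c)$. The $b=0$ case of your residual estimate does happen to be true (via $\dtran(x)/x\decr$, which gives $2\dtran(D)\geq(1-s)\dtran(c)$ directly), but not by the reasoning you wrote.

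Second, and more seriously, ``for intermediate $e$ I would either interpolate or establish monotonicity'' is not a proof. Checking the endpoints $b=0$ and $b=sc$ of your residual inequality gives nothing about intermediate values unless you have convexity of the difference in $e$ (or $b$), and there is no reason to expect that: after your nonlinear reductions (Jensen, $\dtran(x)/x$) the residual is a complicated mixture of $\dtran$ and $\ddtran$ at three distinct arguments, and establishing convexity or monotonicity of it in $e$ would be at least as hard as the original problem.

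The paper avoids this by organizing the reduction differently. Instead of Jensen on $\dtran(A),\dtran(B)$ and a tangent-line bound at $c$, it applies coarser monotonicity bounds (via \cref{mech:17}: $\dtran(B)\leq\dtran(c-b)$, $\dtrans(C)\leq\dtrans((c-b)^2)$, $\dtran(A)\leq\dtran(c)$, $-\ddtran(D)\leq-\ddtran((c-b)/2)$) to produce a simpler upper bound $f(b,c,s)$ whose only remaining $s$-dependence is through $-2\dtran((1-s)c/2)$. This makes $\partial_s^2 f\geq 0$ immediate from $\dddtran\leq 0$, so it suffices to check the two extremes of $s$, namely $s=1$ (trivially $-2\dtran(0)\leq 0$) and $s=b/c$ (handled by a short one-variable argument). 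The crucial idea you are missing is to coarsen in a way that isolates the $s$-dependence into a single convex term, and then to do the endpoint argument in $s$ rather than in $b$.
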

\begin{proof}
	We have
	\begin{align*}
		\partial_b g(b,c,s) 
		&=
		\dtran\brOf{\frac12(1+s)c - b} - 2(sc-b)\dtrans(c^2-2scb+b^2) - 2\dtran\brOf{\frac12(1-s)c}\eqcm
		\\
		\partial_c g(b,c,s) 
		&=
		\frac12(1+s)\dtran\brOf{\frac12(1+s)c} - \dtran(c) - \frac12(1+s)\dtran\brOf{\frac12(1+s)c - b} + 
		\\&\hphantom{=}\ \ 2(c-sb)\dtrans(c^2-2scb+b^2) - (1-s)b\ddtran\brOf{\frac12(1-s)c}
		\eqfs
	\end{align*}
	Define 
	\begin{align*}
		f(b,c,s) &:= \frac{1-s}2\dtran( c-b)
		+ 2(1-s)(b+c)\dtrans( (c-b)^2)
		- 2\dtran\brOf{\frac{(1-s)c}2} 
		+ 
		\\&\hphantom{=}\ \ 
		-\frac{1-s}2\dtran( c)
		- b(1-s)\ddtran\brOf{\frac{c-b}2}
		\eqfs
	\end{align*}
	By \cref{mech:17}, 
	\begin{equation}
		\partial_b g(b,c,s) + \partial_c g(b,c,s) \leq f(b,c,s)\eqfs
	\end{equation}
	As $\dddtran\nega$, we have
	\begin{equation*}
		\partial_s^2 f(b,c,s) =
		- 2 (-c/2)^2\dddtran\brOf{\frac{(1-s)c}2}
		\geq 0		
		\eqcm
	\end{equation*}
	i.e., $f(b,c,s) $ is convex in $s$. Thus, to show $f(b,c,s) \leq 0$, we only need to check the extremes of $s$, $s_{\ms{min}} := \frac bc$ and $s_{\ms{max}} := 1$. For maximal $s$, we have
	\begin{equation*}
		f(b,c,s_{\ms{max}}) = - 2\dtran(0) \leq 0
		\eqfs
	\end{equation*}
	For minimal $s$, set $u := c-b\in[0,c]$. We define
	\begin{align*}
		h(u,c) &:=
		c f(c-u, c, s_{\ms{min}})
		\\&=
		\frac{4c-u}2\dtran(u)
		- 2c\dtran(u/2)
		- \frac u2 \dtran(c)
		- (c-u)u\ddtran(u/2)
		\eqfs
	\end{align*}
	By \cref{mech:15}, $\partial_c h(u,c) \leq 0$. Thus, as $c \geq u$, we have $h(u,c) \leq	h(u,u)$.
	By \cref{mech:16}, $h(u,u) \leq 0$.
	Thus, $c f(b, c, s_{\ms{min}}) = h(u,c) \leq h(u,u) \leq 0$.
	Because $ f(b,c,s)$ is convex in $s$ and negative for the extremes of $s$, we obtain
	\begin{equation*}
		\partial_b g(b,c,s)  + \partial_c g(b,c,s) \leq f(b,c,s)  \leq 0
		\eqfs
	\end{equation*}
\end{proof}
\subsection{Proof of \cref{lmm:param:firstreduction} \ref{lmm:param:firstreduction:a} for $c \geq a$, $a \leq b$}\label{ssec:reduicGaLb}
\begin{lemma}\label{lmm:param:reduicGaLb}
	Let $\tran\in\setccod3$.
	Let $a,b,c\geq0$, $s\in[-1,1]$.
	Assume $(s+1)c\leq 2a$, $c \geq a$, $a \leq b$.
	Then
	\begin{equation*}
		\tran(a)-\tran(c)-\tran(b-a)+\trans(c^2-2scb+b^2) 
		\leq
		2 b \dtran(a - sc) 
		\eqfs
	\end{equation*}
\end{lemma}
\begin{proof}
	Define
	\begin{equation*}
		f(a,b,c,s) := \tran(a)-\tran(c)-\tran(b-a)+\trans(c^2-2scb+b^2) - 2 b \dtran(a - sc)
		\eqfs
	\end{equation*}
	By \cref{mech:6}, $\partial_b f(a,b,c,s)\leq0$. Thus, as $b \geq a$, we have 
	\begin{align*}
		f(a,b,c,s) 
		&\leq 
		f(a,a,c,s) 
		\\&= 
		\tran(a)-\tran(c)+\trans(c^2-2sca+a^2) - 2 a \dtran(a - sc) 
		\\&=: 
		g(a,c,s)
		\eqfs
	\end{align*}
	By \cref{mech:7}, $\partial_a g(a,c,s) \leq 0$. Thus, as $a \geq a_{\ms{min}} := (s+1)c/2$, we have $g(a,c,s) \leq g(a_{\ms{min}},c,s)$.
	Set $u := (1+s)c/2$, $v := (1-s)c/2$. Then $c = u+v$, $s = (u-v)/(u+v)$, $sc = u-v$, $u-sc=v$, and
	\begin{align*}
		g(a_{\ms{min}},c,s) 
		&=
		g(u, u+v, (u-v)/(u+v)) 
		\\&=
		\tran(u)-\tran(u+v)+\trans(4uv + v^2) - 2 u \dtran(v) 
		\\&:=
		h(u, v)
		\eqfs
	\end{align*}
	By \cref{mech:8}, $\partial_u h(u, v) \leq 0$. Thus, as $u \geq 0$, we have 
	\begin{align*}
		h(u, v) \leq h(0, v) = \tran(0) - \tran(v) + \tran(v) = 0 
		\eqfs
	\end{align*}
	Thus, $f(a,b,c,s) \leq g(a,c,s) \leq h(u, v) \leq 0$.
\end{proof}
\subsection{Proof of \cref{lmm:param:firstreduction} \ref{lmm:param:firstreduction:a} for $a \geq c$, $b \geq 2sc$}\label{ssec:reduicLabGsc}
\begin{lemma}\label{lmm:param:reduicLabGsc}
    Let $\tran\in\setccod3$.
    Let $a,b,c\in\Rp$, $s\in[-1,1]$. Assume $b \geq 2sc$, $a\geq sc$.
    Then
    \begin{align*}
        \tran(a) - \tran(c) - \tran(\abs{a-b}) + \trans(c^2 - 2 s c b + b^2)
        \leq 
        2 b \dtran\brOf{\frac{a - s c}2}
        \eqfs
    \end{align*}
\end{lemma}
\begin{proof}
    The function $x \mapsto \trans(x+u) - \trans(x)$, $u\geq0$ is $\decr$ as $\dtrans\decr$.
    Thus, 
    \begin{align*}
        -\trans(c^2) + \trans(c^2 - 2 s c b+b^2)
        &\leq 
        -\trans((sc)^2) + \trans((sc)^2 - 2 s c b+b^2)
        \\&=
        -\tran|sc| + \tran|sc-b|
        \eqfs
    \end{align*}
    We use this and then apply \cref{lmm:merging:simple} to obtain,
    \begin{align*} 
        \tran(a) - \tran(c) - \tran|a-b| + \tran(c^2 - 2 s c b + b^2)
        &\leq 
        \tran(a) - \tran|sc| - \tran|a-b| + \tran|sc-b|
        \\&\leq 
        2 \br{\tran\brOf{\frac{a - s c + b}2} - \tran\abs{\frac{a - s c - b}2}}
        \eqfs
    \end{align*}	
    As $a - s c + b \geq \abs{a - s c - b}$, \cref{lmm:ccdiff} \ref{lmm:ccdiff:tightabs} yields
    \begin{align*}
        &\tran\brOf{\frac{a - s c + b}2} - \tran\abs{\frac{a - s c - b}2} 
        \\&\leq 
        \br{\frac{a - s c + b}2 - \frac{\abs{a - s c - b}}2} \dtran\brOf{\frac{a - s c + b}4 + \frac{\abs{a - s c - b}}4}
        \\&\leq 
        \min(a-sc, b) \dtran\brOf{\frac{\max(a-sc, b)}2}
        \eqfs
    \end{align*}
    By \cref{lmm:tranconcave} \ref{lmm:tranconcave:balance},
    \begin{equation}
        \min(a-sc, b) \dtran\brOf{\frac{\max(a-sc, b)}2}
        \leq
        b \dtran\brOf{\frac{a-sc}2}
        \eqfs
    \end{equation}
\end{proof}
\subsection{Proof of \cref{lmm:param:firstreduction} \ref{lmm:param:firstreduction:a} for $a \geq c$, $b \leq 2sc$, $sc \geq a-b$}\label{ssec:reduicLaLbscbLsc}
\begin{lemma}\label{lmm:param:reduicLaLbscbLsc}
	Let $\tran\in\setccod3$.
	Let $a,b,c\in\Rp$, $s\in[-1,1]$.
	Assume $\frac12 b \leq sc$, $sc \geq a-b$, and $a\geq c$.
	Then
	\begin{equation}\label{eq:reduicLaLbscbLsc:main}
		\tran(a) - \tran(c) - \tran(a-b) + \trans(c^2 - 2 s c b + b^2) 
		\leq 
		2 b \dtran\brOf{\frac{a-sc}{2}}
		\eqfs
	\end{equation}
\end{lemma}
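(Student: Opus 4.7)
Define
\[ F(a,b,c,s) := \tran(a) - \tran(c) - \tran(|a-b|) + \trans(c^2 - 2scb + b^2) - 2b\,\dtran\brOf{\frac{a-sc}{2}}. \]
The hypotheses do not force $a \geq b$, so the $\tran(a-b)$ in the statement should be read as $\tran(|a-b|)$, and the claim becomes $F \leq 0$. The three conditions place $b$ in the (non-empty) interval $[a-sc,\,2sc]$ and force $a \leq 3sc$. The plan, mirroring the style of the preceding sub-lemmas \cref{lmm:param:reduicGaGbGsc}--\cref{lmm:param:reduicGaLb}, is to push $b$ to an endpoint of this interval and evaluate.

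The natural endpoint is $b = 2sc$: there $c^2 - 2scb + b^2 = c^2$, so $\trans$ collapses to $\tran(c)$, which cancels the $-\tran(c)$ on the left, leaving
\[ F(a, 2sc, c, s) = \tran(a) - \tran(|a - 2sc|) - 4sc\,\dtran\brOf{\frac{a-sc}{2}}. \]
Applying \cref{lmm:ccdiff} \ref{lmm:ccdiff:tightabs} with $x = a - sc$ and $y = sc$ (so $x+y = a$ and $|x-y| = |a-2sc|$) bounds
\[ \tran(a) - \tran(|a-2sc|) \leq 2\min(a-sc,\,sc)\,\dtran(\max(a-sc,\,sc)). \]
If $a \geq 2sc$, this gives $2sc\,\dtran(a-sc)$, and the subadditivity $\dtran(a-sc) \leq 2\,\dtran((a-sc)/2)$ --- a direct consequence of $\dtran$ being concave with $\dtran(0)\geq 0$ --- yields $F(a,2sc,c,s) \leq 0$. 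If $a < 2sc$, the bound is $2(a-sc)\,\dtran(sc)$, and the monotonicity of $x \mapsto \dtran(x)/x$ (also from $\dtran$ concave and $\dtran(0)\geq 0$) applied to $(a-sc)/2 < sc$ gives $(a-sc)\,\dtran(sc) \leq 2sc\,\dtran((a-sc)/2)$, so again $F(a,2sc,c,s) \leq 0$.

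The main obstacle is justifying the reduction to $b = 2sc$, i.e., showing that $F$ attains its maximum on $b \in [a-sc,\,2sc]$ at the right endpoint. I would package this as a separate mechanical lemma in the spirit of \cref{mech:3}, asserting
\[ \partial_b F = \dtran(|a-b|)\,\sgn(a-b) + 2(b-sc)\,\dtrans\brOf{c^2-2scb+b^2} - 2\,\dtran\brOf{\frac{a-sc}{2}} \geq 0. \]
The difficulty is that $b - sc$ is not of fixed sign on $[a-sc,\,2sc]$, that $\dtrans(x) = \dtran(\sqrt{x})/(2\sqrt{x})$ only relates to $\dtran$ up to a $1/\sqrt{\cdot}$ factor, and that the $\sgn(a-b)$ term forces a case split on $a \gtrless b$. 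The relevant tools will be the inequality $\sqrt{c^2-2scb+b^2} \geq |b-sc|$, the bound $(a-sc)/2 \leq sc$, and concavity of $\dtran$. Should monotonicity in $b$ prove out of reach, a backup is to evaluate $F$ also at the lower endpoint $b = a-sc$ (where $|a-b| = sc$ and $c^2 - 2scb + b^2 = (a-2sc)^2 + c^2(1-s^2)$) and then appeal to a concavity/convexity argument in $b$ to interpolate between the two endpoint bounds.
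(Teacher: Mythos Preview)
Your reduction to the endpoint $b=2sc$ does not work: $F$ is \emph{not} nondecreasing in $b$ on $[a-sc,2sc]$. Take $\tran(x)=x^{3/2}$, $a=2$, $c=1$, $s=1$; then $b\in[1,2]$, $a-sc=1$, and a direct computation gives $F(2,1,1,1)\approx-1.293$, $F(2,1.5,1,1)\approx-1.354$, $F(2,2,1,1)\approx-1.414$, so $F$ is strictly decreasing in $b$. Your endpoint calculation at $b=2sc$ therefore bounds the \emph{minimum} of $F$ on the interval, not its maximum, and the proposed mechanical lemma $\partial_b F\ge 0$ is simply false. The backup plan is equally fragile: in the same example one checks $F$ is concave in $b$ (e.g.\ $F(2,1.25,1,1)\approx-1.348$ lies below the secant value $-1.323$), so even if you evaluated both endpoints, a convexity argument is unavailable to interpolate.

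The paper's proof avoids any monotonicity in $b$ altogether. Writing the left-hand side as $\trans(a^2)-\trans(c^2)-\trans((a-b)^2)+\trans(c^2-2scb+b^2)$ and exploiting the exact identity
\[
a^2+(c^2-2scb+b^2)-c^2-(a-b)^2=2b(a-sc),
\]
it applies two ``merging'' lemmas for concave functions (\cref{lmm:f1} and \cref{lmm:f2}, according to the ordering of the four arguments) to bound the whole expression by $2\,\trans\!\bigl(b(a-sc)\bigr)$. The final step uses $0\le a-sc\le b$ and \cref{lmm:cc:transdtran} to get $\trans\!\bigl(b(a-sc)\bigr)\le b\,\dtran\!\bigl(\tfrac{a-sc}{2}\bigr)$. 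The key missing idea in your attempt is precisely this algebraic telescoping in the \emph{squared} arguments of $\trans$, which sidesteps the delicate sign and magnitude issues you identified in $\partial_b F$.
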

\begin{proof}
	Define
	\begin{equation*}
		x_1^+ := a^2\eqcm\qquad
		x_2^+ := c^2 - 2 s c b + b^2\eqcm\qquad
		x_1^- := c^2\eqcm\qquad
		x_2^- := (a-b)^2
	\end{equation*} 
	to write the left-hand side of \eqref{eq:reduicLaLbscbLsc:main} as
	\begin{equation*}
		\trans(x_1^+) + \trans(x_2^+) - \trans(x_1^-) - \trans(x_2^-)
		\eqfs
	\end{equation*}
	As $a\geq c$,
	\begin{equation}\label{eq:reduicLaLbscbLsc:sqr}
		x_1^+ + x_2^+ -  x_1^- - x_2^-  = 2 b \br{a - s c} \geq 0
		\eqfs
	\end{equation}
	Because $a \geq c$ and $b \leq 2sc$, we have $a-b \geq a-2sc \geq a-2c \geq -c$. Together with $a-b \leq a$, we obtain $x_1^+ \geq \max(x_1^-, x_2^-)$.\\
	\underline{Case 1:  $x_2^+ \geq \min(x_1^-, x_2^-)$:}
	By first applying \cref{lmm:f1} and then using \eqref{eq:reduicLaLbscbLsc:sqr}, we obtain
	\begin{equation*}
		\trans(x_1^+) + \trans(x_2^+) - \trans(x_1^-) - \trans(x_2^-)
		\leq 
		2 \trans\brOf{b \br{a - s c}}
		\eqfs
	\end{equation*}
	\underline{Case 2:   $x_2^+ \leq \min(x_1^-, x_2^-)$:}
	By \eqref{eq:reduicLaLbscbLsc:sqr}, we have
	\begin{equation}
		x_1^+ + x_2^+ \geq  x_1^- + x_2^-
		\eqfs
	\end{equation}
	By first applying \cref{lmm:f2}, then using \eqref{eq:reduicLaLbscbLsc:sqr}, and finally $\trans\conc$, we obtain
	\begin{align*}
		&\trans(x_1^+) + \trans(x_2^+) - \trans(x_1^-) - \trans(x_2^-)
		\\&\leq 
		\trans\brOf{2 b \br{a - s c}}
		\\&\leq 
		2\trans\brOf{b \br{a - s c}}
		\eqfs
	\end{align*}
	\underline{Finally:}
	The condition $0 \leq a-sc \leq b$ together with \cref{lmm:cc:transdtran} implies 
	\begin{equation*}
		\trans\brOf{b \br{a - s c}}
		\leq 
		b \dtran\brOf{\frac{a-sc}{2}}
		\eqfs
		\qedhere
	\end{equation*}
\end{proof}
\subsection{Proof of \cref{lmm:param:firstreduction} \ref{lmm:param:firstreduction:a} for $a \geq c$, $b \leq 2sc$, $sc \leq a-b$}\label{ssec:reduicLaGbscbLsc}
\begin{lemma}\label{lmm:param:reduicLaGbscbLsc}
	Let $\tran\in\setccod3$.
	Let $a,b,c\in\Rp$, $s\in[-1,1]$.
	Assume $a\geq c$ and $\frac12 b \leq sc\leq a-b$.
	Then
	\begin{equation*}
		\tran(a) - \tran(c) - \tran(a-b) + \trans(c^2 - 2 s c b + b^2) \leq 2 b \dtran\brOf{\frac{a - s c}2}
		\eqfs
	\end{equation*}
\end{lemma}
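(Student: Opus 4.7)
The plan mirrors the monotonicity-reduction strategy of the earlier lemmas in this section. Define the excess
\begin{equation*}
	f(a,b,c,s) := \tran(a) - \tran(c) - \tran(a-b) + \trans(c^2 - 2scb + b^2) - 2b\dtran\brOf{\frac{a-sc}{2}}\eqcm
\end{equation*}
and aim to prove $f \leq 0$ under the standing hypotheses $0 \leq b/2 \leq sc \leq a-b$ and $a \geq c$.

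The first step is to show $\partial_a f \leq 0$, thereby reducing the problem to $a = a_{\ms{min}} := \max(c,\, b+sc)$. A direct computation gives
\begin{equation*}
	\partial_a f = \dtran(a) - \dtran(a-b) - b\ddtran\brOf{\frac{a-sc}{2}}\eqfs
\end{equation*}
Concavity of $\dtran$ yields $\dtran(a) - \dtran(a-b) \leq b\ddtran(a-b)$, and since $\dddtran \leq 0$ makes $\ddtran$ nonincreasing, the bound $\partial_a f \leq 0$ follows from $a-b \geq (a-sc)/2$, i.e., $a \geq 2b - sc$. This holds because the hypotheses give $a \geq b+sc \geq b + b/2 = 3b/2 \geq 2b - sc$. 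Hence it suffices to verify the claim at $a = a_{\ms{min}}$, which splits into two sub-cases according to whether $b+sc \leq c$ or $b+sc \geq c$.

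In the easier sub-case $b+sc \leq c$, so $a_{\ms{min}} = c$, the inequality reduces to
\begin{equation*}
	\trans(c^2 - 2scb + b^2) - \tran(c-b) \leq 2b \dtran\brOf{\frac{(1-s)c}{2}}\eqfs
\end{equation*}
Setting $x := \sqrt{c^2 - 2scb + b^2}$ and $y := c-b$, one has $x \geq y \geq 0$ with $x^2 - y^2 = 2cb(1-s)$, so \cref{lmm:ccdiff} \ref{lmm:ccdiff:tight} yields $\tran(x) - \tran(y) \leq \tfrac{2cb(1-s)}{x+y}\dtran(\tfrac{x+y}{2})$. A direct computation from $s\in[-1,1]$ gives $x+y \geq (1-s)c$, and the conclusion follows because $\dtran(\cdot)/\cdot$ is nonincreasing on $\Rpp$ (a standard consequence of concavity of $\dtran$ together with $\dtran(0) \geq 0$, cf.\ \cref{lmm:tranconcave}), after rearrangement.

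In the remaining sub-case $b+sc \geq c$, we have $a_{\ms{min}} = b+sc$ and the admissible region reduces to $(1-s)c \leq b \leq 2sc$, $s \in [1/3,1]$. The inequality to prove becomes
\begin{equation*}
	\tran(b+sc) - \tran(c) - \tran(sc) + \trans(c^2 - 2scb + b^2) \leq 2b\dtran(b/2)\eqfs
\end{equation*}
The plan is to bound $\tran(b+sc) - \tran(sc) \leq b\dtran(b/2 + sc)$ via \cref{lmm:ccdiff} \ref{lmm:ccdiff:tight}, to bound $\tran(c) - \trans(c^2-2scb+b^2) \geq 0$ from below using the same lemma applied to $c \geq \sqrt{c^2-2scb+b^2}$ (valid since $b \leq 2sc$, with squared difference $b(2sc-b)$), and finally to combine both against $2b\dtran(b/2)$ using monotonicity and concavity of $\dtran$ (in particular, the subadditive bound $\dtran(2t) \leq 2\dtran(t)$ implied by $\dtran$ concave with $\dtran(0) \geq 0$). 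The main obstacle is this final balancing: the two differences on the left have opposing signs and must cancel tightly, so a further sub-split --- for instance depending on whether $b/2 + sc$ exceeds $c$ --- is likely needed to match the factor of $2$ on the right-hand side exactly, in the same spirit as the nested monotonicity arguments of \cref{lmm:param:reduicGaGbGsc} and \cref{lmm:param:reduicGaGbLsc}.
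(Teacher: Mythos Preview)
Your approach differs from the paper's. You keep the target right-hand side $2b\,\dtran\bigl(\tfrac{a-sc}{2}\bigr)$ and reduce in $a$; the paper instead first replaces the right-hand side by the smaller quantity $2\bigl(\tran(\tfrac{a-sc+b}{2})-\tran(\tfrac{a-sc-b}{2})\bigr)$ (a lower bound by \cref{lmm:ccdiff} \ref{lmm:ccdiff:tight}) and reduces in $x=a-b$ via \cref{lmm:aux:extreme}. Your step $\partial_a f\le 0$ is correct, and your sub-case $b+sc\le c$ (so $a_{\min}=c$) is handled cleanly and correctly.

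The genuine gap is the sub-case $b+sc\ge c$. Your proposed bounds are too crude to close it: combining $\tran(b+sc)-\tran(sc)\le b\,\dtran(sc+b/2)$ with the bare estimate $\trans(c^2-2scb+b^2)\le\tran(c)$ leaves you needing $\dtran(sc+b/2)\le 2\,\dtran(b/2)$. But every concavity device at hand---subadditivity $\dtran(sc+b/2)\le\dtran(sc)+\dtran(b/2)$, or monotonicity of $t\mapsto\dtran(t)/t$---reduces this to $sc\le b/2$, contradicting the standing hypothesis $sc\ge b/2$. The negative contribution $\trans(c^2-2scb+b^2)-\tran(c)$ must therefore be retained quantitatively; discarding it cannot work. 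In the paper, the boundary value $a=b+sc$ is dispatched by the stronger auxiliary statement \cref{lmm:xabc} (applied at $x=sc$, with right-hand side $2\tran(b)\le 2b\,\dtran(b/2)$), whose proof is a \emph{second} monotonicity reduction: one shows $\partial_x\bigl(\dtran(x+b)-\dtran(x)-2b\,\dtrans(c^2-2xb+b^2)\bigr)\le 0$, reduces to $x_{\min}=\max(b/2,\,c-b)$, and then splits on which term realises the maximum (\cref{mech:10}--\cref{mech:13}). That nested reduction, not a one-shot inequality, is the missing ingredient in your sketch.

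A minor remark: you add the hypothesis $a\ge c$, which is absent from the lemma as stated but supplied by the surrounding case split, so this is harmless for the intended application.
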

\begin{proof}
	Set $x:=a-b$.
	Define
	\begin{align*}
		f(x,b,c,s) := 
		\tran(x+b)-\tran(c)-\tran(x)+\trans(c^2 - 2 s c b + b^2) -
		2 \br{
			\tran\brOf{\frac{x - s c}{2} + b}  -  \tran\brOf{\frac{x - s c}{2}}
		}
		\eqfs
	\end{align*}
	Then
	\begin{equation*}
		\partial_x f(x,b,c,s) =
		\dtran(x+b)-\dtran(x)
		- \dtran\brOf{\frac{x - s c}{2} + b}
		+ \dtran\brOf{\frac{x - s c}{2}}
		\eqfs
	\end{equation*}
	We have
	\begin{align*}
		x+b &\geq  x\eqcm&
		\frac{x - s c}{2} &\leq  \frac{x - s c}{2} + b\eqcm&
		(x+b) + \frac{x - s c}{2} &= x +\br{\frac{x - s c}{2} + b}\eqfs
	\end{align*}
	Thus, \cref{lmm:aux:extreme} implies
	\begin{equation*}
		\dtran(x+b)
		+ \dtran\brOf{\frac{x - s c}{2}}
		\leq
		\dtran(x)
		+\dtran\brOf{\frac{x - s c}{2} + b}
		\eqfs
	\end{equation*}
	Thus, $\partial_x f(x,b,c,s) \leq 0$. Thus, as $x \geq sc$, we have $f(x,b,c,s) \leq f(sc, b,c,s)$.
    In the case $x = x_0 := sc$, we have
    \begin{equation}
        f(sc, b,c,s) = g(x_0,b,c) :=  \tran(x_0+b)-\tran(c)-\tran(x_0)+\trans(c^2 - 2 x_0 b + b^2) - 2\tran\brOf{b}
        \eqcm
    \end{equation}
    with $c,b,x_0 \in\Rp$ and
    $c \leq x_0+b$,
    $b \leq 2x_0$,
    $x_0 \leq c$.    
    By \cref{lmm:xabc}, $g(x_0,b,c) \leq 0$. Thus, $f(x,b,c,s) \leq 0$ for all valid $x$.
	Then we obtain, using  \cref{lmm:ccdiff} \ref{lmm:ccdiff:tight} and $\dtran\incr$,
	\begin{equation*}
		\tran(a) - \tran(c) - \tran(a-b) + \trans(c^2 - 2 s c b + b^2) \leq 2 b \dtran\brOf{\frac{a - s c}2}
		\eqfs
	\end{equation*}
\end{proof}
\begin{lemma}\label{lmm:xabc}
	Let $\tran\in\setccod3$.
	Let $x,b,c\in\Rp$.
	Assume $b \leq 2x$, $x+b \geq c$, $x\leq c$.
	Then
	\begin{equation*}
		\tran(x+b) + \trans(c^2 - 2 x b + b^2) \leq \tran(c) + \tran(x) + 2 \tran(b) 
		\eqfs
	\end{equation*}	
\end{lemma}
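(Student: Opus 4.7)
The plan is to treat $c$ as the varying parameter with $x,b$ fixed and to show that
\[
g(c) := \tran(x+b) + \trans(c^2 - 2xb + b^2) - \tran(c) - \tran(x) - 2\tran(b)
\]
is nondecreasing on the admissible range $c \in [x, x+b]$ and is nonpositive at the right endpoint $c = x+b$. Combined, these give $g(c) \leq 0$, which is the desired inequality (I am reading the right-hand side of the lemma as $\tran(c)+\tran(x)+2\tran(b)$, matching the way the lemma is invoked in the proof of \cref{lmm:param:reduicLaGbscbLsc}).

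First I would verify monotonicity in $c$. Differentiating gives $g\pr(c) = 2c\,\dtrans(c^2 - 2xb + b^2) - \dtran(c)$; writing $w := \sqrt{c^2 - 2xb + b^2}$ and using $\dtrans(u) = \dtran(\sqrt u)/(2\sqrt u)$ this simplifies to $g\pr(c) = c\,\dtran(w)/w - \dtran(c)$. The hypothesis $b \leq 2x \leq 2c$ forces $b(b-2x) \leq 0$, hence $w \leq c$. By \cref{lmm:ccSimpleProps}, $\dtran$ is concave with $\dtran(0) \geq 0$, so the map $t \mapsto \dtran(t)/t$ is nonincreasing on $\Rpp$ (from $t\ddtran(t) \leq \dtran(t) - \dtran(0) \leq \dtran(t)$). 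Hence $\dtran(w)/w \geq \dtran(c)/c$ and $g\pr(c) \geq 0$.

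Next, using $c \leq x+b$, I evaluate at the right endpoint. A direct calculation gives $(x+b)^2 - 2xb + b^2 = x^2 + 2b^2$, so, rewriting $\tran(u) = \trans(u^2)$,
\[
g(x+b) = \trans(x^2+2b^2) - \trans(x^2) - 2\trans(b^2).
\]
By \cref{lmm:ccsqrtprop}, $\trans$ is concave on $\Rp$ with $\trans(0) = \tran(0) = 0$ (since $\tran \in \setccod{3} \subset \setcco$). A concave function on $\Rp$ vanishing at the origin is subadditive, so two applications yield $\trans(x^2+2b^2) \leq \trans(x^2) + \trans(2b^2) \leq \trans(x^2) + 2\trans(b^2)$, giving $g(x+b) \leq 0$.

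The main (though mild) obstacle is the monotonicity step: it hinges on pairing $b \leq 2x$ (to ensure $w \leq c$) with the chord property of concave $\dtran$ with $\dtran(0) \geq 0$. Once monotonicity is in hand, the endpoint case collapses to subadditivity of $\trans$, and the three hypotheses play distinct roles: $b \leq 2x$ drives the monotonicity, $x+b \geq c$ allows pushing $c$ up to the endpoint $x+b$, and $x \leq c$ keeps the argument $c^2 - 2xb + b^2 \geq c^2 - x^2 \geq 0$ of $\trans$ nonnegative throughout the range.
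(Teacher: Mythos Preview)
Your proof is correct and takes a genuinely different route from the paper's.

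The paper fixes $b,c$ and varies $x$: it shows that $\partial_x f$ is nonincreasing in $x$ (via \cref{mech:14}), pushes $x$ down to $x_{\min}=\max(b/2,\,c-b)$, and then handles the two cases $x_{\min}=c-b$ and $x_{\min}=b/2$ separately, invoking four mechanical lemmas (\cref{mech:10}--\cref{mech:13}) to close each case. You instead fix $x,b$ and vary $c$: the single observation that $w:=\sqrt{c^2-2xb+b^2}\le c$ (from $b\le 2x$) combined with the chord property $t\mapsto\dtran(t)/t$ nonincreasing (concavity of $\dtran$ with $\dtran(0)\ge 0$) gives $g'(c)\ge 0$ in one stroke, and the endpoint $c=x+b$ reduces to subadditivity of $\trans$. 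Your argument is shorter, avoids the case split, and uses only first-derivative information about $\tran$; the paper's approach needs $\dddtran\le 0$ through \cref{mech:14}. Note also that your reading of the statement (with $\tran(x)$ rather than $\trans(x)$ on the right) matches both the paper's own proof of the lemma and its use in \cref{lmm:param:reduicLaGbscbLsc}, so the $\trans(x)$ in the displayed statement is a typo. One cosmetic point: when $b=x$ the quantity $w$ vanishes at the left endpoint $c=x$, so $\dtran(w)/w$ is undefined there; but since $g$ is continuous and your derivative bound holds on the open interval, monotonicity on $[x,x+b]$ follows regardless.
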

\begin{proof}
	Define
	\begin{align*}
		f(x,b,c) &:= \tran(x+b) + \trans(c^2 - 2 x b + b^2) - \tran(c) - \tran(x) - 2 \tran(b) \eqcm
		\\
		g(x,b,c) := \partial_x f(x,b,c)
		&=
		\dtran(x+b) - \dtran(x) - 2b\dtrans(c^2 - 2 x b + b^2)
		\eqfs
	\end{align*}
	By \cref{mech:14}, $\partial_x g(x,b,c) \leq 0$. 
	Thus, as $x \geq x_{\ms{min}} := \max(\frac b2, c-b)$, we have $g(x,b,c) \leq g(x_{\ms{min}},b,c)$. 
	\\
	\underline{Case 1, $x_{\ms{min}} = c-b$:}\\
	In this case $c-b \geq b/2$, i.e., $2c\geq3b$. By \cref{mech:10}, $g(c-b, b, c) \leq 0$. 
	Thus, as $x\geq x_{\ms{min}} = c-b$, we have $f(x,b,c) \leq f(c-b, b, c)$. By \cref{mech:11}, $f(c-b, b, c) \leq 0$.
	Thus, $f(x,b,c) \leq 0$.
	\\
	\underline{Case 2, $x_{\ms{min}} = \frac b2$:}\\
	In this case $c-b\leq b/2$, i.e., $2c \leq 3b$. By \cref{mech:12}, $g(b/2, b, c) \leq 0$. 
	Thus, as $x\geq x_{\ms{min}} = b/2$, we have $f(x,b,c) \leq f(b/2, b, c)$. By \cref{mech:13}, $f(b/2, b, c) \leq 0$.
	Thus, $f(x,b,c) \leq 0$.
\end{proof}

	\section{Auxiliary Results}\label{app:aux}
To make the proofs presented in appendix \ref{app:proofsymm} and \ref{app:mainproof} more readable, we have extracted some calculations to this section.
\subsection{Nondecreasing, Convex Functions with Concave Derivative}\label{app:aux:cc}
\begin{proof}[of \cref{lmm:ccSimpleProps}]
	\begin{enumerate}[label=(\roman*)]
		\item 
		As $\tran$ and $\dtran$ are convex and concave, respectively, we immediately obtain continuity on $\Rpp$. As $\tran$ is nondecreasing and convex, it must also be continuous at $0$.
		As $\tran$ is nondecreasing, $\dtran$ is nonnegative. As $\tran$ is convex, $\dtran$ is nondecreasing.
		Furthermore, $\lim_{x\searrow0} \dtran(x)$ exists as $\dtran$ is nonnegative and nondecreasing.
		Hence, by definition of $\dtran(0)$, $\dtran$ is continuous.
		As $\tran$ is convex and nondecreasing on $\Rp$,
		\begin{equation}
			h\mapsto \frac{\tran(h)-\tran(0)}{h}
		\end{equation}
		is nondecreasing and nonnegative. Thus, $\partial_+ \tran(0)$ exists. 
		It remains to show that $\partial_+ \tran(0) = \dtran(0)$.
		
		Fix $\epsilon >0$. As $\dtran$ is continuous, it is uniformly continuous on the compact interval $[0,2]$. Thus, we can find $\delta\in(0,1]$ such that $\abs{\dtran(x)-\dtran(y)} \leq \epsilon$ for all $x,y\in[0,2]$ with $0 < \abs{x-y} \leq \delta$. Thus, for any $h\in (0, \delta]$ and $x \in (0, 1]$,
		\begin{equation}
			\abs{\frac{\tran(x+h)-\tran(x)}{h} - \dtran(x)}
			= 
			\abs{\frac1h \int_0^h \dtran(x+z) - \dtran(x) \dl z}
			\leq
			\epsilon
			\eqfs
		\end{equation}
		Choose $h\in (0, \delta]$ small enough such that,
		\begin{equation}
			\abs{\frac{\tran(h)-\tran(0)}{h} - \partial_+\tran(0)}
			\leq
			\epsilon
			\eqfs
		\end{equation}
		As $\tran$ is continuous and $h$ is fixed, we can find $x\in(0,1]$ small enough such that
		\begin{align*}
			\abs{\tran(x + h) - \tran(h)} &\leq h\epsilon\eqcm\\
			\abs{\tran(x) - \tran(0)} &\leq h\epsilon\eqfs
		\end{align*}
		Thus, we have
		\begin{align*}
			\abs{\dtran(0) - \partial_+\tran(0)}
			&\leq
			\abs{\dtran(0) - \dtran(x)} +
			\abs{\dtran(x) - \partial_+\tran(0)}
			\\&\leq
            \epsilon +
			\abs{\dtran(x) - \frac{\tran(x+h)-\tran(x)}{h}} +
			\abs{\frac{\tran(x+h)-\tran(x)}{h} - \partial_+\tran(0)}
			\\&\leq
            2\epsilon +
			\abs{\frac{\tran(x+h)-\tran(x)}{h} - \frac{\tran(h)-\tran(0)}{h}}+
			\abs{\frac{\tran(h)-\tran(0)}{h} - \partial_+\tran(0)}
			\\&\leq
            3\epsilon +
			\frac1h\br{\abs{\tran(x+h)-\tran(h)}+\abs{\tran(x)-\tran(0)}}
			\\&\leq 5\epsilon
			\eqfs
		\end{align*}
		As $\epsilon > 0$ can be chosen arbitrarily small, $\dtran(0) = \partial_+\tran(0)$.
		\item As $\tran$ is convex, $\ddtran$ is nonnegative. As $\dtran$ is concave, $\ddtran$ is nonincreasing. 
		\item As $\dtran$ is concave, $\dddtran$ is nonpositive.
	\end{enumerate}
\end{proof}
\begin{proof}[of \cref{lmm:ccpoly}]
	\begin{enumerate}[label=(\roman*)]
		\item Apply a second and a third order Taylor expansion to $y\mapsto \tran(x + y)$ at $0$ and use \cref{lmm:ccSimpleProps}.
		\item Apply a first and a second order Taylor expansion to $y\mapsto \dtran(x + y)$ at $0$ and use \cref{lmm:ccSimpleProps}.
	\end{enumerate}
\end{proof}
\begin{proof}[of \cref{lmm:ccdiff}]
	\begin{enumerate}[label=(\roman*)]
		\item[\ref{lmm:ccdiff:tight}]
		For the lower bound, as $\dtran$ is concave,
		\begin{align*}
			\tran(x) - \tran(y) 
			&= 
			\int_y^x \dtran(u) \dl u
			\\&\geq
			(x-y)\int_0^1 (1-t)\dtran(y) + t \dtran(x)\, \dl t
			\\&=
			\frac {x-y}2 \br{\dtran(x)+\dtran(y)}
			\eqfs
		\end{align*}
		For the upper bound, concavity of $\dtran$ implies the existence of an affine linear function $h$ with $h(u) \geq \dtran(u)$ for all $u \in\Rp$ and 
		\begin{equation}
			h\brOf{\frac{x+y}{2}} = \dtran\brOf{\frac{x+y}{2}}
			\eqfs
		\end{equation}
		Thus,
		\begin{align*}
			\tran(x) - \tran(y) 
			&\leq
			\int_y^x h(u) \dl u
			\\&=
			\frac {x-y}2\br{h(x)+h(y)}
			\\&=
			(x-y) h\brOf{\frac{x+y}{2}}
			\eqfs
		\end{align*}
		\item[\ref{lmm:ccdiff:tightabs}]
		Follows directly from the upper bound in \ref{lmm:ccdiff:tight}.
	\end{enumerate}
\end{proof}
\begin{lemma}[Concave derivative]\label{lmm:tranconcave}
	Let $\tran\in\setcc$.
	\begin{enumerate}[label=(\roman*)]
		\item \label{lmm:tranconcave:add}
		Let $x,y\in\Rp$. Then
		\begin{equation*}
			\dtran(x + y) \leq \dtran(x) +  \dtran(y) \leq 2 \dtran\brOf{\frac{x+y}2}
			\eqfs
		\end{equation*}
		\item\label{lmm:tranconcave:factor} Let $a,x\in\Rp$. Then
		\begin{align*}
			\dtran(ax) &\geq a\dtran(x) \text{ for } a\leq 1\eqcm\\
			\dtran(ax) &\leq a\dtran(x) \text{ for } a\geq 1
			\eqfs
		\end{align*}
		\item\label{lmm:tranconcave:balance} Let $x,y\in\Rp$. Assume $y\geq x$. Then
		\begin{equation*}
			x \dtran(y) \leq y \dtran(x)
			\eqfs
		\end{equation*}
	\end{enumerate}
\end{lemma}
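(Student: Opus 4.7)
The plan is to derive all three parts directly from the basic properties of $\dtran$ established in \cref{lmm:ccSimpleProps}: namely that $\dtran$ is nonnegative, nondecreasing, and concave on $\Rp$. No new ideas are required; everything reduces to well-known consequences of concavity for functions that are also nonnegative at $0$.

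For part \ref{lmm:tranconcave:add}, the second inequality is immediate Jensen for the concave function $\dtran$. The first inequality is the standard fact that a concave function $f$ on $\Rp$ with $f(0)\geq 0$ is subadditive: I would write $x = \tfrac{x}{x+y}(x+y) + \tfrac{y}{x+y}\cdot 0$ and apply concavity together with $\dtran(0)\geq 0$ to obtain $\dtran(x) \geq \tfrac{x}{x+y}\dtran(x+y)$, do the symmetric computation for $\dtran(y)$, and add. (The edge case $x+y=0$ is trivial.)

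For part \ref{lmm:tranconcave:factor}, when $a\leq 1$ I would use $ax = a\cdot x + (1-a)\cdot 0$ together with concavity and $\dtran(0)\geq 0$ to get $\dtran(ax) \geq a\dtran(x) + (1-a)\dtran(0) \geq a\dtran(x)$. For $a \geq 1$, I would apply the case just proved with the roles reversed: writing $x = \tfrac1a (ax) + (1-\tfrac1a)\cdot 0$ gives $\dtran(x) \geq \tfrac1a \dtran(ax)$, i.e., $a\dtran(x) \geq \dtran(ax)$.

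Part \ref{lmm:tranconcave:balance} then follows from part \ref{lmm:tranconcave:factor} by setting $a = x/y \in [0,1]$ (assuming $y>0$; the case $y=0$ forces $x=0$ and both sides vanish): $\dtran(x) = \dtran(ay) \geq a\dtran(y) = \tfrac{x}{y}\dtran(y)$, which rearranges to the claim. No step here is subtle, so I do not anticipate any real obstacle; the only thing to be mindful of is handling the degenerate cases where denominators vanish.
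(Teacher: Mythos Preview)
Your proposal is correct and matches the paper's proof essentially line for line: part \ref{lmm:tranconcave:add} via subadditivity of a concave function with $\dtran(0)\geq 0$ plus Jensen, part \ref{lmm:tranconcave:factor} via the convex-combination-with-$0$ argument, and part \ref{lmm:tranconcave:balance} by specializing \ref{lmm:tranconcave:factor}. The only cosmetic difference is that the paper sets $a=y/x\geq 1$ in \ref{lmm:tranconcave:balance} whereas you set $a=x/y\leq 1$, which amounts to the same computation.
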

\begin{proof}
	These are all well-known properties of nonnegative, concave functions.
	\begin{enumerate}[label=(\roman*)]
		\item Use \cref{lmm:aux:redistri} and Jensen's inequality.
		\item Use $(1-t)\dtran(x_0) + t \dtran(x_1) \leq \dtran((1-t) x_0 + t x_1)$ on points $x_0=0$, $x_1=x$, $t = a$ and on $x_0=0$, $x_1=ax$, $t = 1/a$, respectively, and note that $\dtran(0)\geq0$.
		\item Apply \ref{lmm:tranconcave:factor} with $a = y/x$.
	\end{enumerate}
\end{proof}
\begin{lemma}[Square root and derivative]\label{lmm:cc:transdtran}
	Let $\tran\in\setccod2$.
	Let $x,y\in\Rp$. Assume $x \geq y$. Then
	\begin{equation*}
		\tran\brOf{\sqrt{xy}} \leq x \dtran\brOf{\frac12 y}\eqfs
	\end{equation*} 
\end{lemma}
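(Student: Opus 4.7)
The plan is to reduce the bound to two facts already proved in the paper: the midpoint-type upper bound in \cref{lmm:ccdiff} \ref{lmm:ccdiff:tight}, and the ``sublinear balance'' inequality for the concave nonnegative derivative in \cref{lmm:tranconcave} \ref{lmm:tranconcave:balance}. The key observation is that the exponent distribution in $\sqrt{xy}$ lets us split the bound into one factor controlled by $\tran$ and a second factor controlled by $\dtran$, each at the geometric mean argument, which we can then push apart using the $x\geq y$ hypothesis.

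First I would apply \cref{lmm:ccdiff} \ref{lmm:ccdiff:tight} to the pair $(\sqrt{xy},0)$, using $\tran(0)=0$, to obtain
\begin{equation*}
	\tran\brOf{\sqrt{xy}} \leq \sqrt{xy}\,\dtran\brOf{\tfrac{\sqrt{xy}}{2}}\eqfs
\end{equation*}
Next, I would use $x\geq y\geq0$, which implies $\sqrt{xy}\geq y$, and apply \cref{lmm:tranconcave} \ref{lmm:tranconcave:balance} to the pair $(y/2,\sqrt{xy}/2)$, giving
\begin{equation*}
	\tfrac{y}{2}\,\dtran\brOf{\tfrac{\sqrt{xy}}{2}} \leq \tfrac{\sqrt{xy}}{2}\,\dtran\brOf{\tfrac{y}{2}}\eqfs
\end{equation*}
Multiplying by $\sqrt{xy}/y$ (valid when $y>0$) transforms this into $\sqrt{xy}\,\dtran(\sqrt{xy}/2)\leq x\,\dtran(y/2)$, which chained with the previous inequality is exactly the claim.

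The degenerate case $y=0$ is immediate: both sides vanish or the right-hand side is $x\dtran(0)\geq 0$, which dominates $\tran(0)=0$ since $\dtran\geq 0$ by \cref{lmm:ccSimpleProps}. There is no real obstacle here; the main thing to get right is the bookkeeping, namely that $\sqrt{xy}/2$ lies between $y/2$ and $x/2$ so that both lemmas are being invoked in their valid regime.
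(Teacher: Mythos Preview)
Your proof is correct. Both arguments rely on the same two ingredients---\cref{lmm:ccdiff}\,\ref{lmm:ccdiff:tight} (in the form $\tran(z)\le z\,\dtran(z/2)$) and the concavity balance for $\dtran$ from \cref{lmm:tranconcave}---but assemble them differently. The paper defines $f(x,y)=x\,\dtran(y/2)-\tran(\sqrt{xy})$, shows $\partial_x f\ge 0$ via \cref{lmm:tranconcave}\,\ref{lmm:tranconcave:factor}, and then reduces to the diagonal $x=y$, where \cref{lmm:ccdiff}\,\ref{lmm:ccdiff:tight} finishes. You instead apply \cref{lmm:ccdiff}\,\ref{lmm:ccdiff:tight} directly at the geometric mean $\sqrt{xy}$ and then push the resulting bound apart using \cref{lmm:tranconcave}\,\ref{lmm:tranconcave:balance}. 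Your route avoids the derivative computation entirely and is slightly more elementary; in particular it makes no use of the $\setd2$ regularity in the hypothesis.
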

\begin{proof}
	Define $f(x, y) := x \dtran(\frac12 y) - \tran(\sqrt{xy})$. Its partial derivative with respect to $x$ is
	\begin{align*}
		\partial_x f(x, y)
		&= 
		\dtran\brOf{\frac12 y} - \frac{\sqrt{y}}{2\sqrt{x}}\dtran\brOf{\sqrt{xy}}
		\\&\geq  
		\dtran\brOf{\frac12 y} - \dtran\brOf{\frac12 y} = 0
		\eqcm
	\end{align*}
	by \cref{lmm:tranconcave} \ref{lmm:tranconcave:factor} with $\frac{\sqrt{y}}{2\sqrt{x}} \in [0,1]$. Thus, 
	\begin{align*}
		f(x, y) 
		&\geq 
		f(y, y) 
		\\&= 
		y \dtran\brOf{\frac12 y} - \tran(y)
		\\&\geq 
		0\eqcm
	\end{align*}
	where the last inequality is due to \cref{lmm:ccdiff} \ref{lmm:ccdiff:tight} with $\tran(0) = 0$.
\end{proof}
\subsection{Merging Terms}
\begin{lemma}\label{lmm:aux:redistri}\mbox{ }
	\begin{enumerate}[label=(\roman*)]
		\item Let $f\colon\Rp \to \R$. Assume $f$ is concave. Let $a,b\in\Rp$ with $a\geq b$. Then
		$x \mapsto f(a+x) + f(b-x)$ is nonincreasing. If additionally $f(0)\geq 0$, then $f$ is subadditive.
		\item 
		Let $f\colon\Rp \to \R$. Assume $f$ is convex. Let $a,b\in\Rp$ with $a\geq b$. Then
		$x \mapsto f(a+x) + f(b-x)$ is nondecreasing.
	\end{enumerate}
\end{lemma}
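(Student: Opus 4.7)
The plan is to reduce both parts to the elementary secant-slope characterization of concavity and convexity: for a concave $f$, the increment $f(u+h) - f(u)$ over an interval of fixed length $h$ is nonincreasing as $u$ increases, and for a convex $f$ it is nondecreasing. Part (i) and part (ii) are then mirror images of the same computation.

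For part (i), I will fix $0 \leq x_1 < x_2 \leq b$ and rewrite the target inequality $f(a+x_2) + f(b-x_2) \leq f(a+x_1) + f(b-x_1)$ in the rearranged form
\begin{equation*}
	f(a+x_2) - f(a+x_1) \leq f(b-x_1) - f(b-x_2)\eqfs
\end{equation*}
Both sides are increments of $f$ over intervals of the same length $h := x_2 - x_1$, namely $[a+x_1,\,a+x_2]$ on the left and $[b-x_2,\,b-x_1]$ on the right. The positional inequality $a+x_1 \geq b-x_2$ follows immediately from $a\geq b$ together with $x_1, x_2 \geq 0$, so the left interval sits to the right of the right one. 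Concavity of $f$ then yields exactly the inequality above, proving that $x\mapsto f(a+x)+f(b-x)$ is nonincreasing on $[0,b]$.

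For the subadditivity part of (i), I will reduce to the monotonicity just established. Given $x,y\in\Rp$, assume without loss of generality $x\geq y$. Apply the monotonicity with $a=x$, $b=y$, and compare the values at $t=0$ and $t=y$, which gives
\begin{equation*}
	f(x+y) + f(0) \leq f(x) + f(y)\eqfs
\end{equation*}
The additional assumption $f(0)\geq 0$ then yields $f(x+y)\leq f(x)+f(y)$, which is the claimed subadditivity.

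Part (ii) is proved by an identical argument with every inequality reversed: convexity of $f$ gives $f(a+x_2) - f(a+x_1) \geq f(b-x_1) - f(b-x_2)$ for the same two intervals, so $x\mapsto f(a+x)+f(b-x)$ is nondecreasing. There is no real obstacle in this lemma; the only point worth double-checking is the positional inequality $a+x_1 \geq b-x_2$ that licenses the application of the secant-slope property, and this is immediate from $a\geq b$ and $x_1,x_2\geq 0$.
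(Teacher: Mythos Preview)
Your proof is correct and follows essentially the same approach as the paper's. Both arguments are direct applications of concavity; the paper expresses $a$ and $b$ as explicit convex combinations of $a+x$ and $b-x$ and adds the resulting inequalities, whereas you invoke the equivalent secant-slope monotonicity to compare two arbitrary points $x_1<x_2$ directly --- the subadditivity conclusion is then obtained identically in both cases.
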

\begin{proof}
	We prove the first part; the second part is analogous.
	As $f$ is concave, we have
	\begin{align*}
		f(a) &\geq \frac{a-b+x}{a-b+2x} f(a+x) + \frac{x}{a-b+2x} f(b-x)\eqcm\\
		f(b) &\geq \frac{x}{a-b+2x} f(a+x) + \frac{a-b+x}{a-b+2x} f(b-x)
	\end{align*}
	for $x\in[0, b]$. Adding the two inequalities yields
	\begin{equation}
		f(a) + f(b) \geq f(a+x)  + f(b-x)
		\eqfs
	\end{equation}
	As this inequality also applies to $\tilde a = a+x$, $\tilde b = b-x$, we have that $x \mapsto f(a+x) + f(b-x)$ is nonincreasing. 
	Subadditivity follows by setting $x = b$.
\end{proof}
\begin{lemma}\label{lmm:aux:six}
	Let $f\colon\Rp \to \R$. Assume $f(0)\geq 0$, $f$ is nondecreasing, and $f$ is concave. Let $x_1, \dots, x_6 \in \Rp$.
	Assume $\max(x_1, x_2, x_3, x_4) \leq \max(x_5, x_6)$ and $x_1+x_2+x_3+x_4 \geq x_5+x_6$. Then
	\begin{equation}\label{eq:aux:six}
		f(x_1) + f(x_2) + f(x_3) + f(x_4) \geq f(x_5) + f(x_6)
		\eqfs
	\end{equation}
\end{lemma}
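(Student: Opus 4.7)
The plan is to normalize and then apply \cref{lmm:aux:redistri} in two complementary regimes. First, by relabeling I arrange $x_5 \geq x_6$ and sort $x_1 \geq x_2 \geq x_3 \geq x_4$; the hypothesis $\max(x_1,\dots,x_4) \leq \max(x_5,x_6)$ becomes $x_1 \leq x_5$. Since $f$ is nondecreasing, I may reduce the $x_i$'s (while keeping them nonnegative) until $x_1+x_2+x_3+x_4 = x_5 + x_6$: this only decreases the left-hand side, so proving the inequality under this equality suffices. The argument then splits on whether $x_1 + x_2 \leq x_5$ or $x_1 + x_2 > x_5$. In both cases I use two consequences of \cref{lmm:aux:redistri}(i) available for concave $f$ with $f(0)\geq 0$: subadditivity $f(u) + f(v) \geq f(u+v)$, and the redistribution inequality $f(a) + f(b) \geq f(a+x) + f(b-x)$ whenever $a \geq b$ and $x \in [0, b]$.

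In the case $x_1 + x_2 \leq x_5$, I set $p := x_1 + x_2$ and $q := x_3 + x_4$ and collapse each pair by subadditivity: $f(x_1)+f(x_2) \geq f(p)$ and $f(x_3)+f(x_4) \geq f(q)$. Sorting gives $p \geq q$, the case assumption gives $p \leq x_5$, and the sum constraint then forces $q \geq x_6$. Applying the redistribution inequality with $a = p$, $b = q$, and shift $x = x_5 - p \in [0, q]$ produces exactly $f(p) + f(q) \geq f(x_5) + f(x_6)$, which chains to the desired bound.

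In the case $x_1 + x_2 > x_5$, I apply the redistribution inequality directly to $(x_1, x_2)$ with $a = x_1$, $b = x_2$, and shift $x := x_5 - x_1$; the shift lies in $[0, x_2]$ precisely because $x_1 \leq x_5 \leq x_1 + x_2$. This yields $f(x_1) + f(x_2) \geq f(x_5) + f(r)$ where $r := x_1 + x_2 - x_5 \geq 0$. The sum constraint gives $r + x_3 + x_4 = x_6$, so two applications of subadditivity produce $f(r)+f(x_3)+f(x_4) \geq f(x_6)$, and chaining finishes the proof.

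The principal obstacle is that a direct Karamata / majorization route fails: comparing sorted partial sums shows that $(x_5, x_6, 0, 0)$ need not majorize $(x_1, x_2, x_3, x_4)$ (the partial-sum inequality at $k = 3$ breaks whenever $x_4 > 0$), so Karamata's theorem alone will not close the gap. The key insight is that the hypothesis $f(0) \geq 0$ upgrades concavity to subadditivity, which supplies exactly the extra flexibility needed to absorb the excess mass via the two-case argument above.
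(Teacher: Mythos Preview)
Your proof is correct. You use the same two consequences of \cref{lmm:aux:redistri} as the paper (the redistribution inequality and subadditivity of a nonnegative-at-zero concave function), but organize the argument differently: after normalizing to equal sums you split on whether $x_1+x_2\le x_5$, whereas the paper splits on whether $x_1\ge x_6$, manipulating the right-hand pair $(x_5,x_6)$ by redistribution until one value coincides with some $x_i$ and then cancelling. Your route is arguably tidier: the preliminary reduction to $\sum x_i = x_5+x_6$ makes every subsequent step an exact equality of transferred mass, so the two applications of \cref{lmm:aux:redistri} land precisely on $f(x_5)+f(x_6)$ with no informal ``move until coincidence'' step. Both approaches exploit the same key fact you correctly identify at the end, namely that $f(0)\ge0$ is essential to turn concavity into subadditivity and thereby go beyond what plain majorization would give.
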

\begin{proof}
	Without loss of generality assume $x_1 \geq x_2 \geq x_3 \geq x_4$ and $x_5\geq x_6$.
	
	First consider the case $x_1 \geq x_6$. We decrease $x_5$ and increase $x_6$ while holding $x_5+x_6$ constant until one $x_\bullet$ on the right-hand side coincides with one $x_\bullet$ one the left-hand side. By \cref{lmm:aux:redistri}, this can only increase the right-hand side of \eqref{eq:aux:six}.
	If $\{x_1, x_2, x_3, x_4\} \cap \{x_5, x_6\} \neq \emptyset$, we can subtract the term with the value in the intersection from \eqref{eq:aux:six}.
	The inequality of the form $f(x_1) + f(x_2) + f(x_3) \geq f(x_1 +x_2+x_3) \geq f(x_5)$ for $x_5 \leq x_1+x_2+x_3$ is obtained using subadditivity of $f$, see \cref{lmm:aux:redistri}, and the assumption that $f$ is nondecreasing.
	
	Now consider the case $x_1 < x_6$. Set $s := (x_5+x_6)/2$. Using \cref{lmm:aux:redistri}, we obtain $f(x_5) + f(x_6) \leq 2f(s)$. Furthermore $x_1 \leq s$ and $x_1 + x_2 + x_3 + x_4 \geq 2s$. Thus, again using  \cref{lmm:aux:redistri} and the assumption that $f$ is nondecreasing, we can increase $x_1$ and $x_2$ while decreasing $x_3$ and $x_4$ to $0$ to get
	\begin{equation}
		f(x_1) + f(x_2) + f(x_3) + f(x_4) \geq 2f(s) + 2f(0)
		\eqfs
	\end{equation}
	As $f(0)\geq 0$, we arrive at the desired result.
\end{proof}
\begin{lemma}\label{lmm:aux:extreme}
	Let $\tran\in\setcc$.
	Let $a,b,c,d\in\Rp$.
	Assume $a \geq b \geq c \geq d$ and $a+d \leq b+c$. Then $\dtran(a) + \dtran(d) \leq \dtran(b) + \dtran(c)$.
\end{lemma}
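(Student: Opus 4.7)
The statement concerns only $\dtran$, and by \cref{lmm:ccSimpleProps} the function $g := \dtran$ is nonnegative, nondecreasing, and concave on $\Rp$. So the plan is to prove the purely analytic fact: if $g\colon \Rp\to\R$ is nondecreasing and concave and $a\geq b\geq c\geq d\geq 0$ with $a+d\leq b+c$, then $g(a)+g(d)\leq g(b)+g(c)$.

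The key step will be to rewrite the inequality as $g(a)-g(b) \leq g(c)-g(d)$ and set the two increments $h_1 := a-b\geq0$ and $h_2 := c-d\geq0$. The hypothesis $a+d\leq b+c$ rearranges exactly to $h_1\leq h_2$, and the chain $a\geq b\geq c\geq d$ gives $b\geq d$. So it suffices to show
\begin{equation*}
g(b+h_1)-g(b) \leq g(d+h_1)-g(d) \leq g(d+h_2)-g(d)\eqfs
\end{equation*}

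The first inequality is the standard concavity fact that, for fixed $h\geq 0$, the map $x\mapsto g(x+h)-g(x)$ is nonincreasing; this follows from concavity of $g$ applied to the four collinear points $d, d+h_1, b, b+h_1$ (writing $d+h_1$ and $b$ as convex combinations of $d$ and $b+h_1$ and adding). The second inequality is just monotonicity of $g$ combined with $h_1\leq h_2$. Chaining yields $g(a)-g(b) \leq g(c)-g(d)$, which is the claim.

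I do not expect a real obstacle: both ingredients (concavity giving monotonicity of differences, and nondecreasingness) are already recorded for $\dtran$ in \cref{lmm:ccSimpleProps}, and the proof of the ``decreasing differences'' property of a concave function is a one-line convex-combination argument. The only thing to be mildly careful about is that everything is stated on $\Rp$ including the endpoint $0$; this is fine since $\dtran$ has been extended continuously to $0$ in the definition of $\setcc$.
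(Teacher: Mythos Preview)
Your proof is correct and essentially coincides with the paper's: the paper's one-line proof just invokes \cref{lmm:aux:redistri} (concavity of $\dtran$ makes $x\mapsto \dtran(b+x)+\dtran(c-x)$ nonincreasing) together with the monotonicity of $\dtran$, which is exactly your decreasing-differences-plus-monotonicity argument in a slightly different packaging.
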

\begin{proof}
	As $\dtran$ is concave, \cref{lmm:aux:redistri} applies.
\end{proof}
\begin{lemma}\label{lmm:f1}
	Let $\tran\in\setccod{3}$.
	Let $a\geq b\geq 0 $, $d \geq c \geq 0$.
	Then
	\begin{equation*}
		\trans(a) - \trans(b) - \trans(c) + \trans(d) \leq 
		2 \trans\brOf{\frac12 (a-b+d-c)}
		\eqfs
	\end{equation*}
\end{lemma}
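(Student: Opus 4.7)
The plan is to exploit only three properties of $\trans$ that are already established in \cref{lmm:ccsqrtprop}: $\trans$ is nonnegative with $\trans(0)=0$ (since $\tran\in\setccod{3}$), nondecreasing, and concave. The result will then follow from a straightforward two-step chain: first turn the two differences into single values via subadditivity, then combine the two remaining values via Jensen's inequality.

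First I would argue subadditivity of $\trans$. Since $\trans$ is concave on $\Rp$ with $\trans(0)\geq 0$, \cref{lmm:aux:redistri} gives $\trans(x+y)\leq \trans(x)+\trans(y)$ for all $x,y\in\Rp$. Applying this with $x=b$, $y=a-b$ and with $x=c$, $y=d-c$ (which is legal because $a\geq b$ and $d\geq c$), I obtain
\begin{equation*}
\trans(a)-\trans(b)\leq \trans(a-b)\qquad\text{and}\qquad \trans(d)-\trans(c)\leq \trans(d-c)\eqfs
\end{equation*}
Adding these two inequalities yields
\begin{equation*}
\trans(a)-\trans(b)-\trans(c)+\trans(d)\leq \trans(a-b)+\trans(d-c)\eqfs
\end{equation*}

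Next I would invoke Jensen's inequality: since $\trans$ is concave on $\Rp$ and $a-b,d-c\in\Rp$, we have
\begin{equation*}
\frac{\trans(a-b)+\trans(d-c)}{2}\leq \trans\!\brOf{\tfrac12(a-b+d-c)}\eqcm
\end{equation*}
which rearranges to $\trans(a-b)+\trans(d-c)\leq 2\trans(\tfrac12(a-b+d-c))$. Combining this with the previous estimate completes the proof.

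There is no real obstacle here: the three properties of $\trans$ needed are already available, and neither smoothness nor the third-derivative assumption in $\setccod{3}$ is used. In fact the argument works verbatim for any nonnegative, nondecreasing, concave function on $\Rp$ vanishing at the origin.
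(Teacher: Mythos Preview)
Your proof is correct and follows essentially the same two-step argument as the paper's proof: subadditivity of $\trans$ (from concavity with $\trans(0)\geq0$) gives $\trans(a)-\trans(b)-\trans(c)+\trans(d)\leq\trans(a-b)+\trans(d-c)$, then concavity (Jensen) yields the final bound. Your observation that only concavity and $\trans(0)\geq0$ are used, not the full $\setccod{3}$ hypothesis, is accurate and worth noting.
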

\begin{proof}
	As $a\geq b$, $d\geq c$, subadditivity of $\trans$
	\begin{equation*}
		\trans(a) - \trans(b) - \trans(c) + \trans(d) \leq \trans(a-b) + \trans(d-c) \eqfs
	\end{equation*}
	Furthermore, by concavity of $\trans$,
	\begin{equation*}
		\frac12 \trans(a-b) + \frac12 \trans(d-c)
		\leq
		\trans\brOf{\frac12 (a-b+d-c)}
		\eqfs
		\qedhere
	\end{equation*}
\end{proof}
\begin{lemma}\label{lmm:f2}
	Let $\tran\in\setccod{3}$. Let $a\geq b\geq c \geq d\geq 0$, $a+d\geq b+c$.
	Then
	\begin{equation*}
		\trans(a) - \trans(b) - \trans(c) + \trans(d) \leq \trans(a-b-c+d)
		\eqfs
	\end{equation*}
\end{lemma}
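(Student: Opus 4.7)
The plan is to reduce the claim, via monotonicity in $a$, to a single two-point concavity inequality that is essentially \cref{lmm:aux:redistri}. Recall from \cref{lmm:ccsqrtprop} that $\trans$ is nonnegative, nondecreasing and concave, and from $\tran(0)=0$ that $\trans(0)=0$; recall also that $\dtrans$ is nonincreasing. The hypothesis $a+d\geq b+c$ just means $a-b-c+d\geq 0$, and since $b\geq d$ and $c\geq d$ we also have $b+c-d\geq b\geq 0$, so the smallest admissible value of $a$ is $a_0:=b+c-d$.

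First I would define, for $a\in[a_0,\infty)$ with $b,c,d$ fixed,
\begin{equation*}
    F(a):=\trans(b)+\trans(c)+\trans(a-b-c+d)-\trans(a)-\trans(d)\eqfs
\end{equation*}
Differentiating,
\begin{equation*}
    F'(a)=\dtrans(a-b-c+d)-\dtrans(a)\geq 0\eqcm
\end{equation*}
because $a-b-c+d\leq a$ and $\dtrans$ is nonincreasing. Hence $F$ is nondecreasing on $[a_0,\infty)$, so it suffices to show $F(a_0)\geq 0$.

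At $a=a_0=b+c-d$ the middle term vanishes, so
\begin{equation*}
    F(a_0)=\trans(b)+\trans(c)-\trans(b+c-d)-\trans(d)\eqfs
\end{equation*}
Now I would invoke \cref{lmm:aux:redistri} applied to the concave function $\trans$ with the two base values $b\geq c$: the map $x\mapsto\trans(b+x)+\trans(c-x)$ is nonincreasing on $[0,c]$. Since $c\geq d$, we may plug in $x=c-d\in[0,c]$ to obtain $\trans(b+c-d)+\trans(d)\leq\trans(b)+\trans(c)$, i.e.\ $F(a_0)\geq 0$. Combined with monotonicity this gives $F(a)\geq 0$ for every admissible $a$, which is exactly the claimed inequality.

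There is no real obstacle here: the only mildly delicate point is noticing that the constraint $a+d\geq b+c$ forces us to lower $a$ only down to $b+c-d$ (not to $b$), so the boundary case reduces to pure concavity of $\trans$ rather than requiring any information involving $d$ in a nontrivial way. All three hypotheses on the ordering $a\geq b\geq c\geq d$ are used: $b\geq c$ is needed to invoke \cref{lmm:aux:redistri}, $c\geq d$ ensures $c-d\in[0,c]$ and $a_0\geq b$, and $a\geq b$ together with the $a+d\geq b+c$ hypothesis guarantees $a\geq a_0$.
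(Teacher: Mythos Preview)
Your proof is correct. It differs from the paper's argument in structure, though both ultimately rest on the same concavity of $\trans$.

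The paper introduces the ``subadditivity defect'' $f(x,y)=\trans(x)+\trans(y)-\trans(x+y)$, observes that $f$ is nondecreasing in each argument (again because $\dtrans$ is nonincreasing), and compares $f(b,a-b)$ with $f(d,c-d)$ to obtain $\trans(a)-\trans(b)-\trans(c)+\trans(d)\leq\trans(a-b)-\trans(c-d)$; a final subadditivity step then gives $\trans(a-b)-\trans(c-d)\leq\trans(a-b-c+d)$. Your route is more direct: you freeze $b,c,d$, push $a$ down to its minimum $a_0=b+c-d$ using monotonicity of $F$, and the boundary inequality $\trans(b)+\trans(c)\geq\trans(b+c-d)+\trans(d)$ is exactly one application of \cref{lmm:aux:redistri}. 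Your argument avoids the intermediate bound $\trans(a-b)-\trans(c-d)$ and the separate subadditivity step, at the cost of being slightly less symmetric in the four variables. A minor remark: the hypothesis $a\geq b$ is not actually used in your argument, since $a+d\geq b+c$ already forces $a\geq a_0$, and $a_0\geq b$ follows from $c\geq d$.
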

\begin{proof}
	Define $f(x,y) = \trans(x) + \trans(y) - \trans(x + y)$ for $x,y\geq0$.
	Then $\partial_xf(x,y) = \dtrans(x) - \dtrans(x+y) \geq 0$ and similarly $\partial_yf(x,y) \geq 0$.
	Set $\delta := a-b$ and $\epsilon := c-d$. The assumptions ensure $\delta\geq\epsilon\geq0$.
	Then, 
	\begin{equation*}
		f(b,\delta) \geq f(b,\epsilon) \geq f(d,\epsilon)
		\eqfs
	\end{equation*}
	Thus,
	\begin{align*}
		0 
		&\geq 
		f(d,\epsilon) - f(b,\delta) 
		\\&= 
		\trans(d) + \trans(\epsilon) - \trans(d+\epsilon)
		- \trans(b) - \trans(\delta) + \trans(b+\delta)
		\\&=
		\trans(d) + \trans(\epsilon) - \trans(c)
		- \trans(b) - \trans(\delta) + \trans(a)
		\eqfs
	\end{align*}
	With this we get
	\begin{align*}
		\trans(d) - \trans(c) - \trans(b) + \trans(a)
		&\leq
		\trans(\delta) - \trans(\epsilon)
		\\&\leq 
		\trans(\delta-\epsilon)
		\\&=
		\trans(a-b-c+d)
		\eqfs
		\qedhere
	\end{align*}
\end{proof}%
\begin{lemma}[Simple Merging Lemma]\label{lmm:merging:simple}
	Let $\tran\in\setccod{3}$.
	Let $b\geq0$, $a,c\in\R$.
	Then
	\begin{equation*}
		\tran(|a|) - \tran(|c|) - \tran(|a-b|) + \tran(|c-b|) 
		\leq  
		2 \br{ \tran\brOf{\frac{a - c + b}2} - \tran\brOf{\frac{|a-c - b|}2} } \indOf{a>c}\eqfs
	\end{equation*}
\end{lemma}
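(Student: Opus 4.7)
The plan is to reduce the six-term inequality to a one-variable concavity statement. Introduce the auxiliary function
$$g(x) := \tran(|x + b/2|) - \tran(|x - b/2|),$$
so that the left-hand side of the lemma equals $g(a - b/2) - g(c - b/2)$. A direct substitution shows $g$ is odd with $g(0) = 0$, and by computing $g'$ piecewise one sees $g' \posi$ (using $\dtran \posi$ and $\dtran \incr$), so $g$ is nondecreasing on $\R$. This already disposes of the case $a \leq c$, where the indicator on the right vanishes and the inequality becomes LHS $\leq 0$.

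For the interesting case $a > c$, set $\delta := a - c > 0$ and $\gamma := c - b/2$, so that the left-hand side reads $g(\gamma + \delta) - g(\gamma)$. Note that
$$2 g(\delta/2) = 2\br{\tran\brOf{\tfrac{b+\delta}{2}} - \tran\brOf{\tfrac{|b-\delta|}{2}}}$$
is exactly the claimed right-hand side. Thus the lemma boils down to the uniform bound
$$g(\gamma + \delta) - g(\gamma) \leq 2 g(\delta/2) \qquad \text{for all } \gamma \in \R.$$

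The technical heart is to establish that $g$ is concave on $[0, \infty)$. I would do this piecewise: on $[0, b/2]$, $g(x) = \tran(b/2+x) - \tran(b/2-x)$ with $g''(x) = \ddtran(b/2+x) - \ddtran(b/2-x) \leq 0$ since $\ddtran \decr$; on $[b/2, \infty)$, $g(x) = \tran(b/2+x) - \tran(x-b/2)$ and the same computation gives $g'' \nega$. The two pieces must be glued at $x = b/2$, where the left and right derivatives are $\dtran(b) + \dtran(0)$ and $\dtran(b) - \dtran(0)$; since $\dtran(0) \posi$, the left derivative dominates, which is precisely what concavity of $g$ at the corner requires. This corner-gluing step is the main obstacle and is exactly where the hypothesis $\tran \in \setccod{3}$ (and specifically $\tran(0)=0 \Rightarrow \dtran(0) \posi$) enters.

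Once $g\restrict{[0,\infty)}$ is concave, the uniform bound follows by splitting on the sign of $\gamma$. For $\gamma \geq 0$, concavity makes $\gamma \mapsto g(\gamma + \delta) - g(\gamma)$ nonincreasing, hence it is bounded by $g(\delta) - g(0) = g(\delta)$; then Jensen's inequality with $g(0) = 0$ gives $g(\delta) \leq 2 g(\delta/2)$. For $\gamma \leq -\delta$, the change of variables $\gamma' = -\gamma - \delta \geq 0$ combined with oddness of $g$ reduces back to the previous case. For $-\delta \leq \gamma \leq 0$, oddness of $g$ converts the expression to $g(\gamma + \delta) + g(-\gamma)$ with two nonnegative arguments summing to $\delta$, so concavity on $[0,\infty)$ via Jensen yields the bound $2 g(\delta/2)$. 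Collecting the three cases completes the proof.
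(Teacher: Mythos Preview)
Your argument is correct and is genuinely different from the paper's proof. The paper fixes $q:=a-b$, treats the full difference as a function of $b$, and shows $g'(b)\le 0$ via a four-case analysis on the signs of $q+b$ and $c-b$, using subadditivity and midpoint concavity of $\dtran$ in each case. You instead exploit symmetry: your auxiliary function $g(x)=\tran(|x+b/2|)-\tran(|x-b/2|)$ is odd with $g(0)=0$, and the key structural fact that $g$ is concave on $[0,\infty)$ (piecewise from $\ddtran\decr$, glued at $x=b/2$ using $\dtran(0)\ge 0$) converts the lemma into the clean inequality $g(\gamma+\delta)-g(\gamma)\le 2g(\delta/2)$, which then falls out of standard first-difference and midpoint arguments for concave functions. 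Your route is more conceptual and makes the role of each hypothesis transparent; the paper's route is more hands-on but avoids the corner-gluing step. One small misattribution: the inequality $\dtran(0)\ge 0$ comes from $\tran$ being nondecreasing, not from $\tran(0)=0$; the assumption $\tran(0)=0$ is actually not needed anywhere in this lemma, and you only use $\tran\in\setccd{2}$.
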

\begin{proof}
	Define
	$f(x) := \tran(\abs{x}) - \tran(\abs{x - y})$. Then
	$f\pr(x) = \sgn(x)\dtran(\abs{x}) - \sgn(x)\dtran(\abs{x - y})$.
	If $y \geq 0$ then: if $x \geq 0$ then $\abs{x} \geq \abs{x - y}$, if $x \leq 0$ then $\abs{x} \leq \abs{x - y}$.
	Thus, $f\pr(x) \geq 0$, as $\dtran$ is increasing. Hence, $f(x)$ is increasing.
	Thus, if $a\leq c$, then
	\begin{equation*}
		\tran(|a|)-\tran(|a-b|) \leq \tran(|c|) - \tran(|c-b|) 
		\eqfs
	\end{equation*}
	This shows the inequality for the case $a\leq c$.
	
	Now assume $a>c$.
	Set $q := a-b$ and define
	\begin{equation*}
		g(b) :=  \tran(|q+b|) - \tran(|c|) - \tran(|q|) + \tran(|c-b|) - 2\br{\tran\brOf{\frac{q-c}{2}+b}-\tran\brOf{\frac{q-c}{2}}}
		\eqfs
	\end{equation*}
	We have
	\begin{equation*}
		g\pr(b) = \sgn(q+b)\dtran(|q+b|) - \sgn(c-b)\dtran(|c-b|) - 2\dtran\brOf{\frac{q-c}{2}+b}
		\eqfs
	\end{equation*}
	\underline{Case 1: $\sgn(q+b) = +1$, $\sgn(c-b)=+1$}:
	\begin{equation*}
		g\pr(b) = \dtran(q+b) - \dtran(c-b) - 2\dtran\brOf{\frac{q-c}{2}+b}
		\eqcm
	\end{equation*}
	As $\dtran$ is concave, it is subadditive and $\dtran(2x)\leq2\dtran(x)$. Furthermore, $q+b  = a > c \geq  c-b$. Thus, 
	\begin{equation*}
		\dtran(q+b) - \dtran(c-b) \leq \dtran(q+b-c+b) \leq 2\dtran\brOf{\frac{q-c}{2}+b}
		\eqfs
	\end{equation*}
	\underline{Case 2: $\sgn(q+b) = -1$, $\sgn(c-b)=-1$}:
	\begin{equation*}
		g\pr(b) = -\dtran(-q-b) + \dtran(b - c) - 2\dtran\brOf{\frac{q-c}{2}+b}
		\eqcm
	\end{equation*}
	Similarly to the first case, we have $b - c \geq -c > - a = - q - b$ and
	\begin{equation*}
		\dtran(b - c)-\dtran(-q-b) \leq  \dtran(b - c + q + b) \leq 2\dtran\brOf{\frac{q-c}{2}+b}
		\eqfs
	\end{equation*}
	\underline{Case 3: $\sgn(q+b) = +1$, $\sgn(c-b)=-1$}:
	\begin{equation*}
		\dtran(q+b) + \dtran(b-c) - 2\dtran\brOf{\frac{q-c}{2}+b}
		\eqcm
	\end{equation*}
	$\dtran$ is concave, thus
	\begin{equation*}
		\frac 12 \dtran(q+b) + \frac 12 \dtran(b-c) \leq \dtran\brOf{\frac{q-c}{2}+b}
		\eqfs
	\end{equation*}
	\underline{Case 4: $\sgn(q+b) = -1$, $\sgn(c-b)=+1$}:
	\begin{equation*}
		-\dtran(-q-b) - \dtran(c-b) - 2\dtran\brOf{\frac{q-c}{2}+b}
		\eqcm
	\end{equation*}
	\begin{equation*}
		-\dtran(-q-b)-\dtran(c-b) \leq 0
		\eqfs
	\end{equation*}
	\underline{Together:}
	In every case, we have $g\pr(b) \leq 0$ and $g(0)=0$. Thus, 
	\begin{equation*}
		g(b) \leq 0
		\eqfs
		\qedhere
	\end{equation*}
\end{proof}
\subsection{Mechanical Proofs}\label{sec:mecha}
The following auxiliary results consist of simple term transformations. Their proofs are not commented further.
\begin{lemma}\label{mech:1}
	Let $\tran\in\setccod3$.
	Let $a,b,c\in\Rp$. Assume $a\geq c$, $a-b-2c \geq 0$. Then
	\begin{equation}
		2(a-b-2c)\dtrans((a-b)^2+4cb) + \dtran(c+b) - 2 \dtran(a - c) \leq 0
		\eqfs
	\end{equation}
\end{lemma}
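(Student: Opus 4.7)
The plan is to reduce the inequality to a direct comparison by unpacking $\dtrans$ and then bounding the two nonnegative pieces on the left by $\dtran(a-c)$ separately. Writing $w := \sqrt{(a-b)^2 + 4cb}$, the identity $\dtrans(x) = \dtran(\sqrt x)/(2\sqrt x)$ turns the claim into
\begin{equation*}
\frac{(a-b-2c)\,\dtran(w)}{w} + \dtran(c+b) \leq 2\,\dtran(a-c).
\end{equation*}
The assumption $a \geq b + 2c$ immediately gives $a-c \geq b+c$, so since $\dtran$ is nondecreasing (\cref{lmm:ccSimpleProps}), the second summand already satisfies $\dtran(c+b)\leq \dtran(a-c)$. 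It remains to show that the first summand is also bounded by $\dtran(a-c)$, after which adding the two bounds finishes the proof.

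For the first summand, the key algebraic observation is
\begin{equation*}
w^2 - (a-b-2c)^2 = \bigl((a-b)^2 + 4cb\bigr) - \bigl((a-b)^2 - 4c(a-b) + 4c^2\bigr) = 4c(a-c) \geq 0,
\end{equation*}
using $a\geq c$. Hence $0 \leq a-b-2c \leq w$, so $(a-b-2c)/w \in [0,1]$. I then split on whether $w \leq a-c$ or not. If $w \leq a-c$, monotonicity of $\dtran$ gives $\dtran(w)\leq \dtran(a-c)$, and combined with $(a-b-2c)/w \leq 1$ the first summand is at most $\dtran(a-c)$. If $w \geq a-c$, I apply \cref{lmm:tranconcave}~\ref{lmm:tranconcave:balance} (equivalently, $x\mapsto \dtran(x)/x$ is nonincreasing, which follows from concavity of $\dtran$ with $\dtran(0)\geq 0$) to get $\dtran(w)/w \leq \dtran(a-c)/(a-c)$, and then use the trivial bound $a-b-2c\leq a-c$.

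The combined estimate $(a-b-2c)\dtran(w)/w \leq \dtran(a-c)$ plus $\dtran(c+b)\leq \dtran(a-c)$ yields exactly the required inequality. The only step that requires a bit of foresight is recognizing the factorization $w^2 - (a-b-2c)^2 = 4c(a-c)$, which is what makes the bound $a-b-2c \leq w$ — and hence the whole comparison — work; once this is noted, the rest is routine use of the monotonicity/concavity properties of $\dtran$ recorded in \cref{lmm:ccSimpleProps} and \cref{lmm:tranconcave}.
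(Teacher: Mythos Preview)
Your proof is correct. Both you and the paper begin by using $c+b \leq a-c$ to bound $\dtran(c+b) \leq \dtran(a-c)$, leaving the task of showing $2(a-b-2c)\dtrans((a-b)^2+4cb) \leq \dtran(a-c)$. From there the arguments diverge. The paper splits $a-b-2c = (a-b) - 2c$, uses the monotonicity of $\dtrans$ twice (once on each piece, replacing the argument by $(a-b)^2$ and by $a^2$ respectively), and finishes via $\dtran(a-b)\leq \dtran(a)$ and the concavity bound $(1-2c/a)\dtran(a)\leq \dtran(a-2c)$. Your route is more direct: after rewriting the term as $(a-b-2c)\dtran(w)/w$, the single algebraic identity $w^2 - (a-b-2c)^2 = 4c(a-c)$ gives $a-b-2c \leq w$, and then either monotonicity of $\dtran$ (if $w\leq a-c$) or the concavity property $\dtran(w)/w \leq \dtran(a-c)/(a-c)$ (if $w\geq a-c$) immediately yields the bound. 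Your argument is a bit shorter and isolates the crucial algebraic relation cleanly; the paper's version avoids the case split but requires more intermediate manipulations.
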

\begin{proof}
	\begin{align*}
		&  
		2(a-b-2c)\dtrans((a-b)^2+4cb) + \dtran(c+b) - 2 \dtran(a - c) 
		\\
		a-c \geq c+b, \dtran\incr
		\qquad&\leq
		2(a-b-2c)\dtrans((a-b)^2+4cb) - \dtran(a - c) 
		\\
		\qquad&=
		2(a-b)\dtrans((a-b)^2+4cb) -4c\dtrans((a-b)^2+4cb) - \dtran(a - c) 
		\\
		a\geq b,\dtrans\decr
		\qquad&\leq
		2(a-b)\dtrans((a-b)^2) -4c\dtrans((a-b)^2+4cb) - \dtran(a - c) 
		\\
		\qquad&=
		\dtran(a-b) - 4c\dtrans(a^2 - b((a-2c)+(a-b-2c))) - \dtran(a - c) 
		\\
		a-2c\geq b\geq0,\dtrans\decr
		\qquad&\leq
		\dtran(a-b) - 4c\dtrans(a^2) - \dtran(a - c) 
		\\
		\qquad&=
		\dtran(a-b) - \frac{2c}a \dtran(a) - \dtran(a - c) 
		\\
		\dtran\incr
		\qquad&\leq
		\dtran(a) - \frac{2c}a \dtran(a) - \dtran(a - c) 
		\\
		\qquad&=
		\br{1 - \frac{2c}a} \dtran(a) - \dtran(a - c) 
		\\
		a\geq2c, \dtran\conc
		\qquad&\leq
		\dtran\brOf{\br{1 - \frac{2c}a}a} - \dtran(a - c) 
		\\
		\qquad&=
		\dtran\brOf{a-2c} - \dtran(a - c) 
		\\
		\dtran\incr
		\qquad&\leq
		0
		\eqfs
	\end{align*}
\end{proof}
\begin{lemma}\label{mech:2}
	Let $\tran\in\setccod3$.
	Let $a,b,c\in\Rp$. Assume $a\geq c$, $a-b-2c \leq 0$. Then
	\begin{equation}
		-2(b+2c-a)\dtrans((a-b)^2+4cb) + \dtran(c+b) - 2 \dtran(a - c) \leq 0
		\eqfs
	\end{equation}
\end{lemma}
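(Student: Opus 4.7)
The plan is to reparametrize so that the opaque quantity $\tau := (a-b)^2 + 4bc$ becomes something manageable. Set $p := b+2c-a \geq 0$ (nonnegative by hypothesis) and $q := a-c \geq 0$. Then $b = p+q-c$, $a-b = 2c-p$, $c+b = p+q$, and a direct computation gives
\begin{equation*}
(a-b)^2 + 4bc = (2c-p)^2 + 4c(p+q-c) = p^2 + 4cq.
\end{equation*}
So the inequality to prove becomes $E(c) \leq 0$, where
\begin{equation*}
E(c) := -2p\,\dtrans(p^2+4cq) + \dtran(p+q) - 2\dtran(q),
\end{equation*}
with $c$ ranging over $[0,\,p+q]$ (the upper bound encoding $b \geq 0$).

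Next I would show that $E$ is nondecreasing in $c$. By \cref{lmm:ccsqrtprop}, $\dtrans$ is nonincreasing on $\Rp$; since $p^2+4cq$ is nondecreasing in $c$, $\dtrans(p^2+4cq)$ is nonincreasing in $c$, and multiplying by the nonpositive factor $-2p$ produces a nondecreasing function of $c$. Hence it suffices to bound $E(p+q)$. At $c = p+q$ the argument collapses: $p^2 + 4(p+q)q = (p+2q)^2$. Using the identity $2\sqrt{x}\,\dtrans(x) = \dtran(\sqrt{x})$, this gives $2p\,\dtrans((p+2q)^2) = p\,\dtran(p+2q)/(p+2q)$, so
\begin{equation*}
E(p+q) = -\frac{p}{p+2q}\dtran(p+2q) + \dtran(p+q) - 2\dtran(q).
\end{equation*}

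I would finish in two short steps. First, monotonicity of $\dtran$ gives $\dtran(p+q) \leq \dtran(p+2q)$, whence
\begin{equation*}
E(p+q) \leq \left(1-\frac{p}{p+2q}\right)\dtran(p+2q) - 2\dtran(q) = \frac{2q}{p+2q}\dtran(p+2q) - 2\dtran(q).
\end{equation*}
Second, since $q \leq p+2q$, \cref{lmm:tranconcave} \ref{lmm:tranconcave:balance} yields $q\,\dtran(p+2q) \leq (p+2q)\,\dtran(q)$, so the right-hand side is $\leq 2\dtran(q) - 2\dtran(q) = 0$, as required.

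The main obstacle is simply spotting the substitution: once one recognizes that $\tau = p^2+4cq$ under $p = b+2c-a$, $q = a-c$, the rest is just three elementary facts already available in the appendix — the monotonicity of $\dtrans$, the monotonicity of $\dtran$, and the sublinearity $\dtran(y)/y \leq \dtran(x)/x$ for $x \leq y$. In flavor this is slightly different from the chain-of-inequalities style of \cref{mech:1}, but the change of variables trivializes the case analysis that would otherwise be needed to handle the non-definite sign of $\dtran(c+b) - 2\dtran(a-c)$.
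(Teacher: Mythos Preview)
Your proof is correct and, once the substitution is unwound, coincides step by step with the paper's argument: since $c+b=p+q$ and $a+b=p+2q$, your chain $\dtrans(p^2+4cq)\geq\dtrans((p+2q)^2)$, $\dtran(p+q)\leq\dtran(p+2q)$, and $\frac{2q}{p+2q}\dtran(p+2q)\leq 2\dtran(q)$ is exactly the paper's sequence $\dtrans((a-b)^2+4cb)\geq\dtrans((a+b)^2)$, $\dtran(c+b)\leq\dtran(a+b)$, and $\frac{2(a-c)}{a+b}\dtran(a+b)\leq 2\dtran(a-c)$. The reparametrization is a clean presentational device but does not change the route.
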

\begin{proof}
	\begin{align*}
		&
		-2(b+2c-a)\dtrans((a-b)^2+4cb) + \dtran(c+b) - 2 \dtran(a - c)
		\\
		\qquad&=  
		-2(b+2c-a)\dtrans(a^2 + b(-2a + b + 4c)) + \dtran(c+b) - 2 \dtran(a - c)
		\\
		a \geq c, b+2c\geq a, \dtrans\decr
		\qquad&\leq
		-2(b+2c-a)\dtrans(a^2 + b(-2a + b + 4a)) + \dtran(c+b) - 2 \dtran(a - c)
		\\
		\qquad&=
		-2(b+2c-a)\dtrans((a+b)^2) + \dtran(c+b) - 2 \dtran(a - c)
		\\
		\qquad&=
		-\frac{b+2c-a}{a+b}\dtran(a+b) + \dtran(c+b) - 2 \dtran(a - c)
		\\
		a \geq c, \dtran\incr
		\qquad&\leq
		-\frac{b+2c-a}{a+b}\dtran(a+b) + \dtran(a+b) - 2 \dtran(a - c)
		\\
		\qquad&=
		\frac{2(a-c)}{a+b}\dtran(a+b) - 2 \dtran(a - c)
		\\
		0 \leq a-c \leq a+b, \dtran\conc
		\qquad&\leq
		2\dtran(a-c) - 2 \dtran(a - c)
		\\
		\qquad&=
		0
		\eqfs
	\end{align*}
\end{proof}
\begin{lemma}\label{mech:3}
	Let $\tran\in\setccod3$.
	Let $a,b,c\geq0$, $s\in[-1,1]$.
	Assume $a \geq b \geq sc$.
	Then
	\begin{equation*}
		\dtran(a-b) + 2(b-sc)\dtrans(c^2-2scb+b^2) - 2 \dtran(a - sc) \leq 0
		\eqfs
	\end{equation*}
\end{lemma}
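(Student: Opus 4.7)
The plan is to reduce the $\trans$-derivative term to a plain $\dtran$-term via the chain-rule identity $\dtrans(x) = \dtran(\sqrt x)/(2\sqrt x)$, and then close the inequality using only concavity and monotonicity of $\dtran$ on $\Rp$.

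Let $w := \sqrt{c^2 - 2scb + b^2}$. A direct computation gives $w^2 = (b - sc)^2 + c^2(1 - s^2)$, so under the hypothesis $b \geq sc$ one has $w \geq b - sc \geq 0$. The middle term of the inequality can then be rewritten as
\begin{equation*}
	2(b-sc)\,\dtrans(w^2) \;=\; \frac{(b-sc)\,\dtran(w)}{w}\eqfs
\end{equation*}

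Next, since $\dtran$ is concave on $\Rp$ with $\dtran(0) \geq 0$ (by \cref{lmm:ccSimpleProps}), the function $x \mapsto \dtran(x)/x$ is nonincreasing on $\Rpp$; equivalently, $t\,\dtran(w) \leq \dtran(tw)$ for $t \in [0,1]$. Applied with $t = (b-sc)/w \in [0,1]$ this gives
\begin{equation*}
	\frac{(b-sc)\,\dtran(w)}{w} \;\leq\; \dtran(b-sc)\eqfs
\end{equation*}
So it suffices to prove $\dtran(a-b) + \dtran(b-sc) \leq 2\dtran(a-sc)$. Since $(a-b) + (b-sc) = a - sc$, Jensen's inequality for the concave function $\dtran$ yields $\dtran(a-b) + \dtran(b-sc) \leq 2\dtran((a-sc)/2)$, and monotonicity of $\dtran$ together with $(a-sc)/2 \leq a-sc$ finishes the bound.

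The only mildly delicate step is the middle one, where the $\dtrans$-term has to be converted into a genuine $\dtran$-term; once one observes $w \geq b - sc$ and invokes the ``concave through the origin'' bound $t\,\dtran(w) \leq \dtran(tw)$ for $t \in [0,1]$, the rest is Jensen plus monotonicity. Everything else is bookkeeping to ensure nonnegativity of the relevant arguments, which is exactly what the hypotheses $a \geq b \geq sc \geq 0$ (after noting $sc$ could be negative, but then $b - sc \geq b \geq 0$ still) provide.
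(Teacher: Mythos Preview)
Your proof is correct and follows essentially the same route as the paper's. Both arguments reduce the middle term to $\dtran(b-sc)$: the paper does this by noting $c^2 \geq s^2c^2$ and using that $\dtrans$ is nonincreasing to replace the argument by $(b-sc)^2$, while you unwind the chain rule and invoke the equivalent ``concave through the origin'' bound $t\,\dtran(w)\leq\dtran(tw)$ with $t=(b-sc)/w$. For the final step the paper is slightly more direct: since $a-b\leq a-sc$ and $b-sc\leq a-sc$, monotonicity of $\dtran$ alone already gives $\dtran(a-b)+\dtran(b-sc)\leq 2\dtran(a-sc)$, so your detour through Jensen and $(a-sc)/2$ is unnecessary (though not wrong).
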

\begin{proof}
	\begin{align*}
		&
		\dtran(a-b) + 2(b-sc)\dtrans(c^2-2scb+b^2) - 2 \dtran(a - sc)
		\\
		c\geq sc, \dtrans\decr
		\qquad&\leq
		\dtran(a-b) + 2(b-sc)\dtrans(s^2c^2-2scb+b^2) - 2 \dtran(a - sc)
		\\
		\qquad&=
		\dtran(a-b) + \dtran(b-sc) - 2 \dtran(a - sc)
		\\
		a \geq b \geq sc, \dtran\incr
		\qquad&\leq
		\dtran(a-sc) + \dtran(a-sc) - 2 \dtran(a - sc)
		\\
		\qquad&=
		0
		\eqfs
	\end{align*}
\end{proof}
\begin{lemma}\label{mech:5}
	Let $\tran\in\setccod3$.
	Let $u,v\in\Rp$.
	Assume $u \geq v$.
	Then
	\begin{equation*}
		\dtran(u)-\dtran(u+v)+4v\dtrans(4uv) - 2\dtran(v) \leq 0
		\eqfs
	\end{equation*}
\end{lemma}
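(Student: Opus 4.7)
\textbf{Proof plan for \cref{mech:5}.} The plan is to rewrite the $\dtrans$-term in terms of $\dtran$ and then reduce everything to two applications of the concavity facts already collected in \cref{lmm:tranconcave}. The hypothesis $u \geq v$ enters only to produce a factor $\sqrt{u/v} \geq 1$ in a scaling step; otherwise the argument is purely a monotonicity/concavity manipulation.

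First, I would use the definition $\trans(x) = \tran(\sqrt x)$, which gives $\dtrans(x) = \dtran(\sqrt x)/(2\sqrt x)$. Applied at $x = 4uv$, this yields the identity
\begin{equation*}
	4v\,\dtrans(4uv) \;=\; 4v \cdot \frac{\dtran(2\sqrt{uv})}{4\sqrt{uv}} \;=\; \sqrt{\tfrac{v}{u}}\; \dtran(2\sqrt{uv})\eqfs
\end{equation*}
So the inequality to prove becomes
\begin{equation*}
	\dtran(u) - \dtran(u+v) + \sqrt{\tfrac{v}{u}}\,\dtran(2\sqrt{uv}) - 2\dtran(v) \leq 0\eqfs
\end{equation*}

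Next, I would bound the middle term. Since $u \geq v$, we have $\sqrt{u/v} \geq 1$, so writing $2\sqrt{uv} = \sqrt{u/v}\cdot 2v$ and invoking \cref{lmm:tranconcave} \ref{lmm:tranconcave:factor} with $a = \sqrt{u/v} \geq 1$ gives
\begin{equation*}
	\dtran(2\sqrt{uv}) \leq \sqrt{\tfrac{u}{v}}\,\dtran(2v)\eqfs
\end{equation*}
A second application, namely \cref{lmm:tranconcave} \ref{lmm:tranconcave:add} with $x = y = v$, yields $\dtran(2v) \leq 2\dtran(v)$. Chaining the two bounds and multiplying by $\sqrt{v/u}$ kills the $\sqrt{u/v}$ factor and produces
\begin{equation*}
	\sqrt{\tfrac{v}{u}}\,\dtran(2\sqrt{uv}) \leq 2\dtran(v)\eqfs
\end{equation*}

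Finally, since $\dtran$ is nondecreasing (\cref{lmm:ccSimpleProps}) and $u + v \geq u$, we have $\dtran(u) - \dtran(u+v) \leq 0$. Adding the two estimates finishes the proof. No real obstacle arises: the only subtle point is noticing that the awkward factor $\sqrt{v/u}$ is precisely the reciprocal of the factor $\sqrt{u/v}$ produced by the concavity scaling, so everything cancels cleanly against $2\dtran(v)$.
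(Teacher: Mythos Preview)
Your proof is correct and follows essentially the same approach as the paper: drop $\dtran(u)-\dtran(u+v)$ using monotonicity, then bound $4v\,\dtrans(4uv)$ by $\dtran(2v)\leq 2\dtran(v)$ via concavity. The only cosmetic difference is that the paper reaches $4v\,\dtrans(4uv)\leq \dtran(2v)$ by invoking $\dtrans\decr$ (replacing $4uv$ by $4v^2$), whereas you first convert $\dtrans$ to $\dtran$ and then use the scaling bound from \cref{lmm:tranconcave}~\ref{lmm:tranconcave:factor}; these are equivalent manipulations of the same concavity fact.
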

\begin{proof}
	\begin{align*}
		&
		\dtran(u)-\dtran(u+v)+4v\dtrans(4uv) - 2\dtran(v)
		\\
		\dtran\incr
		\qquad&\leq
		4v\dtrans(4uv) - 2\dtran(v)
		\\
		u\geq v, \dtrans\decr
		\qquad&\leq
		4v\dtrans(4vv) - 2\dtran(v)
		\\
		\qquad&=
		\dtran(2v) - 2\dtran(v)
		\\
		\dtran\conc
		\qquad&\leq
		0
		\eqfs
	\end{align*} 
\end{proof}
\begin{lemma}\label{mech:6}
	Let $\tran\in\setccod3$.
	Let $a,b,c\geq0$, $s\in[-1,1]$.
	Assume $(s+1)c\leq 2a$, $a \leq b$.
	Then
	\begin{equation*}
		-\dtran(b-a)+2(b-sc)\dtrans(c^2-2scb+b^2) - 2\dtran(a - sc)
		\leq
		0
		\eqfs
	\end{equation*}
\end{lemma}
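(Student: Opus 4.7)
My strategy is to first verify that the three arguments $a-sc$, $b-a$, $b-sc$ are all nonnegative, then eliminate the $\trans$-term by bounding $c^2-2scb+b^2$ below by a perfect square, and finally close the argument with subadditivity of $\dtran$. The domain check is immediate: from $(s+1)c\leq 2a$ and $s\leq 1$ I obtain $a\geq \tfrac{s+1}{2}c\geq sc$, and combined with $b\geq a$ this also gives $b\geq sc$.

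To remove the $\trans$-term, I use the identity $c^2-2scb+b^2 = (b-sc)^2 + (1-s^2)c^2$, which shows $c^2-2scb+b^2 \geq (b-sc)^2$ whenever $|s|\leq 1$. Since $\dtrans$ is nonincreasing by \cref{lmm:ccsqrtprop}, and since $2x\,\dtrans(x^2)=\dtran(x)$ for $x\geq 0$, this yields
\[ 2(b-sc)\,\dtrans(c^2-2scb+b^2) \;\leq\; 2(b-sc)\,\dtrans((b-sc)^2) \;=\; \dtran(b-sc). \]
Substituting this bound reduces the target inequality to $\dtran(b-sc) \leq \dtran(b-a)+2\dtran(a-sc)$.

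For the last step, I decompose $b-sc = (b-a)+(a-sc)$ as a sum of two nonnegative numbers. Since $\dtran$ is concave and $\dtran(0)\geq 0$ (by \cref{lmm:nondecreasing} and the definition of $\setcc$), \cref{lmm:aux:redistri} provides subadditivity: $\dtran(b-sc)\leq \dtran(b-a)+\dtran(a-sc)$. Combined with $\dtran(a-sc)\geq 0$, this already beats $\dtran(b-a)+2\dtran(a-sc)$, finishing the argument.

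I do not anticipate any real obstacle. The bookkeeping of signs and the choice of $(b-sc)^2$ as the lower bound for $c^2-2scb+b^2$ are the only ideas, and both are dictated by the structure of the expression; the small degenerate case $b=sc$ is handled automatically since then the middle term vanishes and the reduction is vacuous.
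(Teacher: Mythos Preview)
Your proof is correct and follows essentially the same approach as the paper: both bound $c^2-2scb+b^2\geq (b-sc)^2$ so that, via $\dtrans\!\searrow$, the middle term collapses to $\dtran(b-sc)$, and then finish with subadditivity of the concave function $\dtran$. Your subadditivity step is in fact slightly cleaner than the paper's: the paper first combines $-\dtran(b-a)-2\dtran(a-sc)\leq -\dtran(b+a-2sc)$ and then invokes the hypothesis $a\geq\tfrac{s+1}{2}c$ together with $\dtran\!\nearrow$ to reach $-\dtran(b-sc)$, whereas you go directly via $\dtran(b-sc)=\dtran((b-a)+(a-sc))\leq \dtran(b-a)+\dtran(a-sc)$ and absorb the surplus $\dtran(a-sc)\geq 0$.
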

\begin{proof}
	\begin{align*}
		&
		-\dtran(b-a)+2(b-sc)\dtrans(c^2-2scb+b^2) - 2\dtran(a - sc)
		\\
		\dtran\conc \text{(subadditive)}
		\qquad&\leq
		-\dtran(b-a + 2(a - sc))+2(b-sc)\dtrans(c^2-2scb+b^2)
		\\
		\qquad&=
		-\dtran(b+a-2sc) + 2(b-sc)\dtrans(c^2-2scb+b^2)
		\\
		a \geq \frac{1+s}{2}c,\dtran\incr
		\qquad&\leq
		-\dtran(b+\frac{1+s}{2}c-2sc) + 2(b-sc)\dtrans(c^2-2scb+b^2)
		\\
		\qquad&=
		-\dtran(b+\frac{1-s}{2}c-sc) + 2(b-sc)\dtrans(c^2-2scb+b^2)
		\\
		1-s \geq 0,\dtran\incr
		\qquad&\leq
		-\dtran(b-sc) + 2(b-sc)\dtrans(c^2-2scb+b^2)
		\\
		sc \leq c,\dtrans\decr
		\qquad&\leq
		-\dtran(b-sc) + 2(b-sc)\dtrans((sc)^2-2scb+b^2)
		\\
		\qquad&=
		-\dtran(b-sc) + \dtran(b-sc)
		\\
		\qquad&=
		0
		\eqfs
	\end{align*}
\end{proof}
\begin{lemma}\label{mech:7}
	Let $\tran\in\setccod3$.
	Let $a,c\in\Rp$, $s\in[-1,1]$.
	Assume $a \geq sc$.
	Then
	\begin{equation*}
		\dtran(a) + 2(a-sc)\dtrans(c^2-2sca+a^2) - 2 \dtran(a - sc) - 2 a \ddtran(a - sc) 
		\leq
		0
		\eqfs
	\end{equation*}
\end{lemma}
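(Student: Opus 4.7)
My plan is to reduce everything to monotonicity and tangent-line estimates for $\dtran$ and $\dtrans$ after the substitution $y := a - sc$, which by assumption lies in $\Rp$. With this substitution, the argument appearing in $\dtrans$ factors cleanly:
\[
c^2 - 2sca + a^2 = (a-sc)^2 + (1-s^2)c^2 = y^2 + (1-s^2)c^2 \geq y^2,
\]
since $s \in [-1,1]$.

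I would then apply two independent one-line bounds. First, by \cref{lmm:ccsqrtprop}, $\dtrans$ is nonincreasing, so $\dtrans(c^2-2sca+a^2) \leq \dtrans(y^2) = \dtran(y)/(2y)$; multiplying by $2y = 2(a-sc)$ gives
\[
2(a-sc)\dtrans(c^2-2sca+a^2) \leq \dtran(y),
\]
with the edge case $y=0$ handled trivially since both sides vanish. Second, since $\dtran$ is concave (as $\tran \in \setccod{3}$ has concave derivative), the tangent inequality at $y$ reads
\[
\dtran(a) = \dtran(y + sc) \leq \dtran(y) + sc\,\ddtran(y),
\]
and this holds whatever the sign of $sc$.

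Substituting both bounds into the left-hand side of the claim makes the $\dtran(y)$ contributions telescope, collapsing the expression to $(sc - 2a)\,\ddtran(y)$. To finish, I would observe that $a \geq sc$ combined with $a \geq 0$ yields $2a \geq \max(sc, 0) \geq sc$, so $sc - 2a \leq 0$; together with $\ddtran(y) \geq 0$ this closes the argument. The main thing to watch is that the tangent bound for $\dtran(a)$ and the final sign check both handle the case $sc < 0$ uniformly; here concavity of $\dtran$ gives the tangent inequality regardless of the sign of $a-y$, and the nonnegativity of $a$ (not just $a \geq sc$) is what saves the final step when $sc$ is negative. No case distinction is actually required.
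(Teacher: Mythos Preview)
Your proof is correct and follows essentially the same route as the paper: bound the $\dtrans$ term by monotonicity after observing $c^2-2sca+a^2 \geq (a-sc)^2$, then use the concavity tangent inequality $\dtran(a) \leq \dtran(a-sc) + sc\,\ddtran(a-sc)$ together with $sc \leq 2a$. Your version is marginally cleaner in that the tangent inequality from concavity applies for either sign of $sc$, whereas the paper's citation of \cref{lmm:ccpoly}~\ref{lmm:ccpoly:dtaylor} is stated only for nonnegative increments.
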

\begin{proof}
	\begin{align*}
		&
		\dtran(a) + 2(a-sc)\dtrans(c^2-2sca+a^2) - 2 \dtran(a - sc) - 2 a \ddtran(a - sc) 
		\\
		a \geq sc, sc \leq c, \dtrans\decr
		\qquad&\leq
		\dtran(a) + 2(a-sc)\dtrans((sc)^2-2sca+a^2) - 2 \dtran(a - sc) - 2 a \ddtran(a - sc) 
		\\
		\qquad&=
		\dtran(a) + \dtran(a - sc) - 2 \dtran(a - sc) - 2 a \ddtran(a - sc) 
		\\
		\qquad&=
		\dtran(a) - \dtran(a - sc) - 2 a \ddtran(a - sc) 
		\\
		sc \leq 2 a, \ddtran\posi
		\qquad&\leq
		\dtran(a) - \dtran(a - sc) - sc \ddtran(a - sc) 
		\\
		\text{\cref{lmm:ccpoly} \ref{lmm:ccpoly:dtaylor}}
		\qquad&\leq
		0
		\eqfs
	\end{align*}
\end{proof}
\begin{lemma}\label{mech:8}
	Let $\tran\in\setccod3$.
	Let $u,v\in\Rp$.
	Then
	\begin{equation*}
		\dtran(u) - \dtran(u+v) + 4v\dtrans(4uv + v^2) - 2\dtran(v)
		\leq
		0
		\eqfs
	\end{equation*}
\end{lemma}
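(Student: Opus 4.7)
My plan is to isolate the trivially-signed contributions first. Since $\dtran$ is nondecreasing by \cref{lmm:ccSimpleProps}, the term $\dtran(u) - \dtran(u+v)$ is already nonpositive and may be dropped. This reduces the claim to the simpler-looking inequality
\begin{equation*}
    2v \dtrans(4uv+v^2) \leq \dtran(v)
    \eqfs
\end{equation*}

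Next, I would use the definitional identity $\dtrans(x) = \dtran(\sqrt x)/(2\sqrt x)$, coming from $\trans(x) = \tran(\sqrt x)$. Setting $w := \sqrt{4uv + v^2} = \sqrt{v(4u+v)}$, the left-hand side rewrites as $v\,\dtran(w)/w$. Since $u \geq 0$, we have $w \geq v$, so \cref{lmm:tranconcave} \ref{lmm:tranconcave:balance} applied to the nondecreasing concave nonnegative function $\dtran$ yields $v\,\dtran(w) \leq w\,\dtran(v)$, i.e., $v\,\dtran(w)/w \leq \dtran(v)$. Combining this with the initial observation closes the proof.

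I do not expect a real obstacle: the discarded term might look lossy, but a quick sanity check with $\tran = \tran_2$ (so $\dtran(x) = 2x$) and $u = v$ shows that the reduced inequality $4v\dtrans(4uv+v^2) \leq 2\dtran(v)$ is itself attained with equality, so no additional cleverness is required and no case distinction on $u$ versus $v$ is needed.
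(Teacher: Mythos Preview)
Your proof is correct and is essentially the same as the paper's: both arguments drop the nonpositive contribution $\dtran(u)-\dtran(u+v)$ (the paper keeps it and cancels at the end, you discard it first) and then bound $4v\,\dtrans(4uv+v^2)$ by $2\dtran(v)$ via concavity of $\dtran$. The only cosmetic difference is that the paper rewrites the term as $\dtran(2v\sqrt{u/v+1/4})/\sqrt{u/v+1/4}$ and invokes \cref{lmm:tranconcave}~\ref{lmm:tranconcave:factor}, whereas you write it as $v\,\dtran(w)/w$ with $w=\sqrt{4uv+v^2}\geq v$ and invoke \cref{lmm:tranconcave}~\ref{lmm:tranconcave:balance}; these are the same inequality.
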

\begin{proof}
	\begin{align*}
		&\dtran(u) - \dtran(u+v) + 4v\dtrans(4uv + v^2) - 2\dtran(v) 
		\\
		\qquad&=
		\dtran(u) - \dtran(u+v) + \frac{1}{\sqrt{u/v + 1/4}}\dtran(2v\sqrt{u/v + 1/4}) - 2\dtran(v) 
		\\
		2\sqrt{u/v + 1/4}\geq 1, \dtran\conc
		\qquad&\leq
		\dtran(u) - \dtran(u+v) + \frac{2\sqrt{u/v + 1/4}}{\sqrt{u/v + 1/4}}\dtran(v) - 2\dtran(v) 
		\\
		\qquad&=
		\dtran(u)-\dtran(u+v)
		\\
		v\geq0, \dtran\incr
		\qquad&\leq
		0 
		\eqfs
	\end{align*}
\end{proof}
\begin{lemma}\label{mech:9}
	Let $\tran\in\setccod3$.
	Let $a,b,c\in\Rp$ and $s\in[-1,1]$.
	Assume $a \geq sc \geq b$.
	Then
	\begin{equation*}
		\dtran(a) - \dtran(a-b) - 2b \ddtran(a - sc)
		\leq
		0
		\eqfs
	\end{equation*}
\end{lemma}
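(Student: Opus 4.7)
The plan is to reduce this to a two-step monotonicity argument using \cref{lmm:ccpoly} \ref{lmm:ccpoly:dtaylor} and \cref{lmm:ccSimpleProps}. First, I observe that the hypotheses $a \geq sc \geq b \geq 0$ imply $a - b \geq 0$ and $a - sc \geq 0$, and moreover $a - b \geq a - sc$, so that both $\dtran$ and $\ddtran$ are defined and comparable at these arguments.

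Next, I apply the upper bound in \cref{lmm:ccpoly} \ref{lmm:ccpoly:dtaylor} with $x := a - b$ and $y := b$, which gives
\begin{equation*}
\dtran(a) \leq \dtran(a - b) + b\,\ddtran(a - b)\eqcm
\end{equation*}
i.e., $\dtran(a) - \dtran(a - b) \leq b\,\ddtran(a - b)$. Then, since $\ddtran$ is nonincreasing on $\Rp$ by \cref{lmm:ccSimpleProps} and $a - b \geq a - sc \geq 0$, we have $\ddtran(a - b) \leq \ddtran(a - sc)$. Combined with $b \geq 0$ and $\ddtran \geq 0$, this yields
\begin{equation*}
\dtran(a) - \dtran(a - b) \leq b\,\ddtran(a - sc) \leq 2b\,\ddtran(a - sc)\eqcm
\end{equation*}
which rearranges to the claim.

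There is no real obstacle here; the essential content is just recognizing that the condition $sc \geq b$ is what makes $a - sc$ a smaller (hence better, for $\ddtran$) evaluation point than $a - b$, so the extra factor of $2$ on the right-hand side is not even needed. The proof is essentially a one-line consequence of the second-order Taylor-type bound together with monotonicity of $\ddtran$.
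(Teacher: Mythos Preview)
Your proof is correct and essentially identical to the paper's: both combine the Taylor-type bound $\dtran(a)-\dtran(a-b)\leq b\,\ddtran(a-b)$ from \cref{lmm:ccpoly}\ref{lmm:ccpoly:dtaylor} with the monotonicity $\ddtran(a-b)\leq\ddtran(a-sc)$ (since $sc\geq b$ and $\ddtran\decr$). The only cosmetic difference is the order in which these two steps are applied.
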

\begin{proof}
	\begin{align*}
		&  
		\dtran(a) - \dtran(a-b) - 2b \ddtran(a - sc)
		\\
		b\leq sc, \ddtran\decr
		\qquad&\leq
		\dtran(a) - \dtran(a-b) - 2b \ddtran(a - b)
		\\
		\text{\cref{lmm:ccpoly} \ref{lmm:ccpoly:dtaylor}}
		\qquad&\leq
		b \ddtran(a - b) - 2b \ddtran(a - b)
		\\
		\qquad&=
		-b \ddtran(a - b) 
		\\
		\ddtran\posi
		\qquad&\leq
		0
		\eqfs
	\end{align*}
\end{proof}
\begin{lemma}\label{mech:10}
	Let $\tran\in\setccod3$.
	Let $b,c\in\Rp$.
	Assume $2c\geq3b$.
	Then
	\begin{equation*}
		\dtran(c) - \dtran(c-b) - 2b \dtrans(c^2 - 2 (c-b) b + b^2) 
		\leq
		0
		\eqfs
	\end{equation*}
\end{lemma}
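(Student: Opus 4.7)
The plan is to reduce the inequality to a concavity estimate for $\dtran$ alone. Set $a := c - b$, so that the assumption $2c \geq 3b$ becomes $a \geq b/2$, equivalently $2ab \geq b^2$. Observe that $c^2 - 2(c-b)b + b^2 = (c-b)^2 + 2b^2 = a^2 + 2b^2$; denote $W := \sqrt{a^2 + 2b^2}$. Unfolding the definition $\dtrans(x) = \dtran(\sqrt{x})/(2\sqrt{x})$, the target inequality reads
\begin{equation*}
	\dtran(a+b) - \dtran(a) \leq \frac{b\, \dtran(W)}{W}\eqfs
\end{equation*}

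First I would verify the key pinch $a \leq W \leq a+b$. The lower bound is immediate since $W^2 - a^2 = 2b^2 \geq 0$. The upper bound comes from $(a+b)^2 - W^2 = 2ab - b^2 = b(2a - b) \geq 0$, which is precisely where the hypothesis $a \geq b/2$ is used.

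Since $\dtran$ is nonnegative and concave with $\dtran(0) \geq 0$, the map $x \mapsto \dtran(x)/x$ is nonincreasing on $\Rpp$ by \cref{lmm:tranconcave} \ref{lmm:tranconcave:balance} applied to $\tran$. Combined with $a \leq W \leq a+b$, this gives $\dtran(a) \geq \frac{a}{a+b}\dtran(a+b)$ and $\dtran(W)/W \geq \dtran(a+b)/(a+b)$. The first inequality yields $\dtran(a+b) - \dtran(a) \leq \frac{b}{a+b}\dtran(a+b)$, and the second then amplifies this to $\frac{b\,\dtran(W)}{W}$, completing the argument. The degenerate case $b = 0$ forces $a = 0$ and both sides vanish, so one may assume $a, b > 0$.

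I do not anticipate serious obstacles here. The whole proof hinges on spotting that $c^2 - 2(c-b)b + b^2$ simplifies to $(c-b)^2 + 2b^2$ and that the hypothesis $2c \geq 3b$ is exactly what sandwiches $W$ between $c-b$ and $c$; once this is noticed, two applications of the balance property of concave nonnegative functions close the inequality, with no appeal to $\ddtran$ or $\dddtran$ required.
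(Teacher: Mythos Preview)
Your proof is correct and essentially the same as the paper's. Both arguments hinge on the same two facts: first, that the argument of $\dtrans$ satisfies $c^2 - 2(c-b)b + b^2 \leq c^2$ (your $W \leq a+b$, the paper's $c^2 - b(2c-3b) \leq c^2$), which via the monotonicity of $x\mapsto\dtran(x)/x$ reduces the task to $\dtran(c) - \dtran(c-b) \leq \frac{b}{c}\dtran(c)$; second, that this last inequality is just $\frac{c-b}{c}\dtran(c)\leq\dtran(c-b)$, again the balance property of the concave function $\dtran$. Your version simply unpacks $\dtrans$ and renames $a=c-b$, while the paper works directly with $\dtrans\decr$.

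One small slip: you write ``the degenerate case $b=0$ forces $a=0$''. It does not; $b=0$ gives $a=c$, which need not vanish. What you want is the converse: if $a=0$ then $a\geq b/2$ forces $b=0$. Either way, for $b=0$ the original inequality reads $\dtran(c)-\dtran(c)-0=0$, and for $b>0$ you have $a\geq b/2>0$, so the divisions are safe. This does not affect the validity of the argument.
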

\begin{proof}
	\begin{align*}
		&\dtran(c) - \dtran(c-b) - 2b \dtrans(c^2 - 2 (c-b) b + b^2)
		\\
		\qquad&=
		\dtran(c)- \dtran(c-b) - 2b \dtrans(c^2 - b (2c-3b))
		\\
		2c\geq3b, \dtrans\decr
		\qquad&\leq
		\dtran(c)- \dtran(c-b) - 2b \dtrans(c^2)
		\\
		\qquad&=
		\dtran(c)- \dtran(c-b) - \frac bc \dtran(c)
		\\
		\qquad&=
		\frac{c-b}{c}\dtran(c) - \dtran(c-b)
		\\
		\frac{c-b}{c}\in[0,1], \dtran\conc
		\qquad&\leq
		\dtran(c-b) - \dtran(c-b)
		\\
		\qquad&=
		0
		\eqfs
	\end{align*}
\end{proof}
\begin{lemma}\label{mech:11}
	Let $\tran\in\setccod3$.
	Let $b,c\in\Rp$.
	Assume $c\geq b$.
	Then
	\begin{equation*}
		\trans((c-b)^2 + 2 b^2) - \tran(c-b) - 2\tran(b) 
		\leq
		0
		\eqfs
	\end{equation*}
\end{lemma}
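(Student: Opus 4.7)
The plan is to exploit subadditivity of $\trans$, which is available essentially for free here. By \cref{lmm:ccsqrtprop}, $\trans$ is nonnegative, nondecreasing, and concave on $\Rp$. Since $\tran\in\setccod{3}\subset\setcco$, we have $\trans(0)=\tran(0)=0$. Concavity plus $\trans(0)\geq0$ places $\trans$ under the hypotheses of \cref{lmm:aux:redistri}, so $\trans$ is subadditive: $\trans(x+y)\leq\trans(x)+\trans(y)$ for all $x,y\in\Rp$.

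The first step is to apply subadditivity with $x=(c-b)^2$ and $y=2b^2$, which gives
\begin{equation*}
\trans\bigl((c-b)^2+2b^2\bigr)\leq\trans\bigl((c-b)^2\bigr)+\trans(2b^2).
\end{equation*}
The hypothesis $c\geq b$ enters here exactly to identify $\trans((c-b)^2)=\tran(|c-b|)=\tran(c-b)$.

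The second step is to apply subadditivity again to the term $\trans(2b^2)=\trans(b^2+b^2)\leq 2\trans(b^2)=2\tran(b)$. Chaining the two inequalities yields
\begin{equation*}
\trans\bigl((c-b)^2+2b^2\bigr)-\tran(c-b)-2\tran(b)\leq 0,
\end{equation*}
which is the claim. I do not foresee a genuine obstacle: the only subtlety is ensuring that $\trans$ really does satisfy the concavity and boundary conditions needed to invoke \cref{lmm:aux:redistri}, and both are already packaged in \cref{lmm:ccsqrtprop} and the defining property $\tran(0)=0$ of $\setcco$.
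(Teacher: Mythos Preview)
Your proof is correct and follows essentially the same route as the paper: two applications of subadditivity of $\trans$ (first splitting off $(c-b)^2$, then splitting $2b^2$ into $b^2+b^2$), with the identification $\trans((c-b)^2)=\tran(c-b)$ using $c\geq b$. You are simply more explicit about why subadditivity holds, citing \cref{lmm:ccsqrtprop} and \cref{lmm:aux:redistri}, whereas the paper just annotates the step with ``$\trans\conc$ (subadditive)''.
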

\begin{proof}
	\begin{align*}
		&\trans((c-b)^2 + 2 b^2) - \tran(c-b) - 2\tran(b) 
		\\
		\trans\conc\text{ (subadditive)}
		\qquad&\leq
		\trans((c-b)^2) + \trans(2 b^2) - \tran(c-b) - 2\tran(b) 
		\\
		\qquad&=
		\trans(2b^2) - 2\tran(b) 
		\\
		\trans\conc\text{ (subadditive)}
		\qquad&\leq
		\trans(b^2) + \trans(b^2) - 2\tran(b) 
		\\
		\qquad&=
		0
		\eqfs
	\end{align*}
\end{proof}
\begin{lemma}\label{mech:12}
	Let $\tran\in\setccod3$.
	Let $b,c\in\Rp$.
	Assume $2c \leq 3b$.
	Then
	\begin{equation*}
		\dtran\brOf{\frac32 b} - \dtran\brOf{\frac12 b} - 2b \dtrans(c^2)
		\leq
		0
		\eqfs
	\end{equation*}
\end{lemma}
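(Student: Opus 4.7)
My plan is first to eliminate $\dtrans$ from the expression by using the identity $\trans(x) = \tran(\sqrt x)$. Differentiating yields $\dtrans(y) = \dtran(\sqrt y)/(2\sqrt y)$, so for $c > 0$ we have $2b\,\dtrans(c^2) = (b/c)\,\dtran(c)$. Hence the claim is equivalent to
\begin{equation*}
\dtran(3b/2) - \dtran(b/2) \leq \frac{b}{c}\,\dtran(c) .
\end{equation*}

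The central ingredient will be the observation that $x \mapsto \dtran(x)/x$ is nonincreasing on $(0,\infty)$. Indeed, by \cref{lmm:ccSimpleProps}, $\dtran$ is concave and $\dtran(0) \geq 0$, so for any $0 < x \leq y$ concavity gives $\dtran(x) \geq (x/y)\,\dtran(y) + (1-x/y)\,\dtran(0) \geq (x/y)\,\dtran(y)$, i.e.\ $\dtran(x)/x \geq \dtran(y)/y$.

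I would then apply this monotonicity twice. From the hypothesis $2c \leq 3b$, i.e.\ $c \leq 3b/2$, one gets $\dtran(c)/c \geq \dtran(3b/2)/(3b/2)$, hence $(b/c)\,\dtran(c) \geq (2/3)\,\dtran(3b/2)$. From $b/2 \leq 3b/2$ one analogously obtains $\dtran(3b/2) \leq 3\,\dtran(b/2)$. Chaining the two estimates,
\begin{equation*}
\dtran(3b/2) - \dtran(b/2) - 2b\,\dtrans(c^2) \leq \frac{1}{3}\dtran(3b/2) - \dtran(b/2) \leq 0 ,
\end{equation*}
which is exactly what is needed. I do not expect any substantial obstacle; the argument is essentially a double application of the elementary scaling property of nonnegative concave functions, and the boundary case $c=0$ is already excluded by the paper's convention of restricting to the dense subset where denominators do not vanish.
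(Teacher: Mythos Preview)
Your proof is correct and follows essentially the same route as the paper. The paper first uses that $\dtrans$ is nonincreasing (equivalently, that $x\mapsto\dtran(x)/x$ is nonincreasing) to replace $c$ by $3b/2$, arriving at $\tfrac13\dtran(3b/2)-\dtran(b/2)$, and then applies the concavity scaling $\tfrac13\dtran(3b/2)\leq\dtran(b/2)$; your two applications of the monotonicity of $\dtran(x)/x$ are exactly these same two steps in different clothing.
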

\begin{proof}
	\begin{align*}
		&\dtran\brOf{\frac32 b} - \dtran\brOf{\frac12 b} - 2b \dtrans(c^2)
		\\
		c \leq \frac32b, \dtrans\decr
		\qquad&\leq
		\dtran\brOf{\frac32 b} - \dtran\brOf{\frac12 b} - 2b \dtrans\brOf{\br{\frac32 b}^2}
		\\
		\qquad&=
		\dtran\brOf{\frac32 b} - \dtran\brOf{\frac12 b} - \frac23 \dtran\brOf{\frac32 b}
		\\
		\qquad&=
		\frac13\dtran\brOf{\frac32 b} - \dtran\brOf{\frac12 b}
		\\
		\frac13\leq 1,\dtran\conc
		\qquad&\leq
		\dtran\brOf{\frac12 b} - \dtran\brOf{\frac12 b}
		\\
		\qquad&=
		0\eqfs
	\end{align*}
\end{proof}
\begin{lemma}\label{mech:13}
	Let $\tran\in\setccod3$.
	Let $b\in\Rp$.
	Then
	\begin{equation*}
		\tran(\frac32 b) - \tran(\frac 12 b) - 2 \tran(b)
		\leq
		0
		\eqfs
	\end{equation*}
\end{lemma}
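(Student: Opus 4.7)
The plan is to reduce the inequality to two short applications of \cref{lmm:ccdiff} \ref{lmm:ccdiff:tight}. First, I would combine the two terms $\tran(\tfrac{3}{2}b) - \tran(\tfrac{1}{2}b)$ by viewing them as a single difference $\tran(x) - \tran(y)$ with $x = \tfrac{3}{2}b$ and $y = \tfrac{1}{2}b$. The upper-bound half of that lemma then gives
\begin{equation*}
	\tran(\tfrac{3}{2}b) - \tran(\tfrac{1}{2}b) \leq (x-y)\,\dtran\!\brOf{\tfrac{x+y}{2}} = b\,\dtran(b)\eqfs
\end{equation*}
This step is clean because the midpoint $\tfrac{x+y}{2}$ coincides exactly with $b$, so the left-hand side collapses to the single product $b\,\dtran(b)$.

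Next, I would bound $b\,\dtran(b)$ by $2\tran(b)$. For this I would use the lower-bound half of the same \cref{lmm:ccdiff} \ref{lmm:ccdiff:tight}, applied with $x = b$ and $y = 0$, exploiting that $\tran \in \setccod{3} \subset \setcco$ guarantees $\tran(0) = 0$ and $\dtran(0) \geq 0$. This is precisely the estimate already recorded as \eqref{eq:xdtrantran} in the proof of \cref{cor:rhsbound}. Chaining the two bounds gives
\begin{equation*}
	\tran(\tfrac{3}{2}b) - \tran(\tfrac{1}{2}b) \leq b\,\dtran(b) \leq 2\tran(b)\eqcm
\end{equation*}
which is the claim.

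There is no real obstacle here; the lemma is a compact consequence of two generic features of $\setcco$: a midpoint-type upper bound on finite differences and a trapezoid-type lower bound that links $x\dtran(x)$ to $\tran(x)$ whenever $\tran(0)=0$. The only ``design'' choice is the symmetric pairing $x=\tfrac{3}{2}b$, $y=\tfrac{1}{2}b$, which makes the midpoint of $\dtran$ land on $b$ and thus matches the subsequent application of \eqref{eq:xdtrantran}.
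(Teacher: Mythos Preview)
Your proof is correct and takes a genuinely different route from the paper. The paper works through the square-root transform $\trans(x) = \tran(\sqrt{x})$: it writes $\tran(\tfrac32 b) - \tran(\tfrac12 b) = \trans(\tfrac94 b^2) - \trans(\tfrac14 b^2)$, applies subadditivity of the concave function $\trans$ once to get $\trans(2b^2)$, and then applies subadditivity again to bound $\trans(2b^2) \leq 2\trans(b^2) = 2\tran(b)$. Your argument stays entirely on the $\tran$ side, using the midpoint upper bound from \cref{lmm:ccdiff}\ref{lmm:ccdiff:tight} (which rests on concavity of $\dtran$) followed by the recorded inequality \eqref{eq:xdtrantran}. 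Both proofs are two short steps; yours avoids the detour through $\trans$ and is arguably more direct, while the paper's version keeps the argument in the same $\trans$-subadditivity idiom used in the surrounding mechanical lemmas (e.g., \cref{mech:11}).
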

\begin{proof}
	\begin{align*}
		&\tran(\frac32 b) - \tran(\frac 12 b) - 2 \tran(b)
		\\
		\qquad&=
		\trans(\frac94 b^2) - \trans(\frac 14 b^2) - 2 \tran(b)
		\\
		\trans\conc\text{ (subadditive)}
		\qquad&\leq
		\trans(\frac84 b^2) - 2 \tran(b)
		\\
		\qquad&=
		\trans(2 b^2) - 2 \tran(b)
		\\
		\trans\conc\text{ (subadditive)}
		\qquad&\leq
		\trans(b^2) + \trans(b^2) - 2 \tran(b)
		\\
		\qquad&=
		0
		\eqfs
	\end{align*}
\end{proof}
\begin{lemma}\label{mech:14}
	Let $\tran\in\setccod3$.
	Let $x,b,c\in\Rp$.
	Assume $x \leq c$.
	Then
	\begin{equation*}
		\ddtran(x+b) - \ddtran(x) + 4 b^2\ddtrans(c^2 - 2 x b + b^2)
		\leq
		0
		\eqfs
	\end{equation*}
\end{lemma}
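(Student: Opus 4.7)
My plan is to observe that, under the hypothesis $\tran\in\setccod{3}$, the two summands on the left-hand side are separately nonpositive, so the inequality reduces to two sign/monotonicity facts already established in the excerpt, with the assumption $x\leq c$ serving only to keep the argument of $\ddtrans$ in its domain.

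First, I would handle the difference $\ddtran(x+b) - \ddtran(x)$. By \cref{lmm:ccSimpleProps}, since $\tran\in\setccod{3}\subset\setccd{2}$, the second derivative $\ddtran$ is nonnegative and nonincreasing on $\Rp$. Because $b\geq 0$ we have $x+b\geq x$, so $\ddtran(x+b)\leq \ddtran(x)$, i.e.\ $\ddtran(x+b)-\ddtran(x)\leq 0$.

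Next, I would handle the term $4b^{2}\ddtrans(c^{2}-2xb+b^{2})$. Using $x\leq c$ and $b\geq 0$,
\begin{equation*}
	c^{2}-2xb+b^{2} \geq c^{2}-2cb+b^{2} = (c-b)^{2}\geq 0,
\end{equation*}
so the argument lies in $\Rp$ where $\ddtrans$ is defined. By \cref{lmm:ccsqrtprop}, $\ddtrans$ is nonpositive, and since $4b^{2}\geq 0$ the whole term is $\leq 0$. Adding this to the previous inequality gives the claim.

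There is no real obstacle to overcome here; the only subtlety worth flagging is the domain check $c^{2}-2xb+b^{2}\geq 0$, which is precisely what the hypothesis $x\leq c$ buys us. Thus the lemma is essentially a bookkeeping corollary of \cref{lmm:ccSimpleProps} and \cref{lmm:ccsqrtprop}.
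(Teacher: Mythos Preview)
Your proof is correct and follows essentially the same approach as the paper: drop the $4b^{2}\ddtrans(\cdot)$ term using $\ddtrans\leq 0$, then use that $\ddtran$ is nonincreasing to conclude $\ddtran(x+b)-\ddtran(x)\leq 0$. Your explicit domain check $c^{2}-2xb+b^{2}\geq (c-b)^{2}\geq 0$ is a welcome detail that the paper leaves implicit.
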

\begin{proof}
	\begin{align*}
		&
		\ddtran(x+b) - \ddtran(x) + 4 b^2\ddtrans(c^2 - 2 x b + b^2)
		\\
		\ddtrans\nega
		\qquad&\leq
		\ddtran(x+b) - \ddtran(x)
		\\
		\ddtran\decr
		\qquad&\leq
		0
		\eqfs
	\end{align*}
\end{proof}
\begin{lemma}\label{mech:15}
	Let $\tran\in\setccod3$.
	Let $u,c\in\Rp$.
	Then
	\begin{equation*}
		2\dtran(u)
		- 2\dtran(u/2)
		- \frac u2 \ddtran(c)
		- u\ddtran(u/2)
		\leq
		0
		\eqfs
	\end{equation*}
\end{lemma}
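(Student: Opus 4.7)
The proof of \cref{mech:15} should be short; the statement is essentially a rearrangement of the concavity of $\dtran$. My plan is to isolate the inoffensive term $-\tfrac{u}{2}\ddtran(c)$ first and then dispose of the remainder with a single Taylor-type estimate.

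First, observe that by \cref{lmm:ccSimpleProps} the function $\ddtran$ is nonnegative on $\Rp$. Since $u \geq 0$, this gives $-\tfrac{u}{2}\ddtran(c) \leq 0$, and this term can simply be dropped. Thus it suffices to prove
\begin{equation*}
    2\dtran(u) - 2\dtran(u/2) - u\ddtran(u/2) \leq 0 \eqfs
\end{equation*}
The variable $c$ therefore plays no role in the substantive part of the argument; it is a red herring inherited from the calling context.

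The remaining inequality follows from \cref{lmm:ccpoly} \ref{lmm:ccpoly:dtaylor}, applied with both arguments equal to $u/2$: one gets $\dtran(u) \leq \dtran(u/2) + (u/2)\ddtran(u/2)$, which is just the fact that the concave function $\dtran$ lies below its tangent line at $u/2$. Multiplying by $2$ and rearranging yields precisely $2\dtran(u) - 2\dtran(u/2) - u\ddtran(u/2) \leq 0$. Combining with the first observation closes the proof. I expect no obstacle; stylistically, the cleanest presentation matches the chain format of the surrounding mechanical lemmas, with justifications such as \emph{$\ddtran \posi$} and \emph{\cref{lmm:ccpoly} \ref{lmm:ccpoly:dtaylor}} annotating the two inequality steps.
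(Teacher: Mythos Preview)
Your proposal is correct and follows essentially the same two-step approach as the paper's proof: first drop $-\tfrac{u}{2}\ddtran(c)$ using $\ddtran\posi$, then apply \cref{lmm:ccpoly}~\ref{lmm:ccpoly:dtaylor} with $x=y=u/2$ to conclude.
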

\begin{proof}
	\begin{align*}
		&
		2\dtran(u)
		- 2\dtran(u/2)
		- \frac u2\ddtran(c)
		- u\ddtran(u/2)
		\\
		\ddtran\posi
		\qquad&\leq
		2\dtran(u)
		- 2\dtran(u/2)
		- u\ddtran(u/2)
		\\
		\text{\cref{lmm:ccpoly} \ref{lmm:ccpoly:dtaylor}}
		\qquad&\leq
		u \ddtran(u/2)
		-
		u \ddtran(u/2)
		\\
		\qquad&=
		0
		\eqfs
	\end{align*}
\end{proof}
\begin{lemma}\label{mech:16}
	Let $\tran\in\setccod3$.
	Let $u\in\Rp$.
	Then
	\begin{equation*}
		u\dtran(u)
		- 2u\dtran(u/2)
		\leq
		0
		\eqfs
	\end{equation*}
\end{lemma}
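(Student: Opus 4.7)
The plan is to factor out $u \geq 0$ from the left-hand side, reducing the claim to $\dtran(u) \leq 2\dtran(u/2)$. Since $u \geq 0$, the sign of $u\dtran(u) - 2u\dtran(u/2) = u\br{\dtran(u) - 2\dtran(u/2)}$ is controlled entirely by the bracketed quantity, so we do not need to chase the factor of $u$.

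To establish $\dtran(u) \leq 2\dtran(u/2)$, I would invoke \cref{lmm:tranconcave} \ref{lmm:tranconcave:factor} with $a = 2 \geq 1$ and $x = u/2$, which immediately yields $\dtran(u) = \dtran(2 \cdot u/2) \leq 2\dtran(u/2)$. This is the most direct route, and it relies only on the concavity and nonnegativity of $\dtran$ that come with $\tran \in \setccod 3$ (via \cref{lmm:ccSimpleProps} and \cref{lmm:ccsqrtprop}). An equivalent derivation, if one prefers not to cite \cref{lmm:tranconcave}, is to use concavity of $\dtran$ together with $\dtran(0)\geq 0$ directly: $\dtran(u/2) = \dtran\brOf{\tfrac12 \cdot u + \tfrac12 \cdot 0} \geq \tfrac12 \dtran(u) + \tfrac12 \dtran(0) \geq \tfrac12 \dtran(u)$.

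There is no real obstacle here; this lemma is a one-line consequence of the concavity of $\dtran$ and is included only to support the immediately preceding \cref{mech:15}. In the house style of the other mechanical proofs in this section, I would simply write the displayed chain
\begin{align*}
    u\dtran(u) - 2u\dtran(u/2)
    \qquad&=\qquad
    u\br{\dtran(u) - 2\dtran(u/2)}
    \\
    \text{\cref{lmm:tranconcave} \ref{lmm:tranconcave:factor}}
    \qquad&\leq\qquad 0
    \eqfs
\end{align*}
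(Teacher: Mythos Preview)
Your proof is correct and essentially identical to the paper's: both reduce to $\dtran(u)\leq 2\dtran(u/2)$ via concavity of $\dtran$ (the paper annotates the step simply with ``$\dtran\conc$'' rather than citing \cref{lmm:tranconcave}). One minor aside: the lemma is actually invoked in the proof of \cref{lmm:gbcs}, not \cref{mech:15}.
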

\begin{proof}
	\begin{align*}
		&
		u\dtran(u)
		- 2u\dtran(u/2)
		\\
		\dtran \smallfrown
		\qquad&\leq
		2u\dtran(u/2)
		- 2u\dtran(u/2)
		\\
		\qquad&=		
		0
		\eqfs
	\end{align*}
\end{proof}
\begin{lemma}\label{mech:17}
	Let $\tran\in\setccod3$.
	Let $b,c\in\Rp$, $s\in[-1,1]$.
	Assume $b\leq sc$.
	Then
	\begin{align*}
		&
		\frac{1-s}2\dtran\brOf{\frac{(1+s)c}2 - b} 
		+ 2(1-s)(b+c)\dtrans( c^2 - 2scb + b^2)
		- 2\dtran\brOf{\frac{(1-s)c}2} 
		+ 
		\\&\hphantom{=}\ \ 
		\frac{1+s}2 \dtran\brOf{\frac{(1+s)c}2} 
		- \dtran(c)
		- b(1-s)\ddtran\brOf{\frac{(1-s)c}2} 
		\\&\leq
		\frac{1-s}2\dtran( c-b)
		+ 2(1-s)(b+c)\dtrans( (c-b)^2)
		- 2\dtran\brOf{\frac{(1-s)c}2} 
		+ 
		\\&\hphantom{=}\ \ 
		-\frac{1-s}2\dtran( c)
		- b(1-s)\ddtran\brOf{\frac{c-b}2}
		\eqfs
	\end{align*}
\end{lemma}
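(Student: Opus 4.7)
The plan is to show $\mathrm{RHS} - \mathrm{LHS} \geq 0$ by organizing the difference into four groups of analogous terms. The $-2\dtran(\frac{(1-s)c}{2})$ contribution appears verbatim on both sides and cancels. The remaining LHS and RHS terms pair up by the underlying function involved: a $\dtran(\cdot - b)$ pair, a $\dtrans(\cdot)$ pair, a $\ddtran(\cdot)$ pair, and a residual group involving the leftover $\dtran$-values at $\frac{(1+s)c}{2}$ and $c$.

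The first three groups are immediate applications of monotonicity, one per derivative order, using the shared coefficients as they appear. For the $b$-dependent $\dtran$ pair, $s \leq 1$ gives $\frac{(1+s)c}{2} - b \leq c - b$, and $\dtran \incr$ (see Table~\ref{table:porps}) provides the comparison, with the common nonnegative factor $\frac{(1-s)c}{2}$. For the $\dtrans$ pair, the identity $(c^2 - 2scb + b^2) - (c-b)^2 = 2cb(1-s) \geq 0$ combined with $\dtrans \decr$ (\cref{lmm:ccsqrtprop}) gives the result, both terms sharing the nonnegative factor $2(1-s)(b+c)$. For the $\ddtran$ pair, the hypothesis $b \leq sc$ rewrites as $\frac{(1-s)c}{2} \leq \frac{c-b}{2}$; since $\ddtran \decr$, the common negative coefficient $-b(1-s) \leq 0$ then flips the sign into the correct direction.

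The residual fourth group, which I expect to be the genuine obstacle, involves the leftover $\frac{(1+s)c}{2}\,\dtran(\frac{(1+s)c}{2}) - \dtran(c)$ from the LHS against $-\frac{(1-s)c}{2}\,\dtran(c)$ from the RHS. To handle it, I would exploit that $u \mapsto u\,\dtran(u)$ is nondecreasing --- its derivative $\dtran(u) + u\,\ddtran(u)$ is a sum of two nonnegative quantities by Table~\ref{table:porps} --- so that $\frac{(1+s)c}{2} \leq c$ yields an upper bound on the first summand, while the inequality $u\,\dtran(u) \leq 2\,\tran(u)$ from \cref{lmm:ccdiff} \ref{lmm:ccdiff:tight} converts back to a quantity that can be paired against $\dtran(c)$. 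The hard part is that the naive monotonicity estimates used in the first three groups are too coarse to absorb this residue on their own, so I expect to either strengthen one of them quantitatively --- for instance, upgrading the $\ddtran$ step via \cref{lmm:ccpoly} \ref{lmm:ccpoly:dtaylor} to recover slack of order $b(sc - b)$ --- or to split on the regime of $c$ relative to $sc$ and handle each case separately. The concavity of $\dtran$ together with the tight two-sided constraint $0 \leq b \leq sc$ is the main reserve of slack available for closing this final step.
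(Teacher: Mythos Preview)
Your three monotonicity comparisons --- for the $\dtran(\cdot-b)$ pair, the $\dtrans$ pair, and the $\ddtran$ pair --- are exactly the paper's first, second, and fourth steps. The ``residual fourth group'', however, is not the obstacle you make it out to be: the paper disposes of it by the very same device, namely
\[
\dtran\Bigl(\tfrac{(1+s)c}{2}\Bigr)\le\dtran(c)\qquad(s\le1,\ \dtran\incr),
\]
applied with the nonnegative coefficient in front of $\dtran\bigl(\tfrac{(1+s)c}{2}\bigr)$. The intent is that the two LHS terms $\tfrac{(1+s)c}{2}\,\dtran\bigl(\tfrac{(1+s)c}{2}\bigr)-\dtran(c)$ collapse, after this bound, to $-\tfrac{(1-s)c}{2}\,\dtran(c)$ exactly, leaving no residue at all. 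The gap you are sensing is a misprint in the stated coefficients: tracing the lemma back to \cref{lmm:gbcs}, where its left-hand side is $\partial_b g+\partial_c g$, one finds the prefactors are $\tfrac{1\pm s}{2}$ rather than $\tfrac{(1\pm s)c}{2}$ (equivalently, the $-\dtran(c)$ term should carry the same scaling as the others). With the intended scaling the arithmetic $\tfrac{1+s}{2}-1=-\tfrac{1-s}{2}$ closes the fourth group in one line; no concavity argument, no $u\,\dtran(u)$ monotonicity, and no case split is needed.

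So your plan matches the paper for three of the four pairings, but the elaborate programme you outline for the fourth is both unnecessary and left unfinished. The missing observation is simply that this last pair is also a single monotonicity step --- the apparent residue is an artefact of a typo in the displayed statement, not a mathematical difficulty.
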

\begin{proof}
	Apply
	\begin{align*}
		\dtran( (s+1)c/2-b) &\leq \dtran( c-b)\eqcm && s\leq 1,\dtran\nearrow\eqcm\\
		\dtrans( c^2 - 2scb + b^2) &\leq \dtrans( (c-b)^2)\eqcm&& s\leq 1,\dtrans\searrow\eqcm\\
		\dtran( (s+1)c/2) &\leq \dtran(c)\eqcm&& s\leq 1,\dtran\nearrow\eqcm\\
		-\ddtran((1-s)c/2) &\leq -\ddtran((c-b)/2)\eqcm&& b\leq sc,\ddtran\searrow
		\eqfs
	\end{align*}
\end{proof}

\end{appendix}
\section*{Acknowledgments}
I want to thank Christophe Leuridan for his very quick answer on math overflow\footnote{\url{https://mathoverflow.net/questions/447718/smooth-approximation-of-nonnegative-nondecreasing-concave-functions/447722}} that sped up the creation of \cref{lmm:cc:molli} significantly.

\printbibliography
\end{document}